\documentclass[reqno,11pt]{amsart}

\usepackage{amsmath}
\usepackage{amsthm}
\usepackage[hidelinks]{hyperref}
\usepackage{ragged2e}
\usepackage{enumitem}
\setlist[enumerate,1]{label=(\roman*),ref=(\roman*)}
\usepackage{tikz-cd}
\usepackage{mathtools}
\usepackage{mathrsfs}  
\usepackage{graphicx} 
\usetikzlibrary{arrows.meta}

\usepackage[
backend=biber,
style=alphabetic,
]{biblatex}

\DeclareFieldFormat{postnote}{#1}

\usepackage{amssymb}
\newcommand{\bb}{\mathbb}

\newcommand{\Gal}{\operatorname{Gal}}

\newcommand{\divv}{\operatorname{div}}

\newcommand{\an}{\operatorname{an}}
\newcommand{\vphi}{\varphi}
\newcommand{\Supp}{\operatorname{Supp}}

\newcommand{\Res}{\operatorname{Res}}

\newcommand{\ovl}{\overline}

\newcommand{\cal}{\mathcal}
\newcommand{\Spec}{\operatorname{Spec}}
\newcommand{\ord}{\operatorname{ord}}

\newcommand{\eps}{\varepsilon}
\newcommand{\Hom}{\operatorname{Hom}}

\newcommand{\id}{\operatorname{id}}

\newcommand{\End}{\operatorname{End}}

\newcommand{\h}{\widehat{h}}

\newcommand{\Rat}{\operatorname{Rat}}

\newcommand{\Frac}{\operatorname{Frac}}

\newcommand{\Ht}{\widehat{H}}
\newcommand{\Jac}{\operatorname{Jac}}
\newcommand{\Pic}{\operatorname{Pic}}

\newcommand{\supp}{\operatorname{supp}}
\newcommand{\reg}{\operatorname{reg}}
\newcommand{\Hilb}{\operatorname{Hilb}}

\newcommand{\Fal}{\operatorname{Fal}}
\newcommand{\val}{\operatorname{val}}
\newcommand{\gon}{\operatorname{gon}}
\newcommand{\Tr}{\operatorname{Tr}}
\newcommand{\scr}{\mathscr}
\newcommand{\C}{\mathbb{C}}

\newtheorem{theorem}{Theorem}
\numberwithin{theorem}{section}
\newtheorem{lemma}[theorem]{Lemma}
\newtheorem{proposition}[theorem]{Proposition}

\numberwithin{example}{section}

\newtheorem{definition}{Definition}
\numberwithin{definition}{section}
\newtheorem{corollary}[theorem]{Corollary}

\newtheorem{conjecture}[theorem]{Conjecture}
\newtheorem{rmk}{Remark}
\numberwithin{rmk}{section}

\makeindex
\title[Uniform boundedness over function fields]{Uniform boundedness of small points on abelian varieties over function fields}
\author{Nicole Looper and Jit Wu Yap}
\begin{document}

\begin{abstract} \vspace*{-1em}
Let $k$ be a field of characteristic $0$ and let $K = k(B)$ be the function field of a geometrically irreducible projective curve $B$ over $k$. Let $A/K$ be a $g$-dimensional abelian variety with $\Tr_{K/k}(A) = 0$. We prove that any $K$-rational torsion point $x$ of $A$ has order uniformly bounded in terms of $g$ and the gonality of $B$. We also prove a uniform lower bound on the N\'{e}ron--Tate height $\h_{A,L}(x)$ in terms of the stable Faltings height $h_{\Fal}(A)$ for any $K$-rational point $x$ whose forward orbit is Zariski dense, proving the Lang--Silverman conjecture over function fields of characteristic $0$.  
\end{abstract}

\maketitle

\tableofcontents \vspace*{-4em}

\newpage 
\section{Introduction}
In his landmark work \cite{Maz77, Maz78}, Mazur proved a uniform bound on the order of a $\bb{Q}$-rational torsion point on any elliptic curve $E/\bb{Q}$. In fact, Mazur even gave an exact list of all of the possible torsion subgroups of $E(\bb{Q})$. This uniform boundedness result was generalized to number fields $K$ by Kamienny \cite{Kam86}, Kamienny--Mazur \cite{KM95} and Merel \cite{Mer96}, who proved the existence of an upper bound on the order of a $K$-rational torsion point on any elliptic curve over $K$ depending only on $[K:\bb{Q}]$. This bound was later made effective by Parent \cite{Par99}. 
\par 
There are analogous statements over function fields. For $K = \bb{C}(B)$ the function field of a smooth projective curve $B/\bb{C}$, an upper bound on the order of a $K$-torsion point for a non-isotrivial elliptic curve $E/K$ was given in terms of the genus $g(B)$ by Levin \cite{Lev68}. Upper bounds depending only on the gonality $\gon(B)$ were obtained by Abramovich \cite{Abr96} and Nguyen--Saito \cite{NS96}. 
\par 
For elliptic curves over function fields of arbitrary characteristic, an upper bound depending only on the genus was given by Cojocaru and Hall \cite{CH05}. Poonen \cite{Poo07} subsequently proved an upper bound depending only on the gonality.
\par 
For higher dimensional abelian varieties over number fields, results over one-parameter families were obtained by Cadoret--Tamagawa \cite{CT12} and Ellenberg--Hall--Kolwaski \cite{EHK12}. Statistical results were also shown by Laga--Thorne \cite{LT25}.
\par 
Our first main result is a uniform boundedness statement for abelian varieties $A/K$, where $K = k(B)$ is a function field of a curve $B$ over a field $k$ of characteristic $0$. By a curve $B$, we mean a smooth geometrically integral projective variety of dimension $1$. We say that an abelian variety $A/K$ has no $K$-isotrivial part if its $K/k$-trace $\Tr_{K/k}(A)$ is $0$. We write $\gon(B)$ for the gonality of $B$.

\begin{theorem} \label{UniformIntroTheorem1}
Let $K = k(B)$ be the function field of a curve $B/k$, where $\operatorname{char} k=0$. There exists an integer $N = N(g,\gon(B))$, depending only on $g$ and $\gon(B)$, such that for any abelian variety $A/K$ of dimension $g$ with no $K$-isotrivial part, any torsion point $x \in A(K)$ has order at most $N$.     
\end{theorem}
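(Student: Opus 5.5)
Since the abstract pairs the theorem with a Lang--Silverman type lower bound, the natural plan is to deduce uniform boundedness of torsion from a uniform \emph{equidistribution / small points} statement, combined with a degree-counting argument that uses $\gon(B)$. The plan has four steps.

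\textbf{Reductions.} First reduce to $k$ algebraically closed. Base change to $\bar k$ does not increase the gonality, and a torsion point stays torsion. The trace condition $\Tr_{K/k}(A)=0$ should be preserved because the trace commutes with separable algebraic base change of $k$ in characteristic $0$.

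Next fix a gonal map $\pi\colon B\to\mathbb{P}^1$ of degree $d=\gon(B)$. Put $F=k(t)$, and take the Weil restriction $A'=\Res_{K/F}A$, an abelian variety over $F$ of dimension $gd$. A $K$-point of order $n$ on $A$ gives an $F$-point of order $n$ on $A'$. The trace condition descends: $\Tr_{F/k}(A')$ is built from $\Tr_{K/k}(A)$ via the adjunction, so it vanishes. It therefore suffices to bound torsion of $F$-rational points on abelian varieties of dimension $g'=gd$ over $k(t)$ with trivial trace, by a constant depending only on $g'$.

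\textbf{Height-theoretic core over $\mathbb{P}^1$.} After passing to a field extension of degree bounded in $g'$ (for instance adjoining the $12$-torsion), assume $A'$ has semistable reduction. Then $h_{\Fal}(A')$ is a positive integer (or positive rational with bounded denominator) whenever $A'$ is non-isotrivial. Here non-isotrivial means the trace part has been removed; if $A'$ is isotrivial with zero trace, it has only finitely many rational points, all torsion, and these must be bounded separately via a twisting argument.

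The main input would be a uniform ``many small points are contained in a proper subvariety'' result, a function-field analogue of Dimitrov--Gao--Habegger / Kühne. Concretely, there should be constants $c_1,c_2$ depending only on $g'$ such that
\[
\#\{x\in A'(\bar F): \h(x)\le c_1 h_{\Fal}(A')\}\le c_2 .
\]
Over function fields this should be provable by degenerating to the Berkovich analytic fibres at places of bad reduction and using equidistribution on skeleta. Alternatively, one can use Gao--Habegger's ``height inequality'' on the universal family over $\mathcal A_{g'}$, which holds uniformly in characteristic $0$ after spreading out.

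Torsion points have height $0$, so $A'(F)_{\rm tors}$ has at most $c_2$ elements. Hence the order of any torsion point is at most $c_2$.

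\textbf{Main obstacle.} The hard step is the uniform small-points bound. Two features make it delicate. First, the constant must be independent of the base $\mathbb{P}^1$ model and of the number of bad places. Second, the isotrivial-but-trace-zero case must be handled, which is where the trace hypothesis is genuinely used.

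For the first feature I would first try to adapt the Gao--Ge--Kühne argument: realise $A'$ via a map $\mathbb{P}^1\to\mathcal A_{g',\ell}$ (fine moduli with level $\ell=3$). Then apply the height inequality for the nondegenerate subvariety of the $m$-fold fibered power, which is uniform over the whole moduli space. Reducing $g'$-dependence to degree data of this map is where I expect most of the work.

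For the isotrivial trace-zero case, $A'$ becomes constant after a finite extension of degree bounded in terms of $g'$ (via the image of monodromy in $\mathrm{GL}_{2g'}(\mathbb{Z}/3)$). $F$-torsion then corresponds to fixed points of a finite-order automorphism on a constant abelian variety, which is bounded in terms of $g'$.
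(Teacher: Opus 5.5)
Your reductions (to $k$ algebraically closed, then Weil restriction along a gonal map $B\to\bb{P}^1$, using $A'(k(t))=A(K)$ and $\Tr_{k(t)/k}(A')=0$) are the same as the paper's. The gap is your ``main input'', which is false.

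\begin{itemize}
\item Over $\bar F$ it fails trivially, since $A'(\bar F)$ has infinitely many torsion points, all of height $0$.
\item Over $F$ it also fails. Take $A'_n=E_n\times E$, with $E/k(t)$ a fixed non-isotrivial elliptic curve of positive rank and $E_n$ non-isotrivial with $h_{\Fal}(E_n)\to\infty$. For $P\in E(F)$ non-torsion, the points $(0,[m]P)$ give $\gg h_{\Fal}(A'_n)^{1/2}$ points of height at most $c_1h_{\Fal}(A'_n)$.
\end{itemize}

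This is exactly why Lang--Silverman requires $\bb{Z}\cdot x$ to be Zariski dense. Weil restrictions are typically far from simple, so you cannot sidestep it. If you restrict the count to torsion points, the input is just the theorem itself.

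What one can actually prove (Theorem~\ref{Polarized1}) is weaker: all $x\in A(K)$ with $\h(x)\le c\max\{h_{\Fal}(A),1\}$ lie on $\divv(s)$ for some nonzero $s\in H^0(A,L^{c_1})$, with $c,c_1$ uniform. Getting from this to a torsion bound needs an idea absent from your plan:
\begin{itemize}
\item all multiples of a torsion point are torsion, so Nakamaye's multiplicity estimate (Theorem~\ref{Nakamayae1}) can be applied to a set of bounded multiples of $x$;
\item this yields a proper abelian subvariety of bounded degree containing a bounded multiple of $x$;
\item one descends it to $K$, using R\'emond's bound on the number of Galois conjugates;
\item one then inducts on the dimension, with Zarhin's trick keeping the polarization degree bounded throughout.
\end{itemize}

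Your proposed routes to the input also do not work as described. The Gao--Ge--K\"uhne/Dimitrov--Gao--Habegger height inequality applies only to nondegenerate subvarieties. An abelian scheme and its fibered powers over a positive-dimensional base $S$ are degenerate: the Betti rank is at most $2mg'<2(mg'+\dim S)$. Torsion multisections show that no inequality $\h\ge c_1h-c_2$ can hold on a dense open there.

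The degeneration/equidistribution-on-skeleta idea has the right flavor. But in the paper the uniformity comes from ingredients you do not mention:
\begin{itemize}
\item \textbf{Deligne's Arakelov inequality.} It gives $\gg h_{\Fal}(A)$ places of bad reduction. This is the only place the function-field hypothesis enters, and it is what makes the constants independent of the number of bad places.
\item \textbf{Component groups.} A uniform bound on them at a positive proportion of these places.
\item \textbf{Positivity at bad places.} A uniform positive lower bound for the Arakelov--Green functions there, for points in the identity component, obtained by ultrafilter degeneration.
\item \textbf{Potentially good reduction.} A separate local estimate at places of bad but potentially good reduction. You cannot simply pass to a semistable extension of $k(t)$: that destroys control of the genus of the base, on which Deligne's bound depends.
\item \textbf{Everywhere good reduction.} An independent argument for this case, where $h_{\Fal}$ is bounded and a Hom/Hilbert-scheme component count replaces Northcott.
\end{itemize}

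Finally, an isotrivial trace-zero $A'$ need not have finitely many rational points. For example, twist a constant elliptic curve by a double cover $B'\to\bb{P}^1$ with $B'$ isogenous to it. Your torsion conclusion in that case does survive: $\sum_{\sigma\in G}\sigma$ kills the finite fixed group $A_0^G$, so $A_0^G\subseteq A_0[|G|]$.
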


Our methods also yield a uniform lower bound on the canonical height of a non-torsion $K$-rational point. A conjecture of Lang \cite[Page 92]{Lan78} predicts for any number field $K$, there is a uniform constant $c = c(K) > 0$ such that if $E/K$ is an elliptic curve and $x \in E(K)$ is non-torsion, then 
\begin{equation}\label{eqn:Lang}\h_E(x) \geq c(K) \cdot \log \left|N_{K/\bb{Q}}(\cal{D}_{E/K}) \right|\end{equation}
where $\cal{D}_{E/K}$ is the minimal discriminant of $E/K$ and $\h_E$ is the N\'{e}ron--Tate height for $E$.
\par 
Silverman obtained partial results on Lang's conjecture in \cite{Sil81} and the analogous statement for function fields $K = \bb{C}(B)$ was proven by Hindry and Silverman \cite{HS88}, where $c$ only depends on the genus $g(B)$ of the base curve. Silverman also extended the conjecture to abelian varieties in \cite{Sil84}, where we require the orbit $\bb{Z} \cdot x$ to be Zariski dense (see also \cite{Paz10}).
\par 
Our second main result is a Lang--Silverman type result for abelian varieties over $K = k(B)$, with a lower bound proportional to the stable Faltings height. Given an abelian variety $A/K$, let $K' = k'(B')$ be a finite extension over which $A/K'$ has semistable reduction. Let $\cal{N}'/B'$ be the N\'{e}ron model of $A/K'$. Then we define the stable Faltings height to be 
$$h_{\Fal}(A):= \frac{1}{[K':K]} \deg(  e^* \Omega^g_{\cal{N}'/B'})$$
where $e: B' \to \cal{N}'$ is the identity section and $\Omega_{\cal{N}'/B'}^g$ is the $g$th exterior power of the relative cotangent bundle for $\cal{N}'/B'$. This quantity is independent of the choice of $K'$. 

\begin{theorem} \label{UniformIntroTheorem2}
Let $K = k(B)$ be the function field of a curve $B/k$, where $\operatorname{char} k=0$. There exists a $c = c(g,\gon(B)) > 0$, depending only on $g$ and $\gon(B)$, such that if $A/K$ is a $g$-dimensional abelian variety with no $K$-isotrivial part, $L$ is a symmetric ample line bundle on $A$, and $x \in A(K)$ is a point such that $\bb{Z} \cdot x$ is Zariski dense in $A$, then
$$\h_{A,L}(x) \geq c \max\{h_{\Fal}(A),1\}.$$
\end{theorem}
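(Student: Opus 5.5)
We will prove Theorem \ref{UniformIntroTheorem2} by the equidistribution/degeneration strategy used for uniform Bogomolov-type results (Dimitrov--Gao--Habegger, Kühne, Yuan), adapted to function fields. We reduce to a quantitative statement about small points, then use the Zariski density of $\bb{Z}\cdot x$ to produce many small points in general position.

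\textbf{Step 1: reductions.} We may assume $k$ is algebraically closed. Passing to a finite extension $k'$ of $k$ does not increase $\gon(B)$, and the trace condition is preserved, so this is harmless. We may also assume $L$ is the symmetric ample bundle attached to a principal polarization, up to a bounded power. By Zarhin's trick, $(A\times A^\vee)^4$ is principally polarized, the point $x$ induces a point with Zariski dense multiples in a suitable sub-abelian variety, and the Faltings heights compare linearly. Since $\h_{A,L}$ scales linearly in $L$ and any ample $L$ dominates a fixed multiple of a polarization-induced bundle, it suffices to bound $\h_{A,L}(x)$ below for one $L$ of bounded degree.

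We then pass to a base change $K'/K$ of degree bounded in terms of $g$ (for instance, trivializing the $12$-torsion) so that $A$ acquires semistable reduction. Heights scale by $[K':K]$ on both sides. The gonality of the new base curve is at most $[K':K]\gon(B)$, which is still bounded.

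\textbf{Step 2: the key height inequality.} The goal is a function-field analogue of the ``new gap principle'': there exist $c_1,c_2>0$ and $M$, depending only on $g$ and $\gon(B)$, such that the set
\[\{y\in A(\overline{K}) : \h_{A,L}(y)\le c_1\max\{h_{\Fal}(A),1\}\}\]
is contained in a proper Zariski closed subset of degree at most $M$, intersected with $M$ translates. We obtain this in the following way.

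We use a gonal map $B\to\bb{P}^1$ of degree $\gon(B)$ and take the norm/Weil restriction. This places $A$ over $k(t)$ as a family over an open subset of $\bb{P}^1$ inside a fine moduli space $\cal{A}_{G,n}$ of abelian varieties of dimension $G=g\gon(B)$ with level structure. The trace condition makes the classifying map nonconstant on the relevant part.

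We then apply the height inequality on the universal family. For a nondegenerate subvariety of the $m$-fold fibered power of the universal family, the Néron--Tate height plus a constant is bounded below by a multiple of the modular (Faltings) height. The role of Silverman--Tate here is played by comparing $\h$ with a Weil height on a compactification of the fibered power. The function-field version of the Faltings height is the degree of the Hodge bundle pulled back along the classifying map.

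\textbf{Step 3: Zariski density to conclude.} Suppose $\h(x)$ is small. Then $\h(nx)=n^2\h(x)$, so the multiples $nx$ with $n\le N_0$ are all small, with $N_0=N_0(g,\gon(B))$. For $N_0$ large, the points $x,2x,\dots,N_0x$ are Zariski dense in the appropriate fibered-power sense: by a Bézout/degree argument, no proper subvariety of bounded degree contains $\{nx\}_{n\le N_0}$ unless it contains all of $\bb{Z}x$, which would contradict density. This contradicts Step 2 and yields $\h_{A,L}(x)\ge c\max\{h_{\Fal}(A),1\}$. The term $1$ comes from the fact that $h_{\Fal}(A)\ge$ a positive constant times an integer: by the trace condition the classifying map is nonconstant, so the Hodge bundle has positive degree, and that degree lies in $\frac{1}{d}\bb{Z}$ for bounded $d$.

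\textbf{Main obstacle.} The main obstacle is Step 2. It requires the nondegeneracy (the Betti map having generically full rank) of the relevant fibered powers over a moduli space, uniformly over all families arising from $\bb{P}^1$-bases with bounded gonality data. It also requires showing that the error constant in the height inequality does not depend on the family. I would first try to deduce this from Gao's nondegeneracy criterion applied once on the universal family over $\cal{A}_{G,n}$. All families are then pullbacks of this universal family, and the constants are inherited from the pullback.
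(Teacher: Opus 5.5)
Your argument fails at Step 2, and the problem is structural. As written, the claim is false: the torsion points of $A(\overline{K})$ have $\widehat{h}=0$ and are Zariski dense in $A$. Restricting to points with dense orbit does not help over $\overline{K}$ either. If $x$ is as in the theorem, then every $Q\in A(\overline{K})$ with $[n]Q=x$ also has Zariski dense orbit, and $\widehat{h}(Q)=\widehat{h}(x)/n^2$. So the theorem depends essentially on $x$ being $K$-rational and on the field $K$, and nothing in your Step 2 uses either. Moving the base to $\mathbb{P}^1$ changes nothing if the inequality is meant to hold for all algebraic points.

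Moreover, the Dimitrov--Gao--Habegger height inequality is not available here. It needs a nondegenerate subvariety $X$ of a fibered power, meaning the Betti map on $X$ has real rank $2\dim X$; this forces $\dim X$ to be at most the relative dimension. The tuple $(x,2x,\dots,N_0x)$ lies only on the abelian subscheme $\{(P,2P,\dots,N_0P)\}$, or on all of $\mathcal{A}^{[N_0]}$. Over any positive-dimensional base these have dimension exceeding their relative dimension, so they are degenerate for dimension reasons alone, and appealing to Gao's criterion cannot change that. In DGH and K\"uhne the nondegenerate subvariety comes from a curve $C\subseteq J$ via the Faltings--Zhang map; there is no analogue for the points of $A$ itself.

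The missing idea is a way to exploit rationality place by place, which the paper gets from N\'eron models:
\begin{enumerate}
\item Deligne's Arakelov inequality gives $\gg h_{\mathrm{Fal}}(A)$ places of stable bad reduction. Bad but potentially good places are handled by Theorem \ref{thm:compression}, and everywhere good reduction by the Hom/Hilbert scheme count of Theorem \ref{UniformBoundednessGoodReduction1}.
\item At a positive proportion of these places the component group is uniformly bounded (Proposition \ref{IdentityComponentAbelianVarieties1}). So $[k]x$ reduces into the identity component and retracts to $0$ on Gubler's skeleton. This step is false for $\overline{K}$-points.
\item Via the ultrafilter degeneration of Theorem \ref{UniformNeronBound1}, this gives a positive lower bound on $g_{\mathcal{B}_n,v}$ at those places, uniform in $A$ and $v$. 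Against the Elkies-type bound of Theorem \ref{thm:basisbound} at the other places, it puts all small $K$-points on $\operatorname{div}(s)$ with $s\in H^0(A,L^n)$ and $n$ bounded.
\end{enumerate}

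Three further points in your outline need repair.
\begin{enumerate}
\item Your Step 3 claim is true, but it is not a B\'ezout or degree count. It is a multiplicity estimate for arithmetic progressions on $A$: Nakamaye's Theorem \ref{Nakamayae1}, with Proposition \ref{Nakamayae2} to descend the resulting abelian subvariety to $K$.
\item After Weil restriction, $y$ need not have dense orbit in $\operatorname{Res}_{K/k(t)}A$; if $A$ and $x$ come from $k(t)$, then $y$ lies on a diagonal. This is why the paper iterates through abelian subvarieties $C$ containing a multiple of $y$ and uses $h_{\mathrm{Fal}}(C)\ge h_{\mathrm{Fal}}(A)$.
\item Trace zero does not make the moduli map nonconstant. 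A quadratic twist of a constant abelian variety has trace zero and $h_{\mathrm{Fal}}=0$, and it becomes constant over a quadratic extension. So trace zero is not preserved by your base change $K'/K$ either, and the term $\max\{\cdot,1\}$ cannot come from integrality of the Hodge degree; such $A$ must be handled through the potentially good places and the good-reduction count.
\end{enumerate}
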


For Theorems \ref{UniformIntroTheorem1} and \ref{UniformIntroTheorem2}, it is easy to reduce to the case of $k = \bb{C}$. There have been many partial results toward Theorem \ref{UniformIntroTheorem1}. Bakker and Tsimerman \cite{BT18} prove it for abelian varieties with real multiplication. Over function fields of characteristic $0$, uniform bounds on the maximal $n$ for which a level $n$-structure exists over $K$, i.e., for which all $n^{2g}$-torsion points are $K$-rational, have been proven by Nadel \cite{Nad89} when the genus $g(B)$ equals $0$ or $1$, and by Hwang--To \cite{HT06} for general $B$. The methods used in these articles rely on studying the hyperbolicity of the moduli space that parametrizes abelian varieties with an $n$-torsion point/level $n$ structure and thus do not carry over to the arithmetic setting.
\par 
Our approach is different from the ones above. Its underlying philosophy is the same as that used in Hindry--Silverman's proof of Theorem \ref{UniformIntroTheorem2} for elliptic curves \cite{HS88}. These methods largely transfer to the number field setting. The only place where we use that $K$ is a function field in a crucial way is in invoking the Arakelov inequality. The Arakelov inequality was first proven by Faltings \cite{Fal83} and then refined by Deligne \cite{Del87}. 

\begin{theorem} \label{IntroArakelovTheorem1} \textnormal{(Arakelov Inequality)}
Let $K = \bb{C}(B)$ be the function field of a curve $B/\bb{C}$ and let $A/K$ be a $g$-dimensional abelian variety with semistable reduction. Let $S$ be the set of places of bad reduction of $A/K$. If $2g(B)-2+|S| \geq 0$, then 
$$h_{\Fal}(A) \leq \frac{g}{2}(2g(B) - 2 + |S|).$$
\end{theorem}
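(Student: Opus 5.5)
This is the Faltings–Deligne Arakelov inequality, so the plan is the Hodge-theoretic proof. Let $\cal{N}/B$ be the N\'{e}ron model, whose connected component is semiabelian because reduction is semistable, and put $U = B \setminus S$. Over $U$ we have the polarizable $\bb{Z}$-variation of Hodge structure $\bb{V} = R^1 f_* \bb{Z}$ of weight $1$ and rank $2g$, with unipotent monodromy at each point of $S$. Let $\overline{\cal{V}}$ be Deligne's canonical extension of $\bb{V}\otimes \cal{O}_U$ to $B$. Then the Hodge filtration extends as a subbundle $E = F^1 \subset \overline{\cal{V}}$ of rank $g$, and $E \cong e^*\Omega^1_{\cal{N}/B}$. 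Hence $h_{\Fal}(A) = \deg \det E = \deg E$.

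The key input is the Gauss--Manin connection with logarithmic poles. Because the monodromy is unipotent, the residues are nilpotent, so $\deg \overline{\cal{V}} = 0$. Griffiths transversality gives the Kodaira--Spencer (Higgs) map
$$\theta: E \longrightarrow (\overline{\cal{V}}/E)\otimes \Omega^1_B(\log S),$$
and the polarization identifies $\overline{\cal{V}}/E \cong E^\vee$. Let $N = \ker\theta$. Curvature negativity of the Hodge metric on the horizontal part, together with the fact that Griffiths' curvature computation survives logarithmic degenerations with unipotent monodromy (Schmid, Peters, Kawamata), shows that $\deg N \le 0$. Concretely, $N$ is a flat sub-variation, and unitary flat bundles with unipotent monodromy have degree zero.

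Let $\ell = \operatorname{rank}(E/N)$. The map $\theta$ induces an injection $E/N \hookrightarrow E^\vee \otimes \Omega^1_B(\log S)$, and its image $Q$ sits inside the annihilator of $N$. I would run a slope argument on the Higgs bundle $(E\oplus E^\vee,\theta)$. Simpson's correspondence, or its log version by Simpson and Mochizuki, makes this Higgs bundle polystable of degree $0$. Therefore the $\theta$-invariant subsheaf $E' \oplus \theta(E')$, with $E' = E$, satisfies
$$\deg E + \deg \theta(E) \le 0, \qquad \deg\theta(E) \ge \deg E - \ell\,(2g(B)-2+|S|),$$
where the second inequality holds because $\theta(E)\otimes (\Omega^1_B(\log S))^{-1}$ is a quotient of $E/N$, and $\deg(E/N)\ge \deg E$ since $\deg N\le 0$.

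Instead of this, the cleaner route is Deligne's. Pass to the symmetric square (or to exterior powers) and apply the semistability inequality directly. Take $E$ itself, with $\theta(E)\subset E^\vee\otimes\Omega^1(\log S)$. Polystability applied to the invariant subsheaf $E\oplus \theta(E)\otimes\Omega^{-1}(\log S)$ gives
$$2\deg E - \ell\,(2g(B)-2+|S|) \le 0,$$
and since $\ell \le g$ this yields
$$\deg E \le \frac{g}{2}\,(2g(B)-2+|S|).$$
The hypothesis $2g(B)-2+|S|\ge 0$ is what allows replacing $\ell$ by $g$.

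The main obstacle I expect is justifying semistability, and the degree-zero statement for flat pieces, in the logarithmic setting. This means controlling the Hodge metric near the points of $S$ so that Chern–Weil integrals compute the degrees of the canonical extensions. I would invoke Mumford's result that the Hodge metric is good in his sense. That reduces the degree computations to curvature integrals on $U$, where Griffiths' formula gives the needed sign.
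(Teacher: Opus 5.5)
The paper does not prove this inequality. It imports it from \cite{Fal83} and \cite{Del87}; Deligne's Lemma 3.2 is exactly what is invoked in Corollary \ref{UniformSemistable2}. So your proposal is a genuinely independent route, and it is correct: the Higgs-bundle argument via Simpson's polystability theorem.

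By Simpson's theorem on noncompact curves, the logarithmic Higgs bundle $(E\oplus E^\vee,\theta)$ on the Deligne extension is polystable of degree $0$. Unipotent monodromy means all parabolic weights vanish. Apply this to the $\theta$-invariant subsheaves $N\oplus 0$ and $E\oplus \theta(E)\otimes T_B(-\log S)$. You get $\deg N\le 0$ and $2\deg E\le \deg N+\ell\,(2g(B)-2+|S|)$. The claim then follows from $\ell\le g$ and $2g(B)-2+|S|\ge 0$.

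What your route buys is a transparent account of the constant $g/2$. It is governed by the generic rank $\ell$ of the Kodaira--Spencer map. When $2g(B)-2+|S|>0$, equality forces $\theta\colon E\to E^\vee\otimes\Omega^1_B(\log S)$ to be an isomorphism, as for the universal elliptic curve over $X(n)$, $n\ge 3$. Citing instead spares the paper the analytic inputs you correctly flag, none of which it needs elsewhere:
\begin{itemize}
\item Schmid's extension of $F^1$ as a subbundle;
\item the identification of that extension with $e^*\Omega^1_{\cal{N}/B}$;
\item the growth control of the Hodge metric underlying Simpson's theorem.
\end{itemize}

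Two slips should be fixed, neither fatal.

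First, $N=\ker\theta$ is not a flat sub-variation in general. It is flat exactly when its second fundamental form in $E$ vanishes. This fails, for instance, over a generic curve in a Picard modular surface: on the eigen-piece with $(h^{1,0},h^{0,1})=(2,1)$, the monodromy is irreducible, and $\ker\theta$ is a non-flat line subbundle of strictly negative degree. Only $\deg N\le 0$ is true. It comes for free from polystability applied to $N\oplus 0$, so the separate curvature argument can be dropped.

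Second, in your first display $\theta(E)$ must be read as $\theta(E)\otimes T_B(-\log S)\subset E^\vee$. This sheaf is isomorphic to $(E/N)\otimes T_B(-\log S)$, a twist of $E/N$ rather than a quotient of it. Its degree is $\deg E-\deg N-\ell\,(2g(B)-2+|S|)$. With that reading your two paragraphs are the same argument, and the remark about symmetric squares plays no role.
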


One might conjecture that an analogue of Theorem \ref{IntroArakelovTheorem1} should hold over number fields. For $g = 1$, this is Szpiro's conjecture \cite{Szp79} which is equivalent to the $abc$ conjecture. The fact that Szpiro's conjecture would imply Theorem \ref{UniformIntroTheorem1} for elliptic curves over number fields was already observed by Frey \cite{Fre89}. Hindry--Silverman \cite{HS88} further show that Szpiro's conjecture implies \eqref{eqn:Lang}. Some partial results in this direction in higher dimensions were also obtained by Clark and Xarles \cite{CX08}. 
\par 
In \cite{HP16, HP22}, Hindry and Pacheco conjectured that a positive characteristic version of Theorem \ref{IntroArakelovTheorem1} holds under some suitable hypotheses, generalizing a prior result of Szpiro \cite{Szp79}. It should be possible to obtain analogous results in positive characteristic if their conjecture is true. 

\begin{conjecture}[Hindry--Pacheco \cite{HP16, HP22}] \label{IntroCharpConjecture} 
Let $K = \ovl{\bb{F}}_p(B)$ be the function field of a curve $B$ over $\ovl{\bb{F}}_p$ and let $A/K$ be a $g$-dimensional abelian variety with semistable reduction and nonzero Kodaira-Spencer map for which $\Tr_{K/\ovl{\bb{F}}_p}(A) = 0$. Let $S$ be the set of places of bad reduction of $A$. Then there are constants $c_1(g),c_2(g,K)>0$ such that if $p\ge c_1$, then $$h_{\Fal}(A) \leq c_2(|S|+1).$$ 
\end{conjecture}

When $K$ is a number field, there have been many results toward bounding the number of $K$-torsion points or the N\'{e}ron--Tate height of a non-torsion $K$-rational point in terms of the Faltings height $h_{\Fal}(A)$ \cite{Mas84, Dav91, Paz13, GR25}. We note that both \cite{Dav91} and \cite{Paz13} have bounds that are uniform over certain families of abelian varieties, proving special cases of Theorem \ref{UniformIntroTheorem1} and \ref{UniformIntroTheorem2} for number fields.

\subsection{Proof Strategy} As mentioned earlier, our proof strategy for Theorems \ref{UniformIntroTheorem1} and \ref{UniformIntroTheorem2} follows that of Hindry and Silverman. This argument has been generalized and applied to arithmetic dynamics in various uniformity settings \cite{Ben07, Ing09, Loo19, Loo21, Loo21b, Yap25} and we will follow this more modern viewpoint. First, we may reduce to the case where $k = \bb{C}$; see Section \ref{sec: ZarhinTrick}. Let $K = \bb{C}(B)$. Given a place $v \in M_K$, we let $A_v^{\an}$ denote the Berkovich analytification of $A$ over $\bb{C}_v$ the completion of $\ovl{K_v}$. Let $\cal{N}_v/R_v$ be the N\'{e}ron model of $A$ over the valuation ring $R_v$ of $K_v$, and let $\cal{N}_{v,s}$ denote its special fiber. Our argument exploits the following two properties regarding points on $A(K)$ with small N\'{e}ron--Tate height. 

\begin{enumerate}
\item\label{item:iintro} If $v\in M_K$ is a place of stable bad reduction and the number of components of $\cal{N}_{v,s}$ is small, then any point $x \in A(K)$ is subject to strong restrictions on where it lives in $A_v^{\an}$.

\item\label{item:iiintro} Any generic sequence of $\Gal(\ovl{K}/K)$-stable finite sets of points $F_n\subseteq A(\overline{K})$ with N\'eron--Tate heights tending to $0$ and $|F_n| \to \infty$ equidistributes to the Haar measure on the skeleton of $A_v^{\an}$. 
\end{enumerate} If $F$ is a large set of $K$-rational points, these two properties are in tension with each other. If one has some uniform control on this tension as $A$ varies, then this yields a uniform upper bound on $|F|$. 
\par
We first restrict to the case of elliptic curves. For simplicity, let us assume that our elliptic curve $E/K$ has semistable reduction and that $E$ has at least one place of bad reduction. Hindry and Silverman first apply the Arakelov inequality (or Szpiro's conjecture if one is working over a number field) to conclude that there is a subset $T \subseteq M_K$ with $|T| \geq \frac{h(E)}{M}$ for some uniform $M = M(g(B))>0$, such that $(i)$ holds for all $v\in T$. By ``small" in \ref{item:iintro} we mean that the number of components is uniformly bounded by some integer $D$. Then replacing our small points $x$ with $[D!]x$, we may assume that for all $v \in T$, the reductions of our points lie in the identity component of the N\'{e}ron model.
\par 
To exploit \ref{item:iiintro}, given $N$ distinct points $x_1,\ldots,x_N \in E(K)$, Hindry and Silverman look at the averaged quantity 
$$\frac{1}{N(N-1)} \sum_{i \not = j} \hat{\lambda}_v(x_i - x_j)$$
where $\hat{\lambda}_v$ is the local N\'{e}ron--Tate height for the place $v$. The two main properties of this average are that if $E_N$ is a sequence of sets in $E(\overline{K})$ with $|E_N|=N$, then $$\liminf_{N \to \infty} \frac{1}{N(N-1)} \sum_{i \not = j} \hat{\lambda}_v(x_i - x_j) \geq 0,$$
and if equality holds, then the sets $E_N$ must equidistribute to the Haar measure $\mu_{E,v}$ on $E_v^{\an}$ \cite{BP05}. However, for $v \in T$, equidistribution cannot happen as our points all retract to a finite set of points on $E_v^{\an}$. This leads to a positive average: indeed, by a direct computation, Hindry and Silverman show that when $E_N$ consists solely of points reducing to the identity component of the N\'{e}ron model at $v$, we must have 
\begin{equation} \label{eq: IntroPos1}
\frac{1}{N(N-1)} \sum_{i \not = j } \hat{\lambda}_v(x_i - x_j) \geq \frac{1}{12} \log^+|j(E)|_v.
\end{equation} On the other hand, for any nonarchimedean $v \not \in T$, we have the so called Elkies lower bound \cite{Fal84, BR07}
\begin{equation} \label{eq: IntroElkies1}
\frac{1}{N(N-1)} \sum_{i \not = j} \hat{\lambda}_v(x_i - x_j) \geq -\frac{C}{N} \log^+|j(E)|_v
\end{equation}
where $C > 0$ is independent of $E$. As $|T| \geq \frac{h(E)}{M}$, this leads to a contradiction after summing over all $v \in M_K$ for $N$ sufficiently large; indeed, for any set $\{x_1,\dots,x_N\}$ of distinct $K$-rational points of $E$,
$$\frac{1}{N(N-1)} \sum_{v \in M_K} \sum_{i \not = j} \hat{\lambda}_v(x_i - x_j) = \frac{1}{N(N-1)} \sum_{i \not = j} \h_E(x_i - x_j),$$
and this quantity is small if $\h_E(x_i)$ is small for all $i$. 
\par 
To generalize this argument to higher dimensions, there are three main obstacles. The first is to find a suitable analogue of $\hat{\lambda}_v(x-y)$ for higher dimensional abelian varieties. The second is to establish the Elkies lower bound \eqref{eq: IntroElkies1} in terms of this analogue. The third is to prove a corresponding uniform positive lower bound analogous to \eqref{eq: IntroPos1} when the points $x_1,\ldots,x_N$ reduce to the identity component of the N\'{e}ron model that is independent of $A$ and $v$. The greatest challenge lies in finding analogues of these three that are mutually compatible.
\par 
\subsection{Higher Dimensional Arakelov-Green's Functions} The first two obstacles have been overcome by the first author in her recent preprint \cite{Loo24}. Given an abelian variety $A/K$ and a very ample line bundle $L$ by which $A$ is embedded into $\bb{P}^N$, we assume we have an endomorphism $f: \bb{P}^N \to \bb{P}^N$ that extends multiplication by $[m]$ for some $m \geq 2$. Let $\dim H^0(A,L^n) = c(n)$. Given a basis $\cal{B}_n \subseteq H^0(A,L^n)$, we may define a higher-dimensional Arakelov-Green's function  
$$g_{\cal{B}_n,v}: (A^{\an}_v)^{c(n)} \to \bb{R} \cup \{+\infty\}$$
which generalizes the averaged quantity $\sum_{i \not = j} \hat{\lambda}_v(x_i- x_j)$. The value 
$$g_{\cal{B}_n,v}(x_1,\ldots,x_{c(n)})$$ 
will be a real number unless there is a nonzero $s \in H^0(A,L^n)$ for which $x_1,\ldots,x_{c(n)}$ all lie on $\divv(s)$, in which case it will be $+\infty$. If a different basis $\cal{B}'_n$ is used, then $g_{\cal{B}'_n,v}$ differs from $g_{\cal{B}_n,v}$ by a constant. 
\par 
In \cite{Loo24}, inspired by earlier work of Baker \cite{Bak06} for the case of endomorphisms on $\bb{P}^1$, the first named author constructed a sequence of special bases $\{\cal{B}_n\}_{n=1}^{\infty}$, depending on the data of a homogeneous lift $F: \bb{A}^{N+1} \to \bb{A}^{N+1}$ of $f$, such that for every place $v \in M_K$, we have the inequality 
\begin{equation} \label{eq: IntroElkies2}
g_{\cal{B}_n,v}(x_1,\ldots,x_{c(n)}) \geq -C_{f,v} \frac{\log n}{n} + r(F)
\end{equation}
where $C_{f,v}$ is a constant depending on $f$ and $v$ that can be made explicit. Here, $r(F)$ is a negative multiple of $\log |\Res(F)|$ where $\Res$ is the Macaulay resultant, chosen so that $g_{\cal{B}_n,v}$ itself does not depend on the lift $F$. Equation \eqref{eq: IntroElkies2} may be viewed as an analogue of Elkies lower bound \eqref{eq: IntroElkies1}. This construction has also been recently used by the second author \cite{Yap24} to prove a quantitative version of Yuan's equidistribution theorem \cite{Yua08} for polarized dynamical systems over archimedean places. 
\par 
Using results of Boucksom and Eriksson \cite{BE21} and their extensions by Boucksom--Gubler--Martin \cite{BGM22}, the first author showed that if one has a sequence of $c(n)$-tuples $\{\vec{P}_n\}_{n=1}^{\infty}$, and if
$$\liminf_{n \to \infty} g_{\cal{B}_n,v}(\vec{P}_n) = r(F)$$
for the special bases $\cal{B}_n$, then the tuples $\vec{P}_n$ must equidistribute to the non-archimedean canonical measure $\mu_{A,v}$ on $A_v^{\an}$.
\par 
We now give some background on the equidistribution of small points. Equidistribution of points with small N\'{e}ron--Tate height towards the Haar measure was first observed and proven by Szpiro--Ullmo--Zhang \cite{SUZ97} over archimedean places, and was then used to prove the Bogomolov conjecture by Ullmo \cite{Ulm98} and Zhang \cite{Zha98}. For elliptic curves $E$, a non-archimedean equidistribution theorem was established by Baker--Petsche \cite{BP05}, and analogous results for dynamical systems on $\bb{P}^1$ were proven by Baker--Rumely \cite{BR06}, Autissier--Chambert-Loir \cite{Aut01, CL06} and Favre--Rivera-Letelier \cite{FRL06}. For higher-dimensional abelian varieties (and more generally polarized dynamical systems), Chambert-Loir \cite{CL06} defined a probability measure $\mu_{A,v}$ that lives on the Berkovich analytification $A_v^{\an}$ and plays the role of the Haar measure at archimedean places. 
\par 
In \cite{Gub07, Gub07b, Gub10}, Gubler uses convex and tropical geometry to study $\mu_{A,v}$ and establishes an equidistribution theorem for small points over function fields. We note that Yuan and Zhang have extended the equidistribution theorem to the more general setting of adelic line bundles on quasi-projective varieties \cite{Yua08, YZ24} (see also \cite{Gau23}). Gubler uses his equidistribution result to prove the geometric Bogomolov conjecture for abelian varieties with totally degenerate reduction \cite{Gub07b}. The geometric Bogomolov conjecture was subsequently proven in full by Xie--Yuan \cite{XY22}, building on the work of Yamaki \cite{Yam18}. For function fields over $\bb{C}$, there is a different approach by Gao--Habegger \cite{GH19} and Cantat--Gao--Habegger--Xie \cite{CGHX21}. 
\par 
In another direction, Berman, Boucksom and Witt Nystrom \cite{BBW11}  used the variational approach of Szpiro--Ullmo--Zhang to prove an equidistribution result for Fekete points on complex manifolds. Given a complex manifold $X$ and a hermitian line bundle $L$, one can define a notion of an $n$-transfinite diameter with respect to a basis $\cal{B}_n$ of $H^0(X,L^n)$. Our Arakelov-Green's function $g_{\cal{B}_n,v}$ is essentially $- \log$ of this transfinite diameter. A $c(n)$-tuple is said to be $n$-Fekete if the value of the Arakelov-Green's function evaluated on that tuple realizes the ($-\log$ of the) $n$-transfinite diameter. Any sequence of $n$-Fekete points equidistributes to the equilibrium measure associated to $(X,L)$ as $n \to \infty$. This was extended to the non-archimedean setting by Boucksom--Eriksson \cite{BE21} and Boucksom--Gubler--Martin \cite{BGM22}. We note that Rumely \cite{Rum89} and Rumely--Lau--Varley \cite{RLV00} had previously studied the relation between Arakelov theory and transfinite diameters. 
\par 
Going back to Gubler's work, he shows that if $A$ has stable bad reduction at $v$, then one can define a continuous retraction map $r: A_v^{\an} \to \bb{R}^k/\Lambda$ where $\bb{R}^k/\Lambda$ is a real torus with $k \geq 1$. The dimension $k$ of the real torus is exactly the toric rank of the semiabelian variety corresponding to the special fiber of the N\'{e}ron model of $A$. The pushforward of $\mu_{A,v}$ under $r$ is the Haar measure on $\bb{R}^k/\Lambda$. On the other hand, if $x \in A(K)$ reduces to the identity component of the special fiber at $v$, then $r(x) = 0 \in \bb{R}^k/\Lambda$. In particular, if $\vec{P}_n$ is a tuple with $r(\vec{P}_n) = 0$, then there exist $N, \delta > 0$ depending on $A$ and $v$ such that if $n \geq N$, then
\begin{equation} \label{eq: IntroPos2} 
g_{\cal{B}_n,v}(\vec{P}_n) \geq \delta + r(F).
\end{equation} The main challenge in using \eqref{eq: IntroPos2} for the purposes of this paper is to find a uniform $(N,\delta)$ that makes \eqref{eq: IntroPos2} hold for all $g$-dimensional abelian varieties $A$ over a non-archimedean field $K$.

\subsection{Degeneration by Ultrafilters}  To achieve this, we will use a degeneration argument to show a weaker statement that suffices for our paper. Degeneration arguments have shown to be very useful in proving uniformity results in both arithmetic dynamics and Diophantine geometry. DeMarco--Krieger--Ye \cite{DKY20} and Yuan \cite{Yua24} both use degeneration arguments to prove results toward the uniform Bogomolov conjecture, and Poineau \cite{Poi24a, Poi24b} uses degenerations to prove the Bogomolov--Fu--Tschinkel conjecture. The approach by degenerations differs from the breakthroughs on the  uniform Mordell--Lang conjecture by Dimitrov--Gao--Habbeger \cite{DGH21}, K\"uhne \cite{Kuh21} and Gao--Ge--K\"uhne \cite{GGK21}, which rely on studying the non-degeneracy of families of subvarieties. 
\par 
We will use a method developed by Luo \cite{Luo21, Luo22}, who uses ultrafilters to construct rescaling limits for sequences of rational maps $f_n: \bb{P}^1 \to \bb{P}^1$ of degree $d$ over $\bb{C}$. This was formalized using Berkovich spaces by Favre--Gong \cite{FG24, Gon25}, and it is thus readily adapted to the setting of abelian varieties. The second author has also applied it to prove some uniform boundedness statements in arithmetic dynamics \cite{Yap25}. We remark that there are related constructions by Amini--Nicolussi for the moduli space of curves \cite{AN22, AN24} and also by Poineau \cite{Poi25} and Song \cite{Song25}. 
\par 
We briefly describe the construction here. Let $X = \cal{A}_{g,3}$ be the fine moduli space of principally polarized abelian varieties with level $3$ structure over $\bb{C}$, and fix a compactification $\ovl{X}$ of $X$. Following Silverman \cite{Sil87}, for any valued field $K$ containing $\bb{C}$, we can define a local height function $\lambda_{\partial X}$ with respect to the boundary $\partial X$. This function behaves like $-\log \mathrm{dist}( \cdot , \partial X)$.  
\par 
Now let $(A_n/K_n)$ be a sequence of principally polarized abelian varieties with level $3$ structure over algebraically closed complete non-archimedean valued fields $(K_n, | \cdot |_n)$ contaning $\bb{C}$ as a trivially valued subfield, such that each $A_n$ has bad reduction over $K_n$. Then $\lambda_{\partial X}(A_n) > 0$ and we can thus define $\eps_n = \lambda_{\partial X}(A_n)^{-1}$. For $\eps = (\eps_n)$, we let 
$$\mathscr{A}^{\eps} = \left\{ (x_n) \in \prod_{n=1}^{\infty} K_n: \sup_n |x_n|^{\eps_n} < \infty\right\},$$
which can be viewed as the product of the rescaled valued fields $(K_n, |\cdot |^{\eps_n})$ in the category of Banach rings. Then by Proposition \ref{DegenerationExistence2}, there is a unique morphism $x: \Spec \scr{A}^{\eps} \to X$ such that the precomposition of $x$ with the inclusion $\iota_n: \Spec K_n \to \Spec \scr{A}^{\eps}$ yields the morphism defined by $A_n/K_n$.
\par 
It was already known to Berkovich \cite[Proposition 1.2.3]{Ber90} that the Berkovich spectrum $\cal{M}(\scr{A}^{\eps})$ of $\scr{A}^{\eps}$ under the norm $||(x_n)|| = \sup_n |x_n|^{\eps_n}_n$ is in bijection with the set of ultrafilters $\beta \bb{N}$ under the map 
$$\omega \to \left( (x_n) \mapsto \lim_{\omega} |x_n|^{\eps_n} \right);$$
see also \cite[Theorem 3.8]{FG24}. If $\cal{H}(\omega)$ is the residue field of $\scr{A}^\eps$ for a non-principal ultrafilter $\omega$, we obtain an induced morphism $x_{\omega}: \Spec \cal{H}(\omega) \to X$ which gives us an abelian variety $A_{\omega}$ over $\cal{H}(\omega)$. The field $\cal{H}(\omega)$ is algebraically closed and is a non-archimedean field under $| \cdot |_{\omega}$. One shows that $A_{\omega}$ has bad reduction over $\cal{H}(\omega)$. Observe that $A_{\omega}$ might not be defined over a discretely valued subfield of $\cal{H}(\omega)$, so we require an extension of Gubler's results to general non-archimedean fields. This was achieved by Gubler and Stadloder \cite{GS23}.
\par 
Now let $U$ be a Zariski open of $X$ such that we have a closed embedding $\cal{A}_U \xhookrightarrow{} \bb{P}^N_U$, where $\cal{A}_U$ is the universal abelian scheme over $U$. Assume that we have an endomorphism $f: \bb{P}^N_U \to \bb{P}^N_U$ that lifts multiplication by $[m]$ for some $m \geq 2$. Then after fixing a homogeneous lift $F: \bb{A}_U^{N+1} \to \bb{A}_U^{N+1}$ of $f$, the first author's Arakelov-Green's function construction may be carried out in a relative way over $U$, and we may speak of a consistent choice of a special basis $\cal{B}_k$ for any abelian variety $A/K$ whose associated point in $X$ lies in $U(K)$. 
\par 
For any $C > 0$, we say that an abelian variety $A/K_v$ whose associated point in $X$ lies in $U(K_v)$ is \emph{$C$-centered} if $\lambda_{\partial U}(A) \leq C \lambda_{\partial X}(A)$. The key uniformity result, which serves as the higher-dimensional version of \eqref{eq: IntroPos1}, is the following theorem (see Theorem \ref{UniformNeronBound1}). It is proven by degeneration via ultrafilters. Fix an increasing function $h: \bb{N} \to \bb{N}$ such that $h(n) = c(k_n)$ for some $k_n$ depending on $n$. (Recall that $c(n):=\dim H^0(A,L^n)$.) For $v\in M_K$ a place of stable bad reduction, we say that $A_v:=A\times_K K_v$ is \emph{$n$-good} if given $x_1,\ldots,x_{h(n)} \in A(K)$ that reduce to the identity component of the N\'{e}ron model at $v$, we have 
$$g_{\cal{B}_{k_n},v}(x_1,\ldots,x_{c(k_n)}) \geq \frac{1}{n} \lambda_{\partial X}(A)+r(F_K).$$

\begin{theorem} \label{UniformIntroTheorem3} Let $U$ be a Zariski open of $X=\cal{A}_{g,3}$. For any increasing function $h: \bb{N} \to \bb{N}$ where for each $n$, we have $h(n) = c(k_n)$ for some $k_n\in\bb{Z}_{>0}$, and any $C > 0$, there exists a nonempty finite list of integers $\{n_1,\ldots,n_m\}$ such that for any non-archimedean complete algebraically closed field $K$ containing $\bb{C}$ as a trivially valued subfield, and for any principally polarized abelian variety $A/K$ of dimension $g$ with bad reduction which is $C$-centered with respect to $U$, we have that
$$A/K \text{ is } n_i\text{-good for some } i \text{ with } 1 \leq i \leq m.$$
The set $\{n_1,\ldots,n_m\}$ can depend on the open set $U$ along with the above-constructed rational map $f$ and homogeneous lift $F$. However, it is independent of $K$ and the dimension $g$ principally polarized abelian variety $A/K$ which is $C$-centered with respect to $U$.
\end{theorem}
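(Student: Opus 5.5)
We argue by contradiction and use the ultrafilter degeneration described above. Suppose no finite list works. Then for every finite set $\{n_1,\dots,n_m\}$ there is a field $K$ and a $C$-centered $A/K$ with bad reduction that is not $n_i$-good for any $i$. Taking the lists $\{1,\dots,j\}$ for $j=1,2,\dots$, we obtain a sequence $(K_j, A_j)$ such that $A_j$ is not $n$-good for all $n\le j$. Unwinding the definition, for each $n\le j$ there are points $x^{(j,n)}_1,\dots,x^{(j,n)}_{h(n)}\in A_j(K_j)$, all reducing to the identity component, with
$$g_{\cal{B}_{k_n},v}\bigl(x^{(j,n)}_1,\ldots,x^{(j,n)}_{h(n)}\bigr) < \frac{1}{n}\lambda_{\partial X}(A_j)+r(F_{K_j}).$$

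Set $\eps_j=\lambda_{\partial X}(A_j)^{-1}$ and fix a non-principal ultrafilter $\omega$. By Proposition \ref{DegenerationExistence2} we get $A_\omega/\cal{H}(\omega)$ with bad reduction. The $C$-centered condition gives $\eps_j\lambda_{\partial U}(A_j)\le C$, so $\lambda_{\partial U}(A_\omega)\le C<\infty$. Hence the point of $A_\omega$ lies in $U$, and the relative construction of $f$, $F$ and the bases $\cal{B}_k$ specializes to $A_\omega$. For each fixed $n$, the tuples $x^{(j,n)}$ (for $j\ge n$) define points of $A_\omega$ over $\cal{H}(\omega)$ via $\scr{A}^\eps$. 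These points should again reduce to the identity component; I would check this through Gubler's retraction, using that $r(x)=0$ is a closed condition compatible with the ultralimit of tropicalizations.

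The Green's functions rescale compatibly: $\eps_j\bigl(g_{\cal{B}_{k},v}-r(F_{K_j})\bigr)$ should converge along $\omega$ to $g_{\cal{B}_k,\omega}-r(F_\omega)$. This rests on the functions being built from $\log$ of sup-norms of determinants of sections, together with the resultant, which are continuous in the Banach-ring structure and commute with $\lim_\omega |\cdot|^{\eps_j}$. Passing to the limit therefore gives, for every $n$, a tuple $\vec{P}_n$ in $A_\omega$ reducing to the identity component with
$$g_{\cal{B}_{k_n},\omega}(\vec{P}_n)\le \frac{1}{n}+r(F_\omega).$$
The tuple lies in $A_\omega(\cal{H}(\omega))$ with retraction $0$, and $k_n\to\infty$ since $h$ is increasing. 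Combining the lower bound \eqref{eq: IntroElkies2} over $\cal{H}(\omega)$ with this upper bound yields $\liminf_n g_{\cal{B}_{k_n},\omega}(\vec{P}_n)=r(F_\omega)$. By the Boucksom--Eriksson/BGM equidistribution from \cite{Loo24}, extended to general non-archimedean fields via Gubler--Stadloder \cite{GS23}, the $\vec{P}_n$ equidistribute to $\mu_{A_\omega}$. Pushing forward under the retraction to $\bb{R}^k/\Lambda$ with $k\ge1$ would then give a Haar measure equal to $\delta_0$, a contradiction.

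The main obstacle is the middle step: proving that the Green's functions and the identity-component condition pass correctly to the ultralimit. This requires uniform control of $g_{\cal{B}_k}$ in terms of $\lambda_{\partial U}$ and $\lambda_{\partial X}$ over $U$, and here the $C$-centeredness is exactly what bounds the error terms after rescaling. I would first try to express $g_{\cal{B}_k,v}$ as $-\log$ of a determinant norm, with coefficients that are regular functions on $U$, so that continuity in $\scr{A}^\eps$ is automatic.
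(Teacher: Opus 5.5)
Your architecture is the paper's: a sequence $A_j$ failing to be $n$-good for all $n\le j$, the fundamental rescaling $\eps_j=\lambda_{\partial X}(A_j)^{-1}$, $C$-centeredness forcing the $\scr{A}^\eps$-point into $U$, bad reduction of $A_\omega$, rescaled continuity of the Green's functions, and a contradiction with equidistribution on the skeleton. Treating all $n$ at once on one $A_\omega$ and invoking equidistribution directly is just Proposition \ref{TransfiniteDiameterBound1} inlined. (The paper extracts $\delta$ from that proposition and then uses a single $m$ with $1/m\le\delta$.) Your version works once you do two things. First, pad the subsequence $(k_n)$ with Fekete tuples. Second, note that failure of $n$-goodness only produces \emph{some} good basis depending on $j$. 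So you must use finiteness of the spanning set $\cal{F}$ to fix one subset of sections on an $\omega$-large set, and then check it is a basis for $A_\omega$; it is, because the limiting Green's function is finite.

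The genuine gap is the step you defer: showing that the limit points $x_i=\lim_\omega x^{(j,n)}_i$ satisfy $r(x_i)=0$ in $S(A_\omega)$. The suggestion that ``$r=0$ is closed and compatible with an ultralimit of tropicalizations'' is not an argument. The $r_j$ live on different skeleta $\bb{R}^{t_j}/\Lambda_j$, and the rescaled lattices $\eps_j\Lambda_j$ can collapse in some directions, so the toric rank of $A_\omega$ can be smaller than that of the $A_j$. Nothing available relates Gubler's retraction for $A_\omega$ over $\cal{H}(\omega)$ to those of the $A_j$. Note also that over a non-discretely valued $K_j$ there is no N\'eron model, so the hypothesis must be read as lifting to the Faltings--Chai semiabelian model $G_j/R_j$.

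The missing idea is to work integrally. The degeneration is really a $\scr{B}^\eps$-point of $\ovl{\cal{A}}_{g,3}$ with $\scr{B}^\eps=\prod_j R_j$. Pulling back the universal semiabelian scheme gives $G_{\scr{B}^\eps}$ with generic fiber $A_{\scr{A}^\eps}$. Because each $R_j$-point lands in one affine of a fixed finite cover, the $x^{(j,n)}_i$ glue to a $\scr{B}^\eps$-section. Base change along $\scr{B}^\eps\to\cal{R}(\omega)$ then shows $x_i$ extends to an $\cal{R}(\omega)$-point of $G_\omega$, whence $x_i\in A_1\subseteq r^{-1}(0)$ by Proposition \ref{KernelReduction1}.

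The Green's function side has a similar hole. ``Continuity in $\scr{A}^\eps$'' handles the determinant and resultant terms but not $\widehat{H}_F$, which is itself a limit. Interchanging $\lim_\omega$ with $\lim_k d^{-k}\log\|F^{(k)}\|$ needs a uniform telescoping bound. That bound comes from normalizing $\|F_j\|=1$ together with the boundedness of $\eps_j\log|\Res(F_j)|^{-1}$, which the $U$-valued degeneration provides.
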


Theorem \ref{UniformIntroTheorem3} is certainly weaker than asking for a uniform $N$ and $\delta$ such that \eqref{eq: IntroPos2} holds uniformly for all $n \geq N$ and all $g$-dimensional abelian varieties, but it is enough for our purposes, as $\cal{A}_{g,3}$ can be covered with finitely many such $U$. Using this, we prove Theorem \ref{UniformIntroTheorem1} for semistable abelian varieties with at least one place of bad reduction in Section \ref{sec: UniformSemistable}. 
\par 
To extend our argument to the non-semistable case, we obtain positive lower bounds on $g_{\cal{B}_n,v}(x_1,\ldots,x_{c(n)})$ under the assumption that $A$ has bad reduction at $v$, but good reduction after a finite extension. This is handled in Sections \ref{sec: NeronModels} and \ref{section:fakebadbound}. We address the case of everywhere good reduction in Section \ref{sec: GoodReduction}, and then use Zarhin's trick and reduction to the principally polarized case to prove Theorems \ref{UniformIntroTheorem1} and \ref{UniformIntroTheorem2}, with the additional restriction of dependence on the genus $g(B)$. We finally upgrade the bound to depend only on the gonality via Weil restrictions, using the case of $gd$-dimensional abelian varieties to deduce the case of $g$-dimensional abelian varieties over $K = \bb{C}(B)$ with $\gon(B) = d$. 

\subsection{Acknowledgements} We would like to thank Benjamin Church, Laura DeMarco, Chen Gong, Richard Griffon, Alexander Petrov, Bjorn Poonen, Joseph Silverman and Shou-Wu Zhang for helpful discussions regarding the paper. We especially thank Joseph Silverman for his foundational works that have inspired several key parts of this paper. 
\par
The first author was supported by a Sloan Research Fellowship and NSF CAREER grant DMS-2337942.

\section{Background} In this section, we go through some preliminary background. We first review the theory of global and local heights for closed subschemes over quasi-projective varieties over function fields. This was set forth by Silverman \cite{Sil87}; see also the exposition by Matsuzawa \cite{Mat20}. We then recall some facts about abelian varieties and schemes, in particular about the moduli space of principally polarized abelian varieties $\cal{A}_{g,3}$ with level $3$ structure and its compactifications. We finally end with some facts regarding the Hodge bundle on the moduli space of curves.

\subsection{Heights over Function Fields} Let $K = \bb{C}(B)$ be the function field of a smooth projective curve $B$ over $\bb{C}$. We let $M_K$ be the following set of inequivalent nontrivial absolute values: for each $v \in B(\bb{C})$, we may define an absolute value $| \cdot |_v: K^{\times} \to \bb{R}$ via
$$|f|_z := e^{-\ord_z(f)} \text{ for } f \in K^{\times},$$
where $\ord_z(f)$ is the order of vanishing of $f$ at $z$. The set $M_K$ satisfies the product formula; that is to say, for any $f \in K^{\times}$ we have 
$$\sum_{v \in M_K} \log |f|_v = 0.$$
This allows us to define a global height function on $h: \bb{P}^n(K) \to \bb{R}_{\geq 0}$ by letting
$$h([x_0: \cdots : x_n]) = \sum_{v \in M_K} \log \max\{|x_0|_v,\ldots,|x_n|_v\},$$
where the product formula ensures that this is well-defined independent of our choice of $x_i$. Under suitable normalizations \cite[Remark 1.5.22]{BG06}, we may extend this to a well-defined function $h: \bb{P}^n(\ovl{K}) \to \bb{R}_{\geq 0}$, which we will call the standard Weil height. Unlike the case of number fields where every number field is canonically realized as an extension of $\bb{Q}$, there is no canonical realization of the function field $K$ as an extension of $\bb{C}(t)$, and hence our height function depends on a choice of reference field. If $K'/K$ is a finite extension and we take $K'$ to be our reference field, we have $h_{K'}(x) = [K':K]h_K(x)$ for all $x \in \bb{P}^n(K')$.
\par 
In general, given a projective variety $X$ over $K$ and an line bundle $L$, we may define a height function $h_L: X(\ovl{K}) \to \bb{R}$. However, this height function is only well-defined up to an $O(1)$ term and depends on some choice of the data of $L$. However, if both $X$ and $L$ are defined over $\bb{C}$, then one has a canonical choice of $h_L$. The same holds for height functions $h_Z$ associated to closed subscehemes $Z$ of $X$ \cite{Sil87}, as we now explain.  
\par 
Let $X$ be a projective variety over $\bb{C}$ and $D$ a Cartier divisor on $X$. Let $L,M$ be basepoint free line bundles on $X$ for which $\mathcal{O}(D) \simeq L \otimes M^{-1}$. Fix a meromorphic section $s_D$ of $\mathcal{O}(D)$. Picking bases of the spaces of global sections $s=\{s_0,\ldots,s_n\}$ of $L$ and $t=\{t_0,\ldots,t_m\}$ of $M$, we may define the local height for any valued field $K\supseteq\mathbb{C}$ relative to the presentation $\cal{D} = (s_D,L,s,M,t)$ to be 
$$\lambda_{\cal{D}}(p) = \max_{k} \min_{l} \log \left|\frac{s_k}{t_l s_D}(p)\right|$$
for all $p \in (X \setminus D)(K)$, where $\frac{s_k}{t_l s_D}$ is a rational function on $X$. A priori, this depends on the presentation $\cal{D}$. In the usual setting of heights over global fields, one shows that the local heights for two different presentations differ by an $O(1)$ term. However if we assume that $\bb{C}$ is trivially valued in $K$, they turn out to be equal.

\begin{proposition} \label{LocalHeightEqual1}
Let $\cal{D},\cal{D}'$ be two presentations that are defined over $\bb{C}$ and let $K$ be a non-archimedean field containing $\bb{C}$ as a trivially valued subfield. Then $\lambda_{\cal{D}} = \lambda_{\cal{D}'}$ on $(X \setminus D)(K)$.  
\end{proposition}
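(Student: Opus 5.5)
The plan is to show that both local heights are dominated by the same intrinsic quantity, namely the ``order of vanishing along $D$'' at the reduction of $p$, and that every constant error term in the classical comparison is $0$ because it is the log of the absolute value of a nonzero complex number.

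First I will compare presentations differing in a single datum. If $\cal{D}=(s_D,L,s,M,t)$ and $\cal{D}'=(s_D,L,s',M,t)$ differ only in the basis of $H^0(X,L)$, then $s'_k=\sum_j a_{kj}s_j$ with $a_{kj}\in\bb{C}$. The non-archimedean triangle inequality and $|a_{kj}|=1$ for $a_{kj}\neq 0$ give $|s'_k/(t_ls_D)(p)|\le\max_j|s_j/(t_ls_D)(p)|$, so $\lambda_{\cal{D}'}\le\lambda_{\cal{D}}$; by symmetry (the inverse matrix is also over $\bb{C}$) equality holds. The same argument handles a change of basis for $M$, applied to $\min_l$ via the reciprocals $t_l$. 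Changing $s_D$ to $us_D$ with $u$ a rational function whose divisor is trivial means $u\in\bb{C}^\times$ ($X$ is projective, and I may assume normal or use that $D$ determines $s_D$ up to such a unit), so $|u(p)|=1$.

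Next I will handle different line bundles $(L,M)$ and $(L',M')$ with $L\otimes M^{-1}\simeq L'\otimes M'^{-1}$. Pass to the common presentation $(L\otimes M',M\otimes M')$ with product bases $\{s_kt'_j\}$ and $\{t_lt'_j\}$. I will check that $\max_{k,j}\min_{l,i}\log|s_kt'_j/(t_lt'_is_D)|$ equals $\lambda_{\cal{D}}+\max_j\min_i\log|t'_j/t'_i|$. The last term is $0$: since $M'$ is basepoint free, some $t'_i$ has $|t'_i/t'_j|$ maximal, which forces the $\max_j\min_i$ to be exactly $0$. Concretely, $\max_j\min_i\log|t'_j/t'_i|=\log\max_j|t'_j|-\log\max_i|t'_i|=0$ in a local trivialization at $p$. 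By the first step the product basis can be replaced by any basis, and likewise for $\cal{D}'$, so both presentations agree.

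The main obstacle is that $\min_l$ is not a pure max-type expression. I therefore need to rewrite everything in a local trivialization: choose a trivialization of $L$ and $M$ near $p$ over $K$ and write $\lambda_{\cal{D}}(p)=\log\max_k|s_k(p)|-\log\max_l|t_l(p)|-\log|s_D(p)|$. Each term changes by the same amount under a change of trivialization, so they cancel. Under this rewriting, all the above steps reduce to the ultrametric fact that $\max_k|\sum_j a_{kj}x_j|=\max_j|x_j|$ for an invertible matrix of complex numbers of trivial absolute value.

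This invariance is a Gauss-norm/reduction statement. To prove it I will clear denominators so that $\max_j|x_j|=1$ and reduce to the residue field, using that $\bb{C}\to\tilde K$ is injective; the invertible matrix stays invertible mod the maximal ideal, so some coordinate of the image has absolute value $1$.
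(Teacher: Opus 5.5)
\textbf{The gap is in Step 2.} Your Step 1 is fine. The residue-field argument in your last paragraph is a correct but redundant proof of it, since the ultrametric inequality for the matrix and its inverse already gives equality. The problem is the sentence ``by the first step the product basis can be replaced by any basis.''

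The family $\{s_kt'_j\}$ is not a basis of $H^0(X,L\otimes M')$. It is only a family of sections without common zero, and in general it does not even span, because $H^0(L)\otimes H^0(M')\to H^0(L\otimes M')$ need not be surjective. For example, on an elliptic curve with $L=M'=\mathcal{O}(2O)$, the products of $1,x$ span $\langle 1,x,x^2\rangle$, a hyperplane in $H^0(\mathcal{O}(4O))=\langle 1,x,y,x^2\rangle$.

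When the products span a proper subspace, Step 1 gives only $\max_{k,j}|s_kt'_j(p)|\le\max_r|u_r(p)|$ for a basis $(u_r)$, in a local trivialization. The reverse inequality is not a statement about invertible complex matrices. After matching denominators (the family $\{t_lt'_i\}$ occurs in both product presentations), what remains is to compare the two generating families $\{s_kt'_j\}$ and $\{s'_kt_j\}$ of $L\otimes M'\cong L'\otimes M$, which may span different subspaces. This is the entire content of the proposition.

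It cannot follow from ultrametric linear algebra alone, because it needs properness of $X$, which your argument never uses. On $\mathbb{A}^1$ with $L=M=\mathcal{O}$ and $s_D=t_0=1$, the generating families $\{1\}$ and $\{1,x\}$ give $0$ and $\log\max\{1,|x(p)|\}$.

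\textbf{The fix.} The missing step is the one announced in your first sentence but never carried out: apply the reduction argument to the point $p$ itself, not to a coordinate vector.
\begin{enumerate}
\item Let $R$ be the valuation ring of $K$. Since $\mathbb{C}\subseteq R$ and $X$ is proper, $p$ extends to $\tilde p\in X(R)$, with reduction $\bar p\in X(\tilde K)$.
\item Trivialize $L$ and $M$ on an affine open $U$ containing $\bar p$. Then $\tilde p\in U(R)$.
\item For any family of global sections $\sigma_a=f_ae$ of $L$ (or of $M$) without common zero, all $f_a(p)$ lie in $R$, and their images $f_a(\bar p)\in\tilde K$ are not all $0$. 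Hence $\max_a|f_a(p)|=1$.
\item Therefore $\lambda_{\mathcal{D}}(p)=-\log|h(p)|$, where $h$ is a local equation of $D$ on $U$.
\item $h$ is determined up to $\mathcal{O}(U)^\times$, whose values at $\tilde p$ lie in $R^\times$. So $\lambda_{\mathcal{D}}(p)$ depends only on $D$.
\end{enumerate}
This proves the proposition at once, with no need for Steps 1--2; it is the mechanism behind Proposition \ref{LocalHeightFunction1}.

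The paper takes a shorter route. It quotes \cite[Remark 2.2.13]{BG06}, which bounds $|\lambda_{\mathcal{D}}-\lambda_{\mathcal{D}'}|$ by $\log^+\max_i|p_i|$ for finitely many $p_i\in\mathbb{C}^*$, and this bound is $0$ because $\mathbb{C}$ is trivially valued.
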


\begin{proof}
By \cite[Remark 2.2.13]{BG06}, there exists a finite list of elements $p_1,\ldots,p_m \in \bb{C}^*$ such that 
$$|\lambda_{\cal{D}} - \lambda_{\cal{D}'}| \leq \log^+\max\{|p_1|,\ldots,|p_m|\}$$
where $| \cdot |$ is the absolute value from $K$. Since $\bb{C}$ is trivially valued, it follows that $\lambda_{\cal{D}} = \lambda_{\cal{D}'}$.
\end{proof}

Now pick a local affine chart $U$ over $\bb{C}$ with affine coordinates $(a_1,\ldots,a_N)$, where our Cartier divisor $D$ is given locally by the function $f\in\mathcal{O}(U)$. Then we have a simple formula for $\lambda_{\cal{D}}$ when evaluated on points $(a_1,\ldots,a_N)$ with $|a_i| \leq 1$.

\begin{proposition} \label{LocalHeightFunction1}
Let $U \subseteq X$ be an affine open over $\bb{C}$ with affine coordinates $a_1,\ldots,a_N$ such that our Cartier divisor $D$ is given by $f\in\mathcal{O}(U)$. If $x \in U(K)$ has coordinates $(x_1,\ldots,x_N)$ with $|x_i| \leq 1$, then 
$$\lambda_{\cal{D}}(x) = \log |f(x)|^{-1}.$$
\end{proposition}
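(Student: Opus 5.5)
Since Proposition \ref{LocalHeightEqual1} shows that $\lambda_{\cal{D}}$ does not depend on the presentation, I will choose a convenient one adapted to the chart $U$. Write $\ovl{U}$ for the closure and regard the coordinates $a_i$ as rational functions on $X$. The point is that on the locus $|a_i(x)|\le 1$ every regular function on $U$ with $\bb{C}$-coefficients has absolute value at most $1$, because $\bb{C}$ is trivially valued and $K$ is non-archimedean. Thus it is enough to show that, at any $x\in U(K)$ with $|x_i|\le 1$, the maximum over $k$ and minimum over $l$ in the definition of $\lambda_{\cal{D}}$ collapse to $\log|f(x)|^{-1}$.

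The first step is to trivialize the relevant line bundles over $U$, shrinking $U$ if necessary, or better by working with a finite affine cover and using that the formula is local. On $U$ the section $s_D$ corresponds to $f^{-1}$ times a trivializing frame (or to $f$, depending on sign convention; $\mathcal{O}(D)$ has local generator $1/f$). I write $s_k = \sigma_k e_L$ and $t_l = \tau_l e_M$, where $e_L,e_M$ are frames over $U$ and $\sigma_k,\tau_l\in\mathcal{O}(U)$. Then
$$\frac{s_k}{t_l s_D} = \frac{\sigma_k}{\tau_l}\, f \cdot u$$
for a unit $u\in\mathcal{O}(U)^\times$; the sign of the $f$-exponent is fixed so that $\lambda$ blows up near $D$, which yields $\log|f|^{-1}$. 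Both $u$ and $u^{-1}$ are regular with $\bb{C}$-coefficients, so $|u(x)|=1$ on the unit polydisc.

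The second step is to show that $\max_k\min_l \log|\sigma_k(x)/\tau_l(x)| = 0$. Because $L$ is basepoint free, the $\sigma_k$ generate the unit ideal in $\mathcal{O}(U)$. Hence $1=\sum_k c_k\sigma_k$ with $c_k\in\mathcal{O}(U)$, and all $|c_k(x)|\le1$. The ultrametric inequality then gives $\max_k|\sigma_k(x)|\ge 1$, while $|\sigma_k(x)|\le1$ holds trivially, so $\max_k|\sigma_k(x)|=1$. The same argument applied to $M$ gives $\max_l|\tau_l(x)|=1$, so $\min_l\log|\tau_l(x)|^{-1}=0$. Combining these, $\lambda_{\cal{D}}(x)=\log|f(x)|^{-1}+\log|u(x)|^{\pm1}=\log|f(x)|^{-1}$.

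The main obstacle is the first step: $U$ may not trivialize $L$ and $M$. I would handle this by passing to a finite cover $U=\bigcup V_j$ by principal opens $V_j=D(h_j)$ on which the bundles are trivial. Since the $h_j$ generate the unit ideal, the same ultrametric argument gives some $j$ with $|h_j(x)|=1$. The rescaled coordinates $a_i$ and $1/h_j$ on $V_j$ then also have absolute value at most $1$ at $x$, so the computation above applies on $V_j$. The function $f$ still defines $D$ there, up to a unit, which again has absolute value $1$.
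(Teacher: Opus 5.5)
Your proposal is correct and follows the paper's proof. You treat the case where $U$ trivializes $L$ and $M$, then pass to a principal open $D(h_j)$ with $|h_j(x)|=1$; your unit-ideal argument with coefficients in $\mathcal{O}(U)$, each of norm at most $1$ at $x$, is the right version of that step. The only difference is that you carry out the trivialized case by hand, via basepoint-freeness and the ultrametric inequality giving $\max_k|\sigma_k(x)|=\max_l|\tau_l(x)|=|u(x)|=1$, whereas the paper cites the proof of Matsuzawa's Lemma 4.24.

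One cosmetic fix: the canonical section is $s_D=f\cdot e_D$ for the local frame $e_D=1/f$. So $s_k/(t_l s_D)=\sigma_k/(\tau_l f u)$ directly, and no choice of sign is needed.
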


\begin{proof}
First assume that the line bundles $L$ and $M$ are trivialized by $U$. Let $B$ be the bounded set $B = \{x\in U:|x_1|,\ldots,|x_N| \leq 1\}$, and let $C_B(f)=\sup_{x\in B}\{|f(x)|\}$. Since $\mathbb{C}$ is trivially valued, we have $C_B(f)\le 1$. By the proof of \cite[Lemma 4.24]{Mat20}, we obtain $$\lambda_{\cal{D}}(x) = \log |f(x)|^{-1}$$
for all $x \in B$ as desired.
\par 

We now remove the assumption that $L$ and 
$M$ are trivial on all of $U$.
Because $L$ and $M$ are locally free of rank 
$1$, we may cover $U$ by finitely many distinguished affines $D(g_1),\ldots,D(g_n)$,
$g_i \in \mathcal{O}(U)$ such that both $L$ and 
$M$ are trivial on each $D(g_i)$. Since $\{D(g_i)\}$ covers $U$, the functions $g_i$ generate the unit ideal in $\mathcal{O}(U)$, so there exist coefficients $c_i\in \mathbb{C}$ with \[\sum_i c_i g_i = 1\quad\text{in }\mathcal{O}(U).\] Let \[B=\{x\in U(K)\mid |x_j|\le 1\text{ for all affine coordinates } x_j \text{ on }U\},\] and fix $x\in B$. Evaluating the identity $\sum c_i g_i = 1$ at $x$, and using that $\mathbb{C}$ is trivially valued, yields \[1=\left|\sum_i c_i g_i(x)\right|=\max_i |c_i g_i(x)|=\max_i |g_i(x)|.\] Hence at least one $g_{i_0}$
satisfies $|g_{i_0}(x)| = 1$. Thus $x \in D(g_{i_0})(K)$, and in this affine chart both $L$ and $M$ are trivial by hypothesis. On the distinguished affine $D(g_{i_0})$, the coordinate ring is $\mathcal{O}(U)[g_{i_0}^{-1}]$,
so the coordinates on $D(g_{i_0})$
are the original affine coordinates $x_1,\ldots,x_N$ on $U$,
together with the function $g_{i_0}^{-1}$.
For our specific point $x$, we have $|x_j|\le 1$ by definition of $B$, and $|g_{i_0}(x)^{-1}| = 1$
because $|g_{i_0}(x)| = 1$. Thus all coordinates of $x$ on $D(g_{i_0})$ have norm $\le 1$. Since $L$ and $M$ are trivial on this distinguished affine, the conclusion of the first paragraph applies to $x$, yielding \[\lambda_{\mathcal{D}}(x) = \log |f(x)|^{-1}.\] This completes the proof.\end{proof}

We now move on to local heights associated to closed subschemes. Let $Y$ be a closed subscheme of $X$ defined over $\bb{C}$ and let $Y = D_1 \cap \cdots \cap D_r$ be the scheme-theoretic intersection of $D_1,\dots,D_r$, where each $D_i$ is a Cartier divisor defined over $\bb{C}$. Choose a collection $\cal{Y} = (\cal{D}_1,\ldots,\cal{D}_r)$ of presentations of the $D_i$, so that we obtain local height functions $\lambda_{\cal{D}_i}$. We then define
\begin{equation}\label{eqn:intht}\lambda_{\cal{Y}}=\min\{\lambda_{\cal{D}_1}, \lambda_{\cal{D}_2},\ldots, \lambda_{\cal{D}_r}\}.\end{equation}

\begin{proposition} \label{LocalHeightsEqual2}
For any two presentations $\cal{Y}$ and $\cal{Y}'$, the local height functions $\lambda_{\cal{Y}}$ and $\lambda_{\cal{Y}'}$ are equal. 
\end{proposition}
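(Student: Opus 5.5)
The plan is to run the standard argument that local heights of closed subschemes are well-defined up to $O(1)$ (as in \cite[Section 2]{Sil87}, \cite[Section 4]{Mat20}, or \cite[Section 2.2]{BG06}). At every stage we track the $O(1)$ terms explicitly: each one will be bounded by $\log^+\max|p_i|$ for finitely many constants $p_i\in\bb{C}^*$. Because $\bb{C}$ is trivially valued, each such bound is $0$, exactly as in the proof of Proposition \ref{LocalHeightEqual1}.

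First, Proposition \ref{LocalHeightEqual1} shows that $\lambda_{\cal{D}}$ depends only on the Cartier divisor $D$ and not on the presentation. So write $\lambda_D$. It then suffices to compare two representations $Y=D_1\cap\cdots\cap D_r=D_1'\cap\cdots\cap D_s'$ by Cartier divisors over $\bb{C}$.

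We need two functorial properties, each holding with an \emph{exact} equality or inequality rather than up to $O(1)$.
\begin{enumerate}
\item Additivity: $\lambda_{D+E}=\lambda_D+\lambda_E$. This follows by taking the product presentation and applying Proposition \ref{LocalHeightEqual1}.
\item Positivity: if $E$ is effective, then $\lambda_E\ge 0$ on $(X\setminus E)(K)$. In the standard argument the lower bound is $-\log^+\max|p_i|$ for constants $p_i$, which here is $0$.
\end{enumerate}
Together these give monotonicity: if $D\le E$, then $\lambda_D\le\lambda_E$.

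Now fix $j$. The ideal sheaf of $Y$ is generated locally by the local equations of the $D_i$. Hence on each chart of a finite affine cover, the local equation $f'_j$ of $D'_j$ can be written as $f'_j=\sum_i a_i f_i$ with $a_i\in\cal{O}(U)$. Using the ultrametric inequality, we want to conclude
$$\lambda_{D'_j}\ \ge\ \min_i \lambda_{D_i}.$$
The error in this comparison is controlled by $\log^+|a_i(x)|$ and by the terms that glue the charts. We handle these as in the proof of Proposition \ref{LocalHeightFunction1}. Cover $X$ by finitely many affines whose coordinate functions are bounded by $1$ on a corresponding piece; every $K$-point lies in such a piece, by the partition-of-unity trick $\sum c_ig_i=1$ used there. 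On such a piece we have $|a_i(x)|\le 1$, since the $a_i$ are polynomials with $\bb{C}$-coefficients in coordinates of absolute value at most $1$. Proposition \ref{LocalHeightFunction1} identifies $\lambda_{D_i}$ with $-\log|f_i|$ there, so the inequality holds exactly.

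Taking the minimum over $j$ gives $\lambda_{\cal{Y}'}\ge\lambda_{\cal{Y}}$. Exchanging the roles of $\cal{Y}$ and $\cal{Y}'$ gives equality.

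The main obstacle is this last step. Proposition \ref{LocalHeightFunction1} computes $\lambda_{D_i}$ as $-\log|f_i|$ only when $D_i$ is locally principal on the chosen affine and the point is integral there. So we must refine the cover until every $D_i$ and every $D'_j$ is principal on each piece, and check that the covering trick still places each point integrally in some piece of the refined cover. If this becomes awkward, the fallback is to cite the general $O(1)$ statement \cite[Theorem 2.1]{Sil87}. One then notes that its constant is of the form $\log^+\max|p_i|$ for constants $p_i$ defined over the base field $\bb{C}$, and so it vanishes here.
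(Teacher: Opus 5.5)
Your proposal is correct and follows essentially the same route as the paper. The paper reduces to the case $\cal Y'=\cal Y\cup\{\cal E\}$, which is equivalent to your key inequality $\lambda_{D_j'}\ge\min_i\lambda_{D_i}$. It proves that inequality on the bounded subset of an affine chart trivializing all the presentations by citing \cite[Claim 4.26]{Mat20}, with the correction constant vanishing by Proposition \ref{LocalHeightFunction1}; you derive the same inequality directly from $f'_j=\sum a_if_i$ and the ultrametric inequality. The paper then covers $X(K)$ by such bounded pieces via the proof of \cite[Theorem 2.2.11]{BG06}, as you do; your additivity/positivity step is not needed.
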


\begin{proof}It suffices to prove the case where $\cal{Y}' = \cal{Y} \cup \{\cal{E}\}$ for $\cal{E}$ a presentation of some Cartier divisor $E$. Let $U$ be an affine open for which the presentations $\cal{D}_i,\cal{E}$ are trivialized. By \cite[Claim 4.26]{Mat20}, on the bounded subset $B$ of $U(K)$ (points whose affine coordinates have norm $\le 1$),
we have \[\lambda_{\mathcal Y'}(x)\ge\lambda_{\mathcal{Y}}(x)-\gamma(U,B;\mathcal{Y},\mathcal{E})\] for some constant $\gamma(U,B;\mathcal{Y},\mathcal{E})$ depending on $U,B,\mathcal{Y}$, and $\mathcal{E}$. By Proposition~\ref{LocalHeightFunction1}, the correction terms appearing in $\gamma(U,B;\mathcal{Y},\mathcal{E})$ vanish, so $\gamma(U,B;\mathcal{Y},\mathcal{E})=0$ and hence $\lambda_{\mathcal Y'}\ge\lambda_{\mathcal Y}$ on $B$.  

The reverse inequality is immediate from the definitions of $\lambda_{\mathcal{Y}}$ and $\lambda_{\mathcal{Y}'}$, so $\lambda_{\mathcal{Y}}=\lambda_{\mathcal{Y}'}$ on $B$. By the proof of \cite[Theorem 2.2.11]{BG06}, we may find a cover $\{U_i\}$ of $X$ such that the union of the bounded subsets $B_i$ of each $U_i$ contains all points of $X$. Hence we have $\lambda_{\mathcal{Y}} = \lambda_{\mathcal{Y}'}$ globally as desired.
\end{proof}

As a result, we obtain the following description of the local height functions of a closed subscheme when restricted to an affine open. 

\begin{proposition} \label{LocalHeightFunction2}
Let $U \subseteq X$ be an affine open over $\bb{C}$ with affine coordinates $a_1,\ldots,a_N$ such that on $U$, the closed subscheme $Y=D_1\cap\cdots\cap D_r$ is cut out by the functions $f_1,\ldots,f_m \in\mathcal{O}(U)$. If $x \in U(K)$ has coordinates $(x_1,\ldots,x_N)$ with $|x_i| \leq 1$, then we have 
$$\lambda_{\cal{Y}}(x) = \min_{1 \leq i \leq m}\{\log |f_i(x)|^{-1}\}$$ for any presentation $\mathcal{Y}$ of the Cartier divisors $D_i$.
\end{proposition}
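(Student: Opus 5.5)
By Proposition~\ref{LocalHeightsEqual2}, $\lambda_{\cal{Y}}$ does not depend on the presentation, so the plan is to pick a presentation adapted to the generators $f_1,\ldots,f_m$ on $U$ and compute $\lambda_{\cal{Y}}$ on the bounded set $B=\{x\in U(K): |x_i|\le 1\}$ with Proposition~\ref{LocalHeightFunction1}. Two facts will be used throughout: $\bb{C}$ is trivially valued in $K$, and every regular function on $U$ is a polynomial in $a_1,\ldots,a_N$ with $\bb{C}$-coefficients. Hence every $h\in\mathcal{O}(U)$ satisfies $|h(x)|\le 1$ for $x\in B$.

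\textbf{Step 1 (divisors from the generators).} Fix the presentation $\cal{Y}=(\cal{D}_1,\ldots,\cal{D}_r)$ of $Y=D_1\cap\cdots\cap D_r$. Shrinking to distinguished opens $D(g_j)$ as in the proof of Proposition~\ref{LocalHeightFunction1}, assume each $D_i$ is principal on $U$, cut out by some $d_i\in\mathcal{O}(U)$. Any $x\in B$ lies in a chart $D(g_{j_0})$ with $|g_{j_0}(x)|=1$, and on that chart $x$ still has all coordinates of norm $\le 1$, so this reduction is harmless. Proposition~\ref{LocalHeightFunction1} then gives $\lambda_{\cal{D}_i}(x)=\log|d_i(x)|^{-1}$ for $x\in B$, and by \eqref{eqn:intht}
$$\lambda_{\cal{Y}}(x)=\min_i \log|d_i(x)|^{-1}=-\log\max_i|d_i(x)|.$$

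\textbf{Step 2 (compare the two ideals).} On $U$, the ideals $(d_1,\ldots,d_r)$ and $(f_1,\ldots,f_m)$ coincide, since both define $Y\cap U$. So write $f_k=\sum_i a_{ki}d_i$ and $d_i=\sum_k b_{ik}f_k$ with $a_{ki},b_{ik}\in\mathcal{O}(U)$. For $x\in B$ all these coefficients have norm $\le 1$, and the ultrametric inequality gives
$$\max_k|f_k(x)|\le\max_i|d_i(x)|\le\max_k|f_k(x)|.$$
Therefore $\lambda_{\cal{Y}}(x)=\min_k\log|f_k(x)|^{-1}$. By Proposition~\ref{LocalHeightsEqual2} the same formula holds for any presentation.

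\textbf{Main obstacle.} The delicate point is Step~1: the $D_i$ need not be principal on all of $U$, so the equality of ideals in Step~2 has to be checked on each distinguished open. Each $g_j$ is a polynomial in the coordinates over $\bb{C}$, so $|g_j(x)|\le 1$ on $B$ automatically. The only thing to verify is that the chart chosen for $x$ has $|g_{j_0}(x)|=1$, which holds by the argument in Proposition~\ref{LocalHeightFunction1}. This guarantees that $x$ remains in the bounded set of that chart, so Proposition~\ref{LocalHeightFunction1} and the ideal comparison there both apply.
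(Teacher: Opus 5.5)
Your proposal is correct and follows the paper's proof: compute each $\lambda_{\mathcal{D}_i}$ through a local equation via Proposition~\ref{LocalHeightFunction1}, then compare the two generating sets of the ideal of $Y\cap U$ using the ultrametric inequality. Your version is slightly more careful than the paper's on two points. The paper writes the comparison coefficients as constants $c_j\in\mathbb{C}$ with $|c_j|=1$, whereas you correctly take them in $\mathcal{O}(U)$, where only $|c_j(x)|\le 1$ on the bounded set is needed. The paper also implicitly assumes each $D_i$ is principal on all of $U$, whereas you reduce to a distinguished open $D(g_{j_0})$ with $|g_{j_0}(x)|=1$.
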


\begin{proof} For $\mathcal{Y}=(\mathcal{D}_1,\dots,\mathcal{D}_r)$, we have $\lambda_{\cal{Y}}=\min \lambda_{\mathcal{D}_i}$, and by Proposition \ref{LocalHeightsEqual2}, $\lambda_{\mathcal{Y}}$ is independent of the choice of presentation. Restricting to the affine open $U$, we know by Proposition \ref{LocalHeightFunction1} that if $D_i$ is given by $g_i$, we have $\lambda_{\mathcal{D}_i} = \log |g_i(x)|^{-1}$ if $x \in U(K)$ has affine coordinates satisfying $|x_i|\leq1$ for all $i$. It thus suffices to show that 
$$\min_{1 \leq i \leq r} \{ \log |g_i(x)|^{-1}\} = \min_{1 \leq i \leq m} \{ \log |f_i(x)|^{-1}\}.$$
Since, on $U$, the closed subscheme $Y$ is cut out by $\{f_1,\ldots,f_m\}$ and also by $\{g_1,\ldots,g_{r}\}$, it follows that the ideals of $\mathcal{O}(U)$ generated by $\{f_1,\ldots,f_m\}$ and $\{g_1,\ldots,g_{r}\}$ are equal. Thus for any $f_i$, there exist $c_j \in \bb{C}$ such that $f_i = \sum_{j=1}^{r} c_j g_j$. Since $|c_j|=1$ for all $j$, it follows that 
$$|f_i(x)| \leq \max_{1 \leq j \leq r} |g_j(x)| \implies \log |f_i(x)|^{-1} \geq \min_{1 \leq j \leq r} \log |g_j(x)|^{-1}.$$
Thus 
$$\min_{1 \leq i \leq m} \log |f_i(x)|^{-1} \geq \min_{1 \leq j \leq r} \log |g_j(x)|^{-1}.$$
The reverse inequality follows similarly and we get equality as desired. 
\end{proof}

We now list some basic properties that these local heights associated to closed subschemes satisfy. These can all be found in \cite{Sil87}, but the main thing we want to emphasize as per Proposition \ref{LocalHeightsEqual2} is that our local height functions are canonical. We thus drop the presentations in our notation and simply write $\lambda_Y$ in lieu of $\lambda_{\mathcal{Y}}$, etc. Furthermore, the constants appearing in the bounds only depend on the geometry of $X$ over $\bb{C}$ and not on the non-archimedean field $K$. 

\begin{proposition} \label{LocalHeightProperties1}
Let $X$ be a projective variety over $\bb{C}$ and let $Y,Z$ closed subschemes of $X$. Then for any non-archimedean field $K$ with $\bb{C}$ a trivially valued subfield, we have functions 
$$\lambda_Y, \lambda_Z: X(K) \to \bb{R} \cup \{+\infty\}$$

satisfying the following properties:

\begin{enumerate}
\item If $Z \subseteq Y$ then $\lambda_Z \leq \lambda_Y$.

\item If $\Supp Z \subseteq \Supp Y$, then there is a constant $c > 0$, independent of $K$, such that $\lambda_Z \leq c \lambda_Y$.

\item Let $\vphi: X \to X'$ be a morphism of projective varieties over $\bb{C}$. Let $Y'$ be a closed subscheme of $X'$. Then $\lambda_{\vphi^* Y} = \lambda_{Y'}.$

\item Let $U \subseteq X$ be an open set and let $W \subseteq X$ be the complement of $U$. If $Z \vert_U \subseteq Y \vert_U$, then there exists a constant $c > 0$, independent of $K$, such that $\lambda_Z \leq \lambda_Y + c \lambda_W$.
\end{enumerate}
\end{proposition}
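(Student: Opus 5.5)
The plan is to reduce everything to the affine-local description in Proposition \ref{LocalHeightFunction2}, together with the canonicity in Propositions \ref{LocalHeightEqual1} and \ref{LocalHeightsEqual2}. Throughout, I fix a finite cover of $X$ by affine opens $U_1,\dots,U_s$ over $\bb{C}$, each with affine coordinates, such that every point of $X(K)$ lies in the bounded subset $B_i=\{|x_j|\le 1\}$ of some $U_i$. This is possible by the proof of \cite[Theorem 2.2.11]{BG06}, as already used in Proposition \ref{LocalHeightsEqual2}. The cover depends only on $X/\bb{C}$, and so will every constant below. On $B_i$, a closed subscheme $Y$ cut out by $f_1,\dots,f_m\in\cal{O}(U_i)$ has $\lambda_Y(x)=\min_k \log|f_k(x)|^{-1}$. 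Each property then becomes an ideal-theoretic statement on $U_i$, and the resulting inequalities have coefficients in $\bb{C}$, which are trivially valued.

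For (i): if $Z\subseteq Y$, then $I_Y\subseteq I_Z$ on $U_i$. I take generators $g_1,\dots,g_n$ of $I_Z$ and generators $f_1,\dots,f_m$ of $I_Y$, and write $f_k=\sum_l a_{kl}g_l$ with $a_{kl}\in\cal{O}(U_i)$. Each $a_{kl}$ is a $\bb{C}$-polynomial in the coordinates, so $|a_{kl}(x)|\le1$ on $B_i$. The ultrametric inequality then gives $|f_k(x)|\le\max_l|g_l(x)|$, hence $\lambda_Z\le\lambda_Y$.

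For (ii): $\Supp Z\subseteq\Supp Y$ gives $I_Z^{e}\subseteq I_Y$ on each $U_i$ for some $e$, by the Nullstellensatz. Equivalently, $Z\subseteq eY$, where $eY$ is the subscheme cut out by $I_Y^e$. The ultrametric argument shows $\lambda_{eY}= e\lambda_Y$ on $B_i$, since the generators of $I_Y^e$ are monomials in the $f_k$. Combining this with (i) gives $\lambda_Z\le e\lambda_Y$, and I take $c=\max_i e_i$.

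For (iii): I use the same cover of $X'$ and refine the cover of $X$ so that each $U_i$ maps into some $U'_j$. Here I must check that $\vphi(B_i)\subseteq B'_j$. This holds because the pullbacks of the coordinates of $U'_j$ are $\bb{C}$-polynomials in the coordinates of $U_i$, hence have norm $\le1$ on $B_i$. Since $\vphi^*Y'$ is cut out by the $\vphi^*f_k$, the two formulas agree pointwise, giving $\lambda_{\vphi^*Y'}=\lambda_{Y'}\circ\vphi$. Arranging a refined cover of $X$ whose bounded sets still cover $X(K)$ is a small technical point. I would handle it by covering each $\vphi^{-1}(U'_j)\cap U_i$ with distinguished affines $D(h)$ and arguing via the partition-of-unity trick $\sum c_h h=1$ from the proof of Proposition \ref{LocalHeightFunction1}.

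The main obstacle is (iv). Since $Z|_U\subseteq Y|_U$, on each affine $U_i$ there is $e$ with $I_W^{e}I_Y\subseteq I_Z$ up to the usual saturation argument. I would use this in the form: for generators $w_l$ of $I_W$ and $f_k$ of $I_Y$, each product $w_l^{e}f_k$ lies in $I_Z$. On $B_i$ this gives $e\log|w_l(x)|+\log|f_k(x)|\le\log\max_j|g_j(x)|$ for all $k,l$. Choosing $l$ to maximize $|w_l(x)|$ and $k$ to maximize $|f_k(x)|$ yields $\lambda_Z\le\lambda_Y+e\lambda_W$, so $c=\max_i e_i$. The delicate part is the correct algebraic statement $I_W^{e}I_Y\subseteq I_Z$, rather than the naive containment $I_Y\subseteq I_Z$ that holds only on $U$. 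I would prove it from the fact that $(I_Z:I_Y)$ defines a closed subscheme supported in $W$, again using the Nullstellensatz.
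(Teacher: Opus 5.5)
Your proposal is correct and follows essentially the same route as the paper: minima of defining functions for (i), the Nullstellensatz together with $\lambda_{eY}=e\lambda_Y$ for (ii), direct pullback for (iii), and the containment $Z\subseteq Y+eW$ (i.e.\ $I_W^{e}I_Y\subseteq I_Z$) for (iv). The differences are that you work throughout in affine charts via Proposition~\ref{LocalHeightFunction2}, and that you prove the containment in (iv) yourself with the colon ideal $(I_Z:I_Y)$ instead of citing \cite[Theorem 2.1(g)]{Sil87}; note also one slip in (ii), where the Nullstellensatz gives $I_Y^{e}\subseteq I_Z$ (which is what $Z\subseteq eY$ means), not $I_Z^{e}\subseteq I_Y$.
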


\begin{proof}
The first claim is clear as given any presentation $\cal{Y}$ of $Y$, we may add $\cal{Y}$ to any presentation $\cal{Z}$ of $Z$ and this does not change $\lambda_Z$. As the local heights $\lambda_Z$ and $\lambda_Y$ are defined by taking minima as in \eqref{eqn:intht}, it follows that $\lambda_Z \leq \lambda_Y$. 
\par 
The second claim follows similarly. Since $\Supp Z \subseteq \Supp Y$, the Nullstellensatz implies that $nZ \subseteq Y$ for some positive integer $n>0$. As $\lambda_{nY} = n \lambda_Y$, we obtain $\lambda_Z \leq n \lambda_Y$ as desired.
\item 
The third claim follows immediately from definitions. For the fourth claim, it suffices to find $n$ such that $Z \subseteq Y + nW$, which is done in \cite[Theorem 2.1(g)]{Sil87}. 
\end{proof}

We now state Lemma 5.1 of \cite{Sil87}. 

\begin{proposition} \label{LocalHeightInequality1}
Let $\vphi: X \to X'$ be a dominant rational map of projective varieties. Let $U\subseteq X$ be an open for which $\vphi \vert_U$ is a morphism. Let $Z$ be the reduced closed subscheme for $X \setminus U$. For a closed subscheme $Y$ of $X'$, let $\vphi^* Y$ be the Zariski closure of the preimage of $Y$ under $\vphi\vert_U$. Then there exists a constant $c > 0$ such that for all non-archimedean fields $K$ with $\bb{C}$ a trivially valued subfield, we have 
$$|\lambda_Y \circ \vphi - \lambda_{\vphi^* Y}| \leq c \lambda_Z$$
on $U$.
\end{proposition}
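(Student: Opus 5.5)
We prove the statement by reducing to an affine chart where, by Proposition \ref{LocalHeightFunction2}, all local heights are given by explicit minima of $\log|\cdot|^{-1}$ of regular functions. Then we compare the pullback ideal of $Y$ under $\vphi$ with the ideal of $\vphi^*Y$ on $U$, using the Nullstellensatz to control the discrepancy by a power of the ideal of $Z$.

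First we cover $X$ by finitely many affine opens $V_\alpha$ over $\bb{C}$ with coordinates, such that every point of $X(K)$ lies in the bounded part $B_\alpha$ (all coordinates of norm $\le 1$) of some $V_\alpha$. This is possible exactly as in the proof of Proposition \ref{LocalHeightsEqual2}, and the cover is independent of $K$ since $\bb{C}$ is trivially valued. We refine the cover so that, on $V_\alpha\cap U$, the rational map $\vphi$ is given by a morphism into an affine chart $V'_\beta$ of $X'$ whose bounded part contains the images. To arrange this we graph-resolve: replace $X$ by the closure $\Gamma$ of the graph of $\vphi|_U$, with projections $p:\Gamma\to X$ (an isomorphism over $U$) and $q:\Gamma\to X'$. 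By Proposition \ref{LocalHeightProperties1}(iii), $\lambda_Y\circ\vphi=\lambda_{q^*Y}$ on $U$, and $\lambda_{\vphi^*Y}\circ p=\lambda_{p^*\vphi^*Y}$ and $\lambda_Z\circ p=\lambda_{p^*Z}$. So it suffices to compare two closed subschemes $Y_1=q^*Y$ and $Y_2=p^*\vphi^*Y$ of $\Gamma$ which agree on $p^{-1}(U)$, with the error term $\lambda_W$ for $W=p^*Z$, whose support is $\Gamma\setminus p^{-1}(U)$.

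For this comparison we use Proposition \ref{LocalHeightProperties1}(iv) twice, once with $Y_1\subseteq Y_2$ on the open $p^{-1}(U)$ and once with the roles reversed. This gives
\[
\lambda_{Y_1}\le \lambda_{Y_2}+c_1\lambda_{W'} \quad\text{and}\quad \lambda_{Y_2}\le \lambda_{Y_1}+c_2\lambda_{W'},
\]
where $W'$ is the reduced complement of $p^{-1}(U)$. Since $\Supp W'=\Supp p^*Z$, part (ii) of the same proposition gives $\lambda_{W'}\le c_3\lambda_{p^*Z}$. Combining these yields $|\lambda_{Y_1}-\lambda_{Y_2}|\le c\,\lambda_{p^*Z}$ on $p^{-1}(U)$, and transporting back via $p$ gives the claim on $U$.

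Each constant comes from a Nullstellensatz exponent (some $n$ with $Y_1\subseteq Y_2+nW'$, and similarly for the other inclusions), which is purely geometric over $\bb{C}$. The constants are therefore independent of $K$, as required.

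The main obstacle is justifying that all these height identities and inequalities hold with no additive $O(1)$ term. This is what uniformity over all $K$ needs. One point is that Proposition \ref{LocalHeightProperties1}(iv), taken from \cite[Theorem 2.1(g)]{Sil87}, holds exactly, with no $O(1)$ term. The other is that functoriality holds on all of $X(K)$ rather than only up to bounded error. Both reduce to the observation in Propositions \ref{LocalHeightFunction1} and \ref{LocalHeightFunction2} that, on bounded parts of affine charts, local heights are exact minima of $\log|f_i|^{-1}$. Ideal containments $f\in\sum c_jg_j$ with coefficients regular functions (norm $\le 1$ on bounded parts) then give exact inequalities, since the ultrametric inequality introduces no constants.
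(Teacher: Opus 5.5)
Your proposal is correct and is essentially the paper's argument, with the graph closure $\Gamma$ playing the role of the paper's blow-ups centered in $Z$. In both, one applies Proposition \ref{LocalHeightProperties1}(iv) on the modification to two subschemes that agree over the preimage of $U$, bounds the reduced boundary term by $\lambda_{p^*Z}=\lambda_Z\circ p$ via part (ii), and transports back along the isomorphism over $U$.

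Your choice to compare $q^*Y$ (whose height is exactly $\lambda_Y\circ q$ by functoriality) with $p^*\vphi^*Y$ (whose height is exactly $\lambda_{\vphi^*Y}\circ p$) is slightly cleaner than the paper's bookkeeping. The affine-chart preliminaries in your first paragraph are unnecessary once you pass to $\Gamma$.
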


\begin{proof}
There is a sequence of blowups of $X$ with centers contained in $Z$ yielding a birational map
$\pi:\widetilde{X}\dashrightarrow X$
such that the composite
$\varphi\circ\pi:\widetilde{X}\dashrightarrow X'$
extends to a morphism
$\widetilde{\varphi}:\widetilde{X}\to X'.$
Moreover, $\pi$ is an isomorphism over $U$ and on $U':=\pi^{-1}(U)$ we have $\widetilde{\varphi}=\varphi\circ\pi$. Let $Z'=(\tilde{X}\setminus U')_{\mathrm{red}}$ be the reduced closed subscheme of $\tilde{X}\setminus U'$. Then we have 
$$|\lambda_{(\vphi \circ \pi)^*Y} - \lambda_{\tilde{\vphi}^* Y}| \leq c \lambda_{Z'}$$
for some $c > 0$ by (iv) of Proposition \ref{LocalHeightProperties1}, as $(\vphi \circ \pi)^* Y$ and $\tilde{\vphi}^*Y$ are both equal on $U'$. Since $\Supp Z' = \Supp \pi^*Z$, we have 
$$c' \lambda_Z \circ \pi = c' \lambda_{\pi^* Z} \geq \lambda_{Z'}$$
for some $c' > 0$. Thus 
\begin{equation}\label{eqn:Zhtbd}\lambda_{(\vphi \circ \pi)^*Y} - \lambda_{\tilde{\vphi}^*Y} \leq (cc') \lambda_Z \circ \pi.\end{equation} Pushing forward along the isomorphism $\pi:U'\to U$ gives, for $u\in U$,
\[\lambda_{\widetilde{\varphi}^*Y}(\pi^{-1}(u))
=\lambda_{\varphi^*Y}(u),
\]
because on $U'$ the divisor $\widetilde{\varphi}^*Y$ is the pullback of $\varphi^*Y$ via $\pi$. Similarly,
\[\lambda_{(\varphi\circ\pi)^*Y}(\pi^{-1}(u))
=\lambda_Y(\varphi(u)),
\]
because on $U'$ we have $\lambda_{(\varphi\circ\pi)^*Y}=\lambda_Y\circ(\varphi\circ\pi)$. Combining with \eqref{eqn:Zhtbd} gives
$$|\lambda_Y \circ \vphi - \lambda_{\vphi^* Y}| \leq (cc') \lambda_Z$$
as desired. 
\end{proof}

Now let $X$ be a quasi-projective variety over $\bb{C}$ and let $\ovl{X}$ be a compactification of $X$, that is to say, a proper variety containing $X$ as an open dense subscheme. Letting $\partial X$ be the reduced closed subscheme whose support is $\ovl{X} \setminus X$, we obtain a local height function $\lambda_{\partial X}$ that depends on our compactification. It turns out that any two such local height functions are commensurate.

\begin{proposition} \label{LocalHeightBoundary1}
Let $\lambda_{\partial X}, \lambda'_{\partial X}$ be two such local height functions associated to two different compactifications. Then there exists a $c \geq 1$ such that 
$$c^{-1} \lambda_{\partial X} \leq \lambda'_{\partial X} \leq c \lambda_{\partial X}$$
for all non-archimedean fields $K$ with $\bb{C}$ a trivially valued subfield. 
\end{proposition}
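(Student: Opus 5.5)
The plan is to compare both compactifications with a third one that dominates them, and then use the properties of local heights listed in Proposition \ref{LocalHeightProperties1} together with the pullback estimate of Proposition \ref{LocalHeightInequality1}. Let $\ovl{X}$ and $\ovl{X}'$ be the two compactifications. Take $\ovl{X}''$ to be the closure of the diagonal image of $X$ in $\ovl{X}\times\ovl{X}'$. The projections $p:\ovl{X}''\to\ovl{X}$ and $p':\ovl{X}''\to\ovl{X}'$ are morphisms that restrict to the identity on $X$. If the compactifications are only proper rather than projective, I would first apply Chow's lemma. This lets us work with projective varieties, which is the setting in which the local heights were set up. By symmetry it then suffices to show that $\lambda_{\partial X}$ (computed on $\ovl{X}$) and $\lambda''_{\partial X}$ (computed on $\ovl{X}''$) are commensurate on $X(K)$.

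The key point is a support computation. Since $X\subseteq\ovl{X}''$ is the full preimage of $X\subseteq\ovl{X}$ under $p$ (the map $p$ is the identity over $X$, and $X$ is closed in $X\times\ovl{X}'$ in the graph sense), we get $\Supp p^*(\partial X)=\ovl{X}''\setminus X=\Supp\partial''X$. Here $\partial''X$ denotes the reduced boundary of $\ovl{X}''$. By Proposition \ref{LocalHeightProperties1}(iii), $\lambda_{\partial X}\circ p=\lambda_{p^*\partial X}$. Applying Proposition \ref{LocalHeightProperties1}(ii) in both directions, which is possible because the supports agree, gives a constant $c\geq1$ with
\[c^{-1}\lambda_{\partial''X}\le \lambda_{p^*\partial X}\le c\,\lambda_{\partial''X}.\]
Crucially, $c$ comes from a Nullstellensatz exponent over $\bb{C}$, so it is independent of $K$. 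Restricting to points of $X(K)$, where $p$ is the identity, yields the commensurability of $\lambda_{\partial X}$ and $\lambda''_{\partial X}$. The same argument for $p'$, followed by composing the two constants, gives the statement.

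The main obstacle I expect is justifying the equality of supports, namely that $p^{-1}(X)=X$ inside $\ovl{X}''$. This is exactly where the graph construction is needed: a boundary point of $\ovl{X}''$ lying over $X\subseteq\ovl{X}$ would have to lie in the closure of the graph of the identity on $X$. That graph is closed in $X\times\ovl{X}'$ because $\ovl{X}'$ is separated, so no such point exists.

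A secondary point is that the constants in Proposition \ref{LocalHeightProperties1} are stated as uniform in $K$. This is what makes the final $c$ work simultaneously for all non-archimedean fields $K$ containing $\bb{C}$ trivially valued, as the statement requires.
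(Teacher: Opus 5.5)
Your argument is correct, but it takes a slightly different route from the paper. The paper's proof applies Proposition \ref{LocalHeightInequality1} directly to the birational map $\varphi\colon\overline{X}\dashrightarrow\overline{X}'$ that is the identity on $X$. With $Y=\overline{X}'\setminus X$ and $Z=\overline{X}\setminus X$, one has $\varphi^*Y=\emptyset$, hence $\lambda_{\varphi^*Y}=0$ and $\lambda'_{\partial X}=\lambda_Y\circ\varphi\le c\,\lambda_Z$ on $X$; swapping the roles gives the other inequality.

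You announce Proposition \ref{LocalHeightInequality1} but never use it. Instead you pass to the graph closure $\overline{X}''\subseteq\overline{X}\times\overline{X}'$ and use only parts (ii) and (iii) of Proposition \ref{LocalHeightProperties1}. In effect you are unwinding the proof of Proposition \ref{LocalHeightInequality1} in this special case:
\begin{itemize}
\item your $\overline{X}''$ plays the role of the blowup resolution $\widetilde{X}$ used there;
\item your equality $p^{-1}(X)=X$ is the analogue of $\Supp Z'=\Supp\pi^*Z$ there. You justify it correctly via closedness of the graph in $X\times\overline{X}'$, and it is exactly the inclusion $\Supp\partial''X\subseteq\Supp p^*\partial X$ that your chain of inequalities needs.
\end{itemize}

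The paper's route is shorter once that lemma is available, and the lemma handles arbitrary dominant rational maps with an error controlled by $\lambda_Z$. Your route is self-contained: it needs neither resolution of indeterminacy by blowups nor property (iv), and it makes explicit that the geometric input behind ``$\varphi^*Y=\emptyset$'' is separatedness.

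One small caveat on your Chow's lemma remark. The standard Chow lemma only yields a projective modification that is an isomorphism over \emph{some} dense open, which need not contain all of $X$, so you would need a refined version. This is moot here, though: the paper's local heights are only set up on projective varieties, and its own proof likewise tacitly takes the compactifications to be projective.
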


\begin{proof}
Let $\ovl{X}$ and $\ovl{X}'$ be the two compactifications yielding $\lambda_{\partial X}$ and $\lambda'_{\partial X}$ respectively. We apply Proposition \ref{LocalHeightInequality1} to the rational map $\vphi: \ovl{X} \to \ovl{X}'$ which is the identity on $X$. Letting $Y = \ovl{X}' \setminus X$, we see that since $\vphi^*Y=\emptyset$, the local height function $\lambda_{\varphi^*Y}$ associated to it is $0$. Hence taking $Z=\overline{X}\setminus X$ and applying Proposition \ref{LocalHeightInequality1}, we obtain 
$$|\lambda_Y \circ \vphi | \leq c \lambda_Z.$$
But $\lambda_Y \circ \vphi$ is simply $\lambda_{Z'}$ on $X$, where $Z'=\overline{X}'\setminus X$, and so we obtain $\lambda_{Z'} \leq c \lambda_Z$. The other inequality follows from reversing the roles of $Z$ and $Z'$. 
\end{proof}

Given a product formula field $K$, $X$ quasi-projective over $K$, and $Z \subseteq X$ a closed subscheme, we may associate to it a global height function $h_Z$, which for all $x\in (X\setminus Z)(K)$ is given by
\begin{equation}\label{localglobalht} h_Z(x)=\sum_{v \in M_K}\lambda_{Z,v}(x),\end{equation}
where $\lambda_{Z,v}$ is the local height function $\lambda_Z$ for the field $K_v$. In general, $h_Z(x)$ is only canonical up to a constant term, but in the special case where $K_v$ contains $\bb{C}$ as a trivially valued subfield for all $v\in M_K$ and $X,Z$ are defined over $\bb{C}$, we have seen that each $\lambda_{Z,v}$ is independent of the underlying presentation of $Z$, and hence so is $h_Z$. The right-hand side of \eqref{localglobalht} is a priori only defined for $x\in(X \setminus Z)(K)$, but if $Z$ is a Cartier divisor, then one may choose a linearly equivalent divisor $D'$ whose support does not meet the given $x$. It turns out that $h_D=h_{D'}$ and so for Cartier divisors, we obtain a well-defined height function on $X(K)$. We still have the following inequality relating heights associated to ample divisors with those associated to closed subschemes. 

\begin{proposition} \label{GlobalHeightAmple1}
Let $D$ be an ample divisor and $Z \subseteq X$ a closed subscheme defined over $\bb{C}$. Then there exists a constant $c > 0$ such that for any product formula field $K$ with $\bb{C}$ a trivially valued subfield of $K_v$ for all $v \in M_K$, we have 
$$h_Z \leq c h_D$$
for all $x \in (X \setminus Z)(K).$
\end{proposition}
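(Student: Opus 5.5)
We reduce to the case where $Z$ is cut out by sections of multiples of $D$, and then compare local heights place by place. Since $D$ is ample and everything is defined over $\bb{C}$, we choose $n \geq 1$ such that $nD$ is very ample. After enlarging $n$, we may also assume that $\cal{I}_Z \otimes \cal{O}(nD)$ is globally generated. Then there are global sections $s_1,\ldots,s_r \in H^0(X,\cal{O}(nD))$, defined over $\bb{C}$, whose common zero scheme is exactly $Z$. Writing $D_i = \divv(s_i)$, each $D_i$ is an effective Cartier divisor linearly equivalent to $nD$, and $Z = D_1 \cap \cdots \cap D_r$ scheme-theoretically. By Proposition~\ref{LocalHeightsEqual2}, we may use this presentation to compute $\lambda_Z$. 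This gives
$$\lambda_{Z,v} = \min_i \lambda_{D_i,v} \leq \lambda_{D_1,v}$$
at every place $v$ and every point of $(X\setminus Z)(K)$. The first inequality is simply the definition \eqref{eqn:intht}.

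Next we sum over $v \in M_K$. For $x \in (X \setminus Z)(K)$ with $x \notin \Supp D_1$, we get
$$h_Z(x) = \sum_v \lambda_{Z,v}(x) \leq \sum_v \lambda_{D_1,v}(x) = h_{D_1}(x).$$
The trouble is that $x$ might lie on $D_1$. In that case we use a point-dependent choice instead. Since $x \notin Z$, some $s_i$ does not vanish at $x$, and we use that $D_i$ in place of $D_1$. Then $\lambda_{Z,v}(x) \leq \lambda_{D_i,v}(x)$ holds for all $v$ with $\lambda_{D_i,v}(x)$ finite. Summing gives $h_Z(x) \leq h_{D_i}(x)$. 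Since $D_i \sim nD$ and the global heights are canonical in our setting (the discussion after \eqref{localglobalht}), we have
$$h_{D_i} = h_{nD} = n\, h_D.$$
This yields $h_Z \leq n\, h_D$ with $c = n$. The constant $n$ depends only on $X, Z, D$ over $\bb{C}$ and not on $K$.

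The step I expect to need the most care is the finiteness and sign of the local heights. It has two parts.

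\textbf{Canonicity of the global height.} We need $h_{D_i} = n h_D$ exactly, with no $O(1)$ term. This uses that, over fields in which $\bb{C}$ is trivially valued, the local heights attached to presentations over $\bb{C}$ are canonical (Proposition~\ref{LocalHeightEqual1}). It also uses that the global heights of linearly equivalent divisors agree, via the product formula applied to the ratio $s_i/s_j$, which is a rational function over $\bb{C}$ evaluated at $x$.

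\textbf{The inequality $\lambda_{Z,v} \leq \lambda_{D_i,v}$ must survive summation.} This needs only the pointwise inequality, and that follows from the min-definition. No lower bound is required, because the sum on the right is itself the global height of an effective divisor.

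If the product-formula compatibility turns out to be subtle, I would instead compute directly. Let $\{t_j\}$ be a basis of $H^0(nD)$ containing the $s_i$. Then
$$\lambda_{Z,v}(x) \leq \min_i \log\frac{\max_j |t_j(x)|_v}{|s_i(x)|_v}.$$
Summing over $v$ and applying the product formula to the fixed $s_i$ with $s_i(x) \neq 0$ gives $h_Z(x) \leq h_{nD}(x)$.
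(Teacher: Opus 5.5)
Your proof is correct and follows the same strategy as the paper: write $Z$ as a scheme-theoretic intersection of Cartier divisors, pick one not passing through the given $x$, use $\lambda_Z\le\lambda_{E_i}$ place by place, and sum over places. The only difference is the final comparison: you choose the divisors in $|nD|$ via global generation of $\mathcal{I}_Z(nD)$, so that $h_{D_i}=n\,h_D$ holds exactly, whereas the paper takes arbitrary Cartier divisors $E_i$ and gets $h_{E_i}\le c_i h_D$ by choosing $c_i$ with $c_iD-E_i$ very ample (this step implicitly uses that heights of very ample divisors are nonnegative in this trivially valued setting).
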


\begin{proof}
We write $Z = \cap_{i=1}^{n} E_i$ as an intersection of Cartier divisors $E_i$. As $\lambda_Z \leq \lambda_{E_i}$ on $(X \setminus E_i)(K)$, it follows that $h_Z \leq h_{E_i}$ on $(X \setminus E_i)(K)$. On the other hand, for each $i$, there exists a constant $c_i$ such that $h_{E_i} \leq c_i h_D$; for example, we may choose $c_i$ such that $c_i D - E_i$ is very ample. Thus taking $c = \max\{c_1,\ldots,c_n\}$, we obtain 
$$h_Z \leq c h_D$$
on $(X \setminus E_i)(K)$ for each $E_i$. As $\cap_{i=1}^n E_i = Z$, it follows that $h_Z \leq c h_D$ on $(X \setminus Z)(K)$ as desired.
\end{proof}

Another comparison theorem that we need is the following. 

\begin{proposition} \label{GlobalHeightAmple2}
Let $X,Y$ be quasi-projective varieties over $\bb{C}$ and $\vphi: X \to Y$ a morphism. Let $\ovl{X},\ovl{Y}$ be compactifications of $X,Y$ respectively and let $L,M$ be ample line bundles on $\ovl{X},\ovl{Y}$. Then there exists a constant $c > 0$ such that for any product formula field $K$ for which $\bb{C}$ is a trivially valued subfield of $K_v$ for all $v \in M_K$, we have  
$$h_M \circ \vphi \leq c h_L$$
for all $x \in X(K)$. 
\end{proposition}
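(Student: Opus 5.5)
Since $\vphi$ extends to a rational map $\ovl{\vphi}:\ovl{X}\dashrightarrow\ovl{Y}$ (after fixing compactifications) and $M$ is ample, my plan is to resolve the indeterminacy, pull back $M$, and bound the resulting height by a multiple of $h_L$. This last bound is the familiar ``height of a pullback is dominated by an ample height'' argument. Each step has to be carried out with constants independent of $K$; that is where the canonicity of the local heights (Propositions \ref{LocalHeightEqual1}--\ref{LocalHeightsEqual2}) enters.

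\textbf{Step 1: resolve the map.} Let $\Gamma\subseteq \ovl{X}\times\ovl{Y}$ be the closure of the graph of $\vphi$, with projections $\pi:\Gamma\to\ovl{X}$ and $\psi:\Gamma\to\ovl{Y}$. Then $\pi$ is an isomorphism over $X$, and on $\pi^{-1}(X)$ we have $\psi=\vphi\circ\pi$. Replacing $M$ by a power, which only rescales $c$, I may assume $M$ is very ample and represented by an effective divisor $E$ defined over $\bb{C}$.

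\textbf{Step 2: compare $\psi^*M$ with $\pi^*L$ on $\Gamma$.} Write $L_\Gamma=\pi^*L$; it is big and nef but not necessarily ample. The plan is as follows.
\begin{itemize}
\item Choose an ample $H$ on $\Gamma$ and an integer $a$ with $aH-\psi^*M$ very ample. Since heights of very ample divisors over $\bb{C}$ are nonnegative (they are a maximum of logs over basepoint-free sections), this gives $h_{\psi^*M}\le a\,h_H$.
\item Since $L_\Gamma$ is big, Kodaira's lemma gives $mL_\Gamma = H + F$ with $F$ effective and $\Supp F$ contained in the exceptional locus. As $\pi$ is an isomorphism over $X$, we can take $\Supp F\subseteq \pi^{-1}(\partial X)$.
\item Then $h_H = m h_{L_\Gamma} - h_F$ on points outside $F$. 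Since $F$ is effective, $h_F\ge 0$ at points not in $\Supp F$: this is the sum of the local heights $\lambda_{F,v}\ge 0$, which holds because the canonical local heights of effective divisors are nonnegative when $\bb{C}$ is trivially valued (Proposition \ref{LocalHeightFunction1}, since $|f(x)|\le 1$ on bounded charts).
\item Points of $X(K)$ lift to $\pi^{-1}(X)$, which avoids $\Supp F$. Hence $h_H\le m h_{L_\Gamma}=m\,h_L\circ\pi$ there.
\end{itemize}

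\textbf{Step 3: conclude.} For $x\in X(K)$, let $\tilde x=\pi^{-1}(x)$. Functoriality (Proposition \ref{LocalHeightProperties1}(iii)) gives $h_M(\vphi(x))=h_{\psi^*M}(\tilde x)\le a h_H(\tilde x)\le am\,h_L(x)$, so $c=am$ (divided by the power taken in Step 1) works. All the constants come from geometry over $\bb{C}$. The only input that depends on $K$ is the nonnegativity of canonical local heights of effective or basepoint-free divisors, and this holds uniformly because $\bb{C}$ is trivially valued.

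\textbf{Main obstacle.} The delicate point is Step 2, namely ensuring the ``error divisor'' $F$ has nonnegative height on the relevant points without any $O(1)$ term. I would rely on the canonical, presentation-independent local heights to guarantee $\lambda_{F,v}\ge0$ exactly. Alternatively, one can bypass Kodaira's lemma using Proposition \ref{GlobalHeightAmple1}: work directly with $\psi^*E$ as a closed subscheme pushed to $\ovl{X}$ via Proposition \ref{LocalHeightInequality1}. The correction term there is bounded by $\lambda_{\partial X}$, which vanishes on $X$ only in the trivially-valued-on-$X$ sense. So the Kodaira route seems cleaner.
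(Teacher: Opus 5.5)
Your Step 2 breaks at the Kodaira bullet. Kodaira's lemma only says that $mL_\Gamma-H$ is effective for $m\gg0$; it gives no control over where the effective divisor $F$ lives. In general no $F$ with $\Supp F\subseteq\pi^{-1}(\partial X)$ exists.

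Here is a counterexample. Take $\ovl X=\{x_0x_3=x_1x_2\}\subseteq\bb{P}^4$, the cone over a quadric surface with vertex $p$. Let $X=\ovl X\setminus\{p\}$ and $\vphi\colon X\to\bb{P}^1$, $x\mapsto[x_0:x_1]=[x_2:x_3]$. Then $\Gamma=\{x_0t=x_1s,\ x_2t=x_3s\}\subseteq\ovl X\times\bb{P}^1$ is a $\bb{P}^2$-bundle over $\bb{P}^1$. Also $\pi^{-1}(\partial X)=\{p\}\times\bb{P}^1$ is a curve in a threefold, and $\pi^*L$ is trivial on it. Since this locus has codimension $2$, the only effective Cartier divisor supported on it is $F=0$. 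That would force $m\pi^*L$ to be ample, which is false. So your inequality $h_H\le m\,h_{L_\Gamma}$ on $\pi^{-1}(X)$ is not justified as written.

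The fix is to use finitely many divisors instead of one.
\begin{itemize}
\item The sheaf $\pi_*\psi^*M^{-1}$ is coherent on $\ovl X$ and restricts to the line bundle $\vphi^*M^{-1}$ over $X$. By Serre's theorem, $\pi_*\psi^*M^{-1}\otimes L^n$ is globally generated for $n\gg0$.
\item By the projection formula, this gives sections $\sigma_1,\dots,\sigma_k$ of $\pi^*L^n\otimes\psi^*M^{-1}$ on $\Gamma$ with no common zero on $\pi^{-1}(X)$. In the example one can take $n=1$, with $\sigma_1=x_0/s=x_1/t$ and $\sigma_2=x_2/s=x_3/t$.
\item For $x\in X(K)$, pick $j$ with $\tilde x\notin F_j:=\divv(\sigma_j)$. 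Your (correct) nonnegativity of canonical local heights then gives $h_{\pi^*L^n}(\tilde x)=h_{\psi^*M}(\tilde x)+h_{F_j}(\tilde x)\ge h_{\psi^*M}(\tilde x)$.
\end{itemize}
This yields $c=n$ directly, with no need for $H$ or Kodaira's lemma.

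This is essentially how the paper's proof works, and it is the route you rejected in your last paragraph on mistaken grounds. The correction term $c_i\lambda_Z$ from Proposition~\ref{LocalHeightInequality1} does not need to vanish on $X$. After summing over places it becomes $c_i h_Z$, and Proposition~\ref{GlobalHeightAmple1} bounds both $h_Z$ and $h_{\vphi^*D_i}$ by a multiple of $h_L$. Working with the closed subscheme $Z$, an intersection of finitely many Cartier divisors, rather than a single Cartier divisor supported on the boundary, is exactly what avoids the obstruction above.
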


\begin{proof}
By the linearity relation $h_{M^{\otimes \ell}}=\ell h_M$ for $\ell\in\mathbb{N}$, we may assume without loss that $M$ is very ample. Let $D_1,\ldots,D_n$ be effective Cartier divisors such that $\mathcal{O}(D_i)$ is isomorphic to $M$ for all $i$ and $\cap D_i=\emptyset$. For each $D_i$, we may consider the local height function $\lambda_{D_i}$. By Proposition \ref{LocalHeightInequality1}, we know that 
$$|\lambda_{D_i}\circ\vphi-\lambda_{\vphi^*D_i}|\leq c_i\lambda_Z$$ for some $c_i>0$, where $\varphi^*D_i$ denotes the Zariski closure in $\ovl{X}$ of the preimage of $D_i$ under $\varphi$ and $Z=\overline{X}\setminus X$. Summing over $v\in M_K$, we obtain 
\begin{equation}\label{eqn:summedcibd}|h_{D_i}\circ\vphi-h_{\vphi^* D_i}|\leq c_i h_Z\end{equation}
for all $x\in(X\setminus\vphi^* D_i)(K)$. As Proposition \ref{GlobalHeightAmple1} implies the existence of a constant $c$ such that $\max\{h_Z,h_{\vphi^*D_i}\}\leq ch_L$ for all $i$, we may let $c'=\max\{c_1,\ldots,c_n\}$ in order to conclude from \eqref{eqn:summedcibd} that $$h_{D_i}\circ\vphi\leq (c'+1)ch_{L}$$
for all $x\in(X\setminus\vphi^*D_i)(K)$. Since $\cap D_i=\emptyset$, it follows that $(\cap\vphi^*D_i)\cap X=\emptyset$, and so by our choice of $D_i$, we have
$$h_M\circ\vphi\leq (c'+1)ch_L$$
for all $x\in X(K)$ as desired. 
\end{proof}

Note that if $L$ is a very ample line bundle on a projective variety $X$ over $\bb{C}$ and $K = \bb{C}(B)$ where $B$ is a smooth projective curve, then any $x \in X(K)$ spreads to a morphism $x: B \to X$ and $h(x)$ is equivalent to $\deg_B(x^* L)$, i.e., the degree of the pullback $x^* L$ as a line bundle on $B$ \cite[Example 2.4.11]{BG06}.

\subsection{Abelian Varieties and Schemes} \label{sec: abelianschemes}
Let $K$ be a field and $A/K$ be an abelian variety. We let $A^t$ denote the dual abelian variety of $A$ over $K$. It can be identified with the connected component $\Pic^0(A/K)$ of the identity of the Picard variety of $A$. Given a line bundle $L$ of $A$, we may define a morphism 
$$\lambda(L): A \to A^t$$
given by sending $a \mapsto t_a^*L \otimes L^{-1}$ where $t_a : A \to A$ is the translation by $a$ map. If $L$ is ample, then $\lambda(L)$ is an isogeny, i.e., a surjective homomorphism with finite kernel. 
\par 
A polarization $\lambda$ of $A$ is a homomorphism $\lambda: A \to A^t$ such that over $\ovl{K}$, it is of the form $\lambda(L)$ for some ample line bundle $L$ on $A_{\ovl{K}}$. We say that the polarization is principal if $\lambda$ is an isomorphism. 
\par 
In general for an abelian scheme $A/S$ where $S$ is an artbirary scheme, it is a theorem of Raynaud \cite[I.1.9]{FC90} that the dual $A^t$ exists and is also an abelian scheme. We may define a polarization of $A/S$ to again be a homomorphism $\lambda: A \to A^t$ such that for each geometric point $\ovl{s}$ of $S$, we have an equality $\lambda_{\ovl{s}} = \lambda(\cal{L}_{\ovl{s}})$ for some ample line bundle $\cal{L}_{\ovl{s}}$ on $A_{\ovl{s}}$. For any polarization $\lambda: A \to A^t$, there exists a symmetric ample invertible sheaf $\cal{L}$ on $A$ such that $\lambda(\cal{L}) = 2 \lambda$ \cite[Proposition 6.10]{MFK94}, i.e., such that $\cal{L}$ induces the polarization $2 \lambda_{\ovl{s}}$ on each geometric point $A_{\ovl{s}}$. 
\par 
For an abelian scheme $A/S$, let $A[n]=\ker[n]$ denote the $n$-torsion subgroup. A principal polarization $\lambda$ on $A/S$ induces a non-degenerate skew-symmetric pairing $A[n]\times A[n]\to\mu_n$ \cite[I.1.7]{FC90}. 
A level $n$ structure for an abelian scheme $A/S$ of relative dimension $g$ is an isomorphism of group schemes between $A[n]$ and $(\bb{Z}/n\bb{Z})^{2g}$, where $A[n]=\ker[n]$. For a level $n$ structure to exist, it is necessary for $n$ to be invertible in $O_S$, so that $S$ is a $\bb{Z}[1/n]$-scheme. 
\par 
For $n \geq 1$, we let $\cal{A}_{g,n}$ denote the fine moduli space of principally polarized abelian varieties (ppavs) with level $n$ structure over $\bb{C}$.  We will let $\cal{A}_g$ denote the moduli space of ppavs over $\bb{C}$. For $n \geq 3$, this is a quasi-projective scheme over $\bb{C}$ by Mumford \cite[Theorem 7.10]{MFK94}. We now fix $n=3$. There is a universal abelian scheme $\cal{A}$ over $\cal{A}_{g,3}$ with a principal polarization $\lambda:\cal{A}\to\cal{A}^t$. Then there is a universal invertible sheaf $\cal{L}$ on $\cal{A}$ such that if $(A,\lambda)$ is a principally polarized abelian variety over $K$ with level $3$ structure, then the pullback of $\cal{L}$ to $A$ induces $2 \lambda$. 
\par 
We are interested in two different compactifications of $\cal{A}_{g,3}$. The first is known as the minimal (Satake or Bailly--Borel) compactification, and we will denote it by $\cal{A}^*_{g,3}$. This is a projective variety over $\bb{C}$ which contains $\cal{A}_{g,3}$ as an open dense subvariety. Furthermore, there is an ample line bundle $\omega$ on $\cal{A}_{g,3}^*$ known as the Hodge bundle. On $\cal{A}_{g,3}$, it is equivalent to $\bigwedge^g e^*\Omega^1_{\cal{A}}$, where $\Omega^1_{\cal{A}}$ is the relative cotangent bundle of the universal abelian scheme $\cal{A}$ over $\cal{A}_{g,3}$ and $e: \cal{A}_{g,3} \to \cal{A}$ is the identity section. The line bundle $\omega$ can be used to compute the Faltings height \cite{Fal83} of a semistable principally polarized abelian variety $A/K$ where $K=\bb{C}(B)$ is the function field of a smooth projective curve $B$ over $\bb{C}$ as follows: passing to a finite extension $K'=\bb{C}(B')$, we may assume that $A/K'$ has level $3$ structure, and so we obtain a $K'$-point on $\cal{A}_{g,3}$. Since $\mathcal{A}_{g,3}^*$ is projective (hence proper), this $K'$-point of $\mathcal{A}_{g,3}$ defined by $A/K'$ extends from the generic point $\operatorname{Spec}K'$ of $B'$ to all of $B'$. In other words, by the valuative criterion for properness, any rational map $B'\dashrightarrow\mathcal{A}_{g,3}^*$ coming from the moduli point of $A/K'$ extends uniquely to a regular morphism $p_A:B'\to\mathcal{A}_{g,3}^*$. We may then define the stable Faltings height $h_{\Fal}(A)$ as
$$h_{\Fal}(A) = \frac{1}{[K':K]} \deg_{B'}(p_A^*\omega).$$
By \cite[Section 2]{Fal83}, this agrees with the usual definition via N\'{e}ron models.
\par 
The second compactification of $\cal{A}_{g,3}$ that we are interested in is the arithmetic toroidal compactifications which were constructed by Faltings--Chai \cite{FC90}. Over $\bb{C}$, these compactifications were known before Faltings--Chai \cite{AMRT75} but it was known that there was a semiabelian scheme over the compactification. By \cite[V.5.8]{FC90}, there exists a compactification $\ovl{\cal{A}}_{g,3}$ of $\cal{A}_{g,3}$ which is a projective variety over $\bb{C}$. More importantly for us, there exists a universal semiabelian scheme $G \to \ovl{\cal{A}}_{g,3}$ extending the universal abelian scheme $\cal{A} \to \cal{A}_{g,3}$. Note that Faltings and Chai only compactify the moduli space of ppavs with symplectic level $n$ structure. However, $\cal{A}_{g,3}$ consists of a union of disjoint copies of the moduli space with symplectic level $n$ structure \cite[IV.6.2(b)]{FC90} and hence we can simply compactify each copy individually.  

\par 
We now turn to the setting of abelian varieties $A$ over a function field $K = \bb{C}(B)$ where $B$ is a smooth projective curve over $\bb{C}$. Given a symmetric ample line bundle $L$, we may define the N\'{e}ron--Tate height $\h_{A,L}$ as 
$$\h_{A,L}(x) = \lim_{n \to \infty} \frac{1}{n^2} h_L([n]x) \text{ for } x \in A(\ovl{K})$$
where $h_L$ is a Weil height for the line bundle $L$. In general, we will omit the dependence on $L$ and simply write $\h_A$. By \cite[Lemma 2.3]{GGK21}, two such line bundles $L,L'$ induce the same N\'{e}ron--Tate height function if $c_1(L) = c_1(L')$ where $c_1$ is the first Chern class. Furthermore by \cite[Lemma 2.1]{GGK21}, two ample line bundles $L,L'$ have the same first Chern class if and only if they induce the same polarization on $A$. 

On the other hand, there is a universal polarization $\lambda_{\mathrm{univ}}$ on $\mathcal{A}_{g,3}$, characterized by the property that pulling it back along any moduli point $(A,\lambda)$ gives the polarization $\lambda$ on $A$. The universal symmetric ample line bundle $\mathcal{L}$ on the universal abelian scheme is normalized so that its first Chern class represents $2\lambda_{\mathrm{univ}}$, and therefore pulling back $\mathcal{L}$ along $(A,\lambda)$ produces a symmetric ample line bundle on $A$ whose first Chern class is $2\lambda$. It follows that every $K$-point $(A,\lambda)$ of $\mathcal{A}_{g,3}$ determines a unique Néron--Tate height on $A(\overline{K})$ via the pullback of the universal symmetric ample line bundle $\mathcal{L}$.

\subsection{Moduli Space of Curves} \label{sec: ModuliCurves} We now recall some properties of the moduli space of curves following \cite[Section 3.1]{Yua24}. Let $M_g$ be the coarse moduli space of smooth, projective, connected curves of genus $g$ over $\mathbb{C}$, and let $\overline{M}_g$ be the coarse moduli space of stable curves of genus $g$. Then $\ovl{M}_g$ is a proper variety over $\bb{C}$ and carries with it a $\bb{Q}$-line bundle $\lambda$, which we will call the Hodge bundle. The line bundle $\lambda$ has the following characterization \cite[Section 4]{CH88}: given any family $f: X \to S$ of stable curves, we obtain a morphism $p: S \to \ovl{M}_g$ where $p$ maps $s$ to the point on $\ovl{M}_g$ corresponding to the stable curve $f^{-1}(s)$. Then $p^* \lambda$ is the line bundle given by $\det(f_* \omega_{X/S})$, where $\omega_{X/S}$ is the relative dualizing sheaf. 
\par 
Now let $K = \bb{C}(B)$ be the function field of a smooth projective curve $B$ over $\bb{C}$ and let $C/K$ be a smooth curve of genus $g \geq 2$. Let $K' = \bb{C}(B')$ be an extension for which $C/K'$ has semistable reduction. Then there exists a stable model $\cal{C}/B'$ and hence we obtain a morphism $p: B' \to \ovl{M}_g$. We can define the stable Faltings height of our curve as $$h_{\Fal}(C)=\frac{1}{[K':K]}\deg_{B'}(p^* \lambda).$$ This is also equal to $h_{\lambda}(C)$, the height on $C$ arising via pullback of the Hodge bundle $\lambda$ to the point of $\ovl{M}_g(K)$ corresponding to $C/K$. Let $\cal{C}'/B'$ be the minimal regular model of $C/K'$. Let $\pi:\mathcal{C}\to B'$ and $\pi':\mathcal{C}'\to B'$ denote the respective structure morphisms of $\cal{C}$ and $\cal{C}'$. 
We have an equality \begin{equation}\label{eqn:detsequal}\deg(\pi'_* \omega_{\mathcal{C}'/B'})=\deg(\pi_* \omega_{\mathcal{C}/B'}).\end{equation} Noether's formula \cite[Section 6]{Fal84} gives \begin{equation}\label{eqn:Noether}\deg(\pi'_* \omega_{\cal{C}'/B'})=\frac{1}{12}( \omega^2_{\cal{C'}/B'}+N)\end{equation} where $N$ is the total number of nodes on $\cal{C}'$. 
\par
By construction, $p^*\lambda \cong \det(\pi_*\omega_{\mathcal{C}/B'})$, and since 
$\det(\pi_*\omega_{\mathcal{C}/B'})$ is a line bundle on $B'$ whose degree equals the degree 
of the vector bundle $\pi_*\omega_{\mathcal{C}/B'}$ on $B'$, we obtain 
$\deg_{B'}(p^*\lambda)=\deg_{B'}(\pi_*\omega_{\mathcal{C}/B'})$.
Moreover, as the generic fiber of $\cal{C}'/B'$ is of genus $g \geq 2$, we have the inequality $\omega^2_{\cal{C'}/B'}\geq 0$ \cite[Proposition 1]{Szp79}. Combining these facts with \eqref{eqn:detsequal} and \eqref{eqn:Noether}, we conclude that \begin{equation} \label{eq: CurveHeightNode}[K':K]h_{\lambda}(C)=[K':K]h_{\mathrm{Fal}}(C)=\deg(\pi'_* \omega_{\cal{C}'/B'})\geq\frac{N}{12}.\end{equation}

\section{Lower Bound on Arakelov-Green's functions}\label{section:AG}
In this section, we introduce a higher-dimensional version of the Arakelov-Green's functions due to the first author \cite{Loo24} along with some of the main results in \cite{Loo24}, which appear here as Theorem \ref{thm:basisbound} and Proposition \ref{TransfiniteDiameterBound1}. Theorem \ref{thm:basisbound} gives us an explicit lower bound on the values of the Arakelov-Green's functions while Proposition \ref{TransfiniteDiameterBound1} gives us a criterion to obtain a positive lower bound for the Arakelov-Green's function on an abelian variety with bad reduction. 
\par 
To be specific, for an abelian variety $A/K$ with bad reduction where $K$ is a non-archimedean field, the theory of Gubler \cite{Gub10} provides a retraction map $r:A^{\an}\to\bb{R}^{m}/\Lambda$ where $\bb{R}^{m}/\Lambda$ is a real torus and $m \geq 1$. If we have a collection of points that retract to $0$, then we can obtain a positive lower bound on our Arakelov-Green's functions. We then further study when can we say that a point $x \in A(K)$ lies in $r^{-1}(0)$.  

\subsection{A General Lower Bound} \label{sec: LowerBound} Let $(k,|\cdot|)$ be a valued field, with $|\cdot|$ extended to the algebraic closure $\ovl{k}$ of $k$. Let $X\subseteq\mathbb{P}^N$ be a projective variety over $k$, and let $f:X\to X$ be a degree $d\ge2$ endomorphism of $X$. Assume that $f$ extends to a morphism $f: \bb{P}^N \to \bb{P}^N$, which by abuse of notation we also denote by $f$. Let $F: \bb{A}^{N+1} \to \bb{A}^{N+1}$ be a homogeneous lift of $f$. We write $F = [F_0: \cdots ; F_n]$. If $P\in\mathbb{A}^{N+1}(k)$, we write \[||F(P)||=\max\{|F_0(P)|,\dots,|F_N(P)|\}.\] We define the \emph{homogeneous filled Julia set} of $F$ to be the set \[\mathcal{K}=\{P\in\mathbb{A}^{N+1}(\ovl{k}): ||F^n(P)||\not\to\infty \text{ as }n\to\infty\}.\] Throughout, we will let \[\pi:\mathbb{A}^{N+1}\setminus\{(0,\dots,0)\}\to\mathbb{P}^N\] denote the natural projection. 
	
We now build Arakelov-Green's functions associated to this dynamical system. For each $n\ge 1$, let $c(n)=\dim H^0(X,\mathcal{O}(n))$, where $\mathcal{O}(1)$ is the pullback bundle on $X$ induced by the embedding $X\xhookrightarrow{}\bb{P}^N$. We will assume that $X$ is normally embedded in $\bb{P}^N$, i.e., that $H^0(\bb{P}^N,\mathcal{O}(n)) \to H^0(X,\mathcal{O}(n))$ is surjective for all $n \geq 1$. Given a choice of bases $\{\mathcal{B}_n\}_{n=1}^\infty$ of $H^0(X,\mathcal{O}(n))$ respectively, there is a naturally induced such function \[g_n:X^{c(n)}\to\mathbb{R}\cup\{\infty\}\] for each $n$, constructed as follows. For a homogeneous lift $F:\mathbb{A}^{N+1}\to\mathbb{A}^{N+1}$ of $f$, let $\widehat{H}_F:\mathbb{A}^{N+1}\setminus\{0,\dots,0\}$ be given by \[\widehat{H}_F(P)=\lim_{n\to\infty}\frac{1}{d^n}\log||F^n(P)||.\] (That this limit exists is shown in \cite[proof of Theorem 9]{KS07}.) Write \begin{equation}\label{eqn:rF} r(F)=\frac{1}{d^N (N+1)(d-1)} \log |\Res(F)|^{-1},\end{equation} where $\Res(F)$ denotes the Macaulay resultant of $(F_0,\ldots,F_N)$. For $P\in X(k)$, write $\widetilde{P}$ for a lift of $P$ to $\mathbb{A}^{N+1}(k)$, with coordinates in $k$. The Arakelov-Green's function $g_n:X^{c(n)}\to\mathbb{R}\cup\{\infty\}$ associated to the basis $\mathcal{B}_n$ of $A_n$ is given by \begin{equation}\label{eqn:AGfn}g_n\left(P_1,\dots,P_{c(n)}\right)=\frac{1}{c(n)}\sum_{i=1}^{c(n)}\widehat{H}_F(\widetilde{P_i})-\frac{1}{n\cdot c(n)}\log\left|\mathrm{det}\left(\eta_j(\widetilde{P_i})\right)_{\eta_j\in\mathcal{B}_n}\right|+r(F).\end{equation} It is readily verified that for a fixed $\mathcal{B}_n$, this function is independent of the choice of lifts $\widetilde{P_i}$ and $F$. Moreover, if $\mathcal{B}_n'$ is another basis for $A_n$, then its associated Arakelov-Green's function differs by a constant from that associated to $\mathcal{B}_n$. In the special case of $X=\mathbb{P}^1$ and $\mathcal{B}_n$ the standard monomial basis of $H^0(\mathbb{P}^1,\mathcal{O}(n))=A_n$, this $g_n$ coincides (up to replacing $r(F)$ by $\mathrm{Res}(F)$) with the function $\mathscr{E}(g):(\mathbb{P}^1)^{n+1}\to\mathbb{R}\cup\{\infty\}$ given by \[\mathscr{E}(g)(P_1,\dots,P_{n+1})=\frac{1}{n(n+1)}\sum_{i\ne j}g(P_i,P_j)\] for $g$ the dynamical Arakelov-Green's functions introduced by Baker and Rumely (see \cite[\S3.4]{BR06} and \cite[\S10.2]{BR10}). 

\begin{definition}\label{def:dBn} Given a basis $\mathcal{B}_n$ of $H^0(X,\mathcal{O}(n))$ of order $c(n)$, let \begin{equation*}d_{\mathcal{B}_n}(\mathcal{K}\cap\pi^{-1}(X))=\exp\left(\sup_{\substack{\left(\widetilde{P_1},\dots,\widetilde{P}_{c(n)}\right)\\\in(\mathcal{K}\cap\pi^{-1}(X(\ovl{k}))^{c(n)}}}\frac{1}{n\cdot c(n)}\log\left|\mathrm{det}\left(\eta_j(\widetilde{P_i})\right)_{\eta_j\in\mathcal{B}_n}\right|\right).\end{equation*}  \end{definition}

From here on out, we normalize $F$ so that $r(F)=0$ (by passing to an extension of $k$ if necessary), and consider the collection $\mathcal{G}$ of polynomials of the form $\left(F_i^{(\ell)}\right)^j$, where $0\le i\le N$, $\ell\in\mathbb{Z}_+$, and $1\le j\le d-1$. Let \begin{equation}\label{eqn:basisform}\mathcal{F}=\bigcup_{j=\lfloor t_1\rfloor}^{\lfloor t_2\rfloor}\left\{\eta_iG_i: G_i\in\mathcal{G}^j\textup{ and }\eta_i\textup{ is a standard monomial of deg.}<d(N+1)\right\}\end{equation} where \[\mathcal{G}^j=\left\{G_1G_2\cdots G_j:G_\ell\in\mathcal{G}\textup{ for all }1\le \ell\le j\right\}.\] 

\begin{definition}\label{def:Hn}  We will denote by $\mathcal{B}_n$ any basis of $H^0(X,O(n))$ consisting of vectors of the form (\ref{eqn:basisform}) whenever $n\ge d(N+1)$, and consisting of the standard monomial basis otherwise. We will denote by $c(n)$ the quantity $\dim_k H^0(X,\mathcal{O}(n))$, even though this quantity clearly depends on $X$ and its embedding into $\mathbb{P}^N$ in addition to $n$. When there is potential ambiguity as to the underlying dynamical system or the lift $F$ used in defining $\mathcal{B}_n$, we will refer to $\mathcal{B}_n$ and $c(n)$ as being \emph{with respect to $(X,f)$} (as well as the lift $F$). The notation \[\left(\eta_j(P_1,\dots,P_{c(n)})\right)_{\eta_j\in\mathcal{B}_n}\] will denote the $c(n)\times c(n)$ matrix whose $i$-th row corresponds to $P_i$ and whose $j$-th column corresponds to $\eta_j\in\mathcal{B}_n$.\end{definition}

We will call the above bases $\cal{B}_n$ good. Note that this notion depends on the polarized dynamical system $(X,f)$ along with its lift $F$. Also note that for a good basis to be known to exist by definition, we must have a surjection $H^0(\bb{P}^N,\mathcal{O}(n))\to H^0(X,\mathcal{O}(n))$. For any fixed embedding $X \xhookrightarrow{} \bb{P}^N$, this is true for all sufficiently large $n$. Conversely if we have a surjection $H^0(\bb{P}^N,\mathcal{O}(n))\to H^0(X,\mathcal{O}(n))$, then a good basis always exists since $\cal{F}$ is a spanning set for $H^0(\bb{P}^N,\mathcal{O}(n))$ \cite[Section 2.2]{Loo24}.

We have the following theorem \cite[Theorem 3.1]{Loo24}.

\begin{theorem}\label{thm:basisbound} Let $k$ be a valued field with absolute value $|\cdot|$, and let $X\subseteq\mathbb{P}^N$ be a projective variety  Let $f:X\to X$ be a morphism of degree $d\ge2$ which extends to $f: \bb{P}^N \to \bb{P}^N$ and let $F:\mathbb{A}_k^{N+1}\to\mathbb{A}_k^{N+1}$ be a homogeneous lift of $f$. Let $\mathcal{K}$ be the homogeneous filled Julia set of $F$. For this triple $(X,f,F)$, and for $n \geq 1$, let $c(n)$ and $\mathcal{B}_n$ be as in Definition \ref{def:Hn}. Let \[\pi:\mathbb{A}^{N+1}\setminus\{(0,\dots,0)\}\to\mathbb{P}^N\] be the natural projection. Let $R$ be the diameter of $\mathcal{K}\cap\pi^{-1}(X)$ with respect to the distance defined by the supnorm. There is a constant $C=C(d,N)$ with the following property. For any $c(n)$-tuple $\vec{P}=\left(P_1,\dots,P_{c(n)}\right)$ of points in $\mathcal{K}\cap\pi^{-1}(X)$ and any $n\ge 2$, we have \begin{equation}\label{eqn:avglb}\log\left|\textup{det}\left(\eta_j(\vec{P})\right)_{\eta_j\in\mathcal{B}_n}\right|\le C\max\{\log R,1\}(\log n)\cdot c(n).\end{equation} (Here we define $\log(0)=-\infty$.)\end{theorem}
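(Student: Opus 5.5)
The plan is to expand the determinant as a sum over permutations and bound each term pointwise. By the ultrametric inequality (non-archimedean case) or the triangle inequality (archimedean case),
\[
\left|\det\left(\eta_j(\vec P)\right)\right|\le c(n)!^{\epsilon}\,\prod_{i=1}^{c(n)}\max_{\eta\in\mathcal{B}_n}|\eta(\widetilde{P_i})|,
\]
where $\epsilon=0$ in the non-archimedean case and $\epsilon=1$ otherwise. It therefore suffices to show that every $\eta\in\mathcal{B}_n$ satisfies
\[
\log|\eta(\widetilde P)|\le C\max\{\log R,1\}\log n
\]
for every $\widetilde P\in\mathcal{K}\cap\pi^{-1}(X)$. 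In the archimedean case one additionally checks that $\log c(n)!$ is absorbed, which needs $\log c(n)=O(\log n)$; this holds since $c(n)$ is polynomial in $n$ of degree at most $N$. (If instead the intended bound requires a Hadamard-type estimate, I would use Hadamard's inequality, which gives a factor $c(n)^{c(n)/2}$ and is again absorbed.)

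The key step is bounding a single good basis element $\eta=\eta_0 G_1\cdots G_j$, with $\eta_0$ a standard monomial of degree $<d(N+1)$ and each $G_\ell=(F_i^{(m)})^{e}$, $e\le d-1$. Since $\mathcal{K}$ is $F$-invariant and, by definition of $R$, every point of $\mathcal{K}\cap\pi^{-1}(X)$ has sup-norm at most $R$, we get $|F_i^{(m)}(\widetilde P)|\le\|F^m(\widetilde P)\|\le R$ for all $m$. Hence $\log|G_\ell(\widetilde P)|\le (d-1)\log^+R$ and $\log|\eta_0(\widetilde P)|\le d(N+1)\log^+R$. The total is therefore at most $(j(d-1)+d(N+1))\log^+R$, up to an archimedean constant coming from the coefficients of $\eta_0$, which are $1$.

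The main point is that the number $j$ of factors is only $O(\log n)$, not $O(n)$. Since $\deg F_i^{(m)}=d^m$, a degree-$n$ element is built with iterates of exponentially growing degree, essentially the base-$d$ expansion of $n$, where digits $\le d-1$ explain the exponent range $1\le e\le d-1$. Thus the range $\lfloor t_1\rfloor\le j\le\lfloor t_2\rfloor$ in \eqref{eqn:basisform} should satisfy $t_2\le C'(d,N)\log n$. I would verify this from the construction in \cite[Section 2.2]{Loo24}: any monomial of degree $n$ is written as $\eta_0\prod (F^{(m)}_{i})^{e}$ via a division algorithm with respect to the regular sequence $F^{(m)}_0,\dots,F^{(m)}_N$. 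Here the remainder monomials have degree $<d^m(N+1)$, and one peels off one iterate level at a time, for about $\log_d n$ levels with at most $N+1$ factors per level. Establishing this $O(\log n)$ count of factors, and making sure the remainders at each level also decompose with controlled norm, is the step I expect to be the real obstacle; everything else is bookkeeping.

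Combining these bounds gives
\[
\log|\det|\le c(n)\bigl((d-1)C'\log n+d(N+1)\bigr)\max\{\log R,1\}+O(c(n)\log n)
\]
in the archimedean case. Since $n\ge2$, this is at most $C(d,N)\max\{\log R,1\}(\log n)\,c(n)$, as claimed. If $\det=0$, the inequality holds trivially with the convention $\log 0=-\infty$.
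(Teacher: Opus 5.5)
Your proposal is correct and matches the result as the paper uses it. The paper gives no proof, citing \cite[Theorem 3.1]{Loo24}, which adapts Baker's method \cite{Bak06} exactly as you do: bound each good-basis element pointwise on $\mathcal{K}\cap\pi^{-1}(X)$ using $F$-invariance and the fact that it has only $O(\log n)$ iterate factors $(F_i^{(\ell)})^e$, then apply the ultrametric inequality (or Hadamard's inequality in the archimedean case). The step you flag as the obstacle is lighter than you fear: $t_1,t_2=O(\log n)$ is built into the definition of a good basis, and since the determinant is taken directly in $\mathcal{B}_n$, no norm control on any spanning decomposition is needed (that only concerns existence of good bases); note also that $\|\widetilde P\|\le R$ follows from the diameter via the scaling invariance $c\,\mathcal{K}\subseteq\mathcal{K}$ for $|c|\le 1$, and that $F^m(\widetilde P)$ stays in $\pi^{-1}(X)$ because $f(X)\subseteq X$.
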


Note that Theorem \ref{thm:basisbound} gives a lower bound on $g_n(P_1,\ldots,P_{c(n)})$. Indeed as we may choose any lifts $\tilde{P}_i$ of $P_i$, we choose lifts (possibly after passing to an extension of $k$) such that the $\widehat{H}_F(\tilde{P}_i)$ are arbitrarily close to $0$. Thus we have the lower bound 
\begin{equation}\label{eqn:Greengenbd}g_n(P_1,\ldots,P_{c(n)}) \geq -C \max\{\log R,1\} \frac{\log n }{n}+r(F).\end{equation}
In particular, we have 
$$\liminf_{n \to \infty} \inf g_n(P_1,\ldots,P_{c(n)}) \geq r(F).$$

Given an endomorphism $f: \bb{P}^N \to \bb{P}^N$ along with a lift $F: \bb{A}^{N+1} \to \bb{A}^{N+1}$, we would like to bound the supnorm diameter $R$ of the homogeneous filled Julia set $\mathcal{K}$ of $F$ explicitly. This can be done by using \cite[Lemma 6]{Ing22}. For simplicity, let's assume that $| \cdot |$ is non-archimedean as that will be the only case we use. Letting \[\lambda(f)=-\log|\Res(F)|+(N+1)(d^2)^N\log ||F||\] be a local height function for the hypersurface $\{\Res(F)=0\}$ inside $\Rat_{d^2}^N$, we have
$$\frac{-\lambda(f)}{d-1}\leq\Ht_F(P)-\log||P||-\frac{1}{d-1}\log||F||\leq 0$$
for all $P \in \bb{A}^{N+1}$. Inside $\mathcal{K}$, we have $\Ht_F(P)\leq 0$ and so 
$$\log||P||\leq\frac{\lambda(f)}{d-1}+\frac{1}{d-1}\log||F||.$$ Note that $\lambda(f)\geq0$ since $|\cdot|$ is non-archimedean. Hence we may take 
\begin{equation}\label{eqn:Rbound}R=O(\lambda(f)+\log||F||)\end{equation}
where our constant only depends on $d$ and $N$.
\par 
Now suppose $K$ is a product formula field and assume further that $F$ has a coefficient that is $1$, so that $\log||F||_v\geq0$ for all $v \in M_K$. This allows us to assume that for any $M_K'\subseteq M_K$,
\begin{equation}\label{eqn:Fnormbd}\sum_{v \in M_K'} N_v \log ||F||_v \leq \sum_{v \in M_K} N_v \log ||F||_v = h(f).\end{equation} Thus, by the non-negativity of $\lambda_v$ at all $v\in M_K$,
$$\sum_{v \in M_K'} (\lambda_v(f) + \log ||F|_v) \leq \sum_{v \in M_K} (\lambda_v(f) + \log ||F||_v) = O(h(f)).$$ In particular, for any $M_K' \subseteq M_K$, combining this with \eqref{eqn:Greengenbd} and \eqref{eqn:Rbound} gives
\begin{equation} \label{eq:ElkiesLower1}
\sum_{v \in M_K'}g_{\cal{B}_k}(x_1,\ldots,x_{c(k)})\geq\left(\sum_{v\in M_K'}r(F)_v\right)-\frac{C\log k}{k}O(h(f)).
\end{equation}

\subsection{A Positive Lower Bound for Abelian Varieties} \label{sec : AbVarPos}
Let $K$ be a complete nonarchimedean valued field, and let $A$ be an abelian variety over $K$ embedded into $\bb{P}^N$ along with an endomorphism $f: \bb{P}^N \to \bb{P}^N$ of degree $d^2$ that extends multiplication by $[d]$ on $A$. We assume without loss that $A$ is normally embedded into $\bb{P}^N$, so that $H^0(\bb{P}^N,\mathcal{O}(n))$ surjects onto $H^0(A,\mathcal{O}(n))$. Pick a homogeneous lift $F: \bb{A}^{N+1} \to \bb{A}^{N+1}$. Let $\{\mathcal{B}_n\}$ be a choice of basis for $H^0(A,\mathcal{O}(n))$ of the form given in Definition \ref{def:Hn}. For each $n$, let $g_n(x_1,\dots,x_{c(n)})$ be the associated Arakelov-Green's function. By Theorem \ref{thm:basisbound}, if $\mathcal{K}$ is the homogeneous filled Julia set of $F$, $\pi:\mathbb{A}^{N+1}\setminus\{0\}\to\mathbb{P}^N$ is the natural projection, and $\cal{B}_n$ is a sequence of good bases for $H^0(A,\mathcal{O}(n))$,  we obtain 
\begin{equation} \label{eq:TransfiniteBound1}
\limsup_{n} d_{\cal{B}_n}(\cal{K} \cap \pi^{-1}(A)) \leq \exp(-r(F)).
\end{equation}
Given a $c(n)$-tuple $\vec{P} \in A^{\an}$, we say that $\vec{P}$ is \emph{Fekete} if 
$$g_{\cal{B}_n}(\vec{P}) = \log d_{B_n}(\cal{K} \cap \pi^{-1}(A)).$$
Given a sequence $(\vec{P}_n)$ of $c(n)$-tuples of points $P_i \in A^{\an}$, along with a sequence of bases $(\cal{B}_n)$, not necessarily a sequence of good bases, we say that the sequence is \emph{asymptotically Fekete} if  
$$\lim_{n \to \infty} \left(g_{\cal{B}_n}(\vec{P}_n) + \log d_{\cal{B}_n}(\cal{K} \cap \pi^{-1}(A)) \right) = 0.$$
Note that the notions of Fekete and asymptotically Fekete are independent of our choice of sequence of bases $\{\cal{B}_n\}$.  
\par 

By \cite[Theorem 3.5]{Loo24}, using the results of \cite{BGM22}, it follows that there exists a unique probability measure $\mu_A$, supported on $A^{\an}$, such that for any sequence $(\vec{P}_n)$ that is asymptotically Fekete, we have 
\begin{equation} \label{eq: Convergence1}
\frac{1}{c(n)} \sum_{x \in \vec{P}_n} \delta_x \overset{\ast}{\rightharpoonup} \mu_A.
\end{equation}
In fact, we can describe $\mu_A$. On $A^{\an}$, as our line bundle $L = \mathcal{O}(1)$ is symmetric and ample, we have an isomorphism $\vphi: L^{n^2} \simeq [n]^*L$ for any $n \geq 1$. Fix some $n \geq 2$. There is then a canonical metric $| \cdot |$ by \cite[Theorem 2.2]{Zha95} or \cite[Theorem 9.5.3]{BG06}, unique up to a multiplicative constant, such that $(\vphi^* \circ [n]^*| \cdot |)^{1/n^2} = | \cdot |$ for our $n$. Our metric is a limit of model metrics and hence semipositive, or psh in the sense of \cite{BGM22}. By \cite[Corollary D]{BGM22}, the measure $\mu_A$ coincides with the canonical measure $c_1(L, | \cdot |)^{\wedge g}$ where $g = \dim A$, see \cite{Gub07} for more details on the definition of $c_1(L, |\cdot|)^{\wedge g}$. Note that Chambert--Loir's original definition \cite{CL06} of the Monge--Amp\`ere measures assumed that the valuation on $K$ comes from the algebraic closure of a field with discrete valuation but this assumption was subsequently removed by Gubler. There is a continuous retraction map \cite{Gub10, GS23}
$$r: A \to S(A) \simeq \bb{R}^k/\Lambda$$
where $S(A)$ is the skeleton of $A$, with dimension $0\le k\le g$, and $\Lambda$ is a lattice in $\bb{R}^k$. 
\par
We now assume that $A$ has bad reduction over $K$, so that $k \geq 1$. Let $S = \supp \mu_A$. Then \cite[Theorem 1.2]{GS23} tells us that $r(S) = \bb{R}^k/\Lambda$, i.e., the measure is supported on the entire skeleton. Suppose we are given a sequence of $c(n)$-tuples $\vec{P}_n$ such that $r(\vec{P}_n) = 0$. Define \[\mathfrak{d}_{\mathcal{B}_n}(\vec{P}_n)=\exp(-g_{\mathcal{B}_n}(\vec{P}_n)-\log d_{\mathcal{B}_n}(\mathcal{K}\cap\pi^{-1}(A))).\] We claim that $\log\mathfrak{d}_{\cal{B}_n}(\vec{P}_n)$ is bounded away from $0$.

\begin{proposition} \label{TransfiniteDiameterBound1}
Let $\cal{B}_n$ be a sequence of good bases and $(\vec{P}_n)$ a sequence of $c(n)$-tuples of points such that $r(\vec{P}_n) = 0$. Then there exists a $\delta > 0$ such that 
$$\limsup \log\mathfrak{d}_{\cal{B}_n}(\vec{P}_n) \leq - \delta.$$
In other words, there exists a $\delta > 0$ such that for all sufficiently large $n$,
$$\log\mathfrak{d}_{\cal{B}_n}(\vec{P}_n) \leq - \delta.$$
\end{proposition}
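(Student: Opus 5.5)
We argue by contradiction, combining the equidistribution statement \eqref{eq: Convergence1} with the fact from \cite[Theorem 1.2]{GS23} that $\mu_A$ charges the whole skeleton.

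First, note that $\log\mathfrak{d}_{\cal{B}_n}(\vec{P}_n)\le 0$ always. By definition of $d_{\cal{B}_n}$ as a supremum over tuples in $\cal{K}\cap\pi^{-1}(A)$, and since $g_{\cal{B}_n}$ can be computed using lifts in $\cal{K}$ with $\widehat{H}_F$ arbitrarily close to $0$, we have $g_{\cal{B}_n}(\vec{P}_n)\ge -\log d_{\cal{B}_n}(\cal{K}\cap\pi^{-1}(A))$. (Here we use the normalization $r(F)=0$.)

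Suppose the conclusion fails. Then for every $\delta>0$, infinitely many $n$ have $\log\mathfrak{d}_{\cal{B}_n}(\vec{P}_n)>-\delta$. A diagonal choice gives a subsequence $n_j$ with $\log\mathfrak{d}_{\cal{B}_{n_j}}(\vec{P}_{n_j})\to 0$, that is,
$$g_{\cal{B}_{n_j}}(\vec{P}_{n_j})+\log d_{\cal{B}_{n_j}}(\cal{K}\cap\pi^{-1}(A))\to 0.$$
So the subsequence is asymptotically Fekete. This is where we must check that \cite[Theorem 3.5]{Loo24} (via \cite{BGM22}) applies to a subsequence of indices rather than all $n$. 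The Boucksom--Gubler--Martin argument only uses the asymptotics along the given indices, so this should be harmless; alternatively, we pad the sequence with genuine Fekete tuples at the other indices, which is possible since Fekete tuples exist by compactness or can be approximated. Equidistribution \eqref{eq: Convergence1} then gives
$$\mu_j:=\frac{1}{c(n_j)}\sum_{x\in\vec{P}_{n_j}}\delta_x\overset{\ast}{\rightharpoonup}\mu_A.$$

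Next, push forward by the continuous retraction $r:A^{\an}\to\bb{R}^k/\Lambda$. Weak-$*$ convergence is preserved under pushforward by continuous maps between compact Hausdorff spaces, so $r_*\mu_j\to r_*\mu_A$. But $r(\vec{P}_{n_j})=0$, so $r_*\mu_j=\delta_0$ for all $j$, forcing $r_*\mu_A=\delta_0$. On the other hand, $k\ge1$ by bad reduction, and $r(\supp\mu_A)=\bb{R}^k/\Lambda$ by \cite[Theorem 1.2]{GS23}. Hence $r_*\mu_A$ is not the Dirac mass at $0$: indeed $r_*\mu_A$ is the Haar measure, or at minimum its support is the full torus, being the image of $\supp\mu_A$ under the proper map $r$. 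This is a contradiction.

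The main obstacle is the subsequence and asymptotically-Fekete bookkeeping, that is, making sure the cited equidistribution theorem applies along a subsequence. A secondary point is justifying the sign inequality $\log\mathfrak{d}\le 0$ for points not lying in $\cal{K}$. This follows by rescaling lifts: since $\widehat{H}_F(cP)=\log|c|+\widehat{H}_F(P)$, we can scale each lift to land (approximately) in $\cal{K}$ while tracking the determinant's homogeneity of degree $n$ in each row.
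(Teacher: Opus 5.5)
Your proposal is correct and follows the paper's proof. The paper also argues by contradiction, pads the bad subsequence with Fekete tuples to obtain an asymptotically Fekete sequence, applies \eqref{eq: Convergence1}, and contradicts $r(\supp\mu_A)=\mathbb{R}^k/\Lambda$. Your pushforward formulation $r_*\mu_A=\delta_0$ is equivalent to the paper's observation that $\supp\mu_A\subseteq r^{-1}(0)$ because $r^{-1}(0)$ is closed. Your justification of $\log\mathfrak{d}_{\mathcal{B}_n}(\vec{P}_n)\le 0$, by rescaling lifts, fills in a step the paper uses without comment.
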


\begin{proof}
Assume otherwise. Then there exists a sequence $n_1 < n_2 < \cdots$ such that $$\lim_{n\to\infty} \log\mathfrak{d}_{\cal{B}_{n_i}}(\vec{P}_{n_i})=0.$$
We now replace our sequence of tuples so that for $n = n_i$, we keep the original tuple but for each $n \not = n_i$, we replace $\vec{P}_n$ with a Fekete tuple. Let $(\vec{Q}_n)$ be this new sequence. Then since we have the upper bound 
$$\limsup_{i\to\infty} \log\mathfrak{d}_{\cal{B}_{n_i}}(\vec{P}_{n_i}) \leq 0,$$
it follows that $(\vec{Q}_n)$ is an asymptotically Fekete sequence of tuples. Applying \eqref{eq: Convergence1}, we obtain
$$\frac{1}{c(n)} \sum_{x \in \vec{Q}_n} \delta_x \overset{\ast}{\rightharpoonup} \mu_A$$
and hence passing to the subsequence $\{n_i\}$ yields
$$ \frac{1}{c(n_k)} \sum_{x \in \vec{P}_{n_k}} \delta_x \overset{\ast}{\rightharpoonup} \mu_A$$
too. However, for all $k$ and for all $x \in \vec{P}_{n_k}$, we have $r(x) = 0$. As $r$ is continuous, $r^{-1}(0)$ is a closed set. It follows that we must have $\supp \mu_A \subseteq r^{-1}(0)$. But this is impossible as $r(\supp \mu_A)$ surjects onto $\bb{R}^k/\Lambda$, which is positive-dimensional as $k \geq 1$ by hypothesis. 
\end{proof} In the setting of Section \ref{sec: LowerBound}, if we pick our lift $F$ so that $r(F) = 0$, by \eqref{eq:TransfiniteBound1} we have
$$\limsup_{n} \log d_{\cal{B}_n}(\cal{K} \cap \pi^{-1}(A)) \leq 0.$$
In particular, Proposition \ref{TransfiniteDiameterBound1} implies that there is some $\delta > 0$ for which 
$$g_{\cal{B}_n}(\vec{P}) \geq \delta > 0.$$
\par
We want to understand the kernel $r^{-1}(0)$. We recall the construction of the retraction map by Gubler \cite[Section 4]{Gub10}. Let $R$ be the valuation ring of $K$. Following \cite[Section 4]{Gub10}, by \cite[Theorem 8.2]{BL84}, there is a unique connected formal group scheme $\cal{A}_1$ over $R$ with generic fiber $A_1$ (in the sense of Raynaud) such that $\cal{A}_1$ has semiabelian reduction and $A_1$ is isomorphic to an open analytic subgroup of $A^{\an}$.  
\par 
The semistable reduction theorem says that there is a short exact sequence 
$$1 \to T_1 \xrightarrow{\iota_1} A_1 \xrightarrow{q_1} B \to 1$$
where $T_1$ is the maximal formal affinoid subtorus of $A_1$ and $B$ is the generic fiber of a formal abelian scheme $\cal{B}$. We may identify $T_1$ with the subset $\{|x_1| = \cdots = |x_n| = 1\}$ of the usual torus $T = (\bb{G}^n_m)^{\an}$. The uniformization $E$ of $A$ is given by an analytic group $E = (A_1 \times T)/T_1$, where $T_1$ acts by 
$$t_1 \cdot (a,t) = (\iota_1(t_1) + a, t_1^{-1} \cdot t).$$
This gives us an exact sequence 
$$1 \to T \to E \to B \to 1$$
The closed immersion $T_1 \to A_1$ extends to $T \xrightarrow{\iota}A^{\an}$ and hence, from the map $A_1\times T\to A^{\an}$ given by $(a_1,t)\mapsto a+\iota(t)$ (whose kernel contains $T_1$), we obtain a homomorphism $\pi: E \to A^{\an}$. This turns out to be a surjection and the kernel $\ker(\pi) = M$ is a discrete subgroup of $E$. 
\par 
Let $q_1: A_1 \to B$ be the quotient map from $A_1$ to $B$. We may find a chart of formal affinoids $\{V_i\}$ with respect to $\cal{B}$ covering $B$ such that $q_1^{-1}(V_i) \simeq V_i \times T_1$. We may choose sections $s_i:V_i\to A_1$ such that on each chart $V_i\times T_1$ of $A_1$, we have $s_i:v\mapsto (v,1)\in V_i\times T_1$. Then the transition maps $g_{i,j} = s_i - s_j$ are maps from $V_i \cap V_j$ to $T_1$. Note also that the $V_i \times T$ form an atlas for $E$ since $E = (A_1 \times T)/T_1$. Fix coordinates $x_1,\ldots,x_n$ on $T = (\bb{G}^n_m)^{\an}$. Then we obtain coordinates on $V_i \times T$ just by looking at the $T$-component.  These depend on our choice of chart, but the function $|x_i|$ does not as the transition maps $g_{i,j}$ are valued in $T_1$. We hence obtain a well-defined continuous map $$\begin{array}{llll} \val: & E & \to & \bb{R}^n \\ & p & \mapsto & (-\log |x_1|(p),\ldots, -\log |x_n|(p)). \end{array} $$ The discrete subgroup $M$ maps to a lattice $\Lambda$ and thus we get a map 
$$r: A^{\an} \to \bb{R}^n/\Lambda$$ that forms a retraction of $A^{\an}\cong E/M$.
This construction is independent of the choice of formal charts $\{V_i\}$ covering $B$.

\begin{proposition} \label{Retraction1} 
Under the retraction map $r$, we have $r(A_1) = 0$.
\end{proposition}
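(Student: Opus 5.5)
We need to show every point of $A_1$, viewed in $A^{\an}$ via the open immersion $A_1\hookrightarrow A^{\an}$, retracts to $0\in\bb{R}^n/\Lambda$. The plan is to factor this inclusion through the uniformization $E$ and compute $\val$ there directly from the definitions.

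\emph{Step 1 (lift through $E$).} Consider the map $A_1 \to E = (A_1\times T)/T_1$ given by $a\mapsto [(a,1)]$. Composing with $\pi:E\to A^{\an}$ sends $a$ to $a+\iota(1)=a$. So this is exactly the inclusion $A_1\subseteq A^{\an}$. Since $r$ is defined as the composite of $\val$ with the quotient $\bb{R}^n\to\bb{R}^n/\Lambda$, and this is compatible with $\pi$ (well-definedness of $r$ uses $\val(M)=\Lambda$), it suffices to prove $\val([(a,1)])=0$ for every $a\in A_1$.

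\emph{Step 2 (local computation).} Fix $a\in A_1$ and choose a chart $V_i$ with $q_1(a)\in V_i$. Via $q_1^{-1}(V_i)\simeq V_i\times T_1$, write $a=s_i(q_1(a))+\iota_1(t_1)$ with $t_1\in T_1$. Using the $T_1$-action $t_1\cdot(a',t)=(\iota_1(t_1)+a',t_1^{-1}t)$, we get
\[
[(a,1)]=[(s_i(q_1(a))+\iota_1(t_1),1)]=[(s_i(q_1(a)),t_1)].
\]
In the chart $V_i\times T$ of $E$, this point therefore has $T$-coordinate $t_1\in T_1$. Because $T_1=\{|x_1|=\cdots=|x_n|=1\}$, each $-\log|x_j|$ vanishes there, so $\val([(a,1)])=0$. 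Hence $r(a)=0$.

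\emph{Main point to check.} The only delicate step is that $\val$ really is computed by the $T$-coordinate in any chart, independent of the chart chosen. This holds because the transition maps $g_{i,j}$ take values in $T_1$, and multiplying by elements of $T_1$ leaves all the $|x_j|$ unchanged; the paper states this just before the proposition. One should also confirm the identification $\pi([(a,1)])=a$, which holds because $\pi$ is induced by $(a,t)\mapsto a+\iota(t)$ and $\iota(1)=0$.
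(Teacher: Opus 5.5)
Your proof is correct and takes the same route as the paper. The paper also notes that in the charts $V_i\times T$ of $E$ the subgroup $A_1$ appears as $V_i\times T_1$, so $\val$ vanishes on it, and then pushes this forward along $\pi$; your identity $[(a,1)]=[(s_i(q_1(a)),t_1)]$ together with $\pi([(a,1)])=a$ simply makes explicit the identification the paper leaves implicit.
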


\begin{proof}
The retraction map $r: A^{\an} \to \bb{R}^n/\Lambda$ is induced by the valuation map $\val: E \to \bb{R}^n$. Recall that we have an atlas of formal affinoid charts $\{V_i\}$ covering $B$ satisfying $q^{-1}(V_i) \simeq V_i \times T_1$. Then as noted previously, $E$ has an atlas given by the $V_i \times T$ and the functions $|x_1|,\ldots,|x_n|$ are defined by looking at the $T$-coordinates under any chart of this atlas. But in this atlas, we have $A_1 \simeq V_i \times T_1$, and so clearly $\val(A_1) = 0$. Hence we must have $r(\pi(A_1)) = 0$. But by construction of $E$, $A_1\subseteq A^{\an}$ is contained in $\pi(A_1)$, and hence $r(A_1)=0$ as desired.\end{proof}

We now give a criterion for a point $x \in A(K)$ to lie in $A_1$. Recall that $A_1$ is the generic fiber of a unique connected formal group scheme $\cal{A}_1$ over $R$ such that $\cal{A}_1$ has semiabelian reduction and that $A_1$ is isomorphic to an open analytic subgroup of $A^{\an}$. 

\begin{proposition} \label{KernelReduction1}
Let $G$ be a semiabelian scheme over $R$ with generic fiber $A$. If $x \in A(K)$ lifts to $\tilde{x} \in G(R)$, then $x \in A_1$. In particular, $r(x) = 0 \in S(A)$.  
\end{proposition}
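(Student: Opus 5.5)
The strategy is to identify $\cal{A}_1$ with the formal completion of the semiabelian model. Let $\widehat{G}$ denote the formal completion of $G$ along its special fiber, a formal group scheme over $R$ whose Raynaud generic fiber $\widehat{G}_\eta$ is an open analytic subgroup of $A^{\an}$. The points of $\widehat{G}_\eta$ are exactly the points of $A^{\an}$ that extend to $R$-points of $G$, i.e., $\widehat{G}(R)=G(R)$ inside $A(K)$. In the discretely valued or general case this uses properness-type arguments only for $R$-points of the formal completion. Since $G$ is semiabelian, the special fiber $G_s$ is a semiabelian variety, hence connected, so $\widehat{G}$ is a connected formal group scheme with semiabelian reduction.

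The key step is to invoke the uniqueness part of \cite[Theorem 8.2]{BL84} as recalled above: $\cal{A}_1$ is the unique connected formal group scheme over $R$ with semiabelian reduction whose generic fiber is an open analytic subgroup of $A^{\an}$. This forces $\widehat{G}\simeq \cal{A}_1$, and hence $\widehat{G}_\eta = A_1$ as subsets of $A^{\an}$.

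In practice, one has to be careful about what uniqueness means, namely uniqueness of the subgroup of $A^{\an}$ rather than up to abstract isomorphism. To address this, I would argue via the rigid-analytic characterization of $A_1$ in \cite{BL84}. There, $A_1$ is the maximal open subgroup with semiabelian formal model, equivalently the subgroup of points reducing to the identity component of the Néron/formal model. The map $\widehat{G}_\eta\to A^{\an}$ is an open immersion of groups because $G_\eta=A$. If $G$ is not of finite type with connected fibers, I would first replace it by its identity component $G^0$; an $R$-point of $G$ lands in $G^0(R)$ once the special fiber is connected, which holds for a semiabelian scheme.

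With the identification in hand, the lift $\tilde{x}\in G(R)$ gives a point of $\widehat{G}(R)$, whose generic fiber lies in $\widehat{G}_\eta=A_1$, so $x\in A_1$. Proposition \ref{Retraction1} then yields $r(x)=0\in S(A)$. The main obstacle is precisely the identification $\widehat{G}_\eta=A_1$, i.e., applying the uniqueness in Bosch–Lütkebohmert as an equality of subgroups of $A^{\an}$. In the non-discretely valued setting this requires the version in \cite{GS23}.
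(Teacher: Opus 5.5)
Your proposal is correct and follows the paper's proof. You take the formal completion $\widehat{G}$, whose generic fiber is an open analytic subgroup of $A^{\an}$ with connected semiabelian reduction (the paper cites \cite[Theorem 5.3.1]{Con99} for the open-subgroup claim). The uniqueness in \cite[Theorem 8.2]{BL84}, which is indeed uniqueness as an open subgroup of $A^{\an}$, then gives $\widehat{G}_\eta = A_1$, and you conclude via $\widehat{G}(R)=G(R)$ and Proposition~\ref{Retraction1}. Your aside about passing to $G^0$ is unnecessary, since a semiabelian scheme already has connected fibers.
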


\begin{proof}
 Since $G$ is finitely presented over $R$, we may take its formal completion $\cal{G}$, which is a formal group scheme over $R$. By \cite[Theorem 5.3.1]{Con99}, its generic fiber $\cal{G}^{\an}$ is an open analytic subgroup of $A^{\an}$. As by construction $\cal{G}$ has the same special fiber as $G$, we see that $\cal{G}$ clearly has semi-abelian reduction. By uniqueness of $\cal{A}_1$ and connectedness of $\cal{G}$, we thus must have $A_1 = \cal{G}^{\an}$. Now since $x$ lifts to an $R$-point of $G$, it lifts to an $R$-point of $\cal{G}$ too and hence is a $K$-point of $\cal{G}^{\an}$ as $\cal{G}^{\an}(K) = \cal{G}(R)$. In particular, $x \in A_1$.\end{proof}

When $A$ is defined over a discretely valued field $K' \subseteq K$, the existence of such a semiabelian scheme $G$ follows from the existence of N\'{e}ron models. In general we may not have a N\'{e}ron model but we can use the Faltings--Chai compactification to obtain a semiabelian model. Recall that we have a compactification $\ovl{\cal{A}}_{g,3}$ of $\cal{A}_{g,3}$ such that the universal abelian scheme $\cal{A}$ over $\cal{A}_{g,3}$ extends to a semiabelian scheme $\cal{G}$. Then after choosing level $3$ structure, $A$ defines a $K$-point on $\cal{A}_{g,3}$ and hence by properness, we obtain an $R$-point of $\ovl{\cal{A}}_{g,3}$. This gives us a semiabelian scheme $G/R$ with generic fiber $A/K$. Note that \cite[I.2.7]{FC90} proves the uniqueness of such a semiabelian model $G$ if $R$ is noetherian and normal. We can also obtain uniqueness for general non-noetherian valuation rings by descending $G$ to a noetherian base, but this will not be necessary for us.  

\section{Degenerations of Sequences of Points}
We now introduce the theory of degenerations of sequences of objects using ultrafilters. This was introduced in complex dynamics for rational maps on $\bb{P}^1$ by Luo \cite{Luo21, Luo22} and then formalized using Berkovich spaces by Favre and Gong \cite{FG24}. As the approach of Favre--Gong is algebraic, it is easy to extend their method to objects with a fine moduli space and in particular to abelian varieties. This will be useful to us as quantities coming from Arakelov theory are usually continuous with respect to degenerations \cite{DEJ19, DKY20, Fal21, Poi24b, Wil21, Yua24, YZ24}. We first review the general construction of degenerations on a general quasi-projective variety $X$ over $\bb{C}$ and then specialize this to the case of $X = \cal{A}_{g,3}$ later. 

\subsection{Degeneration via Ultrafilters}
Let $X$ be a quasi-projective variety over $\bb{C}$. We will assume that all our varieties are irreducible unless stated otherwise. Let $(K_n)$ be a sequence of complete algebraically closed non-archimedean fields containing $\bb{C}$ such that $\bb{C}$ is trivially valued. Let $(\eps_n)$ be a sequence of positive reals. We define 
$$\scr{A}^{\eps} = \{ (x_n) \in \prod_{n=1}^{\infty} K_n \mid |x_n|^{\eps_n} \text{ is bounded}.\}.$$
This is a Banach ring under the supremum norm $|(x_n)| = \sup_n |x_n|^{\eps_n}$. We have natural projection maps $\pi_n: \scr{A}^{\eps} \to K_n$ which induce projections $\pi_n: X(\scr{A}^{\eps}) \to X(K_n)$. 

\begin{definition} \label{DegenerateDefinition1}
Let $x_n \in X(K_n)$ be a sequence of points. An $(\eps_n)$-degeneration of $(x_n)$ is a point $x \in X(\scr{A}^{\eps})$ such that $\pi_n(x) = x_n$ for all $n \in \bb{N}$.     
\end{definition}

If an $(\eps_n)$-degeneration exists, it is necessarily unique. 

\begin{proposition} \label{DegenerationUniqueness1}
There exists at most one $(\eps_n)$-degeneration of $(x_n)$. 
\end{proposition}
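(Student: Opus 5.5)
The plan is to reduce uniqueness to the fact that the natural map $\scr{A}^{\eps} \to \prod_n K_n$ is injective. Indeed, $\scr{A}^{\eps}$ is by definition a subring of $\prod_n K_n$, and $\pi_n$ is the restriction of the $n$th projection. Suppose $x, y \in X(\scr{A}^{\eps})$ both satisfy $\pi_n(x) = \pi_n(y) = x_n$ for all $n$. We want to show that $x = y$ as morphisms $\Spec \scr{A}^{\eps} \to X$.

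First, treat the affine case. Suppose $X = \Spec R$ with $R = \bb{C}[T_1,\ldots,T_m]/I$. A point of $X(\scr{A}^{\eps})$ is then a ring map $R \to \scr{A}^{\eps}$, which amounts to a tuple $(a_1,\ldots,a_m) \in (\scr{A}^{\eps})^m$ satisfying the relations in $I$. Each $a_i$ is a sequence $(a_{i,n})_n$, and $\pi_n(x)$ is the tuple $(a_{1,n},\ldots,a_{m,n})$. So if $\pi_n(x) = \pi_n(y)$ for all $n$, the coordinate sequences agree entry by entry, and therefore $x = y$. Put differently, a morphism into an affine scheme is determined by its composites with all the $\pi_n$, because $\scr{A}^{\eps} \to \prod K_n$ is injective.

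The main obstacle is the quasi-projective case, because $\Spec \scr{A}^{\eps}$ is not the disjoint union of the $\Spec K_n$. The images of the $\iota_n: \Spec K_n \to \Spec \scr{A}^{\eps}$ need not cover $\Spec \scr{A}^{\eps}$; for instance, the points coming from non-principal ultrafilters are missing. So we cannot check equality pointwise on the images of the $\iota_n$ alone. I would handle this by showing that the union of the images of the $\iota_n$ is schematically dense in $\Spec \scr{A}^{\eps}$, and then using that $X$ is separated.

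Concretely, $x$ and $y$ define a morphism $(x,y): \Spec \scr{A}^{\eps} \to X \times X$. The equalizer of $x$ and $y$ is the preimage of the diagonal $\Delta$. Since $X$ is separated, $\Delta$ is a closed subscheme, so this preimage is a closed subscheme $Z = \Spec \scr{A}^{\eps}/J$ through which every $\iota_n$ factors. That means $J \subseteq \ker(\pi_n)$ for every $n$. Hence $J \subseteq \bigcap_n \ker \pi_n = 0$, using injectivity of $\scr{A}^{\eps} \to \prod K_n$. Therefore $Z$ is all of $\Spec \scr{A}^{\eps}$, and so $x = y$.

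I expect only one point to need care: $x$ and $y$ must factor through the diagonal scheme-theoretically, not merely set-theoretically. This follows from the universal property of the fiber product defining the equalizer, so it should be routine. With this argument the affine case is subsumed, but it remains a useful sanity check.
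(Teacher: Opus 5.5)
Your proof is correct and follows essentially the same route as the paper: pull back the diagonal of the separated $X$ along $(x,y)$ to get a closed subscheme of $\Spec \scr{A}^{\eps}$ through which every $\Spec K_n \to \Spec \scr{A}^{\eps}$ factors, then show this subscheme is everything. The only difference is the last step. You conclude directly from $J \subseteq \bigcap_n \ker \pi_n = 0$, whereas the paper first shows the image of $\bigsqcup_n \Spec K_n$ is topologically dense and then uses that $\scr{A}^{\eps}$ is reduced; your ideal-theoretic version is slightly more direct.
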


\begin{proof}
Let $Y=\bigsqcup_{n\ge1}\Spec K_n$. Each projection 
$\pi_n\colon \scr{A}^{\eps}\to K_n$ induces \[\Spec(\pi_n)\colon\Spec K_n\to\Spec \scr{A}^{\eps},\] and these glue to a single morphism \[f\colon Y\to\Spec \scr{A}^{\eps}.\] We claim that $f$ is an epimorphism in the category of separated, reduced schemes. Indeed, given $g,h: \Spec \scr{A}^{\eps} \to X$ such that $g \circ f = h \circ f$, we can consider the diagonal embedding $\Delta: X \to X \times X$, which is a closed immersion by separatedness of $X$. Pulling back $\Delta(X)$ via $(g,h): \Spec \scr{A}^{\eps} \to X \times X$ gives us a closed subscheme $Z$ of $\Spec\scr{A}^\eps$ and the map $f: Y \to \Spec \scr{A}^{\eps}$ must factor through $Z$ since $g \circ f = h \circ f$.
\par 
We claim that $f$ is dominant. Indeed, the image of $f$ consists of the kernels of the projection maps $\pi_n: \scr{A}^{\eps} \to K_n$. Given any nonzero $a \in \scr{A}^{\eps}$ with $a = (a_n)$, there must exist some $a_n \not = 0$ and so $\pi_n(a) \not = 0$, which implies that $\ker(\pi_n) \in D(a)$. Since distinguished opens form a basis, it follows that $f(Y)$ is dense and hence $f$ is dominant as desired. 
\par 
Since $f$ is dominant and factors through $Z$, $Z$ must be dense in $\Spec \scr{A}^{\eps}$. As $\Spec \scr{A}^{\eps}$ is reduced, it follows that $Z = \Spec \scr{A}^{\eps}$ and hence $g = h$ as desired. 
\par 
We now go back to our proposition. the points $x_n\in X(K_n)$ define maps $\Spec K_n\to X$ which glue to \[f'\colon Y\to X,\] and $f'$ is clearly unique. By definition an $(\varepsilon_n)$-degeneration is a morphism \[\tilde x\colon\Spec \scr{A}^{\eps}\to X\] with $\tilde x\circ f=f'$. Given two such morphisms $\tilde{x}_1, \tilde{x}_2$, we have $\tilde{x}_1 \circ f = \tilde{x}_2 \circ f$ as maps from $Y$ to $X$. Since $f$ is an epimorphism of reduced, separated schemes, pre-composition with $f$ is injective on morphisms.  It follows that $\tilde x_1=\tilde x_2$, proving uniqueness.
\end{proof}

Clearly an $(\eps_n)$-degeneration need not always exist. For example if $\eps_n = 1$ and $X = \bb{A}^1$, then $x_n$ corresponds to an element of $K_n$ and an $(\eps_n)$-degeneration exists if and only if $|x_n|$ is bounded from above. However if $X$ is a projective variety, then an $(\eps_n)$-degeneration always exists. 

\begin{proposition} \label{DegenerationExistence1}
Assume that $X$ is a projective variety over $\bb{C}$. Let $x_n\in X(K_n)$ for all $n$. Then an $(\eps_n)$-degeneration of $(x_n)$ always exists. 
\end{proposition}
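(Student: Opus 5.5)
Since $X$ is projective, I will first reduce to the case $X=\bb{P}^N$. Fix a closed embedding $X\hookrightarrow\bb{P}^N$ over $\bb{C}$. Suppose $x\in\bb{P}^N(\scr{A}^\eps)$ is an $(\eps_n)$-degeneration of the images of the $x_n$. Then I will check that $x$ factors through $X$. Let $I$ be the homogeneous ideal of $X$ and take $G\in I$. On the open set where a chosen coordinate is invertible, $G/T_i^{\deg G}$ pulls back along $x$ to an element of $\scr{A}^\eps$. Each projection $\pi_n$ of this element is $0$, because $x_n\in X(K_n)$. An element of $\scr{A}^\eps$ all of whose components vanish is $0$. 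So the ideal sheaf of $X$ pulls back to zero, $x$ factors through the closed subscheme $X$, and $\pi_n(x)=x_n$ still holds.

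For $\bb{P}^N$ I will write the point down explicitly. For each $n$, choose homogeneous coordinates $x_n=[a_{n,0}:\cdots:a_{n,N}]$ with $a_{n,i}\in K_n$. Rescale them by dividing by a coordinate of maximal absolute value, so that $\max_i|a_{n,i}|=1$. Then each sequence $a_i:=(a_{n,i})_n$ satisfies $|a_{n,i}|^{\eps_n}\le 1$, so $a_i\in\scr{A}^\eps$. The key point is that the tuple $(a_0,\dots,a_N)$ is unimodular, i.e. generates the unit ideal of $\scr{A}^\eps$, so that it defines a morphism $\Spec\scr{A}^\eps\to\bb{P}^N$.

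To see unimodularity, partition $\bb{N}=S_0\sqcup\cdots\sqcup S_N$, where $S_i$ is the set of $n$ such that $i$ is the first index with $|a_{n,i}|=1$. Let $e_i\in\scr{A}^\eps$ be the idempotent indicator of $S_i$. These are bounded elements with $\sum e_i=1$. Define $b_i=(b_{n,i})$ by $b_{n,i}=a_{n,i}^{-1}$ if $n\in S_i$ and $b_{n,i}=0$ otherwise. Since $|a_{n,i}^{-1}|=1$ on $S_i$, we have $b_i\in\scr{A}^\eps$. Moreover $\sum_i a_ib_i=\sum_i e_i=1$. Hence the $a_i$ generate the unit ideal. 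Concretely, $\Spec\scr{A}^\eps$ is covered by the opens $D(e_i)$, and on $D(e_i)$ the coordinate $a_i$ is a unit. The ratios $a_j/a_i$ then glue to a morphism $x:\Spec\scr{A}^\eps\to\bb{P}^N$.

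Finally, composing $x$ with $\pi_n$ gives $[a_{n,0}:\cdots:a_{n,N}]=x_n$. This holds because $\pi_n$ sends the unimodular tuple $(a_i)$ to the chosen coordinates of $x_n$, and the gluing is compatible with the chart on which $a_{n,i}\ne0$. I expect the main obstacle to be the unimodularity step. The non-obvious point is that the norm condition alone does not yield an inverse in $\scr{A}^\eps$. What makes it work is the normalization to norm exactly $1$: the inverses then also have norm $1$, and the idempotents handle the fact that the maximizing index changes with $n$.
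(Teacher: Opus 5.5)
Your proof is correct, but it is organized differently from the paper's. The paper never embeds $X$. It takes a finite affine cover $\{U_i\}$ and uses the valuative criterion of properness to extend each $x_n$ to an $R_n$-point. Because $R_n$ is local, that point lands in a single chart $U_{i_n}$, and the paper views these as $R_n$-points of the affine scheme $Y=\bigsqcup_i U_i$. Their affine coordinates have norm $\le 1$, so they assemble coordinatewise into a $\prod_n R_n$-point of $Y$ (Remark \ref{rmk:Bep}), which is pushed down to $X$.

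Your argument is the explicit version of this for the standard charts of $\bb{P}^N$. Normalizing to $\max_i|a_{n,i}|=1$ is the valuative criterion made concrete. Your idempotents $e_i$ correspond to the images of the idempotents of $\mathcal{O}(Y)=\prod_i\mathcal{O}(U_i)$, which the disjoint-union trick handles implicitly.

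The paper's route needs only properness and an arbitrary affine cover. Its ``coordinates of norm $\le 1$ in some chart'' description is also what gets reused for the boundary analysis in Proposition \ref{DegenerationExistence2} and for extending over $\Spec \scr{B}^{\eps}$ in Proposition \ref{DegeneratingSemiAbelian2}. Yours is fully explicit: the identity $\sum_i a_ib_i=1$ makes unimodularity transparent, and since $a_i,b_i$ have norm $\le 1$ it also gives a $\prod_n R_n$-point. It amounts to the paper's later Propositions \ref{ProjectiveCoordinate1} and \ref{ProjectiveCoordinate2} combined.

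One small point of phrasing. Your reduction step is stated for an arbitrary $x\in\bb{P}^N(\scr{A}^\eps)$ and treats pulled-back sections as elements of $\scr{A}^\eps$. That is only justified for your explicit $x$, using the cover $D(e_i)=\Spec(e_i\scr{A}^\eps)$ on which $a_i$ is a unit. It is simpler to note that $G(a_0,\dots,a_N)\in\scr{A}^\eps$ has all components $0$, hence is $0$. For a morphism given by a unimodular tuple, this is exactly the condition to factor through $X$.
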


\begin{proof}
Let $\{U_1,\ldots,U_n\}$ be an affine open cover for $X$ and let $Y = \bigsqcup_{i=1}^{n} U_i$. Then we have a natural morphism $\pi: Y \to X$ and $Y$ is affine. Let $x_n \in X(K_n)$. Then since $X$ is projective, $x_n$ lifts to $\tilde{x}_n \in X(R_n)$ where $R_n$ is the valuation ring of $K_n$. Hence we have an induced morphism $\tilde{x}_n: \Spec R_n \to X$. Since $\{U_i\}$ is an open cover and $R_n$ is a valuation ring, if $\tilde{x}_n^{-1}(U_i)$ contains the maximal ideal of $R_n$ then it contains all of $\Spec R_n$. Thus $\tilde{x}_n \in U_i(R_n) \subseteq Y(R_n)$. 
\par 
Since $Y$ is affine, we may let $x_1,\ldots,x_N$ be affine coordinates for $Y$ over $\bb{C}$. Then $\tilde{x}_n$ has affine coordinates $a_{n,1},\ldots,a_{n,N}$ with $a_{n,i} \in R_n$ and $|a_{n,i}| \leq 1$ for all $1\le i\le N$. Hence if we set $a_i = (a_{n,i})$, we have $a_i \in \scr{A}^{\eps}$. We thus obtain an $\scr{A}^{\eps}$-point $(a_i)_{i=1}^N$ of $Y$ that commutes with the projection map $\pi_n: \scr{A}^{\eps} \to K_n$. Projecting it down to $X$, we obtain our $(\eps_n)$-degeneration as desired.
\end{proof}

\begin{rmk}\label{rmk:Bep} Observe that in Proposition \ref{DegenerationExistence1}, the resulting $(\varepsilon_n)$-degeneration is actually a $\scr{B}^{\eps}$-point of $X$ for $$\scr{B}^{\eps} = \prod_{n=1}^{\infty} R_n,$$ where $R_n$ is the valuation ring of $K_n$.\end{rmk}
\par 
Now if $X$ is only quasi-projective, we may embed it into some projective compactification $\overline{X}$. Then the $(\eps_n)$-degeneration exists on $\overline{X}$ and we can ask whether our $\scr{A}^{\eps}$-point lies on $X$. Recall that for each non-archimedean field $K$ having $\bb{C}$ as a trivially valued subfield, we have a local distance function $\lambda_{\partial X}$ depending on the compactification $\overline{X}$. On an affine open $U_i \subseteq \overline{X}$ such that the reduced closed subscheme $(\overline{X} \setminus X ) \cap U_i$ is given by $\{f_1 = f_2 = \cdots = f_m = 0\}$ for $f_j \in\mathcal{O}(U_i)$, Proposition \ref{LocalHeightFunction2} implies that for $x\in U_i(K)$ whose affine coordinates in $U_i$ have norm $\leq 1$, we have 
\begin{equation}\label{eqn:distancemin}\lambda_{\partial X}(x) = \min_{1 \leq i \leq n} -\log |f_i(x)|.\end{equation} The function $\lambda_{\partial X}$ depends on the choice of compactification, but $\frac{\lambda_{\partial X}}{\lambda'_{\partial X}}$ is bounded from above and below for any two compactifications.

\begin{proposition}\label{DegenerationExistence2}
Let $X$ be a quasi-projective variety over $\bb{C}$. Let $x_n \in X(K_n)$ and let $(\eps_n)$ be a sequence of reals. Then an $(\eps_n)$-degeneration of $(x_n)$ exists on $X$ if and only if  
$\eps_n \lambda_{\partial X}(x_n)$
is uniformly bounded from above. 
\end{proposition}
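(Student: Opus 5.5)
We work throughout in a fixed projective compactification $\overline{X}$. By Proposition \ref{DegenerationExistence1} the $(\eps_n)$-degeneration of $(x_n)$ exists on $\overline{X}$; call it $x\in\overline{X}(\scr{A}^{\eps})$. By Remark \ref{rmk:Bep} it is in fact a $\scr{B}^{\eps}$-point. By Proposition \ref{DegenerationUniqueness1} any degeneration on $X$ is also a degeneration on $\overline{X}$, so it must equal $x$. Hence the question reduces to whether $x$ factors through the open subscheme $X\subseteq\overline{X}$.

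Next we make a reduction to one chart. As in the proof of Proposition \ref{DegenerationExistence1}, fix a finite affine open cover $U_1,\ldots,U_m$ of $\overline{X}$. For each $n$, the lift $\tilde x_n\in\overline{X}(R_n)$ lands in some $U_{i(n)}(R_n)$. Partition $\bb{N}$ according to $i(n)$. Then $\scr{A}^{\eps}$ splits as a finite product of the corresponding rings $\scr{A}^{\eps}_{(i)}$, and $\Spec\scr{A}^{\eps}$ is the disjoint union of the $\Spec\scr{A}^{\eps}_{(i)}$. So it suffices to treat each piece separately, and we may assume every $\tilde x_n$ lies in a single affine $U$ with coordinates $a_{n,1},\ldots,a_{n,N}$ satisfying $|a_{n,j}|\le1$. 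On $U$, write $\partial X\cap U=\{f_1=\cdots=f_r=0\}$ with $f_j\in\mathcal{O}(U)$ defined over $\bb{C}$. By \eqref{eqn:distancemin}, $\lambda_{\partial X}(x_n)=\min_j -\log|f_j(x_n)|$.

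Now the key equivalence. The point $x\in U(\scr{A}^\eps)$ lies in $X\cap U=\bigcup_j D(f_j)$ if and only if the elements $f_j(x)=(f_j(x_n))_n\in\scr{A}^\eps$ generate the unit ideal.

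For the direction ($\Leftarrow$), suppose $\eps_n\lambda_{\partial X}(x_n)\le M$. Then for each $n$ some $j(n)$ satisfies $|f_{j(n)}(x_n)|^{\eps_n}\ge e^{-M}$. Define $b_j\in\scr{A}^\eps$ by $b_{j,n}=f_j(x_n)^{-1}$ if $j=j(n)$ and $0$ otherwise. These satisfy $|b_{j,n}|^{\eps_n}\le e^{M}$, so they are bounded, and $\sum_j b_jf_j(x)=1$. Hence $x\in X(\scr{A}^\eps)$.

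For the direction ($\Rightarrow$), suppose $\sum_j b_j f_j(x)=1$ for some $b_j\in\scr{A}^\eps$. Applying $\pi_n$ and the ultrametric inequality gives $1\le\max_j|b_{j,n}||f_j(x_n)|$. Therefore $\max_j|f_j(x_n)|^{\eps_n}\ge(\max_j\|b_j\|)^{-1}$, which means $\eps_n\lambda_{\partial X}(x_n)\le\log\max_j\|b_j\|$, uniformly in $n$.

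Two points need care, and this is the main obstacle. The first is justifying that "$x$ factors through the open $\bigcup D(f_j)$" is equivalent to "the ideal $(f_j(x))$ is the unit ideal" for the possibly non-noetherian ring $\scr{A}^\eps$. This holds because $\Spec\scr{A}^\eps$ lands in the open subscheme iff the pulled-back ideal $(f_j(x))$ has empty vanishing locus, which happens iff it is the unit ideal. The second is the bookkeeping showing that the chart reduction commutes with taking degenerations, via the finite product decomposition above. Finally, independence from the choice of compactification follows from Proposition \ref{LocalHeightBoundary1}, since boundedness of $\eps_n\lambda_{\partial X}(x_n)$ is preserved under commensurate changes of $\lambda_{\partial X}$.
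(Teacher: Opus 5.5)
Your proof is correct and follows the paper's argument. You reduce to affine charts (your finite product splitting of $\scr{A}^{\eps}$ is the same device as the paper's disjoint union $Y=\bigsqcup U_i$), then show that the degeneration lands in $X$ iff the $f_j(x)$ generate the unit ideal, i.e.\ iff $\max_j|f_j(x_n)|^{\eps_n}$ is bounded away from $0$. Your explicit inverse coefficients and ultrametric estimate just unpack the unit criterion that the paper cites from Favre--Gong.
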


\begin{proof}
Pick any projective compactification $\ovl{X}$ of $X$ and let $\{U_i\}_{i=1}^{k}$ be a finite affine open cover of $\ovl{X}$. Let $Y = \bigsqcup_{i=1}^{k} U_i$ be the disjoint union of the $U_i$ and let $\pi :Y \to \ovl{X}$ be the natural projection. Then each $x_n \in X(K_n)$ lifts to an $R_n$-point of $\ovl{X}$ and hence to an $R_n$-point $\tilde{x}_n$ of $Y$. We thus obtain a $\scr{B}^{\eps}$-point $\tilde{x}$ of $Y$ which is an $(\eps_n)$-degeneration of $(\tilde{x}_n)$, and so $\pi(\tilde{x})$ is an $(\eps_n)$-degeneration of $(x_n)$. Our proposition is then equivalent to verifying whether $\tilde{x}$ lies on $\pi^{-1}(X)$. 
\par 
Since $Y$ is affine, we may let $Y \simeq \bb{C}[X_1,\ldots,X_N]/I$ for some ideal $I$. Then each $\tilde{x}_n$ has affine coordinates $a_{n,1},\ldots,a_{n,N}$ with $|a_{n,i}| \leq 1$ as it is an $R_n$-point. Let $f_1,\ldots,f_m \in \cal{O}(Y)$ be regular functions that cut out $Y \setminus \pi^{-1}(X)$. Then we have 
$$\lambda_{\partial X}(x_n) = \lambda_{Y \setminus \pi^{-1}(X)}(\tilde{x}_n) = \min_{1 \leq i \leq m} -\log |f_i(\tilde{x}_n)|$$
where the first equality follows from $\pi^{-1}(\partial X) = \pi^{-1}(\ovl{X} \setminus X) = Y \setminus \pi^{-1}(X)$ and the second from \eqref{eqn:distancemin}. 
The affine coordinates of our $\scr{A}^{\eps}$-point $\tilde{x}$ is given by $(a_1,\ldots,a_N)$ where each $a_i = (a_{n,i})$. We wish to check whether our $\scr{A}^{\eps}$-point lies on $\pi^{-1}(X)$. This is equivalent to checking whether the pullback of the affine opens $D(f_i)$ by $\tilde{x}$ cover $\Spec \scr{A}^{\eps}$, which is equivalent to checking if  
$$J = (f_1(a_1,\ldots,a_N), \ldots, f_m(a_1,\ldots,a_N))$$
generate the unit ideal in $\scr{A}^{\eps}$ as $\scr{A}^{\eps}$ is reduced. 
\par 
Now, an element $b = (b_n)$ in $\scr{A}^{\eps}$ is in $(\scr{A}^{\eps})^{\times}$ if and only if $(|b_n|^{-1})^{\eps_n}$ is uniformly bounded from above \cite[Remark 3.7]{FG24}. Hence if $J$ generates the unit ideal, then there exists $c_i$ with $c_i = (c_{n,i})^{\infty}_{n=1}$ with 
\begin{equation} \label{eq: Inverse1}
\sum_{i=1}^{m} c_i f_i(a_1,\ldots,a_N) \in (\scr{A}^{\eps})^{\times}. 
\end{equation} 
As $|c_{i,n}|^{\eps_n}$ is bounded from above as $n$ varies, \eqref{eq: Inverse1} implies that
$$\max_{1 \leq i \leq m} |f_i(a_{n,1},\ldots,a_{n,N})|^{\eps_n}$$
is uniformly bounded away from $0$ and so $\eps_n \lambda_{\partial X}(x_n)$ is uniformly bounded from above. Conversely suppose that $\eps_n \lambda_{\partial X}(x_n) \leq C$ for some $C > 0$. For each $n$, let $i_n$ be an index such that 
$$-\log |f_{i_n}(a_{n,1},\ldots,a_{n,N})| = \lambda_{\partial X}(x_n).$$
We define $c_i \in \scr{A}^{\eps}$ as $c_i = (c_{1,i},\ldots,c_{n,j},\ldots)$ where $c_{n,j} = 1$ if $i_n = j$ and $0$ otherwise. Then 
$$\left|\sum_{i=1}^{m} c_{i} f_i(a_{n,1},\ldots,a_{n,N})\right|^{\eps_n} = e^{- \lambda_{\partial X}(x_n) \eps_n}$$
and hence the left-hand side is uniformly bounded away from $0$ as $\eps_n \lambda_{\partial X}(x_n) $ is uniformly bounded from above. Thus $\sum_{i=1}^{m} c_i f_i(a_1,\ldots,a_N) \in (\scr{A}^{\eps})^{\times}$ and so our $\scr{A}^{\eps}$-point lies in $X$ as desired.
\end{proof}

Hence if $\lambda_{\partial X}(x_n) > 0$ for all $n$, then $\eps_n = (\lambda_{\partial X}(x_n))^{-1}$ is the smallest rescaling parameter (up to commensurability) that we can degenerate so that our $\scr{A}^{\eps}$-point still lies on $X$. Following Favre--Gong's suggestion, we will call this the fundamental rescaling parameter. 

\begin{definition} We will call such a sequence $(\eps_n)$ a fundamental rescaling parameter for the sequence $(x_n)$. Any two fundamental rescaling parameters have their ratios uniformly bounded from above or below. \end{definition}
\par 
In principle, we are free to pick any sequence that is much smaller than $(\eps_n)$, but it will turn out that the fundamental rescaling parameter will be the more useful sequence, based on Proposition \ref{Degenerating1}.
\par 
Recall that the Berkovich spectrum of $\scr{A}^{\eps}$ is in bijection with the set of ultrafilters $\omega$ on $\bb{N}$ \cite[Proposition 1.2.3]{Ber90}, where the bijection \cite[Theorem 3.8]{FG24} sends
$$\omega \mapsto \left( (a_n) \mapsto \lim_{\omega} |a_n|^{\eps_n} \right).$$
As $|a_n|^{\eps_n}$ is bounded from above, this sequence lies in a compact interval and hence its $\omega$-limit exists as a real number. For non-principal ultrafilters, we will let $| \cdot |_{\omega}$ denote the norm associated to $\omega$. There is an associated maximal ideal $\ker(\omega) = \{ a \in \scr{A}^{\eps} \mid |a|_{\omega} = 0\}$ and the quotient $\scr{A}^{\eps}/\ker(\omega)$ gives a residue field $\cal{H}(\omega)$ which is complete with respect to $| \cdot |_{\omega}$ and algebraically closed.
\par 
Given an $(\eps_n)$-degeneration of $(x_n)$ with $(\eps_n)$ being a fundamental rescaling parameter, we obtain an $\scr{A}^{\eps}$-point of $X$ and hence an $\cal{H}(\omega)$ point $x_{\omega}$ of $X$. Let $\cal{R}(\omega)$ be the valuation ring of $\cal{H}(\omega)$. One might ask whether it is possible to lift $x_{\omega}$ to an $\cal{R}(\omega)$ point of $X$. This turns out to never be the case if each $x_n$ cannot be lifted to an $R_n$-point, i.e., if $\lambda_{\partial X}(x_n) > 0$ for all $n$. We first need the following basic proposition.

\begin{proposition} \label{BadReduction1}
Let $X$ be a variety over $\bb{C}$ and let $x \in X(K)$ where $K$ is a complete non-archimedean field with $\bb{C}$ a trivially valued subfield and with valuation ring $R$. Then $x$ lifts to an $R$-point of $X$ if and only if $\lambda_{\partial X}(x) = 0$.
\end{proposition}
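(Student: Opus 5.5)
The plan is to compactify and reduce to an affine computation, as in the proof of Proposition \ref{DegenerationExistence2}. Here $\lambda_{\partial X}$ is computed with respect to a projective compactification $\ovl{X}$ of $X$. Since $\ovl{X}$ is projective and $R$ is a valuation ring, the valuative criterion shows that $x$ always extends to a unique $R$-point $\tilde{x}\colon \Spec R \to \ovl{X}$. The closed point of $\Spec R$ lies in some affine open $U$ of a finite affine cover of $\ovl{X}$. Any open set containing the closed point of the local scheme $\Spec R$ is everything, so $\tilde{x}\in U(R)$, and hence the affine coordinates of $x$ in $U$ have norm $\le 1$. Choose generators $f_1,\dots,f_m\in\mathcal{O}(U)$ of the ideal of the reduced boundary $(\ovl{X}\setminus X)\cap U$. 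Proposition \ref{LocalHeightFunction2} (equivalently \eqref{eqn:distancemin}) then gives
$$\lambda_{\partial X}(x)=\min_{1\le i\le m}-\log|f_i(\tilde{x})|.$$
Each $f_i(\tilde{x})$ lies in $R$, so $\lambda_{\partial X}(x)\ge 0$.

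Next I would identify when $x$ lifts to an $R$-point of $X$. By separatedness of $\ovl{X}$, any $R$-point of $X$ extending $x$ must coincide with $\tilde{x}$. So $x$ lifts if and only if $\tilde{x}$ factors through $X$. Because $\tilde{x}$ sends the generic point into $X$, this holds if and only if the closed point of $\Spec R$ maps into $X\cap U$. That in turn means the closed point avoids $V(f_1,\dots,f_m)$, i.e. some $f_i(\tilde{x})$ is not in the maximal ideal of $R$, i.e. $|f_i(\tilde{x})|=1$ for some $i$. By the displayed formula, this is exactly the condition $\lambda_{\partial X}(x)=0$.

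I would also check that the relevant distinguished opens cover $\Spec R$. If $|f_i(\tilde{x})|=1$, then $f_i(\tilde{x})\in R^\times$, so $\tilde{x}^{-1}(D(f_i))=\Spec R$ and $\tilde{x}$ lands in $X\cap U$. The converse is the same reasoning read backwards.

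The only point needing care is independence of the compactification and the affine chart. The chart is harmless, since the formula in Proposition \ref{LocalHeightFunction2} is canonical by Proposition \ref{LocalHeightsEqual2}. The compactification is handled by Proposition \ref{LocalHeightBoundary1}: the various $\lambda_{\partial X}$ are mutually bounded by constant multiples, so the vanishing of $\lambda_{\partial X}(x)$ does not depend on the choice. I expect no real obstacle here; the only subtle step is justifying that the extended point lands in a single affine chart, which is exactly the valuation-ring locality argument above.
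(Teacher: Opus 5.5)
Your proposal is correct and follows essentially the same route as the paper's proof: you extend $x$ to an $R$-point $\tilde{x}$ of a projective compactification, place it in an affine chart where its coordinates have norm at most $1$, and use the local formula $\lambda_{\partial X}(x)=\min_i -\log|f_i(\tilde{x})|$ to see that $\tilde{x}$ avoids the boundary exactly when some $f_i(\tilde{x})$ is a unit, i.e.\ when $\lambda_{\partial X}(x)=0$. Your extra remarks on uniqueness of the extension (via separatedness) and on independence of the compactification (via Proposition~\ref{LocalHeightBoundary1}) are correct, and they make explicit points the paper leaves implicit.
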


\begin{proof}
Let $\ovl{X}$ be a projective compactification and let $U_i$ be an affine cover. Since $\ovl{X}$ is projective, $x$ lifts to an $R$-point $\tilde{x}$ on $\ovl{X}$. Let $U_i$ be an affine chart containing this $R$-point. Then on $U_i$, $x$ has affine coordinates $\leq 1$. If $f_1,\ldots,f_m \in \cal{O}(U_i)$ cut out $(\ovl{X} \setminus X) \cap U_i$, then $\tilde{x} \in X(R)$ if and only if $\tilde{x}$ has empty intersection with $\bigcap_{i=1}^{m} V(f_i)$. Since $R$ is a valuation ring, this is equivalent to saying at least one of the $f_i(x)$ is invertible, which is equivalent to 
$$\lambda_{\partial X}(x) = \min_{1 \leq i \leq m} \{ -\log |f_i(x)| \} = 0$$
as desired.\end{proof}

\begin{proposition} \label{Degenerating1}
Let $X$ be a variety over $\bb{C}$ and $(K_n)$ a sequence of complete non-archimedean fields with $\bb{C}\subseteq K_n$ trivially valued. Let $x_n \in X(K_n)$ be a sequence of points. Assume that $\lambda_{\partial X}(x_n)  > 0$ for all $n$ and let $\eps_n = \lambda_{\partial X}(x_n)^{-1}$. Let $x$ be an $(\eps_n)$-degeneration of $(x_n)$ and let $x_{\omega}$ be the induced $\cal{H}(\omega)$-point on $X$. Then $x_{\omega}$ does not lift to an $\cal{R}(\omega)$-point on $X$. 
\end{proposition}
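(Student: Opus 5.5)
By Proposition \ref{BadReduction1}, applied over the field $\cal{H}(\omega)$ (complete, non-archimedean, with $\bb{C}$ trivially valued since $|c|_\omega=\lim_\omega |c|^{\eps_n}=1$ for $c\in\bb{C}^\times$), it suffices to show that $\lambda_{\partial X}(x_\omega)>0$. I will in fact show $\lambda_{\partial X}(x_\omega)=1$, or at least that it lies in $(0,\infty)$. This is the value of $\eps_n\lambda_{\partial X}(x_n)$, which is identically $1$ by the choice of $\eps_n$.

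The plan follows the proof of Proposition \ref{DegenerationExistence2}. Fix a projective compactification $\ovl{X}$ and a finite affine cover $U_1,\dots,U_k$ of $\ovl{X}$, and let $Y=\bigsqcup U_i$. Lift each $x_n$ to an $R_n$-point $\tilde{x}_n$ of some $U_{i(n)}$. The index $i(n)$ takes only finitely many values, so it is $\omega$-almost constant, say equal to $i_0$. Choose affine coordinates on $U_{i_0}$ and regular functions $f_1,\dots,f_m$ cutting out $\partial X\cap U_{i_0}$. The degeneration $x$ is built from the sequences $a_j=(a_{n,j})$, with $|a_{n,j}|\le 1$. By Proposition \ref{DegenerationUniqueness1}, it agrees with the point obtained from this chart; on the $\omega$-small set where $i(n)\ne i_0$ one may modify the coordinates, which does not affect the image in $\cal{H}(\omega)$.

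The image $x_\omega$ then has coordinates $a_{j,\omega}$ in $U_{i_0}$, and these satisfy $|a_{j,\omega}|_\omega=\lim_\omega|a_{n,j}|^{\eps_n}\le 1$. Therefore \eqref{eqn:distancemin} (that is, Proposition \ref{LocalHeightFunction2}) applies over $\cal{H}(\omega)$ and gives
$$\lambda_{\partial X}(x_\omega)=\min_i -\log|f_i(a_\omega)|_\omega=\min_i \lim_\omega \eps_n\bigl(-\log|f_i(a_n)|_n\bigr).$$
Here the $f_i$ have $\bb{C}$-coefficients, so evaluation commutes with the quotient map $\scr{A}^\eps\to\cal{H}(\omega)$.

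Because the minimum is over a finite index set, it commutes with $\lim_\omega$. Using \eqref{eqn:distancemin} over $K_n$ for the $\omega$-many $n$ with $i(n)=i_0$, the right-hand side equals
$$\lim_\omega \eps_n\lambda_{\partial X}(x_n)=1>0.$$
The values $-\log|f_i(a_n)|$ may be $+\infty$, but the minimum is finite, and the $\omega$-limit of a minimum of finitely many sequences valued in $[0,\infty]$ is the minimum of the limits. By Proposition \ref{BadReduction1}, $x_\omega$ therefore does not lift to an $\cal{R}(\omega)$-point.

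The main obstacle is the bookkeeping in the middle step. First, the lifts must land $\omega$-almost surely in a single chart. Second, $x_\omega$ has to be expressed in that chart with coordinates of norm at most $1$, so that the explicit formula for $\lambda_{\partial X}$ is available over $\cal{H}(\omega)$. The remaining steps are formal.
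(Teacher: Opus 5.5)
Your proposal is correct and follows the paper's proof essentially step for step. You reduce via Proposition \ref{BadReduction1} to showing $\lambda_{\partial X}(x_\omega)>0$, fix an $\omega$-large set of indices on which the lifts land in a single affine chart, and apply \eqref{eqn:distancemin} over $\cal{H}(\omega)$ to get $\lambda_{\partial X}(x_\omega)=\lim_\omega \eps_n\lambda_{\partial X}(x_n)=1$; your version is slightly more careful than the paper's, since it checks that $\bb{C}$ is trivially valued in $\cal{H}(\omega)$, justifies interchanging $\min$ with $\lim_\omega$, and omits the stray minus sign in the paper's final display.
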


\begin{proof}
Note that the degeneration $x$ of $(x_n)$ exists by Proposition \ref{DegenerationExistence2}. The point $x_{\omega}$ lifts to an $\cal{R}(\omega)$-point if and only if $\lambda_{\partial X}(x_{\omega}) = 0$. As in the previous Proposition, we can let $U_1,\ldots,U_m$ be an affine open cover of $\ovl{X}$. For each $x_n$, there exists a $U_{i_n}$ such that the affine coordinates of $x_n$ satisfy $| \cdot | \leq 1$ by properness of $\ovl{X}$. Since there are only finitely many possibilities for $i_n$, we can find a $j$ such that the set $E = \{n \mid i_n = j\}$ is $\omega$-large. 
\par 
Let $f_1,\ldots,f_m \in \cal{O}(U_j)$ cut out $U_j \cap (\ovl{X} \setminus X)$. Then for $n \in E$, we know that 
$$\lambda_{\partial X}(x_n) = \min_{1 \leq i \leq m} - \log |f_i(a_{n,1},\ldots,a_{n,N})|$$
where the affine coordinates of $x_n$ in $U_j$ are $a_{n,1},\ldots,a_{n,N}$. Clearly $x_{\omega}$ has absolute value $\leq 1$ in the coordinates of $U_j$ because $E$ is $\omega$-large. Hence 
$$\lambda_{\partial X}(x_{\omega}) = - \lim_{\omega} \eps_n\min_{1 \leq i \leq m} - \log |f_i(a_{n,1},\ldots,a_{n,N})| = -\lim_{\omega} \eps_n \lambda_{\partial X}(x_n) = 1$$
and so $x_{\omega}$ does not lift to an $\cal{R}(\omega)$-point.\end{proof}

\subsection{Application to Moduli of Abelian Varieties} \label{sec: DegenAbelian}
We now apply the above construction to the moduli space of principally polarized abelian varieties. We would like to work with $\cal{A}_g$ directly, but since it is an algebraic stack and not a variety, we will instead work with $\cal{A}_{g,3}$, the fine moduli space of principally polarized abelian varieties of dimension $g$ with level $3$ structure. We will work over $\bb{C}$, in which case $\cal{A}_{g,3}$ is a quasi-projective irreducible variety over $\bb{C}$ by \cite[IV.6.8]{FC90}. \par 
For an abelian variety $A/K$ where $K$ is a non-archimedean algebraically closed field, we say that $A$ has bad reduction if $A$ does not extend to an abelian scheme $\cal{A}$ over $R$ where $R$ is the valuation ring of $K$. We first show that $\cal{A}_{g,3}$ is able to capture the notion of bad reduction.

\begin{proposition} \label{AbelianBadReduction1}
Let $A/K$ be a principally polarized abelian variety of dimension $g$, where
$K=\Frac(R)$ for a discrete valuation ring $R$. Suppose that $A$ has good
reduction. Then any $K$-point of $\mathcal A_{g,3}$ whose underlying abelian
variety is $A$ extends to an $R$-point of $\mathcal A_{g,3}$. Equivalently, if a
$K$-point of $\mathcal A_{g,3}$ does not extend to $R$, then its underlying
abelian variety has bad reduction.
\end{proposition}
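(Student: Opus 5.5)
The plan is to extend each piece of data in the $K$-point of $\cal{A}_{g,3}$ separately over $R$, and then conclude by the moduli interpretation of $\cal{A}_{g,3}$.

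A $K$-point of $\cal{A}_{g,3}$ is a triple $(A,\lambda,\alpha)$, where $\lambda:A\to A^t$ is a principal polarization and $\alpha: A[3]\xrightarrow{\sim}(\bb{Z}/3\bb{Z})^{2g}$ is a level $3$ structure. Since $A$ has good reduction, there is an abelian scheme $\cal{A}/R$ with generic fiber $A$, and it is the N\'eron model of $A$. The first step is to extend $\lambda$. The dual $\cal{A}^t$ is an abelian scheme over $R$ by Raynaud's theorem (recalled in Section \ref{sec: abelianschemes}), with generic fiber $A^t$, so it is the N\'eron model of $A^t$. By the N\'eron mapping property, $\lambda$ extends uniquely to a homomorphism $\tilde\lambda:\cal{A}\to\cal{A}^t$. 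Applying the same argument to $\lambda^{-1}$ gives an inverse, so $\tilde\lambda$ is an isomorphism.

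The second step is to check that $\tilde\lambda$ is a polarization on the special fiber. Take a symmetric ample $L$ on $A$ with $\lambda(L)=2\lambda$. Extend $L$ to a line bundle $\tilde L$ on $\cal{A}$, which is possible because $\cal{A}$ is regular and we can take the closure of a divisor. Rigidify $\tilde L$ along the zero section and replace it by $\tilde L\otimes[-1]^*\tilde L$ if needed. Then $\lambda(\tilde L)$ and $2\tilde\lambda$ agree on the generic fiber, so they agree everywhere by separatedness and the N\'eron property. Ampleness on the special fiber follows from the fact that a line bundle on an abelian scheme over a connected base that is ample on one fiber is relatively ample; this is \cite[Proposition 6.13]{MFK94}, or alternatively one can use the constancy of the Euler characteristic together with nondegeneracy of $\lambda(\tilde L_s)$. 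Hence $\tilde\lambda$ is a principal polarization. This is the step I expect to be the main obstacle: the polarization property has to be verified on geometric fibers, not just generically. If it proves troublesome, the fallback is to cite the standard fact that polarizations extend over normal bases for abelian schemes, \cite[I.1.10]{FC90}.

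The third step is to extend the level structure. Since $3$ is invertible in $R$ (we are over $\bb{C}$), the group scheme $\cal{A}[3]$ is finite étale over $R$. Because $R$ is a henselian DVR, or at least normal with $\cal{A}[3]$ finite étale, the restriction map $\Hom_R(\cal{A}[3],(\bb{Z}/3)^{2g})\to\Hom_K(A[3],(\bb{Z}/3)^{2g})$ is bijective. One can see this because $\cal{A}[3]$ is the closure of $A[3]$, and isomorphisms extend since they extend in both directions. Note that the $K$-points of $A[3]$ being rational means $\cal{A}[3]$ is constant over $R$: its sections extend by the N\'eron property, and étaleness gives distinct specializations. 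Thus $\alpha$ extends to $\tilde\alpha$.

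Finally, $(\cal{A},\tilde\lambda,\tilde\alpha)$ is an $R$-point of the fine moduli space $\cal{A}_{g,3}$, and its restriction to $K$ is the given $K$-point. The second statement is the contrapositive.
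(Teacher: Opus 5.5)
Your proof is correct, but it is organized differently from the paper's. The paper first discards the level structure. Since $\mathcal{A}_{g,3}\to\mathcal{A}_g$ is finite \'etale, hence proper, it suffices to extend the image point in $\mathcal{A}_g$, i.e.\ to extend $\lambda$ to a principal polarization of $\mathcal{A}$. For this the paper spreads $(\mathcal{A},\lambda)$ out over a finitely generated normal $\mathbb{Z}$-subalgebra $R'\subseteq R$, invokes the Faltings--Chai extension of polarizations over normal bases, gets principality from local constancy of the degree, and base changes back to $R$.

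You instead stay over $R$ throughout. The N\'eron mapping property extends both $\lambda$ and $\lambda^{-1}$, so principality is automatic. You check the polarization condition on the special fiber by hand, via an extension of an ample symmetric $L$. You extend the level structure directly, because $\mathcal{A}[3]$ is finite \'etale over the normal base $R$ (indeed constant, since the rational $3$-torsion sections extend and stay disjoint); so no valuative criterion on the stack $\mathcal{A}_g$ is needed.

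Your route is more self-contained and isolates the one non-formal input, namely that ampleness specializes on abelian schemes, which the paper delegates to a citation. The paper's argument, on the other hand, never uses that $R$ is noetherian, and this matters in context. The proposition is later applied to the non-discrete valuation rings of the $K_n$ and of $\mathcal{H}(\omega)$. There, your use of regularity of $\mathcal{A}$ (closures of divisors) and of the N\'eron property over a DVR would have to be replaced, e.g.\ by a descent of exactly the paper's kind.

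One step of yours should be tightened. Ampleness does not pass from the generic to the special fiber in general proper flat families. Constancy of $\chi$ plus nondegeneracy of $\tilde L_s$ only forces the index $i(\tilde L_s)$ to be even. What closes the argument is upper semicontinuity:
\begin{itemize}
\item $\tilde L_s$ is nondegenerate because $\lambda(\tilde L_s)=2\tilde\lambda_s$ is an isogeny;
\item $h^0(\tilde L_s)\ge h^0(L)=\chi(L)>0$, so $i(\tilde L_s)=0$;
\item an effective nondegenerate line bundle on an abelian variety is ample.
\end{itemize}
Also, the symmetrization is unnecessary, and would turn $2\tilde\lambda$ into $4\tilde\lambda$. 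The special fiber is an irreducible principal divisor on the regular scheme $\mathcal{A}$, so $\Pic(\mathcal{A})\to\Pic(A)$ is injective and the extension of a symmetric $L$ is automatically symmetric.
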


\begin{proof}
The natural morphism
\[
  \mathcal{A}_{g,3} \longrightarrow \mathcal{A}_g
\]
is finite étale, hence proper. Therefore a $K$-point of $\mathcal{A}_{g,3}$
extends to an $R$-point if and only if its image in $\mathcal{A}_g$ does.
Thus it suffices to show that if $A$ has good reduction, then the $K$-point of
$\mathcal{A}_g$ corresponding to $(A,\lambda)$ extends to $R$.

By assumption, there exists an abelian scheme $\mathcal{A}/R$ whose generic
fiber is $A$. Extending the $K$-point of $\mathcal{A}_g$ amounts to extending
the given principal polarization $\lambda$ on $A$ to a principal polarization on
$\mathcal{A}$.

By \cite[Proposition 8.9.1]{Gro64}, since $\mathcal{A}$ is of finite presentation and smooth over
$R$, there exists a finitely generated $\mathbb{Z}$-subalgebra $R'\subseteq R$
with fraction field $K'$ and an abelian scheme $\mathcal A'/R'$ whose base
change to $R$ is $\mathcal{A}$, and whose generic fiber $A'/K'$ pulls back to
$A/K$. Moreover, after replacing $R'$ by its integral closure in $K'$, we may
assume that $R'$ is a normal domain and that the given principal polarization
$\lambda$ descends to a polarization $\lambda'$ on $A'/K'$.

By the proof of \cite[Theorem~1.9]{FC90}, the polarization $\lambda'$ extends
to a polarization $\lambda'_{R'}$ on $\mathcal{A}'/R'$. Since the degree of a polarization is
locally constant in families and the generic fiber is principally polarized, the
extended polarization on $\mathcal{A}'/R'$ is again principal.

For $f:\mathrm{Spec}(R)\to\mathrm{Spec}(R')$, form the base changes \[\mathcal{A} = \mathcal{A}' \times_{\Spec(R')} \Spec(R),\qquad\mathcal{A}^{\vee} = (\mathcal{A}')^{\vee} \times_{\Spec(R')} \Spec(R),\]
and let 
\[p: \mathcal{A} \to \mathcal{A}', \qquad 
q:\mathcal{A} \to \Spec(R),\]
be the projections associated to $\mathcal{A}$. Consider the morphisms
\[u_1:=\lambda'_{R'}\circ p:\mathcal{A}\to(\mathcal{A}')^{\vee},
\qquad u_2:=q:\mathcal{A} \to \Spec(R).\]
These satisfy the compatibility condition \[\pi^{\vee} \circ u_1=f\circ u_2,\]
where $\pi^{\vee}:(\mathcal{A}')^{\vee}\to\Spec(R')$ is the structure morphism. Therefore, by the universal property of $(\mathcal{A}')^{\vee} \times_{\Spec(R')} \Spec(R)$, the pair $(u_1, u_2)$ determines a unique morphism $f^{*}\lambda' : \mathcal{A} \longrightarrow \mathcal{A}^{\vee}$. This is a polarization on $\mathcal{A}$. Since the degree of a polarization is preserved under base change, the morphism $f^{*}\lambda'$ is again principal, and it extends the given
polarization on the generic fiber. Thus the $K$-point of $\mathcal{A}_g$ defined by $(A,\lambda)$ extends to an
$R$-point, and by properness so does the original $K$-point of
$\mathcal{A}_{g,3}$. 
\end{proof}

Let $X = \cal{A}_{g,3}$ and in what follows let $(A_n)$ be a sequence of principally polarized abelian varieties over $K_n$ such that each $A_n$ has bad reduction. After choosing some level $3$ structure, we let $x_n \in X(K_n)$ represent $A_n$. Since $A_n$ has bad reduction, Proposition \ref{AbelianBadReduction1} tells us that $\lambda_{\partial X}(x_n) > 0$. Let $\eps_n = \lambda_{\partial X}(x_n)^{-1}$. By Proposition \ref{DegenerationExistence2}, an $(\eps_n)$-degeneration of $(x_n)$ exists and we obtain a $\scr{A}^{\eps}$-point of $X$. Since $X$ is a fine moduli space, this corresponds to a principally polarized abelian scheme $A_{\scr{A}^{\eps}}$ over $\scr{A}^{\eps}$. Furthermore by Propositions \ref{BadReduction1} and \ref{AbelianBadReduction1}, if $\omega$ is some non-principal ultrafilter, the induced abelian variety $A_{\mathcal{H}(\omega)}$ over $\cal{H}(\omega)$ has bad reduction. 
\par 
We now define the $\omega$-limit of a sequence of points $x_n \in A_n(K_n)$. In general, one can define it as the $\omega$-limit of $(x_n)$ inside the Berkovich analytification of $A_{\scr{A}^{\eps}}$. As this is a compact Hausdorff space, the $\omega$-limit will exist. However, we give a simpler ad hoc definition that is enough for our applications. 
\par 
We first put ourselves in a situation where we have an embedding of $A_{\scr{A}^{\eps}}$ into projective space $\bb{P}^N_{\scr{A}^{\eps}}$. Choose an affine open subset $U \subseteq X$ and a closed immersion of
$U$-schemes $\iota_U : \mathcal{A}_U \hookrightarrow \mathbb{P}^N_U$ for the universal abelian scheme $\mathcal{A}_U \to U$. (This is possible after shrinking $U$, since $\mathcal{A}_U \to U$ is a projective morphism.)
Suppose that each point $x_n$ lies in $U$, and that there is some constant
$c>0$ such that $\lambda_{\partial U}(x_n) \le c\,\lambda_{\partial X}(x_n)$
for all $n$. We claim that the $(\varepsilon_n)$-degeneration of $(x_n)$ constructed
above yields an $\mathscr{A}^\varepsilon$-point of $U$, i.e., a morphism $\phi : \Spec \mathscr{A}^\varepsilon \longrightarrow U$. Indeed, since each $x_n$ lies in $U$, we may view $(x_n)$ as a sequence of
$U(K_n)$-points. By assumption there exists a constant $c>0$ such that
$\lambda_{\partial U}(x_n) \le c\,\lambda_{\partial X}(x_n)$ for all $n$.
Recall that we have chosen $\varepsilon_n = \lambda_{\partial X}(x_n)^{-1}$, so that $\varepsilon_n \lambda_{\partial X}(x_n) = 1$.
Hence \[\varepsilon_n \lambda_{\partial U}(x_n)\le c\varepsilon_n \lambda_{\partial X}(x_n)=c\] and the sequence $\varepsilon_n \lambda_{\partial U}(x_n)$ is uniformly bounded from above. Since $U$ is quasi-projective, Proposition~\ref{DegenerationExistence2} applied with $X=U$ implies that $(x_n)$ admits an $(\varepsilon_n)$-degeneration with
values in $U$.  Thus the $(\varepsilon_n)$-degeneration constructed above gives
a morphism $\phi: \Spec \mathscr{A}^{\varepsilon} \longrightarrow U.$
\par 
Pulling back the universal abelian scheme on $U$ along $\phi$ gives an abelian
scheme \[A_{\mathscr{A}^\varepsilon}:=\mathcal{A}_U \times_U \Spec \mathscr{A}^\varepsilon\longrightarrow \Spec \mathscr{A}^\varepsilon.\]
Since closed immersions are stable under base change, the base change of
$\iota_U$ along $\phi$ is a closed immersion
\[\iota_{\mathscr{A}^\varepsilon}:A_{\mathscr{A}^\varepsilon}=\mathcal{A}_U \times_U \Spec \mathscr{A}^\varepsilon
\hookrightarrow\mathbb{P}^N_U \times_U \Spec \mathscr{A}^\varepsilon\simeq\mathbb{P}^N_{\mathscr{A}^\varepsilon}.\] Thus we obtain an embedding $A_{\mathscr{A}^\varepsilon}\hookrightarrow\mathbb{P}^N_{\mathscr{A}^\varepsilon}$.
\par
We now turn to a general description of $\mathscr{A}^\varepsilon$-points of
$\mathbb{P}_{\mathscr{A}^\varepsilon}$. Note that an $\scr{A}^{\eps}$-point of $\bb{P}^N_{\scr{A}^{\eps}}$ corresponds to $[a_0: \cdots : a_N]$, where each $a_i \in \scr{A}^{\eps}$ and $(a_0,\ldots,a_N)$ generates the unit ideal of $\scr{A}^{\eps}$. These are identified up to scalar multiplication by an element of $(\scr{A}^{\eps})^{\times}$. 

\begin{proposition} \label{ProjectiveCoordinate1}
Let $P_n=[a_{n,0}: \cdots : a_{n,N}]$ be a point of $\bb{P}^N_{K_n}$, where we normalize the $a_{n,i}$ so that $\max\{|a_{n,i}|\} = 1$. If we let $a_i = (a_{n,i})_{n=1}^\infty$, then $[a_0: \cdots : a_N]$ defines an $\scr{A}^{\eps}$-point on $\bb{P}^N_{\scr{A}^{\eps}}$ that is independent of our choice of lifts of $P_n$ satisfying $\max\{|a_{n,i}|\} = 1$. 
\end{proposition}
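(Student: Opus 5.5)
We need two things: first, that $[a_0:\cdots:a_N]$ is a genuine $\scr{A}^{\eps}$-point of $\bb{P}^N_{\scr{A}^{\eps}}$; second, that the point does not depend on the normalized lifts chosen.

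\emph{Coordinates and the unit ideal.} Each $a_i=(a_{n,i})_n$ lies in $\scr{A}^{\eps}$, since $|a_{n,i}|^{\eps_n}\le 1$ for all $n$. To see that $(a_0,\ldots,a_N)$ generates the unit ideal, we imitate the argument in the proof of Proposition \ref{DegenerationExistence2}.
\begin{enumerate}
\item For each $n$, choose an index $i_n$ with $|a_{n,i_n}|=1$.
\item For each $j$, define $c_j\in\scr{A}^{\eps}$ by $c_{n,j}=1$ if $i_n=j$ and $c_{n,j}=0$ otherwise. These lie in $\scr{A}^{\eps}$ because their entries have norm at most $1$.
\item Then $b=\sum_j c_j a_j$ has $n$-th entry $a_{n,i_n}$, which has absolute value $1$.
\item By \cite[Remark 3.7]{FG24}, $b$ is a unit: its inverse $(a_{n,i_n}^{-1})_n$ has entries of norm $1$, so it is bounded and lies in $\scr{A}^{\eps}$.
\end{enumerate}
Hence the ideal $(a_0,\ldots,a_N)$ contains a unit, and $[a_0:\cdots:a_N]$ defines a morphism $\Spec\scr{A}^{\eps}\to\bb{P}^N_{\scr{A}^{\eps}}$. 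Its composite with $\pi_n$ is $P_n$, since the ideal generated by the $\pi_n(a_i)$ is still the unit ideal and the point is $[a_{n,0}:\cdots:a_{n,N}]$.

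\emph{Independence of the lifts.} Let $(a'_{n,i})$ be another choice of lifts with $\max_i|a'_{n,i}|=1$. Then for each $n$ there is $u_n\in K_n^{\times}$ with $a'_{n,i}=u_na_{n,i}$ for all $i$. Taking maxima of absolute values gives $|u_n|=1$. So $u=(u_n)$ and $u^{-1}=(u_n^{-1})$ both lie in $\scr{A}^{\eps}$, and $u$ is a unit of $\scr{A}^{\eps}$. Since $a'_i=ua_i$ for every $i$, the two tuples define the same $\scr{A}^{\eps}$-point of $\bb{P}^N$.

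As an alternative, one could deduce independence from Proposition \ref{DegenerationUniqueness1}, since both points project to $P_n$ for every $n$. The explicit unit argument above is more direct.

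The only slightly delicate point is that the unit criterion needs a uniform lower bound on the norms. This is provided exactly by the normalization $\max_i|a_{n,i}|=1$, which makes every component of $b$ have norm exactly $1$.
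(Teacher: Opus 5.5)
Your proposal is correct and follows essentially the same argument as the paper. The paper likewise builds the indicator elements $c_i$ to produce a unit $\sum_i c_i a_i$, and handles independence via the unit $(\lambda_n)$ with $|\lambda_n|=1$. Your extra check that the point specializes to $P_n$ under each $\pi_n$, and your alternative via Proposition~\ref{DegenerationUniqueness1}, are harmless additions.
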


\begin{proof}
Since $|a_{n,i}| \le 1$ for all $n,i$, each $a_i$ lies in
$\mathscr{A}^\varepsilon$. For each $n$, choose an index $i_n$ with $|a_{n,i_n}|=1$.
Define $c_i=(c_{n,i})_{n=1}^\infty\in\mathscr{A}^\varepsilon$ by
$c_{n,i}=1$ if $i=i_n$ and $0$ otherwise.
Then \[\sum_{i=0}^N c_{n,i} a_{n,i} = a_{n,i_n},\] so $\left|\sum_i c_{n,i} a_{n,i}\right| = 1$ for every $n$.
Thus the element \[u:=\sum_{i=0}^Nc_i a_i\in\mathscr{A}^\varepsilon\] satisfies $|u_n| = 1$ for all $n$, and therefore $u \in
(\mathscr{A}^\varepsilon)^\times$. Hence $(a_0,\ldots,a_N)$ generates the unit ideal in $\mathscr{A}^\varepsilon$, and so defines a point of $\mathbb{P}^N_{\mathscr{A}^\varepsilon}$.

If $(a_{n,i}')_{n=1}^\infty$ is another choice of homogeneous coordinates with
$\max_i |a_{n,i}'|=1$, then for each $n$ the two tuples differ by a scalar
$\lambda_n \in K_n^\times$ with $|\lambda_n|=1$.
Hence $\lambda=(\lambda_n)\in(\mathscr{A}^\varepsilon)^\times$,
and $a_i' = \lambda a_i$.
Therefore $[a_0':\cdots:a_N'] = [a_0:\cdots:a_N]$ in
$\mathbb{P}^N_{\mathscr{A}^\varepsilon}(\mathscr{A}^\varepsilon)$.\end{proof}

Hence given $x_n \in \bb{P}^N(K_n)$, we obtain a well-defined $x \in \bb{P}^N(\scr{A}^{\eps})$, which we call the $(\eps_n)$-degeneration of $(x_n)$. Note that this is the unique section $x: \Spec \scr{A}^{\eps} \to \bb{P}^N_{\scr{A}^{\eps}}$ that induces $x_n: \Spec K_n \to \bb{P}^N_{K_n} \xhookrightarrow{} \bb{P}^N_{\scr{A}^{\eps}}$ for each $n \in \bb{N}$, where uniqueness follows from Proposition \ref{DegenerationUniqueness1}. We claim that if the $x_n$ are chosen to be in $A_n(K_n)$, then $x \in A_{\scr{A}^{\eps}}(\scr{A}^{\eps})$.



\begin{proposition}\label{ProjectiveCoordinate2}
Let $A_{\mathscr{A}^\varepsilon}\subseteq\mathbb{P}^N_{\mathscr{A}^\varepsilon}$ be the
closed subscheme obtained by base change from the projective embedding
$A_U \hookrightarrow \mathbb{P}^N_U$ as described above.
Suppose that for each $n$, the point
$x_n \in \mathbb{P}^N(K_n)$ lies in $A_n(K_n)$.
Let $x : \Spec(\mathscr{A}^\varepsilon)\to\mathbb{P}^N_{\mathscr{A}^\varepsilon}$
be the $\mathscr{A}^\varepsilon$-point of $\mathbb{P}_{\mathscr{A}^\varepsilon}^N$ induced by the $(\varepsilon_n)$-degeneration of the sequence $(x_n)$, obtained via Proposition \ref{ProjectiveCoordinate1}.
Then $x$ factors through $A_{\mathscr{A}^\varepsilon}$; equivalently, $x \in A_{\mathscr{A}^\varepsilon}(\mathscr{A}^\varepsilon)$.
\end{proposition}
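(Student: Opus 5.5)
We reduce the claim to a statement about equations with coefficients in $\scr{A}^{\eps}$. Since $A_{\scr{A}^{\eps}}$ is the base change of $\cal{A}_U \subseteq \bb{P}^N_U$ along $\phi$, it is cut out in $\bb{P}^N_{\scr{A}^{\eps}}$ by homogeneous polynomials $G_1,\ldots,G_r$. Their coefficients lie in $\scr{A}^{\eps}$ and are the images under $\phi^*$ of regular functions on the affine $U$. The $n$-th coordinate projection $\pi_n$ sends the $G_j$ to polynomials $G_{j,n}$ with coefficients in $K_n$. Because $\phi\circ \Spec(\pi_n)$ is the point $x_n\in U(K_n)$, the $G_{j,n}$ cut out $A_n\subseteq \bb{P}^N_{K_n}$; this is just base change of the closed immersion along $\Spec K_n\to \Spec\scr{A}^{\eps}\to U$.

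The point $x$ is given by $[a_0:\cdots:a_N]$ as in Proposition \ref{ProjectiveCoordinate1}, where $a_i=(a_{n,i})_n$ and the $(a_i)$ generate the unit ideal. On the standard open $D_+(X_i)$ of $\bb{P}^N$, the morphism $x$ restricted to $D(a_i)\subseteq \Spec\scr{A}^{\eps}$ factors through $A_{\scr{A}^{\eps}}$ exactly when the dehomogenized equations vanish. It therefore suffices to show the single global statement $G_j(a_0,\ldots,a_N)=0$ in $\scr{A}^{\eps}$ for all $j$. Indeed, then $G_j(a)/a_i^{\deg G_j}=0$ in $\scr{A}^{\eps}[a_i^{-1}]$, so each restriction factors through the closed subscheme. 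These restrictions glue because the $D(a_i)$ cover $\Spec\scr{A}^{\eps}$, and a closed immersion is a monomorphism.

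The key computation is now coordinatewise. Evaluating a polynomial commutes with the ring homomorphism $\pi_n$, so
$$\pi_n\bigl(G_j(a_0,\ldots,a_N)\bigr)=G_{j,n}(a_{n,0},\ldots,a_{n,N}).$$
This vanishes because $x_n\in A_n(K_n)$. An element of $\scr{A}^{\eps}\subseteq \prod_n K_n$ all of whose coordinates vanish is $0$, which gives the claim. As an alternative, one can invoke the epimorphism argument in the proof of Proposition \ref{DegenerationUniqueness1}. The scheme $Z=x^{-1}(A_{\scr{A}^{\eps}})$ is closed in $\Spec\scr{A}^{\eps}$, and $f:\bigsqcup\Spec K_n\to\Spec\scr{A}^{\eps}$ factors through $Z$. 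Since $f$ is dominant and $\Spec\scr{A}^{\eps}$ is reduced, $Z$ is everything.

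The only step needing care is the identification of $\pi_n$ applied to the coefficients of $G_j$ with the equations of $A_n$ in its given embedding. This requires that the level-$3$ moduli point of $A_n$ and the embedding $\iota_U$ are compatible. That compatibility holds by construction, since $x_n\in U(K_n)$ and $A_n\subseteq\bb{P}^N_{K_n}$ is by definition the fiber of $\iota_U$ over $x_n$. I expect no real obstacle beyond this bookkeeping.
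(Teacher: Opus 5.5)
Your proof is correct and is essentially the paper's argument: the defining equations of $A_{\mathscr{A}^\varepsilon}$ evaluated at $(a_0,\ldots,a_N)$ specialize under each $\pi_n$ to the equations of $A_n$ evaluated at $x_n$. These vanish, so each such element is zero in $\mathscr{A}^\varepsilon\subseteq\prod_n K_n$. Your extra justification that global vanishing implies factorization (via the charts $D_+(X_i)$ and the cover of $\Spec\mathscr{A}^\varepsilon$ by the $D(a_i)$) is a valid detail the paper leaves implicit, and so is your alternative via the epimorphism argument of Proposition~\ref{DegenerationUniqueness1}.
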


\begin{proof}
Let $I \subseteq \mathscr{A}^\varepsilon[T_0,\ldots,T_N]$ be the homogeneous ideal
defining the closed subscheme
$A_{\mathscr{A}^\varepsilon} \subseteq \mathbb{P}^N_{\mathscr{A}^\varepsilon}$.
Choose homogeneous coordinates
\[[a_0:\cdots:a_N] \in \mathbb{P}^N_{\mathscr{A}^\varepsilon}(\mathscr{A}^\varepsilon)\]
representing the morphism
$x:\Spec(\mathscr{A}^\varepsilon) \to \mathbb{P}^N_{\mathscr{A}^\varepsilon}$.
Then $x$ factors through $A_{\mathscr{A}^\varepsilon}$ if and only if \[f(a_0,\ldots,a_N) = 0 \quad\text{in }\mathscr{A}^\varepsilon\] for every homogeneous polynomial $f \in I$.

For each $n$, the specialization map
$\mathscr{A}^\varepsilon \to K_n$ sends $a_i$ to the corresponding
coordinate $a_{n,i}$ of $x_n$, and the specialization of $x$ is $x_n$.
Since $x_n$ lies in $A_n \subseteq \mathbb{P}^N_{K_n}$, we have \[f_n(a_{n,0},\ldots,a_{n,N}) = 0 \quad\text{in }K_n\]
for every generator $f=(f_n) \in I$ and every $n$, where $f_n \in K_n[T_0,\ldots,T_N]$
denotes the specialization of $f$. Hence \[f(a_0,\ldots,a_N)=(f_n(a_{n,0},\ldots,a_{n,N}))_{n=1}^\infty=(0)_{n=1}^\infty=0\] as an element of $\mathscr{A}^\varepsilon$.

Thus every polynomial in $I$ vanishes on $(a_0,\ldots,a_N)$, so $x$ annihilates
the defining ideal of $A_{\mathscr{A}^\varepsilon}$.
It follows that $x$ factors through the closed immersion
$A_{\mathscr{A}^\varepsilon}\hookrightarrow \mathbb{P}^N_{\mathscr{A}^\varepsilon}$.\end{proof}

Hence for any ultrafilter $\omega$, we obtain a well-defined point
$x_{\omega} \in A_{\omega}(\mathcal{H}(\omega))$.  We shall call $x_{\omega}$
the $\omega$-limit of the sequence $(x_n)$.  
Observe that if $x : \Spec \mathscr{A}^{\varepsilon} \to A_{\mathscr{A}^{\varepsilon}}$
is the section arising from the $(\varepsilon_n)$-degeneration of $(x_n)$, then
the point $\Spec \mathcal{H}(\omega) \to A_{\omega}$ given by
the base change of $x$ along $\Spec\mathscr{A}^{\varepsilon} \to \Spec\mathcal{H}(\omega)$ equals $x_\omega$.

\par 
The main property that the $\omega$-limit satisfies is the following. For any homogeneous degree $d$ polynomial $F$ on $\bb{P}^N_{\scr{A}^{\eps}}$ with $\scr{A}^{\eps}$-coefficients, we may define a function on $\bb{P}^N_{\cal{H}(\omega)}$ by
$$\psi_F = \frac{|F_{\omega}(x_0,\ldots,x_N)|_{\omega}}{ \max\{ |x_0|_{\omega} ,\ldots,|x_N|_{\omega}\}^d},$$ where $F_{\omega}$ is the induced homogeneous degree $d$ polynomial over $\mathcal{H}(\omega)$ and $(x_0,\ldots,x_N)$ is any homogeneous lift of our point in $\bb{P}^N_{\cal{H}(\omega)}$. Let $F_n$ be degree $d$ polynomial on $K_n$ induced by $F$, and let $\psi_{F_n}$ be the corresponding function on $\bb{P}^N_{K_n}$. Then 
$$\psi_F(x_{\omega}) = \lim_{\omega} \psi_{F_n}(x_n)^{\eps_n} \implies \log \psi_F(x_{\omega}) = \lim_{\omega} \eps_n \log \psi_{F_n}(x_n)$$
where we take the $\omega$-limit on the extended real line $[-\infty,\infty]$.
\par 
We now recall the relevant compactification of $X=\mathcal{A}_{g,3}$. Faltings--Chai \cite{FC90} constructed, given some choice of auxiliary data, a compactification $\overline{X}=\overline{\mathcal{A}}_{g,3}$ of $X$, which is a proper algebraic stack over $\mathbb{C}$. Different choices of this auxiliary data generally give rise to non-isomorphic compactifications. Moreover, there exist choices of compactification for which $\overline{X}$ is in fact a projective algebraic variety over $\mathbb{C}$ \cite[Theorem~5.8]{FC90}. We will let $\overline{X}=\overline{\mathcal{A}}_{g,3}$ denote such a choice. The compactification $\overline{X}$ carries a universal semiabelian variety $G \to \overline{X}$ extending the universal abelian variety over $X$
\cite[Theorem~IV.6.7]{FC90}.
\par 

For each abelian variety $A_n/K_n$ with associated point $x_n\in X(K_n)$, and $R_n$ the valuation ring of $K_n$, the properness of the compactification $\overline{\mathcal{A}}_{g,3}$ guarantees that $x_n$ extends to a morphism $\Spec(R_n)\to\overline{\mathcal{A}}_{g,3}$, and pulling back the universal semiabelian scheme $G\to\overline{\mathcal{A}}_{g,3}$ along this morphism yields a semiabelian scheme $G_n/R_n$. Now let $x_n \in A_n(K_n)$ be a sequence of points such that each $x_n$ lifts to a point in $G_n(R_n)$. We will show that in this case, the resulting $\omega$-limit point $x_\omega\in A_\omega(\mathcal{H}(\omega))$
extends to an $\mathcal{R}(\omega)$-point of the semiabelian model
$G_\omega$, where $\mathcal{R}(\omega)$ is the valuation ring of $\mathcal{H}(\omega)$, and
$G_\omega$ is defined by the base change \[G_\omega:=G\times_{\overline{\mathcal{A}}_{g,3}} \Spec(\mathcal{R}(\omega)).\] We do so in order to further show the following fact about the retraction of $x_\omega$ (still assuming that $x_n$ lifts to an $R_n$-point of $G_n$ for all $n$). Since $A_{\omega}$ has bad reduction, it has a canonical skeleton $S(A_{\omega})$, which is isomorphic to a real torus $\bb{R}^k/\Lambda$, and a retraction map $r: A_{\omega}^{\an} \to S(A_{\omega})$. We will prove that $r(x_{\omega}) = 0$. By Proposition \ref{KernelReduction1}, it suffices to show that $x_{\omega}$ lifts to an $R(\omega)$-point of $G_{\omega}$. 
\par 

To do this, set
\[
\mathscr{B}^{\varepsilon}
:= \{ (x_n) \in \prod_{n=1}^{\infty} K_n \mid |x_n| \le 1 \}.
\]
Then $\mathscr{B}^{\varepsilon}$ is a subring of $\mathscr{A}^{\varepsilon}$, and
for every ultrafilter $\omega$ the induced homomorphism
$\mathscr{A}^{\varepsilon} \to \mathcal{H}(\omega)$ restricts to a homomorphism $\mathscr{B}^{\varepsilon} \longrightarrow \mathcal{R}(\omega)$, since elements of $\mathscr{B}^{\varepsilon}$ have $\omega$-norm $\le 1$. We will construct a semiabelian scheme $\mathcal{G}/\mathscr{B}^{\varepsilon}$ whose base change to $\mathscr{A}^{\varepsilon}$ is $A_{\mathscr{A}^{\varepsilon}}$, and whose base change to $\mathcal{R}(\omega)$ is $G_\omega$. We begin with the following proposition.

\begin{proposition} \label{DegeneratingSemiAbelian2}
Let $(x_n) \in \mathcal{A}_{g,3}(K_n)$ be a sequence of points and let $(\varepsilon_n)$ be a fundamental rescaling parameter. If $A_{\mathscr{A}^\varepsilon}$ is the abelian scheme corresponding to our $(\varepsilon_n)$-degeneration, then there exists a semiabelian scheme $G_{\mathscr{B}^\varepsilon}$ whose base change to $\mathscr{A}^\varepsilon$ is $A_{\mathscr{A}^\varepsilon}$.
\end{proposition}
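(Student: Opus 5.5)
The plan is to build $G_{\scr{B}^\eps}$ by pulling back the universal semiabelian scheme $G \to \ovl{X}$, where $\ovl{X} = \ovl{\cal{A}}_{g,3}$ is the projective Faltings--Chai toroidal compactification, along a $\scr{B}^\eps$-point of $\ovl{X}$. By Remark \ref{rmk:Bep}, the degeneration in Proposition \ref{DegenerationExistence1}, applied to the projective variety $\ovl{X}$, is already a $\scr{B}^\eps$-point. It is obtained as follows.
\begin{enumerate}
\item Fix a finite affine cover $\{U_i\}$ of $\ovl{X}$ and set $Y=\bigsqcup U_i$.
\item Lift each $x_n$ to an $R_n$-point $\tilde{x}_n$ of some $U_{i_n}$, using properness and the fact that $R_n$ is a valuation ring.
\item Take coordinatewise products of the affine coordinates, which all have norm $\le 1$.
\end{enumerate}
This gives a morphism $\tilde{x}\colon \Spec \scr{B}^\eps \to Y \to \ovl{X}$, and we define
$$G_{\scr{B}^\eps} := G \times_{\ovl{X}} \Spec \scr{B}^\eps.$$
Base change preserves being semiabelian (smooth, separated, commutative group scheme with semiabelian fibers), so $G_{\scr{B}^\eps}$ is semiabelian over $\scr{B}^\eps$.

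Next I check that the base change to $\scr{A}^\eps$ is $A_{\scr{A}^\eps}$. The composite $\Spec\scr{A}^\eps \to \Spec\scr{B}^\eps \xrightarrow{\tilde{x}} \ovl{X}$ is a $(\eps_n)$-degeneration of $(x_n)$ on $\ovl{X}$, because its projection to each $K_n$ is $x_n$. Because $(\eps_n)$ is a fundamental rescaling parameter, Proposition \ref{DegenerationExistence2} gives a $(\eps_n)$-degeneration $x$ on $X$. The composite of $x$ with $X \hookrightarrow \ovl{X}$ is also a degeneration on $\ovl{X}$, which is separated. Proposition \ref{DegenerationUniqueness1} applied to $\ovl{X}$ then says the two $\scr{A}^\eps$-points coincide, so $\Spec\scr{A}^\eps\to\ovl{X}$ factors through $X$ via $x$. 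Since $G|_X = \cal{A}$ is the universal abelian scheme, we get
$$G_{\scr{B}^\eps}\times_{\scr{B}^\eps}\scr{A}^\eps \cong \cal{A}\times_X \Spec\scr{A}^\eps = A_{\scr{A}^\eps}.$$

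The main subtlety is that $\scr{A}^\eps$ is a localization of $\scr{B}^\eps$: every $a\in\scr{A}^\eps$ is $b/t$ with $b\in\scr{B}^\eps$ and $t=(t_n)$, $|t_n|=\max(1,|a_n|)^{...}$ suitably chosen. Concretely, $|a_n|\le C^{1/\eps_n}$, and we take $t_n\in K_n$ with $|t_n|^{-1}\ge |a_n|$ and $|t_n|\le 1$; then $t$ is not necessarily a unit of $\scr{A}^\eps$. I therefore expect to avoid needing this and simply use the ring inclusion $\scr{B}^\eps\subseteq\scr{A}^\eps$, which is clear because $|x_n|\le1$ implies $|x_n|^{\eps_n}\le 1$. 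Thus the real points to verify are only the uniqueness argument and the identification $G|_X=\cal{A}$. The step I expect to need most care is that the factorization through $X$ is compatible with the particular level-$3$ component and choice of lift. Both routes define the same morphism into $\ovl{X}$ by uniqueness, so this should be fine.
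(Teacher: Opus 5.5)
Your proposal is correct and is essentially the paper's proof: you pull back the universal semiabelian scheme $G\to\overline{\mathcal{A}}_{g,3}$ along the $\mathscr{B}^\varepsilon$-point of Remark~\ref{rmk:Bep}, and you identify the base change to $\mathscr{A}^\varepsilon$ with $A_{\mathscr{A}^\varepsilon}$ because the $\mathscr{A}^\varepsilon$-point factors through $\mathcal{A}_{g,3}$, which you justify via Proposition~\ref{DegenerationUniqueness1} on $\overline{\mathcal{A}}_{g,3}$, whereas the paper reads it off directly from the construction in Proposition~\ref{DegenerationExistence2}. Your localization digression is not needed (and with the choice $t_n=a_n^{-1}$ when $|a_n|>1$ and $t_n=1$ otherwise, $t$ is in fact a unit of $\mathscr{A}^\varepsilon$), but nothing in your argument depends on it.
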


\begin{proof}
Let $X=\mathcal{A}_{g,3}$ and let $(x_n)$ be the given sequence.
By assumption, $(\varepsilon_n)$ is a fundamental rescaling parameter, so $\varepsilon_n=\lambda_{\partial X}(x_n)^{-1}$,
and therefore $\varepsilon_n\lambda_{\partial X}(x_n)=1$ is uniformly bounded. Thus by Proposition~\ref{DegenerationExistence2}, the
$(\varepsilon_n)$-degeneration of $(x_n)$ exists on $X$ itself; in other words, the
degeneration defines a morphism $x:\Spec \mathscr{A}^\varepsilon\longrightarrow X\subseteq\overline{\mathcal{A}}_{g,3}$. By Remark~\ref{rmk:Bep}, this morphism extends to a morphism $\tilde x:\Spec \mathscr{B}^\varepsilon\longrightarrow \overline{\mathcal{A}}_{g,3}$. Let $G\longrightarrow \overline{\mathcal{A}}_{g,3}$
denote the universal semiabelian scheme on the Faltings--Chai compactification.
Define
\[G_{\mathscr{B}^\varepsilon}:=G\times_{\overline{\mathcal{A}}_{g,3}}\Spec \mathscr{B}^\varepsilon,\] which is a semiabelian scheme over $\mathscr{B}^\varepsilon$. Base changing along $\mathscr{B}^\varepsilon\to \mathscr{A}^\varepsilon$ gives \[G_{\mathscr{A}^\varepsilon}:=G_{\mathscr{B}^\varepsilon}\times_{\mathscr{B}^\varepsilon}\Spec \mathscr{A}^\varepsilon=G\times_{\overline{\mathcal{A}}_{g,3}}\Spec \mathscr{A}^\varepsilon.\] Since $x$ factors through the open subvariety
$X=\mathcal{A}_{g,3}\subseteq\overline{\mathcal{A}}_{g,3}$, and since the restriction of $G$ to $X$ is the universal abelian scheme, it follows that $G_{\mathscr{A}^\varepsilon}$ is obtained by pulling back the universal abelian scheme along $x$. By definition, this pullback is precisely the degeneration family
$A_{\mathscr{A}^\varepsilon}$. Hence there is a canonical isomorphism $G_{\mathscr{A}^\varepsilon}\cong A_{\mathscr{A}^\varepsilon}$.
\end{proof}

Now suppose we have a sequence of points $(x_n)$ of $A_n(K_n)$ such that $x_n$ lifts to an $R_n$-point of $G_n$ for all $n$. We wish to show that we may glue these points together to obtain a $\scr{B}^{\eps}$-section of $G_{\scr{B}^{\eps}}$

\begin{proposition}\label{SemiAbelianSection1} Let $G_{\mathscr{B}^\varepsilon}$ be as in Proposition \ref{DegeneratingSemiAbelian2}. There exists a section $x : \Spec  \scr{B}^{\eps}\to G_{\scr{B}^{\eps}}$ such that for each $n$, the base change of $x$ along the natural map $\Spec R_n \to \Spec  \scr{B}^{\eps}$ agrees with the given morphism $x_n : \Spec R_n \to G_n$. \end{proposition}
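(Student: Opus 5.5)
We will build the section by working affine-locally over the Faltings--Chai compactification $\overline{\mathcal{A}}_{g,3}$, mirroring the proof of Proposition~\ref{DegenerationExistence1}. The plan has four steps: choose a finite cover of $G$ adapted to its fibration over $\overline{\mathcal{A}}_{g,3}$; pick for each $n$ one chart containing $x_n$; assemble the coordinates into elements of $\mathscr{B}^\varepsilon$; and check compatibility with $\tilde x$.

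\textbf{Step 1: an adapted finite cover.} Fix a finite affine open cover $\{W_j\}$ of $\overline{\mathcal{A}}_{g,3}$. Cover each $G|_{W_j}$ by finitely many affine opens $V_{jk}\subseteq G$ mapping into $W_j$. Then $Y=\bigsqcup V_{jk}$ is an affine $\mathbb{C}$-scheme of finite type, equipped with the natural map $Y\to G$.

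\textbf{Step 2: choosing charts for each $n$.} Given $x_n:\Spec R_n\to G_n\to G$, the closed point of $\Spec R_n$ lands in some $V_{jk}$. As in Proposition~\ref{DegenerationExistence1}, since $R_n$ is local, any open subset containing the closed point is all of $\Spec R_n$. Hence $x_n$ factors through $V_{jk}$, and its affine coordinates $a_{n,1},\dots,a_{n,M}$ (for a fixed presentation $Y\cong\Spec\mathbb{C}[T_1,\dots,T_M]/I$) satisfy $|a_{n,i}|\le 1$. We arrange the presentation so that membership in a given $V_{jk}$ is recorded by idempotent coordinates, for instance using the disjoint-union structure. The tuples $a_i=(a_{n,i})_n$ then lie in $\mathscr{B}^\varepsilon=\prod R_n$, and every relation in $I$ holds coordinatewise. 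This gives a morphism $\Spec\mathscr{B}^\varepsilon\to Y\to G$.

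One subtlety arises here. $\Spec\prod R_n$ is not the disjoint union of the $\Spec R_n$; it also has ultrafilter points. The honest construction therefore partitions $\mathbb{N}$ into finitely many sets $E_{jk}$ according to the chart used, with $\Spec\mathscr{B}^\varepsilon=\bigsqcup\Spec\prod_{n\in E_{jk}}R_n$ via idempotents. On each piece we use a single affine chart $V_{jk}$, whose coordinates are bounded by $1$.

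\textbf{Step 3: checking it is a section compatible with $\tilde x$.} The composite $\Spec\mathscr{B}^\varepsilon\to G\to\overline{\mathcal{A}}_{g,3}$ restricts on each factor $R_n$ to the structure point of $G_n$. By construction, the degeneration $\tilde x$ of Remark~\ref{rmk:Bep} is assembled the same way, from charts $W_j$ and coordinates with $|\cdot|\le1$. We refine the partition so that both constructions use compatible charts $V_{jk}\to W_j$. The two maps $\Spec\prod_{n\in E}R_n\to W_j$ then agree coordinatewise, since coordinates in $\prod R_n$ are determined by their components, and so they are equal. Hence the morphism factors through $G_{\mathscr{B}^\varepsilon}=G\times_{\overline{\mathcal{A}}_{g,3}}\Spec\mathscr{B}^\varepsilon$ and is a section of it. Its base change along $\Spec R_n\to\Spec\mathscr{B}^\varepsilon$ (projection to the $n$th factor) recovers $x_n$ by construction.

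\textbf{Main obstacle.} The main difficulty is the bookkeeping in Steps 2 and 3: ensuring the morphism from $\Spec\prod R_n$ is well-defined using finitely many affine charts. An arbitrary cover would not suffice, since $\prod R_n$ has non-principal points. This is handled by the finite partition of $\mathbb{N}$ and the corresponding idempotent decomposition of $\mathscr{B}^\varepsilon$. The decomposition works because the cover is finite and equalities in a product ring are checked componentwise.
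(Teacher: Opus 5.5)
Your argument is correct and is essentially the paper's proof. Both use a finite affine cover, use the locality of $R_n$ to force each $x_n$ into a single chart, and assemble the chart-wise ring maps into $\prod_n R_n$; the paper's homomorphism $\prod_i C_i \to \prod_n R_n$ is exactly your finite partition of $\mathbb{N}$ and idempotent decomposition of $\mathscr{B}^\varepsilon$. The only difference is that the paper covers $G_{\mathscr{B}^\varepsilon}$ directly over $\mathscr{B}^\varepsilon$, so the section property follows immediately, component by component. You instead map into $G$ over $\mathbb{C}$ and must check that the composite to $\overline{\mathcal{A}}_{g,3}$ equals $\tilde x$; your compatible choice of charts handles this, namely taking the $W_j$ and the chart indices from the construction of $\tilde x$.
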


\begin{proof}
For each $n$, the given point $x_n\in G_n(R_n)$ corresponds to a morphism \[x_n:\Spec R_n \longrightarrow G_n= G_{\mathscr B^\varepsilon}\times_{\Spec\mathscr B^\varepsilon}\Spec R_n.\]
Equivalently, by composing with the projection
$G_n\to G_{\mathscr B^\varepsilon}$, we may view $x_n$ as a morphism \[x_n:\Spec R_n \longrightarrow G_{\mathscr B^\varepsilon}\]
whose composition with $G_{\mathscr B^\varepsilon}\to\Spec\mathscr B^\varepsilon$
is induced by the natural projection
$\pi_n:\mathscr B^\varepsilon\to R_n$.

Since $G_{\mathscr B^\varepsilon}$ is a scheme of finite type over
$\mathscr{B}^\varepsilon$, it admits a finite affine open cover
$\{W_i\}_{i=1}^m$ with each $W_i = \Spec C_i$. Because each $R_n$ is a valuation ring, the image of
$\Spec R_n$ under $x_n$ must be contained in a single affine open $W_{i_n}$. On each $W_{i_n}=\Spec(C_{i_n})$, the morphism $x_n$ is given by a ring homomorphism $C_{i_n}\to R_n$. Taking these homomorphisms coordinatewise defines a ring homomorphism 
$$\prod_{i=1}^{m} C_i \to \prod_n R_n = \scr{B^{\eps}}$$
as follows. For each $n$, we first project $\prod_{i=1}^{m} C_i$ down to $C_{i_n}$ and then compose with $x_n: C_{i_n} \to R_n$ to obtain a morphism $\prod_{i=1}^{m} C_i \to R_n$. Taking the product gives us a ring morphism $\prod_{i=1}^{m} C_i \to \scr{B}^{\eps}$.  
\par 
Thus we obtain a $\scr{B^{\eps}}$-point of $\prod_{i=1}^{m} W_i$ which naturally defines a $\scr{B^{\eps}}$-point $x:\Spec\mathscr B^\varepsilon \to G_{\mathscr B^\varepsilon}$. By construction, for each $n$ the base change of $x$ along
$\Spec R_n\to\Spec\mathscr B^\varepsilon$ induces the original morphism
$x_n:\Spec R_n\to G_n$, since both are defined by the same coordinate ring
homomorphism $\Gamma(W_{i_n},\mathcal O)\to R_n$.\end{proof}

Recall that the $(\varepsilon_n)$-degeneration of $(x_n)$ determines a section
\[
x_{\mathscr A} : \Spec \mathscr{A}^\varepsilon \longrightarrow A_{\mathscr{A}^\varepsilon}.
\]
For any ultrafilter $\omega$, base change along
$\Spec \mathcal{H}(\omega) \to \Spec \mathscr{A}^\varepsilon$
defines a point
\[x_\omega \in A_\omega(\mathcal{H}(\omega)),\qquad A_\omega := A_{\mathscr{A}^\varepsilon} \times_{\Spec \mathscr{A}^\varepsilon} \Spec \mathcal{H}(\omega),\] which by definition is the $\omega$-limit of the sequence $(x_n)$. On the other hand, by Proposition~\ref{SemiAbelianSection1} (note that we are making the standing assumption that $x_n$ lifts to an $R_n$-point of $G_n$ for all $n$), there exists a section
\[x_{\mathscr{B}}:\Spec\mathscr{B}^\varepsilon\longrightarrow G_{\mathscr{B}^\varepsilon}\]
whose base change to $\Spec R_n$ recovers each $x_n:\Spec R_n\to G_n$.
Base changing this section along
\[\Spec \mathcal{R}(\omega) \to \Spec \mathscr{B}^\varepsilon\]
produces a section \[x_\omega^G \in G_\omega(\mathcal{R}(\omega)),\qquad
G_\omega:=G_{\mathscr{B}^\varepsilon} \times_{\Spec \mathscr{B}^\varepsilon} \Spec \mathcal{R}(\omega).\] Passing to the generic fiber identifies the induced
$\mathcal{H}(\omega)$-point of $G_\omega$ with a point of
\[A_\omega = G_\omega \times_{\Spec \mathcal{R}(\omega)} \Spec \mathcal{H}(\omega),\] using the canonical identification $A_{\mathscr{A}^\varepsilon}\cong G_{\mathscr{A}^\varepsilon}$ from Proposition~\ref{DegeneratingSemiAbelian2}. By construction, this $\mathcal H(\omega)$-point of $A_\omega$, induced from
$x_\omega^G$, and the $\omega$-limit point $x_\omega$ defined via
$x_{\mathscr A}$ both specialize to the same $x_n$ for every $n$. Since $\mathcal{H}(\omega)$ is the ultraproduct residue field of the $K_n$ with
respect to $\omega$, so that a morphism $\Spec\mathcal{H}(\omega)\to A_\omega$ is uniquely determined by its compositions with the projections to the $A_n$, it follows that these two $\mathcal{H}(\omega)$-points coincide, and hence that the
$\omega$-limit point $x_\omega \in A_\omega(\mathcal{H}(\omega))$ lifts to an
$\mathcal{R}(\omega)$-point of $G_\omega$ (namely $x_\omega^G$). By Proposition~\ref{KernelReduction1},
this implies that the retraction of $x_\omega$ to the skeleton vanishes.

\begin{corollary} \label{SemiAbelianSection2}
Assume each $x_n$ lifts to an $R_n$-point of $G_n$. Then the $\omega$-limit $x_{\omega}$ lifts to an $\cal{R}(\omega)$-point of $G_{\omega}$. In particular, $r(x_{\omega}) = 0$ in $S(A_{\omega})$. 
\end{corollary}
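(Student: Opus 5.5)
The plan is to assemble the corollary from the constructions immediately preceding it. First, Proposition \ref{DegeneratingSemiAbelian2} supplies the semiabelian scheme $G_{\scr{B}^{\eps}}$ over $\scr{B}^{\eps}$ whose base change to $\scr{A}^{\eps}$ is $A_{\scr{A}^{\eps}}$. Second, since each $x_n$ lifts to an $R_n$-point of $G_n$, Proposition \ref{SemiAbelianSection1} glues these lifts into a section $x_{\scr{B}}\colon \Spec \scr{B}^{\eps}\to G_{\scr{B}^{\eps}}$. Third, because every element of $\scr{B}^{\eps}$ has $\omega$-norm at most $1$, the homomorphism $\scr{B}^{\eps}\to\cal{R}(\omega)$ lets us base change $x_{\scr{B}}$ to a point $x^G_\omega\in G_\omega(\cal{R}(\omega))$. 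Here $G_\omega$ is a semiabelian scheme over $\cal{R}(\omega)$ whose generic fiber is $A_\omega$.

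The key step is to check that the generic fiber of $x^G_\omega$ equals the $\omega$-limit $x_\omega$. Both are $\cal{H}(\omega)$-points of $A_\omega$ obtained by base change from $\scr{A}^{\eps}$-points of $A_{\scr{A}^{\eps}}\cong G_{\scr{A}^{\eps}}$. The first is the restriction of $x_{\scr{B}}$ to $\Spec\scr{A}^{\eps}$, and the second is the degeneration section $x_{\scr{A}}$. I would show these two $\scr{A}^{\eps}$-sections coincide, which is cleaner than comparing the $\cal{H}(\omega)$-points directly. Both specialize to $x_n$ in $A_n(K_n)$ for every $n$, so Proposition \ref{DegenerationUniqueness1}, the epimorphism argument applied to the separated scheme $A_{\scr{A}^{\eps}}$ (or to the $U$-scheme, using the same reducedness and density argument), forces them to be equal. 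Base changing to $\cal{H}(\omega)$ then identifies the two points.

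Finally, Proposition \ref{KernelReduction1}, applied over the complete algebraically closed field $\cal{H}(\omega)$ with valuation ring $\cal{R}(\omega)$ and semiabelian model $G_\omega$, gives $x_\omega\in A_1$. Combined with Proposition \ref{Retraction1}, this yields $r(x_\omega)=0$ in $S(A_\omega)$; the skeleton is defined because $A_\omega$ has bad reduction by Propositions \ref{BadReduction1} and \ref{Degenerating1}.

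I expect the main obstacle to be the identification step: making sure that the uniqueness argument of Proposition \ref{DegenerationUniqueness1} applies to sections of the $\scr{A}^{\eps}$-scheme $A_{\scr{A}^{\eps}}$ rather than to points of a $\bb{C}$-variety. The argument carries over verbatim, since it only uses that the target is separated and that $\Spec\scr{A}^{\eps}$ is reduced with $\bigsqcup\Spec K_n$ dense in it.
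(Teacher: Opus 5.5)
Your proposal is correct and follows the paper's argument (the paragraph just before the corollary) step for step: $G_{\scr{B}^{\eps}}$ from Proposition \ref{DegeneratingSemiAbelian2}, the glued section from Proposition \ref{SemiAbelianSection1}, base change along $\scr{B}^{\eps}\to\cal{R}(\omega)$, identification of the generic fiber with $x_\omega$, and then Proposition \ref{KernelReduction1}. The one difference is the identification step, and there your version is the more rigorous one: you compare the two sections already over $\scr{A}^{\eps}$ via the equalizer argument of Proposition \ref{DegenerationUniqueness1}, where the equalizer ideal lies in $\bigcap_n \ker \pi_n = 0$. The paper instead appeals to $\cal{H}(\omega)$ being an ultraproduct of the $K_n$, but for non-principal $\omega$ the projections $\pi_n$ do not factor through $\cal{H}(\omega)$, so the comparison is best made over $\scr{A}^{\eps}$ as you do.
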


\section{Uniform Positivity for Arakelov-Green's Functions}
We now combine the results in Sections \ref{sec : AbVarPos} and \ref{sec: DegenAbelian} to obtain a uniform positive lower bound on $g_{\cal{B}_n}(x_1,\ldots,x_{c(n)})$ under suitable hypotheses. This will be a key input in obtaining uniform bounds on the number of small points on abelian varieties.
\par 
Let $U \subseteq \cal{A}_{g,3}$ be an open irreducible affine chart such that for the universal abelian scheme $A_U$ on $U$, we have an embedding $A_U\hookrightarrow\bb{P}^N_U$ and furthermore, for some $n \in \bb{N}$, the multiplication by $[n]$ morphism on $A_U$ extends to a morphism $f: \bb{P}^N_U \to \bb{P}^N_U$ of degree $d = n^2$. Fixing a homogeneous lift $F = (F_0,\ldots,F_N)$ of $f$, this means that the homogeneous resultant $\Res(F)$ is an invertible element of $\mathcal{O}(U)$. 
\par 
This allows us to perform the first author's construction of good bases and dynamical Arakelov-Green's functions in a relative setting over $U$. If we let $F^k=(F^{(k)}_0,\ldots,F^{(k)}_N)$, then \cite[Lemma 4]{Ing22} that any homogeneous polynomial $G$ with coefficients in $\mathcal{O}(U)$ of degree at least $(N+1)(d^k-1)+1$ can be written as 
$$G = \sum P_i F^{(k)}_i$$ for appropriate homogeneous polynomials $P_i$ over $\mathcal{O}(U)$.
In particular if we let $\cal{G}$ be the collection of polynomials of the form $(F_i^{(k)})^j$ for $1 \leq j \leq d$, then the set
\begin{equation} \label{eqn: GlobalSpanningSet}
\cal{F} = \bigcup_{j = \lfloor t_1 \rfloor }^{\lfloor t_2 \rfloor} \{\eta_i G_i \mid G_i \in \cal{G}^j \text{ and } \eta_i \text{ monomial of degree } \leq d(N+1)\}
\end{equation}
is a spanning set for the space of degree $n$ polynomials, where we may take $t_1,t_2 = O(\log n)$. 
\par 
Let $L$ be the ample line bundle on $\cal{A}_U$ corresponding to $\mathcal{O}(1)$. By replacing $L$ by $L^4$, we know that for each $s \in U$, $L$ gives a projectively normal embedding of $\cal{A}_s$ into $\bb{P}^N_s$ \cite{Koi76} and hence we have a surjection $H^0(\bb{P}_U^N,\mathcal{O}(n)) \to H^0(\cal{A}_U,L^n)$ For any $K$-point of $U$, pulling back via the $K$-point gives us an embedding $A_K \xhookrightarrow{} \bb{P}^N_K$ and also a surjection \begin{equation}\label{eqn:Lbundle} H^0(\bb{P}^N_K,\mathcal{O}(n)) \to H^0(A_K,L^n).\end{equation} By \cite[Proposition 6.13]{MFK94}, we have $\dim H^0(A_K,L^n)=c(n)$ for some constant $c(n)$ that is independent of our $K$-point on $U$, as $\pi_*(L^n)$ is a locally free sheaf on $U$.  
\par 
As the multiplication by $n$ map extends to a morphism $f$ on $\bb{P}^N_U$ and we have fixed a homogeneous lift $F:\mathbb{A}_U^{N+1}\to\mathbb{A}_U^{N+1}$, pulling back by the $K$-point gives us a consistent choice of a polarized dynamical system $(A_K,f_K)$ and homogeneous lift $F_K$. We define the notion of good basis pointwise on $\cal{A}_U$, where a good basis $\cal{B}_n$ for $A_K$ is defined as in Definition \ref{def:Hn} with respect to the lift $F_K$ of $f_K$. Observe that any good basis is the pullback of a subset of sections of $\cal{F}$ by the $K$-point.
\par  
Now assume that $(K,|\cdot|)$ is a complete valued field. Given any set of sections $\cal{B}_n \subseteq H^0(A_K,L_K^n)$ of size $c(n)$, recall from \S\ref{section:AG} that we may define an Arakelov-Green's function $g_{\cal{B}_n}:A_K^{c(n)}\to\bb{R}\cup\{\infty\}$ by setting
$$g_{\cal{B}_n}(x_1,\ldots,x_{c(n)}) = \frac{1}{c(n)} \sum_{i=1}^{c(n)} \Ht_{F_k}(\tilde{x}_i) - \frac{1}{n \cdot c(n)} \log\left| \det \big( ( \eta_j(\tilde{x}_i))_{\eta \in \cal{B}} \big)  \right|+ r(F_K)$$
where $(f_K,F_K)$ is the pullback of $(f,F)$ by $\Spec K \to U$ and 
$$r(F_K) = \frac{1}{d^N(N+1)(d-1)} \log |\Res(F_K)|^{-1}.$$
Here, the $\tilde{x}_i$ are lifts of $x_i$ to $\bb{A}^{N+1}(K)$. Recall that $g_{\cal{B},n}$ is independent of the choice of lifts of the $x_i$. Note that this definition makes sense even if $\cal{B}_n$ is not a basis, in which case we have $g_{\cal{B},n}(x_1,\ldots,x_{c(n)}) = +\infty$ for all $x_i$.  
\par
We first prove the following basic continuity property. Fix a sequence $(\eps_n)$ of positive real numbers and assume that we have an $\scr{A}^{\eps}$-point of $U$. Then pulling back $A_U/U$ along $\Spec \scr{A}^{\eps} \to U$, we obtain an abelian scheme $A_{\scr{A}^{\eps}}$ along with an embedding $A_{\scr{A}^{\eps}}\hookrightarrow\bb{P}^N_{\scr{A}^{\eps}}$ and a morphism $f: \bb{P}^N \to \bb{P}^N$ extending $[n]$ on $A_{\scr{A}^\eps}$ with homogeneous lift $F$. Then for a set of sections $\cal{B}_k \subseteq H^0(\bb{P}_{\scr{A}^{\eps}}^N,\mathcal{O}(k))$ of size $c(k)$, pulling back via $K_n \to \scr{A}^{\eps}$ and $\cal{H}(\omega) \to \scr{A}^{\eps}$, we get induced sets of sections $\cal{B}_{k,n}$ and $\cal{B}_{k,\omega}$ for $H^0(A_n, L^k_{n})$ and $H^0(A_{\omega}, L_{\omega}^k)$ respectively. We thus get Arakelov-Green's functions $g_{\cal{B}_k,n}$ and $g_{\cal{B}_{k},\omega}$.

\begin{proposition} \label{ArakelovContinuity1} Let $A_{\scr{A}^{\eps}}$ be the abelian scheme constructed above and for each $n$, let $x_{n,1},\ldots,x_{n,c(k)}$ be points on $A_n(K_n)$. Let $x_i$ be the $\omega$-limit of $(x_{1,i},x_{2,i},\ldots)$ for any non-principal ultrafilter $\omega$. Then we have 
$$\lim_{\omega} \eps_n g_{\cal{B}_k,n}(x_{n,1},\ldots,x_{n,c(k)}) = g_{\cal{B}_{k},\omega}(x_1,\ldots,x_{c(k)}),$$
where we take the $\omega$-limit in the extended reals $[-\infty,\infty]$. 
\end{proposition}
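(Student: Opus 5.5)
The plan is to treat the three terms of $g_{\cal{B}_k,n}$ separately: the canonical local heights $\Ht_{F_n}$, the determinant term, and $r(F_n)$. For each we show that, once the chosen lifts are normalized compatibly, $\eps_n$ times the term at level $n$ has $\omega$-limit equal to the corresponding term over $\cal{H}(\omega)$. All three terms are built from absolute values of polynomial expressions with $\scr{A}^{\eps}$-coefficients evaluated on $\scr{A}^{\eps}$-points. The basic tool is the identity $|a|_\omega=\lim_\omega |a_n|^{\eps_n}$ for $a=(a_n)\in\scr{A}^{\eps}$, which gives $\log|a|_\omega=\lim_\omega \eps_n\log|a_n|$ in $[-\infty,\infty)$.

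First we fix lifts. By Proposition \ref{ProjectiveCoordinate1} we choose homogeneous coordinates $\tilde x_{n,i}$ of $x_{n,i}$ with $\max_j|\tilde x_{n,i,j}|=1$. The sequence $(\tilde x_{n,i})_n$ then gives an $\scr{A}^{\eps}$-valued vector $\tilde x_i$, whose image in $\cal{H}(\omega)^{N+1}$ is a lift of $x_i$ with $\|\tilde x_i\|_\omega=1$; this holds because each $\max_j|\tilde x_{n,i,j}|^{\eps_n}=1$, and the $\omega$-limit of a finite max is the max of the limits.

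\emph{Resultant term.} $\Res(F)\in\cal{O}(U)^\times$ pulls back to a unit $\rho\in(\scr{A}^{\eps})^\times$ whose components are $\Res(F_n)$. Hence $\log|\Res(F_\omega)|_\omega=\lim_\omega\eps_n\log|\Res(F_n)|$, and this limit is finite. This gives $\lim_\omega\eps_n r(F_n)=r(F_\omega)$.

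\emph{Determinant term.} $\det(\eta_j(\tilde x_i))$ is an element $D\in\scr{A}^{\eps}$ with components $D_n=\det(\eta_j(\tilde x_{n,i}))$, so $\log|D|_\omega=\lim_\omega\eps_n\log|D_n|$. This holds in $[-\infty,\infty)$ and covers the case $\lim_\omega=-\infty$, where both sides give $+\infty$ for $g$. This is exactly why the statement allows extended reals.

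\emph{Canonical height term.} This is the main obstacle, since $\Ht_F$ is a limit $\lim_m d^{-m}\log\|F^m(\tilde x)\|$, and we must interchange this limit with $\lim_\omega$. We use the uniform estimate quoted after \eqref{eqn:Rbound} (from \cite[Lemma 6]{Ing22}), applied to each field separately:
$$-\frac{\lambda(f)}{d-1}\le \Ht_F(P)-\log\|P\|-\frac{1}{d-1}\log\|F\|\le 0 .$$
Applying it to $F^m(P)$ gives $|\Ht_F(P)-d^{-m}\log\|F^m(P)\||\le d^{-m}C(\lambda(f)+|\log\|F\||)$. Over $K_n$ the quantity $\lambda(f_n)+|\log\|F_n\||$ equals $\eps_n^{-1}$ times a quantity bounded uniformly in $n$. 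Indeed, the coefficients of $F$ lie in $\scr{A}^{\eps}$ and so are bounded, and the resultant is a unit, so $\eps_n\lambda(f_n)$ is bounded. If $\|F\|_\omega$ were $<1$ we would renormalize $F$ by a unit; alternatively we can simply bound $\eps_n|\log\|F_n\||$ using the fact that $F$ has an $\scr{A}^{\eps}$-unit coefficient after rescaling. The error $\eps_n|\Ht_{F_n}(\tilde x_{n,i})-d^{-m}\log\|F_n^m(\tilde x_{n,i})\||$ is therefore $O(d^{-m})$ uniformly in $n$, and the same bound holds over $\cal{H}(\omega)$. For each fixed $m$, the basic tool applied to the coordinates of $F^m(\tilde x_i)\in(\scr{A}^{\eps})^{N+1}$ gives $\lim_\omega\eps_n d^{-m}\log\|F_n^m(\tilde x_{n,i})\|=d^{-m}\log\|F_\omega^m(\tilde x_i)\|_\omega$. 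These values are finite, since $\Res$ is a unit and $\|\tilde x_i\|_\omega=1$ force $\|F^m(\tilde x_i)\|_\omega\ge|\Res|_\omega^{c}>0$. Letting $m\to\infty$ with the uniform error then gives $\lim_\omega\eps_n\Ht_{F_n}(\tilde x_{n,i})=\Ht_{F_\omega}(\tilde x_i)$.

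Summing the three terms, and noting that $c(k)$ and $k$ are fixed, we obtain the claimed identity. The remaining case is when some limit of $g$ is $+\infty$; the other two terms are finite, so this causes no indeterminacy.
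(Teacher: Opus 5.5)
Your proof is correct and follows the paper's route. Both use the same three-term split with normalized lifts, treat the resultant and determinant terms directly, and interchange $\lim_\omega$ with the limit defining $\Ht_F$ via an $O(d^{-m})$ error that stays uniform after multiplying by $\eps_n$; this comes from Ingram's estimates together with boundedness of $\eps_n\log|\Res(F_n)|^{-1}$. The differences are cosmetic: the paper normalizes $\|F_n\|=1$ and telescopes with \cite[Lemma 5]{Ing22}, whereas you keep the $\scr{A}^{\eps}$-valued lift and apply the bound after \eqref{eqn:Rbound} to $F^m(P)$; the lower bound on $\eps_n\log\|F_n\|$ that you only gesture at follows at once from $\lambda(f_n)\ge 0$ and $\Res(F)$ being a unit.
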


\begin{proof}
By assumption, the sequence $(f_n)$ of rational maps admits an $(\varepsilon_n)$-degeneration in $\operatorname{Rat}^N_d$. Equivalently, the associated points of $\operatorname{Rat}^N_d(K_n)$ define an $\mathscr{A}^\varepsilon$-point of $\operatorname{Rat}^N_d$. Under the standard compactification $\operatorname{Rat}^N_d \subseteq\mathbb{P}^{N'}$ with boundary $\{\operatorname{Res}=0\}$, Proposition~\ref{DegenerationExistence2} implies that
the sequence $(\varepsilon_n \log|\operatorname{Res}(f_n)|^{-1})$ is uniformly bounded from above.
\par 
Now choose lifts $F_n$ of $f_n$ such that $||F_n||_n = 1$. Then $F_{\omega} = \lim_{\omega} F_n$ is a lift of $f_{\omega}$ satisfying $||F_{\omega}||_{\omega} = 1$. 
We clearly have 
$$\lim_{\omega} \eps_n r(F_n) = r(F_{\omega})$$
and 
$$\lim_{\omega} \eps_n \log |\det(\eta_j(\tilde{x}_{i,n}))_{\eta \in \cal{B}}|_n = \log |\det(\eta_j(\tilde{x}))_{\eta \in \cal{B}}|_{\omega}$$
by direct computation. It suffices to show that 
$$\Ht_{F_{\omega}}(\tilde{x}) = \lim_{\omega} \eps_n \Ht_{F_n}(\tilde{x}_n).$$
By definition, for all $\omega \in \beta \bb{N}$ we have 
$$\Ht_{F_{\omega}} = \eps_{\omega} \lim_{n \to \infty} \frac{1}{d^n} \log ||F_{\omega}^n||_n$$
where we set $\eps_{\omega} = 1$ for $\omega \in \beta \bb{N} \setminus \bb{N}$. 
We have the telescoping sum
$$\eps_{\omega} \left(\Ht_{F_{\omega}}(\tilde{x}_{\omega}) - \frac{1}{d^k} \log |F_{\omega}^{(k)}(\tilde{x}_{\omega})|_{\omega} \right)$$
$$= \eps_{\omega} \sum_{n \geq k} \frac{1}{d^n} \left(\frac{1}{d} \log |F_{\omega}^{({n+1})}(\tilde{x}_{\omega})|_{\omega} - \log |F_{\omega}^n(\tilde{x}_{\omega})|_{\omega} \right),$$
and by \cite[Lemma5]{Ing22}, we may bound 
$$\eps_{\omega} \left|\frac{1}{d} \log |F^{({n+1})}(\tilde{x}_{\omega})|_{\omega} - \log |F^n(\tilde{x}_{\omega})|_{\omega} \right|$$
in terms of $\eps_{\omega} |\Res(f)|_{\omega}^{-1}$ and $\eps_{\omega} ||F||_{\omega}$. As $\eps_{\omega} |\Res(f)|^{-1}_{\omega}$ is uniformly bounded and $||F||_{\omega} = 1$, there exists a uniform constant $C > 0$ such that 
$$\left| \eps_{\omega} \left( \Ht_{F_{\omega}}(\tilde{x}_{\omega}) - \frac{1}{d^k} \log |F^{(k)}(\tilde{x}_{\omega})|_{\omega} \right) \right| \leq \frac{C}{d^k}$$
for all $\omega \in \beta \bb{N}$. Finally, by direct computation it is clear that 
$$\lim_{\omega} \frac{\eps_n}{d^k} \log |F_n^{(k)}(\tilde{x}_n)|_n = \eps_{\omega} \log |F_{\omega}^{(k)}(\tilde{x}_{\omega})|_{\omega},$$
as $(F_n^{(k)})$ is a sequence of polynomials whose coefficients gives $F_{\omega}^{(k)}$. We thus obtain 
$$\left| (\lim_{\omega} \eps_n \Ht_{F_n}(\tilde{x}_n) - \Ht_{F_{\omega}}(\tilde{x}_{\omega})\right| \leq \frac{C}{d^k}.$$
Taking $k \to \infty$ yields
$$\lim_{\omega} \eps_n \Ht_{F_n}(\tilde{x}_n) = \Ht_{F_{\omega}}(\tilde{x}_{\omega})$$
as desired.\end{proof}

Now let $A$ be a ppav defined over an algebraically closed complete non-archimedean valued field $K$, and assume its corresponding $K$-point of $X$ lies in $U$. Let $L$ be as in \eqref{eqn:Lbundle}.

By properness, we obtain an $R$-point of $\ovl{\cal{A}}_{g,p}$ and hence a semiabelian scheme $\cal{G}$ lifting $A$. Fix any increasing function $h: \bb{N} \to \bb{N}$ such that $h(n) = c(k_n)$ for some $k_n$. If $A$ has bad reduction, we say that $A$ is \emph{$n$-good} if for any collection of $x_1,\ldots,x_{h(n)}$ many points which all lift to $R$-points of $\cal{G}$, we have 
$$g_{\cal{B}_{k_n}}(x_1,\ldots,x_{c(k_n)}) \geq \frac{1}{n} \lambda_{\partial X}(A)+r(F_K)$$
where $X = \cal{A}_{g,3}$ and $\cal{B}_{k_n}$ is any good basis of $H^0(A,L^{k_n})$. Fix a constant $C > 0$. Recall that we have fixed an affine open $U \subset X = \cal{A}_{g,3}$ at the start of the section. We say that our abelian $A/K$ whose induced $K$-point on $\cal{A}_{g,3}$ lies in $U(K)$ is $C$-centered with respect to our affine open $U$ if 
$$\lambda_{\partial U}(A) \leq C \lambda_{\partial X}(A).$$
Here, we define $\lambda_{\partial U}$ using the minimal compactification $\cal{A}_{g,3}^*$ of $\cal{A}_{g,3}$. The main theorem of this section is then the following.

\begin{theorem} \label{UniformNeronBound1}
For any increasing function $h: \bb{N} \to \bb{N}$ where for each $n$, we have $h(n) = c(k)$ for some $k\in\bb{N}$, and any $C > 0$, there exists a nonempty finite list of integers $\{n_1,\ldots,n_m\}$ such that for any non-archimedean complete algebraically closed field $K$ with $\bb{C}$ a trivially valued subfield, and for any principally polarized abelian variety $A/K$ of dimension $g$ with bad reduction which is $C$-centered with respect to $U$, we have that
$$A/K \text{ is } n_i\text{-good for some } i \text{ with } 1 \leq i \leq m.$$
The set $\{n_1,\ldots,n_m\}$ can depend on the open set $U$ along with the rational map $f$ and homogeneous lift $F$. However, it is independent of $K$ and the dimension $g$ principally polarized abelian variety $A/K$ which is $C$-centered with respect to $U$.
\end{theorem}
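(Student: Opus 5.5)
We argue by contradiction via the ultrafilter degeneration of Section 4. Fix $h$ and $C$, and suppose no finite list exists. Then for every $m$ the list $\{1,\ldots,m\}$ fails, so we may choose, for each $m$, a field $K_m$ and a $C$-centered principally polarized abelian variety $A_m/K_m$ with bad reduction that is not $n$-good for any $n\le m$. In particular $A_m$ is not $m$-good. Thus there are points $x_{m,1},\ldots,x_{m,h(m)}\in A_m(K_m)$, all lifting to $R_m$-points of the semiabelian model $\mathcal G_m$, and a good basis $\mathcal B_{k_m}$ with
$$g_{\mathcal B_{k_m},m}(x_{m,1},\ldots,x_{m,c(k_m)})<\tfrac1m\lambda_{\partial X}(A_m)+r(F_{K_m}).$$
Let $x_m\in X(K_m)$ be the moduli points, and put $\eps_m=\lambda_{\partial X}(x_m)^{-1}$, which is positive by Proposition \ref{AbelianBadReduction1}. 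The $C$-centered hypothesis gives $\eps_m\lambda_{\partial U}(x_m)\le C$. By the discussion preceding Proposition \ref{ProjectiveCoordinate1}, we obtain an $\scr A^\eps$-point of $U$, the abelian scheme $A_{\scr A^\eps}\hookrightarrow\bb P^N_{\scr A^\eps}$, and the relative dynamical data $(f,F)$. Fix a non-principal ultrafilter $\omega$. Propositions \ref{Degenerating1} and \ref{AbelianBadReduction1} show that $A_\omega/\mathcal H(\omega)$ has bad reduction. Its skeleton is therefore a positive-dimensional torus.

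The key point is that $k_m\to\infty$, so for each fixed $k$ we cannot directly take limits of the $c(k_m)$-tuples. We instead use a restriction step. Any good basis for level $k_m$ is built from products of iterates of the $F_i$ and monomials. I would compare $g_{\mathcal B_{k_m}}$ with Arakelov–Green functions at a fixed level by passing to sub-tuples and averaging: the determinant of a $c(k_m)$-tuple is controlled by $c(k)$-minors via Laplace expansion along blocks. An alternative is to choose $h$-indices so that for a fixed target $k$ one extracts $c(k)$-sub-tuples with $\eps_m g_{\mathcal B_k,m}$ nearly as small.

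The cleaner route is to build, for each level $k$, a limiting Fekete-type statement. First, via Proposition \ref{ArakelovContinuity1}, the $\omega$-limit points $x_{\omega,i}$ satisfy $\lim_\omega \eps_m g_{\mathcal B_k,m}=g_{\mathcal B_k,\omega}$ at each fixed $k$. Second, by Corollary \ref{SemiAbelianSection2}, every $\omega$-limit $x_{\omega,i}$ retracts to $0$ in $S(A_\omega)$. Third, the failure of $m$-goodness, after multiplying by $\eps_m$, gives
$$\limsup_\omega\bigl(\eps_m g_{\mathcal B_{k_m},m}-r(F_{\omega})\bigr)\le\lim_\omega 1/m=0.$$
Here $\eps_m r(F_{K_m})\to r(F_\omega)$ as in the proof of Proposition \ref{ArakelovContinuity1}. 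Combined with the uniform lower bound \eqref{eqn:Greengenbd}, rescaled by $\eps_m$, the level-$k_m$ values are asymptotically Fekete in the limit. This bound is uniform because $\eps_m\lambda(f_m)$ and $\eps_m\log\|F_m\|$ stay bounded: we normalize $\|F_m\|=1$ and use $\eps_m\log|\Res|^{-1}\lesssim\eps_m\lambda_{\partial U}\le C$. Proposition \ref{TransfiniteDiameterBound1} applied on $A_\omega$ then says that tuples retracting to $0$ have $g\ge\delta+r(F_\omega)$ for all large levels. This contradicts the inequality above, provided the levels can be transported to $\omega$.

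The main obstacle is exactly this transport: the levels $k_m$ vary with $m$, while Proposition \ref{ArakelovContinuity1} only handles a fixed $k$. I would handle it with a diagonal choice. For each fixed level $k$, extract from the $h(m)$ points a $c(k)$-sub-tuple, using a monotonicity or sub-tuple inequality for these transfinite-diameter quantities, with the same bound up to $O(\log k/k)$ errors that are uniform by \eqref{eqn:Greengenbd}. Then apply continuity at level $k$ and let $k\to\infty$ along tuples of limit points, producing a sequence on $A_\omega$ retracting to $0$ that is asymptotically Fekete. This is the step I expect to require the most care.
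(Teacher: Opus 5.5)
There is a genuine gap, and it is exactly the step you flag: you never transport the varying levels $k_m$ to $A_\omega$, and the tools you suggest for doing so are not available. Two ingredients need a fixed degree: Proposition \ref{ArakelovContinuity1}, and the pigeonhole over the finitely many subsets of $\cal{F}$ that fixes one relative good basis on an $\omega$-large set. On $A_\omega$ there is no ``level $k_m$'', so saying the level-$k_m$ values are ``asymptotically Fekete in the limit'' has no content. Your proposed sub-tuple inequality also cannot be taken for granted. For $\bb{P}^1$ the energy is an average over pairs, so some sub-tuple does at least as well as the whole tuple. Here, however, $g_{\cal{B}_k}$ is a normalized log-determinant of sections of $L^k$. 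A Laplace expansion of the level-$k_m$ determinant produces minors of level-$k_m$ sections together with complementary minors you have no control over. (Also, $h$ is given in the statement, so you cannot choose the $h$-indices.)

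The obstacle is self-inflicted: you keep only the diagonal consequence ``$A_m$ is not $m$-good'' of your contradiction hypothesis. Since $A_m$ is not $n$-good for every $n\le m$, for each \emph{fixed} $n$ the failure at the fixed level $k_n$ (with $h(n)=c(k_n)$) holds for all $m\ge n$, which is an $\omega$-large set. So order the choices as the paper does:
\begin{enumerate}
\item Build the degeneration, fix $\omega$ and $A_\omega$, and take $\delta>0$ from Proposition \ref{TransfiniteDiameterBound1}, uniformly over good bases and tuples in $r^{-1}(0)$ at all large levels.
\item Only then choose $n$ with $1/n<\delta$ and $k_n$ past that threshold.
\item For $m\ge n$, take the witnesses of non-$n$-goodness at level $k_n$, and fix the good basis by pigeonhole on an $\omega$-large set.
\item Apply Proposition \ref{ArakelovContinuity1} at the fixed level $k_n$ to get $g_{\cal{B}_\omega}(x_{\omega,1},\ldots,x_{\omega,c(k_n)})\le \frac1n+r(F_\omega)$. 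Finiteness of this value forces $\cal{B}_\omega$ to be a basis, and it is good because it is induced from a subset of $\cal{F}$.
\item Corollary \ref{SemiAbelianSection2} puts every $x_{\omega,i}$ in $r^{-1}(0)$, so step (i) gives $g_{\cal{B}_\omega}>\delta+r(F_\omega)$, a contradiction.
\end{enumerate}
The rescaled lower bound \eqref{eqn:Greengenbd} is not needed anywhere in this argument.
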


\begin{proof}
Assume no such nonempty finite set $\{n_1,\ldots,n_m\}$ exists. Then we can find a sequence of dimension $g$ ppavs $A_n$ over $K_n$ having bad reduction, each $C$-centered with respect to $U$, such that for all $1 \leq i \leq n$, the abelian variety $A_n$ is not $i$-good for all $1 \leq i \leq n$. We fix choices of principal level $3$ structures so that each $A_n$ defines a point on $X = \cal{A}_{g,3}$. For each $n$, let $L_n$ be the line bundle on $A_n$ as in \eqref{eqn:Lbundle}. By assumption, after fixing a positive integer $n$, for all $n' \geq n$, if $h(n) = c(k)$, then there exists a good basis $\cal{B}'_{n'}$ of $H^0(A_{n'}, L_{n'}^{k})$, along with points $x_{n',1},\ldots,x_{n',c(k)}$ of $A_{n'}(K_{n'})$ that lift to $R_{n'}$-points of the semiabelian model $G_{n'}$ of $A_{n'}$, such that 
$$g_{\cal{B}'_{n'}}(x_{n',1},\ldots,x_{n',c(k)}) \leq \frac{1}{n} \lambda_{\partial X}(A_{n'}) + r(F_{n'}).$$
Now recall that any good basis $\cal{B}'_{n'}$ is a finite subset of $\cal{F}\subseteq H^0(\bb{P}^N_U,\cal{O}(k))$ where $\cal{F}$ is defined in \eqref{eqn: GlobalSpanningSet}. We now pass to a subsequence of $A_n$'s so that each good basis $\cal{B}'_{n'}$ of $H^0(A_{n'}, L^k_{n'})$ is induced from the same set of sections of $H^0(\bb{P}_U^{N}, \cal{O}(k))$. We may do this as there are only finitely many possible subsets of $\cal{F}$.
\par 
Let $\eps_n = \lambda_{\partial X}(A_n)^{-1}$, which is finite because of Proposition \ref{BadReduction1}. Then by Proposition \ref{DegenerationExistence2}, an $(\eps_n)$-degeneration of $(A_n)$ exists on $\cal{A}_{g,3}$. Furthermore since each $A_n$ is $C$-centered, it follows that our $(\eps_n)$-degeneration actually exists on $U$. Thus we obtain an abelian scheme $A_{\scr{A}^{\eps}}$ over $\Spec \scr{A}^{\eps}$ with an embedding $A_{\scr{A}^{\eps}} \xrightarrow{} \bb{P}^N_{\scr{A}^{\eps}}$ and an extension $f: \bb{P}^N \to \bb{P}^N$ of multiplication by $[n]$ on $A_{\scr{A}^\eps}$ along with a homogeneous lift $F$ of $f$. 

Choose a non-principal ultrafilter $\omega$. By Proposition \ref{Degenerating1}, $A_{\omega}$ has bad reduction over $\cal{H}(\omega)$, so there is a continuous map $r: A_{\omega}^{\an} \to \bb{R}^k/\Lambda$ where $\Lambda$ is a lattice and $k \geq 1$. Let $V$ be the closed set that retracts to the identity $0$ of $S(A_{\omega})$. 

Let $L_\omega$ be the line bundle on $A_\omega^{\an}$ as in \eqref{eqn:Lbundle} for $\cal{H}(\omega)$. Then by Proposition \ref{TransfiniteDiameterBound1}, we know that for any sequence of good bases $(\cal{B}_k)$ of $H^0(A_{\omega}, L_{\omega}^k)$, there exists a $\delta > 0$ such that we have 
$$\liminf_{k \to \infty} \inf_{(x_i)} g_{\cal{B}_k}(x_1,\ldots,x_{c(k)}) > \delta + r(F_{\omega})$$
where we take the infimum over all $c(k)$-tuples of classical points $(x_i)$ such that $x_i \in V \cap A_{\omega}(\cal{H}(\omega))$. Pick an $m\in\bb{Z}_{>0}$ large enough so that $\delta \geq \frac{1}{m}$ and
\begin{equation} \label{eq: Transfinite1}
\inf_{(x_i)} g_{\cal{B}_k}(x_1,\ldots,x_{c(k)}) > \delta + r(F_{\omega})
\end{equation}
where $c(k) = h(m)$ and $\cal{B}_k$ is any good basis of $H^0(A_{\omega},L_{\omega}^k)$. 

We now consider $m'\ge m$ and let $h(m) = c(k)$. We set $\cal{B}'_{\omega}$ to be the subset of $H^0(A_{\omega},L_{\omega}^k)$ that is induced from the same set of sections of $H^0(\bb{P}^N_U,\cal{O}(k))$ as our good bases $\cal{B}'_{n'}$. We now apply Proposition \ref{ArakelovContinuity1} and obtain
$$g_{\cal{B}'_{\omega}}(x_1,\ldots,x_{c(k)}) \leq \lim_{\omega} \left(\frac{1}{m} \lambda_{\partial X}(A_{m'}) \eps_{m'}  + r(F_{m'}) \eps_{m'} \right) = \frac{1}{m} + r(F_{\omega}),$$
where we take the $\omega$-limit over the $m'$ and each $x_i$ is the $\omega$-limit of the sequence $(x_{m',i},x_{m'+1,i},\ldots)$. Note that $\cal{B}'_{\omega}$ must then be a basis of $H^0(A_{\omega},L_{\omega}^k)$ as otherwise $g_{\cal{B}'_{\omega},k}(x_1,\ldots,x_{c(k)})$ equals $\infty$. But by Corollary \ref{SemiAbelianSection2}, each $x_i$ retracts to the identity of $\bb{R}^k/\Lambda$ and hence lies in $V$. By our choice of $m$, this contradicts \eqref{eq: Transfinite1}.
\end{proof}

We will apply Theorem \ref{UniformNeronBound1} in the following setting: let $A$ have level $3$ structure and be defined over a discretely valued field $K$ with valuation ring $R$, and assume that $A$ is semistable over $K$. Let $G/R$ be the connected part of the N\'{e}ron model of $A$. Then $G$ is a semiabelian scheme over $R$ and by uniqueness of the semiabelian model \cite[I.2.7]{FC90}, $G$ corresponds to the semiabelian scheme induced by the $R$-point on $\ovl{\cal{A}}_{g,3}$ that lifts the $K$-point on $\cal{A}_{g,3}$ corresponding to $A$. Hence Theorem \ref{UniformNeronBound1} applies to points $x_1,\ldots,x_{c(k)} \in A(K)$ that lift to $R$-points of $G(R)$.

\section{Uniform Bounds for Semistable Principally Polarized Abelian Varieties} \label{sec: UniformSemistable}
	We now prove uniform bounds on the number of rational torsion points for ppavs of dimension $g$ for principally polarized abelian varieties $A/K$ with semistable reduction and having at least one place of bad reduction.  We will actually prove a more general statement under the assumption that 
    \begin{equation} \label{eq: BadPlaces1} 
    |S_0| \geq \frac{h_{\omega}(A)}{M_0},
    \end{equation}
    where $S_0$ is the set of places of bad reduction for $A$ and $h_{\omega}(A)$ is the Faltings height of $A$. Then our uniform bounds will depend on $\dim A$ and $M_0$. 
    Recall from Section \ref{sec: abelianschemes} that there is an ample line bundle $\omega$ on the minimal compactification $\cal{A}_{g,3}^*$ so that $h_{\omega}(A)$ is exactly the height of $A$ as a $K$-point on $\cal{A}_{g,3}^*$ with respect to $\omega$. 
    \par 
    Using an analogue of analogue of Szpiro's inequality by Deligne \cite[Lemma 3.2]{Del87}, we can show that $M_0$ depends only on $g$ and $g(B)$ for $A/K$ semistable, which proves uniformity for semistable abelian varieties. 
    \par 
    Our strategy follows that of Hindry--Silverman \cite{HS88} which proves both uniform boundedness and Lang's conjecture for elliptic curves over functions fields of characteristic $0$. The first step is to reduce to the case where our small points $x_i$ reduce to the identity component of the N\'{e}ron model for a uniform positive proportion of places of bad reduction. We then apply Theorems \ref{thm:basisbound} and \ref{UniformNeronBound1} to obtain a contradiction.
    \par 
    Given \eqref{eq: BadPlaces1}, we first show we can find a positive proportion of places of $S_0$, depending on $M_0$, such that the number of connected components for the N\'{e}ron model $\cal{N}$ for these places can be uniformly bounded. To do so, we will reduce to the case of Jacobians. We first give a bound for the case of Jacobians. We note that an alternative argument is possible by using \cite[Lemma 6.6]{HP16} instead. 

    \begin{proposition} \label{NodesComponents1Chapter6}
Let $C/K$ be a semistable curve of genus $g \geq 2$ and let $v \in M_K$ be a place such that the minimal regular model $\cal{C}/O_{K,v}$ has $N$ nodes. If $\cal{N}/O_{K,v}$ denotes the N\'{e}ron model of $\Jac(C)$ over $K_v$, then the number of connected components of its special fiber is bounded in terms of $N$. 
\end{proposition}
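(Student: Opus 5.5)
The plan is to use Raynaud's description of the component group of the Néron model of a Jacobian in terms of the dual graph of the special fiber of a regular model. Let $k_v$ denote the (algebraically closed, since $k=\bb{C}$) residue field, and let $\Phi_v=\cal{N}_s/\cal{N}_s^0$ be the component group of the special fiber. Since $\cal{C}$ is a proper regular model of $C$ with reduced special fiber that is a nodal curve (semistability), Raynaud's theorem (see \cite[Theorem 9.6/1]{BLR90}-style results, or the combinatorial form due to Raynaud and Oda--Seshadri) identifies $\Phi_v$ with the cokernel of the intersection/Laplacian matrix of the dual graph $\Gamma$ of $\cal{C}_s$, restricted to degree zero. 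Concretely, $\Phi_v\cong \ker(\deg:\bb{Z}^{V(\Gamma)}\to\bb{Z})/\operatorname{im}(\Delta_\Gamma)$, where $\Delta_\Gamma$ is the graph Laplacian. This is the Jacobian (critical group) of $\Gamma$.

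By Kirchhoff's matrix-tree theorem, $|\Phi_v|$ equals the number of spanning trees of $\Gamma$. So it suffices to bound the number of spanning trees of $\Gamma$ in terms of $N$. The edges of $\Gamma$ are in bijection with the nodes of $\cal{C}_s$, so $|E(\Gamma)|\le N$ (loops from self-intersecting components contribute edges that belong to no spanning tree, which only helps). A spanning tree is a subset of the edge set, so the number of spanning trees is at most $2^{|E(\Gamma)|}\le 2^N$. This gives $|\Phi_v|\le 2^N$, which is the desired bound depending only on $N$. If one wants something sharper, the bound $\binom{N}{|V|-1}$ also works, but that is not needed.

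The main point requiring care is the applicability of Raynaud's theorem. It needs a regular proper model whose special fiber has gcd of multiplicities $1$ and which is geometrically reduced. The minimal regular model of a semistable curve is semistable: its special fiber is reduced with only ordinary double points, and the nodes of the stable model are replaced by chains of $\bb{P}^1$'s. So the hypotheses hold, and $N$ counts exactly the edges of the dual graph of this regular model. I would also note that $\cal{N}$ is the Néron model of $\Jac(C)=\Pic^0_{C/K}$ and that $\Phi_v$ has order equal to the number of connected components of the special fiber. Then the cited identification together with the matrix-tree count completes the argument. Since the residue field is algebraically closed, there are no issues with $k_v$-rational versus geometric components.
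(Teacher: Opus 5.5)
Your proof is correct, but it takes a different route from the paper.

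\textbf{The paper's route.} It invokes Lorenzini's theorem \cite[Theorem 1.3]{Lor90}, which says the component group can be read off from the numerical type $(n,M,R,P)$ of the minimal regular model. It then argues that for fixed $N$ only finitely many minimal numerical types occur: the number of components is controlled by $N$, and the intersection numbers $(C_i\cdot C_j)$ and component genera $g(C_i)$ are bounded in terms of $g$ (via the Stacks project results on minimal numerical types). The resulting bound is non-explicit and, despite the statement, implicitly depends on $g$ as well. This is harmless in the application, where $g(C)$ is bounded in terms of $\dim A$.

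\textbf{Your route.} You identify $\Phi_v$ with the critical group of the dual graph via Raynaud's theorem, and then apply Kirchhoff's matrix-tree theorem. This gives the explicit bound $|\Phi_v|\le 2^N$, with no dependence on $g$. It also sidesteps the paper's slightly loose count of components: a connected graph with $N$ edges can have $N+1$ vertices, not at most $N$.

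\textbf{Comparison.} Both arguments rest on the same geometric input: the minimal regular model of a semistable curve has reduced special fibre with only ordinary double points. In that case the intersection matrix is minus the graph Laplacian and each node is exactly one edge of the dual graph. The paper's version is a soft finiteness argument over numerical types that never uses this Laplacian structure explicitly. Yours exploits that structure to get a quantitative, genus-free constant, which would also make the constant $E$ in the subsequent component-bounding step explicit.
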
 

\begin{proof}
We apply \cite[Theorem 1.3]{Lor90}. As $C/K$ is semistable, the special fiber $\cal{C}_s'$ of $\cal{C}'$ is reduced and so we satisfy the hypothesis required. As we have $N$ nodes, it follows that the number of irreducible components of $\cal{C}'_s$ is at most $N$. We may associate to our minimal regular model a type $(n,M,R,P)$ where $n\le N$ is the number of irreducible components, $M$ is the $n\times n$ intersection matrix $((C_i \cdot C_j))$ where the $C_i,C_j$ run over the irreducible components of $\cal{C}'_s$, $R = (1,1,\ldots,1)$ as $\cal{C}'_s$ is reduced, and $P = (g(C_1),\ldots,g(C_n))$. As $\cal{C}'$ is the minimal regular model, the numerical type must be a minimal numerical type. \cite[\href{https://stacks.math.columbia.edu/tag/0C9W}{Tag 0C9W}]{stacks-project} tells us that for a minimal type, $|(C_i \cdot C_j)|$ is bounded in terms of $g$ and  \cite[\href{https://stacks.math.columbia.edu/tag/0C9V}{Tag 0C9V}]{stacks-project} tells us that $g(C_i)$ are all bounded above by $g$. Hence there are only finitely many such minimal numerical types if $N$ is fixed. Then \cite[Theorem 1.3]{Lor90} tells us that we can read off the number of components from a given numerical type, and so we have a uniform bound as desired.
\end{proof}

\begin{proposition}\label{JacobianCoverFamily1Chapter6}
We can cover $\mathcal A_{g,3}$ with finitely many connected, locally closed subvarieties $Z_1,\ldots,Z_n$ such that on each $Z_i$ there exist:
\begin{itemize}
  \item a smooth proper curve $C_i/Z_i$ of genus $g_i$;
  \item an abelian scheme $\mathcal B_i/Z_i$;
  \item a finite flat subgroup scheme $G_i\subseteq \mathrm{Pic}^0_{C_i/Z_i}$;
\end{itemize}
for which the quotient morphism \[\phi_i:\ \mathrm{Pic}^0_{C_i/Z_i}\ \twoheadrightarrow\
  \mathrm{Pic}^0_{C_i/Z_i}/G_i\ \cong\ A_{Z_i}\times \mathcal B_i\] is an isogeny of abelian schemes over $Z_i$, where $A_{Z_i}$ denotes the universal abelian scheme on $\mathcal A_{g,3}$ restricted to $Z_i$.
On each $Z_i$ the genus is constant and satisfies
\[g_i=\dim\!\big(\mathrm{Pic}^0_{C_i/Z_i}\big)=g+\dim(\mathcal B_i)\ge g.\]\end{proposition}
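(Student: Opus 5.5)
Using a constructible stratification, I will reduce the statement to the pointwise fact that every abelian variety is an isogeny factor of a Jacobian, and then spread that fact out over a noetherian induction on $\mathcal A_{g,3}$.

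\textbf{Pointwise input.} Let $\eta$ be the generic point of an irreducible closed subset $W\subseteq\mathcal A_{g,3}$, and let $A_\eta$ be the fiber of the universal abelian scheme over $\kappa(\eta)$.
1. Choose a projective embedding of $A_\eta$, possibly after a finite extension $\kappa'$ of $\kappa(\eta)$.
2. By Bertini, intersecting with general hyperplanes gives a smooth geometrically connected curve $C_\eta\subseteq A_\eta$ that generates $A_\eta$.
3. Albanese functoriality then gives a surjection $J_\eta:=\Pic^0_{C_\eta}\to A_\eta$.
4. By Poincaré complete reducibility there is an abelian subvariety $B_\eta\subseteq J_\eta$ such that the addition map $B_\eta\times A_\eta^{\vee\vee}\to J_\eta$ is an isogeny. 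Concretely, take $B_\eta$ to be the connected kernel of $J_\eta\to A_\eta$, and let the complement be the image of the dual map $A_\eta\cong A_\eta^\vee\to J_\eta^\vee\cong J_\eta$, using the principal polarizations.
5. Composing with a suitable isogeny in the other direction, we get a finite subgroup scheme $G_\eta\subseteq J_\eta$ with $J_\eta/G_\eta\cong A_\eta\times B_\eta$.

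Concretely, if $\psi:A\times B\to J$ is an isogeny killed by $N$, write $N=\psi\circ\psi'$ with $\psi':J\to A\times B$. Then $J/\ker\psi'\cong A\times B$, so $G_\eta=\ker\psi'$ works.

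\textbf{Spreading out.} All of this data is of finite presentation, so it spreads out over a dense open $V$ of the finite cover $\Spec$ of the normalization of $W$ in $\kappa'$. Over $V$ it gives:
1. a smooth proper curve $C_V/V$ with $\Pic^0_{C_V/V}$ an abelian scheme;
2. an abelian scheme $\mathcal B_V$;
3. a finite flat subgroup $G_V$, flat after shrinking $V$;
4. an isomorphism $\Pic^0_{C_V/V}/G_V\cong A_V\times\mathcal B_V$.

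The statement asks for the data on locally closed subvarieties of $\mathcal A_{g,3}$ itself, not on a finite cover. To handle this, I shrink so that $V\to W$ is finite étale onto an open $W^\circ\subseteq W$. Then either of the following works:
1. Take the image of $V$. Covering by finitely many pieces over which a section exists is not generally possible for étale covers, but we may replace $V$ by a connected component of the Galois closure and use Weil restriction.
2. More simply, choose $\kappa'=\kappa(\eta)$ from the start. This is possible because Bertini over the infinite field $\kappa(\eta)$ already yields a geometrically connected smooth curve defined over $\kappa(\eta)$, so no field extension is needed.

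With option 2, the isomorphism in item 4 holds generically, hence over a dense open $W^\circ\subseteq W$. After shrinking, I take $W^\circ$ connected and $\dim\mathcal B$ constant, which gives constant genus $g_i=g+\dim\mathcal B_i$.

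\textbf{Noetherian induction.} Apply the construction to each irreducible component of $\mathcal A_{g,3}$ to get opens $W^\circ$. Then repeat on each irreducible component of the complement, which is closed of smaller dimension. Noetherian induction terminates after finitely many steps and produces finitely many connected locally closed $Z_i$ covering $\mathcal A_{g,3}$.

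\textbf{Main obstacle.} The step I expect to be most delicate is the spreading out of the isogeny decomposition so that $G_i$ is finite flat and the quotient is isomorphic to the product over all of $Z_i$, not merely generically. I would handle it as follows.
1. The morphism $\Pic^0\to A\times\mathcal B$ extends over an open, since $\Hom$ schemes of abelian schemes are unramified and the morphism is defined at the generic point.
2. Its kernel is a quasi-finite group scheme.
3. Fiberwise degree is constant on an open.
4. A quasi-finite morphism of abelian schemes that is surjective on fibers is an isogeny and hence finite flat.
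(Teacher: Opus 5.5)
Your proposal is correct and follows essentially the same route as the paper: construct the curve, the complementary abelian variety and the isogeny at the generic point of an irreducible closed subset, spread everything out to a dense open (shrinking until $G$ is finite flat and the quotient is isomorphic to $A\times\mathcal{B}$), pass to connected pieces, and finish by noetherian induction on the complement. Your Bertini argument over the infinite field $\kappa(\eta)$ usefully supplies the rationality detail that the paper leaves implicit, so your ``option 2'' is the right choice; ``option 1'' should simply be discarded, since a Weil restriction of a curve is no longer a curve.
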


\begin{proof} Let $\mathcal{A}\to Z$ be an abelian scheme over an irreducible quasi-projective variety $Z/\mathbb C$, and write $A_K$ for its generic fiber over $K=K(Z)$. By Poincar\'e reducibility there exists a smooth proper curve $C_K/K$ and an abelian variety $B_K/K$ together with an isogeny \[\phi_K:\ \mathrm{Pic}^0_{C_K/K}\longrightarrow A_K\times B_K,\] whose kernel $G_K\subseteq \mathrm{Pic}^0_{C_K/K}$ is finite. Choose a finitely generated $\mathbb C$-subalgebra $R\subseteq K$ over which $C_K$ and $G_K$ descend, giving a smooth proper curve $C_R\to \operatorname{Spec}\,R$ and a finite subgroup scheme $G_R\subseteq \mathrm{Pic}^0_{C_R/R}$.
	
After shrinking to a dense open $U_0\subseteq Z$ admitting a morphism $U_0\to \operatorname{Spec}R$, we obtain a smooth proper curve $C_{U_0}\to U_0$ and a finite subgroup scheme $G_{U_0}\subseteq \mathrm{Pic}^0_{C_{U_0}/U_0}$. Shrinking further if necessary, $G_{U_0}$ becomes finite flat; write $U\subseteq U_0$ for the resulting open and $G_U$ for the finite flat subgroup scheme in $\mathrm{Pic}^0_{C_U/U}$. The quotient \[Q:=\mathrm{Pic}^0_{C_U/U}/G_U\] exists as an abelian scheme over $U$, and the quotient map \[\phi_U:\ \mathrm{Pic}^0_{C_U/U}\twoheadrightarrow Q\] is a homomorphism of abelian schemes over $U$ (the structural maps to $U$ commute). By construction the generic fiber $Q_K$ is isomorphic to $A_K\times B_K$. Since the functor of homomorphisms between abelian schemes is represented by an \'etale scheme over the base, this generic isomorphism extends, after possibly shrinking $U$ once more, to an isomorphism of abelian schemes \[Q\ \cong\ \mathcal{A}_U\times \mathcal B_U,\] where $\mathcal A_U$ is $\mathcal A$ restricted to $U$ and $\mathcal B_U/U$ is an abelian scheme extending $B_K$. Thus \[\phi_U:\ \mathrm{Pic}^0_{C_U/U}\longrightarrow \mathcal{A}_U\times \mathcal B_U\] is an isogeny of abelian schemes over $U$. Replacing $U$ by a connected component if necessary, the relative dimension is constant, and we have \[g_U:=\dim\!\big(\mathrm{Pic}^0_{C_U/U}\big)=\dim(\mathcal A_U)+\dim(\mathcal B_U)=g+\dim(\mathcal B_U),\] so the fibers of $C_U/U$ have constant genus $g_U\ge g$ on this connected $U$.
	
Applying the foregoing construction to $Z=\mathcal A_{g,3}$ with $\mathcal A$ the universal abelian scheme, we obtain a nonempty open subset $U\subseteq\mathcal A_{g,3}$ and, after replacing $U$ by its finitely many connected components, connected locally closed subsets $Z_1,\dots$ on each of which we have a smooth proper curve $C_i/Z_i$, an abelian scheme $\mathcal B_i/Z_i$, and an isogeny \[\phi_i:\mathrm{Pic}^0_{C_i/Z_i}\longrightarrow A_{Z_i}\times\mathcal{B}_i\quad\text{over }Z_i.\] On the closed complement $\mathcal{A}_{g,3}\setminus\bigcup Z_i$, repeat the same construction on each irreducible component and then pass to connected components of the resulting opens. By Noetherian induction (each step removes a nonempty open from a closed subset and yields only finitely many connected components), this recursive procedure terminates after finitely many steps, giving a finite cover \[\mathcal{A}_{g,3}=\bigcup_i Z_i\] by connected locally closed subvarieties possessing such data.\end{proof}

We now obtain the following proposition. 

\begin{proposition} \label{IdentityComponentAbelianVarieties1}
Let $A$ be a principally polarized abelian variety over a function field $K$ with $K = \bb{C}(B)$ and with semistable reduction. Let $M_0 > 0$ be a constant and let $S_0$ be the places of bad reduction for $A$. Assume that 
$$|S_0| \geq \frac{h_{\omega}(A)}{M_0}.$$
Then there exist constants $M,k > 0$ depending only on $g:= \dim A$ and $M_0$, such that there is a subset $S \subseteq M_K$ satisfying 
\begin{enumerate}
			\item $A$ has bad reduction for each $v \in S$;
			\item $|S| \geq \frac{1}{M} h_{\omega}(A)$;
			\item For any $x \in A(K)$, the point $[k]x$ lies in the identity component of the N\'eron model of the special fiber at each $v\in S$. 
		\end{enumerate}  
\end{proposition}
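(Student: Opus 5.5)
The plan is a Markov-type averaging argument: the total ``size'' of bad reduction, measured through an auxiliary curve, is bounded by a multiple of $h_\omega(A)$. Since $|S_0|\ge h_\omega(A)/M_0$, a definite proportion of the bad places must have small size, and at those places the component groups of the N\'eron model have uniformly bounded order.

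\textbf{Reduction to Jacobians.} Apply Proposition~\ref{JacobianCoverFamily1Chapter6}. After a finite base change $K'/K$ we may assume the moduli point of $A$ is defined over $K'$ and lies generically in some $Z_i$. This gives a curve $C/K'$ of genus $g_i$ and an isogeny $\phi:\Jac(C)\to A\times B$ with kernel $G_i$. The degree of $\phi$ is bounded by a constant $e$ depending only on $g$, since there are finitely many $Z_i$. Dually there is an isogeny $\psi: A\times B\to \Jac(C)$ with $\phi\circ\psi=[e]$. Replacing $K'$ by a further extension, we may assume $C$ is semistable. The extension degree enters only through the normalization $[K':K]$, which is harmless because heights and place counts scale compatibly; more carefully, one works with places of $K$ and uses ramification indices.

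\textbf{Height comparison.} The point of $\ovl{M}_{g_i}$ attached to $C$ is the image of a morphism $Z_i\to M_{g_i}$. By Proposition~\ref{GlobalHeightAmple2}, applied with $\omega$ on $\cal{A}^*_{g,3}$ and $\lambda$ on $\ovl{M}_{g_i}$, we get $h_\lambda(C)\le c\,h_\omega(A)$ with $c$ depending only on $g$. A sharper route is the comparison of Faltings heights under isogeny, which over function fields are invariant. Either way, \eqref{eq: CurveHeightNode} gives
$$\sum_v N_v \le 12 c\, h_\omega(A),$$
where $N_v$ is the number of nodes of the minimal regular model of $C$ at $v$ (normalized to $K$).

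\textbf{Averaging.} Set $D=24cM_0$. The number of $v\in S_0$ with $N_v> D$ is at most $12c\,h_\omega(A)/D=|S_0|/2$. Let $S$ be the remaining places in $S_0$. Then $|S|\ge |S_0|/2\ge h_\omega(A)/(2M_0)$, which gives (i) and (ii) with $M=2M_0$.

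For $v\in S$, Proposition~\ref{NodesComponents1Chapter6} bounds the component group $\Phi_v$ of the N\'eron model of $\Jac(C)$ by $\kappa(D,g_i)$, with $g_i\le g+\dim \cal{B}_i$ uniformly bounded. The homomorphism $\phi$ extends to N\'eron models and induces $\Phi_{\Jac(C),v}\to\Phi_{A,v}\times\Phi_{B,v}$. Given $x\in A(K)$, the point $\psi(x,0)$ has component class killed by $\kappa!$. Hence
$$[e\,\kappa!]\,x=\phi\bigl(\psi([\kappa!]x,0)\bigr)$$
has trivial component in $\Phi_{A,v}$. Taking $k=e\cdot\kappa!$ gives (iii).

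\textbf{Main obstacle.} The delicate step is the height comparison combined with the base change. The curve $C$ only exists after extending $K$, and semistability of $C$ may require a further extension. So we must check that node counts over $K'$, divided by $[K':K]$, still dominate counts of places of $K$. This holds because each place of $K$ in $S$ has at least one place above it, and node counts grow with ramification. We must also check that the bad places of $A$ are genuinely accounted for. For the latter, note that only an upper bound on $\sum N_v$ is needed, so restricting to $S_0$ costs nothing. I would try the Faltings-height isogeny-invariance route first, since it avoids boundary issues with Proposition~\ref{GlobalHeightAmple2} on the non-proper $Z_i$.
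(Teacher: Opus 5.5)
Your proposal is correct. Up to the last step it agrees with the paper: the same reduction to Jacobians (Proposition~\ref{JacobianCoverFamily1Chapter6}), the same comparison $h_\lambda(C)\le c\,h_\omega(A)$ via Proposition~\ref{GlobalHeightAmple2}, and the same Markov-type selection of places with few nodes via \eqref{eq: CurveHeightNode} and Proposition~\ref{NodesComponents1Chapter6}.

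The genuine difference is how the component-group bound is transferred from $\Jac(C)$ to $A$. The paper lifts $x$ through the isogeny to a point $y\in\Jac(C)(L)$. This forces an auxiliary extension $L/K'$ of bounded degree, and the paper must then control how component groups grow under this possibly ramified base change. It does so with the Halle--Nicaise estimate $\#\Phi_{J/L,w}\le[L:K']^{g'}\#\Phi_{J/K',v}$ from \cite{HN10}.

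You instead push $x$ forward with the quasi-inverse $\psi$ (so $\phi\circ\psi=[e]$), which is defined over $K'$. Then $\psi(x,0)\in\Jac(C)(K')$, only the component group over $K'$ itself enters, and functoriality of N\'eron models for $\phi$ finishes the argument. The cost is an extra factor $e=\deg\phi$ in $k=e\cdot\kappa!$. Your route is more elementary: it needs neither the extension $L$ nor any result on how component groups behave under base change.

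Three smaller points.
\begin{enumerate}
\item The degree $[K':K]$ is not harmless merely because heights and place counts scale compatibly. Proposition~\ref{NodesComponents1Chapter6} is applied at the places $v'\mid v$ of $K'$. With $N_v=\frac{1}{[K':K]}\sum_{v'\mid v}N_{v'}$, the bound you actually use is $N_{v'}\le[K':K]\,N_v\le[K':K]\,D$, so you need $[K':K]$ bounded in terms of $g$. It is, as the paper notes: for example, adjoin the $3$-torsion of $A$ and of $\Jac(C)$. Relatedly, the inequality you need is this upper bound on $N_{v'}$, not that node counts ``dominate'' place counts.
\item Your preferred isogeny-invariance route only gives $h_{\Fal}(C)=h_{\Fal}(A)+h_{\Fal}(B)$. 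Bounding $h_{\Fal}(B)$ still requires a comparison on $Z_i$ such as Proposition~\ref{GlobalHeightAmple2}. That proposition is already stated for quasi-projective sources, so there is no boundary issue to avoid, and the isogeny route does not replace it.
\item Both you and the paper obtain (iii) at places of the extension field. To get it over $K$ as stated, use that for semistable $A$ the natural map $\Phi_{A/K,v}\to\Phi_{A/K',v'}$ is injective.
\end{enumerate}
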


\begin{proof}
On each quasi-projective variety $Z_i$ from Proposition \ref{JacobianCoverFamily1Chapter6}, the abelian scheme $\Pic^0_{C_i/Z_i}$ induces a map $f_i: Z_i \xhookrightarrow{} \cal{M}_{g_i}$. Under this morphism $f_i$, by Proposition \ref{GlobalHeightAmple2}, we have a constant $c_i$ such that $h_{\lambda}(f(x)) \leq c_i h_{\omega}(x)$ for all $x \in Z_i(K)$ where $K$ is a global function field over $\bb{C}$. Here $\lambda$ is the Hodge $\bb{Q}$-line bundle on the coarse moduli space $\ovl{M}_g$ (see Section \ref{sec: ModuliCurves}).
	\par 
	Now let $A/K$ be our principally polarized abelian variety. By assumption, we have $|S_0| \geq \frac{1}{M'} h_{\omega}(A)$ where $T$ is the set of bad places for $A/K$. Let $K'$ be a finite extension so that $A$ has level $3$ structure with $[K':K]$ uniformly bounded in terms of $g$. Using $K'$ as our reference field for global heights, we have $|S_0| \geq \frac{1}{M'[K':K]} h_{\omega}(A)$. Then the $K'$-point representing $A$ on $\cal{A}_{g,3}(K')$ must factor through one of the $Z_i$'s of Proposition \ref{JacobianCoverFamily1Chapter6} and hence via pullback along this point we obtain a smooth proper curve $C/K'$ such that there is an isogeny $\pi: \Jac(C) \to A \times A'$, where $A'$ is some abelian variety and  $h_{\lambda}(C) \leq c h_{\omega}(A)$, with $\deg(\pi)$ and $c$ uniformly bounded by Proposition \ref{GlobalHeightAmple2} since $\omega$ is an ample height. Furthermore, $g(C)$ is bounded above only in terms of $g$.
	\par 
	Since $h_{\lambda}(C) \leq c h_{\omega}(A)$, by increasing $M'$ we also obtain 
\begin{equation} \label{eq: BadReductionBound1}    |S_0|\ge\frac{1}{M'[K':K]} h_{\lambda}(C)
\end{equation}
    since our constants are allowed to depend on $g$. We replace $K'$ with a larger extension so that $C/K'$ has semistable reduction. We still have at least $\frac{1}{M'[K':K]} h_{\lambda}(C)$ places of bad reduction for $A/K'$ where again $[K':K]$ is bounded purely in terms of $g$.
	\par 
	Let $S_0'$ be the places of bad reduction of $A/K'$, so that $|S_0'| \geq |T|$. Since $h_{\lambda}(C) \geq \frac{N}{12}$ by \eqref{eq: CurveHeightNode}, where $N$ is the total number of nodes of the minimal regular model of $C$ over $B'$ for $B'$ the curve corresponding to $K'$, it follows from \eqref{eq: BadReductionBound1} for at least half of these places, the number of nodes is at most $24 M' [K':K]$ which depends only on $g$ and $M'$. Let $S'$ be these subset of $S_0'$. As $g(C)$ may be bounded above in terms of $g$, it follows from Proposition \ref{NodesComponents1Chapter6} that for any $v\in S'$, the number of connected components of the special fiber of the N\'{e}ron model of $\Jac(C)$ is at most $E$, for some constant $E$ depending only on $g$ and $g(B)$.

Let $x \in A(K)$. Then since $\deg(\pi)$ is bounded solely in terms of $g$, there exists a finite extension $L/K'$ with $[L:K']$ bounded depending only on $g$ such that there is a preimage $y \in \Jac(C)(L)$ mapping to $x$ under $\pi_1$, the projection to the first factor. Fix $w\in M_L$ lying over $v\in S'$, and set $J:=\Jac(C)$ and $g':=\dim J$. Write \[\Phi_{J/K',v}:=\text{component group of the N\'eron model of } J \text{ at } v,\]
\[\Phi_{J/L,w} := \text{component group of the Néron model of } J \text{ at } w.\] By \cite[Thm.~5.7]{HN10} one has \[\#\Phi_{J/L,w}\le[L:K']^{g'}\cdot\#\Phi_{J/K',v}.\]
Using $[L:K']\le n_i$ and the uniform bound $\#\Phi_{J/K',v} \le E$ for $v\in S'$ from above, we obtain \[\#\Phi_{J/L,w} \le n_i^{g'} E\] and 
hence \[\exp(\Phi_{J/L,w}) \le n_i^{g'}E.\]
Let $k:=n_i^{g'}E$. Then $\exp(\Phi_{J/L,w})\mid k!$, so for the lift $y\in J(L)$ with $\pi_1(y)=x$ we have $[k!]y$ reducing to the identity component of the N\'{e}ron model of $J$. at $w$. Hence $[k!]x=\pi_1([k!]y)$ must lie in the identity component of $A$ over $w$. We have thus proven our theorem by letting $S$ be the set of places of $K$ that lie below those of $S'$. Then $$|S|\geq\frac{|S'|}{[K':K]}\geq\frac{|S_0|}{2[K':K]} \geq \frac{1}{2[K':K]^2 M'} h_{\omega}(A)$$ and hence we may take $M=2[K':K]^2 M'$. \end{proof}
	
Next, we cover $\cal{A}_{g,3}$ with affine opens $\{U_i\}$ such that on each $U_i$, we have an embedding $\cal{A}_{U_i} \xhookrightarrow{} \bb{P}^{N_i}_{U_i}$ along with an extension of the endomorphism $[m_i]$ for some natural number $m_i$.  

\begin{proposition} \label{ModuliOpenCover1Chapter6} We may cover $\cal{A}_{g,3}$ with finitely many affine opens $U_1,\ldots,U_m$ such that for each $i$, there exists an embedding $\cal{A}_{U_i} \xrightarrow{} \bb{P}^{N_i}_{U_i}$ and a natural number $m_i \geq 2$ such that $[m_i]$ extends to a morphism $f_i:\bb{P}_{U_i}^{N_i} \to \bb{P}_{U_i}^{N_i}$ of degree $m_i^2$, where $\cal{A}_{U_i}$ is the universal abelian scheme over $U_i$ and $\bb{P}^{N_i}_{U_i}$ is projective space over $U_i$. 
\end{proposition}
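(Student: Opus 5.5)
The plan is to work locally around each point $s\in\cal{A}_{g,3}$ and then use quasi-compactness. Since $\cal{A}_{g,3}$ is quasi-projective, it is Noetherian, so it suffices to show that every point $s$ has an affine open neighbourhood $U\ni s$ carrying an embedding and an extension of some $[m]$, $m\ge 2$, as in the statement. First choose a relatively very ample line bundle for the universal abelian scheme. Take the universal symmetric ample sheaf $\cal{L}$ of Section \ref{sec: abelianschemes} and replace it by $\cal{L}^{4}$ (or any power $\cal{L}^{\ell}$ with $\ell\ge 4$ even). Then on each fiber it is symmetric and very ample, and by \cite{Koi76} it gives a projectively normal embedding. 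Because $\pi_*\cal{L}^{\ell}$ is locally free (\cite[Proposition 6.13]{MFK94}), we may shrink to an affine $U\ni s$ on which it is free. A choice of basis then gives a closed immersion $\cal{A}_U\hookrightarrow\bb{P}^{N}_U$ with $\cal{O}(1)|_{\cal{A}_U}=\cal{L}^{\ell}$.

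Next, extend multiplication by $[m]$. Symmetry and the theorem of the cube give $[m]^*\cal{L}^{\ell}\cong\cal{L}^{\ell m^2}$ up to a line bundle pulled back from $U$. After shrinking $U$, that line bundle is trivial, so we get an honest isomorphism $[m]^*\cal{O}(1)\cong\cal{O}(m^2)|_{\cal{A}_U}$. Pulling back the coordinates $X_0,\dots,X_N$ along $[m]$ produces sections $s_0,\dots,s_N\in H^0(\cal{A}_U,\cal{L}^{\ell m^2})$. By projective normality, which holds fiberwise; after shrinking $U$, cohomology and base change makes $H^0(\bb{P}^N_U,\cal{O}(m^2))\to H^0(\cal{A}_U,\cal{L}^{\ell m^2})$ surjective, and since $U$ is affine we can lift each $s_i$ to a homogeneous polynomial $F_i$ of degree $m^2$ over $\cal{O}(U)$. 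The tuple $F=(F_0,\dots,F_N)$ restricts to $[m]$ on $\cal{A}_U$. It defines a morphism $\bb{P}^N_U\to\bb{P}^N_U$ exactly where $\Res(F)$ is a unit.

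The main obstacle is arranging that the lifts have no common zero on $\bb{P}^N_s$, i.e.\ $\Res(F)(s)\neq 0$. The lifts are only determined modulo the ideal $I_d$ of $\cal{A}_s$ in degree $d=m^2$. On $\cal{A}_s$ itself the $F_i$ have no common zero, since $[m]$ is a morphism. So we must choose the lifts $F_i+G_i$, with $G_i\in (I_{\cal{A}_s})_{m^2}$, so that their common zero locus away from $\cal{A}_s$ is empty. I would first try the standard dimension-count argument, as in the construction of endomorphisms extending $[m]$ (e.g.\ Fakhruddin's lemma). Take $m$ large enough that $I_{\cal{A}_s}(m^2)$ is globally generated. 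Then perturb by generic elements: successive general choices of $G_i$ cut the base locus outside $\cal{A}_s$ down by one dimension each time, and $N+1$ general choices make it empty. Having chosen such an $F$ over the fiber at $s$, lift the perturbations $G_i$ to sections over $U$, using surjectivity of the ideal sheaf sections on the affine $U$. Since $\Res(F)$ is a regular function that is nonzero at $s$, we may shrink $U$ to the affine open $D(\Res(F))\cap U$ containing $s$, where $f$ is a morphism of degree $m^2$.

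Finally, cover $\cal{A}_{g,3}$ by these neighbourhoods and extract a finite subcover $U_1,\dots,U_m$ using quasi-compactness. The integers $m_i$ and $N_i$ may vary from piece to piece.
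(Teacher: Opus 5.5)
Your proposal is correct and follows essentially the same route as the paper: embed $\mathcal{A}_U$ by a power of the universal symmetric line bundle, use the theorem of the cube to represent the pulled-back coordinates $[m]^*X_i$ by degree-$m^2$ forms, perturb these forms by elements of the ideal of $\mathcal{A}_s$ (Fakhruddin's argument) so that $\Res(F)(s)\neq 0$, shrink to $D(\Res(F))$, and conclude by quasi-compactness. The differences are minor. The paper first imposes a Bertini condition $\mathcal{A}_s\cap\{x_0=\cdots=x_g=0\}=\emptyset$ to control the first $g+1$ forms, whereas you track the components of the base locus not contained in $\mathcal{A}_s$; this works because the $[m]^*X_i$ already have no common zero on $\mathcal{A}_s$. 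You are also more explicit than the paper about lifting the perturbations from the fiber to $U$ via cohomology and base change for the ideal sheaf.
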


To prove Proposition \ref{ModuliOpenCover1Chapter6}, we prove the following proposition first.

\begin{proposition}\label{prop:MorphismExtension1Chapter6} Let $S$ be a noetherian affine scheme and $\mathcal{A}/S$ an abelian scheme of relative dimension $g$ with a symmetric, rigidified, very ample line bundle $\mathcal{L}$ inducing an embedding $\mathcal{A}\hookrightarrow \mathbb{P}^N_S$. Fix $s\in S$ and assume that \[\mathcal{A}_s\cap\{x_0=\cdots=x_g=0\}=\emptyset.\] Suppose moreover that $\mathcal{A}$ is cut out in $\mathbb{P}^N_S$ by homogeneous polynomials of degree at most $n^2$. Then there exists a Zariski neighborhood $U\subseteq S$ of $s$ such that the multiplication-by-$n$ morphism $[n]:\mathcal{A}_U\to\mathcal{A}_U$ extends to an endomorphism of $\mathbb{P}^N_U$ of degree $n^2$.
\end{proposition}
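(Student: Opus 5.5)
Since $\mathcal{L}$ is symmetric, $[n]^*\mathcal{L}\cong \mathcal{L}^{\otimes n^2}\otimes \pi^*M$ for some line bundle $M$ on $S$. After shrinking $S$ to an affine neighborhood of $s$ on which $M$ is trivial (rigidification pins the isomorphism down), we get an isomorphism $[n]^*\mathcal{L}\cong\mathcal{L}^{n^2}$. Pulling back the coordinate sections $x_0,\dots,x_N\in H^0(\mathcal{A},\mathcal{L})$ gives sections $[n]^*x_i\in H^0(\mathcal{A},\mathcal{L}^{n^2})$. The plan is to lift these to homogeneous polynomials $F_0,\dots,F_N$ of degree $n^2$ on $\mathbb{P}^N_U$, and then check that $F=(F_0,\dots,F_N)$ has no common zero in $\mathbb{P}^N_U$ after shrinking. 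That gives a morphism $f:\mathbb{P}^N_U\to\mathbb{P}^N_U$ of degree $n^2$ restricting to $[n]$ on $\mathcal{A}_U$.

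\textbf{Lifting.} We need $H^0(\mathbb{P}^N_U,\mathcal{O}(n^2))\to H^0(\mathcal{A}_U,\mathcal{L}^{n^2})$ to be surjective near $s$. If the embedding is projectively normal on the fiber at $s$ (true, e.g., after replacing $\mathcal{L}$ by a suitable power as in \cite{Koi76}), then surjectivity on the fiber, cohomology and base change ($\pi_*\mathcal{L}^{n^2}$ is locally free, \cite[Proposition 6.13]{MFK94}) and Nakayama give surjectivity on a neighborhood. Since $U$ is affine, we can lift the sections to polynomials with coefficients in $\mathcal{O}(U)$.

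\textbf{Modifying the lift.} Restricted to $\mathcal{A}$, the $F_i$ have no common zero because $[n]$ is a morphism. Off $\mathcal{A}$ they may vanish simultaneously, and the lifts are only defined modulo the ideal $I_{\mathcal{A}}$, which is generated in degree $\le n^2$. We therefore add to $F_i$ suitable elements of $(I_{\mathcal{A}})_{n^2}$ to kill common zeros. The hypothesis $\mathcal{A}_s\cap\{x_0=\cdots=x_g=0\}=\emptyset$ enters here: projection from the linear space $\{x_0=\cdots=x_g=0\}$ is a finite morphism $\mathcal{A}_s\to\mathbb{P}^g$, so $x_0,\dots,x_g$ restrict to a system of parameters on the homogeneous coordinate ring of $\mathcal{A}_s$. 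Using generators $G_j$ of $I_{\mathcal{A}}$ of degree $\le n^2$, multiplied by monomials to reach degree $n^2$, we form $F_i+\sum_j a_{ij}G_j$. For generic $a_{ij}$ over the residue field at $s$, the common zero locus of the modified $F$ on $\mathbb{P}^N_s$ should be empty. It is contained in $\mathcal{A}_s$, where it is empty, because the linear system spanned by the $F_i$ and $(I_{\mathcal{A}})_{n^2}$ has base locus equal to the base locus of $\{[n]^*x_i\}$ on $\mathcal{A}_s$. A dimension count (Bertini-type, generic choice of $N+1$ members) should then give $N+1$ members with no common zero. I expect this step to be the main obstacle: an empty base locus alone does not guarantee that $N+1$ generic members have empty common zero locus. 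The count can fail for large $N$, so it may have to use the explicit coordinates $x_0,\dots,x_g$, for example by choosing the modifications in the coordinates $x_{g+1},\dots,x_N$.

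\textbf{Spreading out.} Once the fiber at $s$ works, $\Res(F)$ is nonzero at $s$, so it is invertible on a Zariski neighborhood $U$ of $s$. On this $U$, $F$ defines an endomorphism of $\mathbb{P}^N_U$ extending $[n]$ (agreement on $\mathcal{A}_U$ holds by construction).
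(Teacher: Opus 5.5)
Your outline (theorem of the cube plus rigidification, lifting the $[n]^*x_j$ to degree-$n^2$ forms, correcting the lifts by elements of $(I_{\mathcal{A}})_{n^2}$, spreading out from $\Res(F)(s)\neq 0$) follows the paper. However, the step that carries the content of the proposition is left open, so there is a genuine gap.

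The difficulty is not genericity. $\Res(F)\neq 0$ is a Zariski-open condition on the correction coefficients $a_{ij}$, so a generic choice works as soon as \emph{one} choice works, and that existence is what you have not shown. Nor is it a dimension count failing for large $N$: $N+1$ general members of a base-point-free system on $\mathbb{P}^N_s$ have no common zero. The real problem is that each $F_i$ is confined to its own coset $F_i+(I_{\mathcal{A}})_{n^2}$. Moreover, the correction is invisible on $\mathcal{A}_s$, where $F_i$ restricts to the fixed section $[n]^*x_i$.

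The missing idea is the sequential argument of \cite[Proposition~2.1]{Fak03}, which the paper uses. It consumes the hypothesis by treating $x_0,\dots,x_g$ \emph{first}, not by modifying only $x_{g+1},\dots,x_N$.

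Choose $F_0,\dots,F_N$ one at a time so that $Y_i=V(F_0,\dots,F_i)\subseteq\mathbb{P}^N_s$ has dimension at most $N-i-1$. Given $Y_i$, every irreducible component $W\not\subseteq\mathcal{A}_s$ of $Y_i$ admits an element of $(I_{\mathcal{A}})_{n^2}$ not vanishing identically on $W$, because this degree-$n^2$ piece cuts out $\mathcal{A}_s$ set-theoretically. Hence a general $F_{i+1}$ in its coset vanishes identically on none of these finitely many $W$.

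On components inside $\mathcal{A}_s$ you have no freedom, but there
$$Y_{i+1}\cap\mathcal{A}_s=[n]^{-1}\bigl(\mathcal{A}_s\cap V(x_0,\dots,x_{i+1})\bigr).$$
Since $\mathcal{A}_s\cap V(x_0,\dots,x_g)=\emptyset$ and each hyperplane section lowers dimension by at most one, $\mathcal{A}_s\cap V(x_0,\dots,x_j)$ has dimension exactly $g-j-1$ for $j\le g$. As $[n]$ is finite, $Y_{i+1}\cap\mathcal{A}_s$ has dimension at most $g-i-2\le N-i-2$, and it is empty once $i+1\ge g$.

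Therefore $\dim Y_{i+1}\le N-i-2$. For $i\ge g$, all components of $Y_i$ lie off $\mathcal{A}_s$ and are cut down by each new $F_{i+1}$. After $N+1$ steps $Y_N=\emptyset$, i.e.\ $\Res(F)(s)\neq 0$. Your remark that $x_0,\dots,x_g$ form a system of parameters on $\mathcal{A}_s$ is exactly the input needed; it just has to be used one polynomial at a time, in this order.

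Separately, in the lifting step you cannot replace $\mathcal{L}$ by a power, since that changes the embedding and hence the hypotheses. What you need is surjectivity of $H^0(\mathbb{P}^N_s,\mathcal{O}(n^2))\to H^0(\mathcal{A}_s,\mathcal{L}_s^{n^2})$ for the given embedding, which the paper also takes for granted.
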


\begin{proof} Let $s_0,\dots,s_N$ be the sections of $\mathcal{L}$ defining the embedding $\iota:\mathcal{A}\hookrightarrow\mathbb{P}^N_S$. Let $n\ge1$. Because $\mathcal{L}$ is symmetric, the theorem of the cube gives an isomorphism \[[n]^*\mathcal{L}\cong\mathcal{L}^{\otimes n^2}\otimes(e^*\mathcal{L})^{\otimes\binom{n}{2}},\] where $e:S\to\mathcal{A}$ is the identity section. As $\mathcal{L}$ is rigidified, $e^*\mathcal{L}\cong\mathcal{O}_S$, so we obtain a canonical isomorphism $[n]^*\mathcal{L}\cong\mathcal{L}^{\otimes n^2}$. In particular, for each $j$ the pullback $[n]^*s_j$ is a section of $\mathcal{L}^{\otimes n^2}$. Since $H^0(\mathcal{A},\mathcal{L}^{\otimes n^2})$ is generated by degree-$n^2$ monomials in the $s_i$, we may choose a homogeneous polynomial $f_j(s_0,\dots,s_N)$ of degree $n^2$ representing $[n]^*s_j$. These $f_j$ are unique only modulo the homogeneous ideal of $\mathcal{A}$.
	
The tuple $(f_0,\dots,f_N)$ defines a rational map $F:\mathbb{P}^N_S\dashrightarrow\mathbb{P}^N_S$, and by construction $F$ restricts to multiplication by $n$ on $\mathcal{A}$. Indeed, for $T$ an $S$-scheme and $a\in\mathcal{A}(T)$, the image of $a$ under the embedding $\iota:\mathcal{A}\hookrightarrow\mathbb{P}^N_S$ is 
\[\iota(a)=[s_0(a):\cdots:s_N(a)]\in\mathbb{P}^N(T).\] 
Write $[n]:\mathcal{A}\to\mathcal{A}$ for the multiplication-by-$n$ morphism. Then 
\[\iota([n](a))=[s_0([n](a)):\cdots:s_N([n](a))].\] By definition of pullback of sections, 
\[s_j([n](a))=([n]^*s_j)(a)=f_j(s_0(a),\dots,s_N(a)).\] Thus 
\[\iota([n](a))=(f_0(s_0(a),\dots,s_N(a)),\dots,f_N(s_0(a),\dots,s_N(a))),\] as claimed.

To extend $F$ to a morphism on some neighborhood $U\subseteq S$ of $s$, we must arrange that the $f_j$ have no common zeroes on $\mathbb{P}^N_{U}$. The hypothesis \[\mathcal{A}_s\cap\{x_0=\cdots=x_g=0\}=\emptyset\] says that $s_0,\dots,s_g$ have no common zero on $\mathcal{A}_s$. Passing to the embedding associated to $\mathcal{L}^{\otimes n^2}$ if necessary, the pullback sections $[n]^*s_0,\dots,[n]^*s_N$ provide the projective coordinates in this embedding. Each $[n]^*s_j$ can be represented by a homogeneous polynomial $f_j(s_0,\dots,s_N)$ of degree $n^2$, unique only modulo the homogeneous ideal of $\mathcal{A}$. By choosing these representatives $f_0,\dots,f_g$ suitably, we may ensure that their common zero locus in $\mathbb{P}^N_s$, where $\bb{P}^N_s$ is the fiber above $s$, has codimension $g+1$. This furnishes the base case of the induction.

We now follow Fakhruddin's argument in \cite[Proposition~2.1]{Fak03}. Suppose inductively that for some $i\geq 0$ the common vanishing locus \[Y_s=\{f_0=\cdots=f_{g+i}=0\}\subseteq\mathbb{P}^N_s\] has codimension $g+i+1$. To define $f_{g+i+1}$, choose a homogeneous polynomial $h_1$ of degree $n^2$ restricting to $[n]^*s_{g+i+1}$ on $\mathcal{A}$, and choose another homogeneous polynomial $h_2$ of degree $n^2$ vanishing on $\mathcal{A}$ but not identically on any irreducible component of $Y_s$ (this is possible since $Y_s\cap\mathcal{A}_s=\varnothing$). For general $b$, the linear combination $f_{g+i+1}=h_1+b\,h_2$ still restricts to $[n]^*s_{g+i+1}$ on $\mathcal{A}$ and avoids vanishing identically on any component of $Y_s$, so the new common vanishing locus \[\{f_0=\cdots=f_{g+i+1}=0\}\] has codimension $g+i+2$. 
	
Iterating, we obtain $f_0,\dots,f_N$ whose common zero locus in $\mathbb{P}^N_s$ is empty. Equivalently, the resultant $\operatorname{Res}(f_0,\dots,f_N)$ is nonzero in the residue field of $s$, hence invertible on some Zariski open neighborhood $U$ of $s$. Over $U$, the polynomials $f_0,\dots,f_N$ therefore define a genuine morphism \[F:\mathbb{P}^N_U\to\mathbb{P}^N_U\] of degree $n^2$ whose restriction to $\mathcal{A}_U$ coincides with $[n]$.\end{proof}

		\begin{proof}[Proof of Proposition \ref{ModuliOpenCover1Chapter6}] For each $x$, let $U_x$ be an affine open containing $x$. If $\cal{L}$ is the universal line bundle on $\cal{A}_{g,3}$, then as $\cal{L}^3$ is very ample we can embed $\cal{A}_{U_x} \xhookrightarrow{} \bb{P}^N_{U_x}$ as a closed subscheme. By Bertini we can arrange the embedding so that $\cal{A}_x \cap \{x_0 = \cdots = x_g = 0\}$ is empty. By noetherianess, there is some $n_x > 0$ for which it is cut out by homogeneous polynomials of degree at most $n_x^2$. Proposition \ref{prop:MorphismExtension1Chapter6} implies that after shrinking $U_x$, the morphism $[n_x]$ extends to $\bb{P}^{N}_{U_x}$ and furthermore after picking some homogeneous lift $F$ of the morphism $f$, we may assume that $F$ has an invertible coefficient too. We can then multiply $F$ by the inverse of this coefficient and assume that it is $1$. Now by compactness of $\mathcal{A}_{g,3}$, we can find finitely many such affine opens as desired.\end{proof}
		
        Given $g\ge1$, we fix once and for all a finite cover $\{U_i\}$ by open affines $U_i$ of $\mathcal{A}_{g,3}$ as in Proposition \ref{ModuliOpenCover1Chapter6}, as well as the associated integers $m_i, N_i$ that arise as the extensions of $[m_i]$ to $\mathbb{P}_{U_i}^{N_i}$. We thus consider all dependencies on these $U_i,m_i,N_i$ to constitute a dependence only on $g$. Observe that on each $U_i$ we obtain a morphism $\vphi_i: U_i \to \Rat^{N_i}_{m_i^2}$ extending $[m_i]$, and as both $U_i$ and $\Rat^{N_i}_{m_i^2}$ are affine, it follows from Proposition \ref{GlobalHeightAmple2} that there exists a $c_i>0$ such that 
    \begin{equation} \label{eq : HeightExtension1} 
    c_i h_{\partial U_i}(A) \geq h(\vphi_i(A))
    \end{equation}
    where $h$ is the Weil height on $\Rat^{N_i}_{m_i^2}$. Indeed, as $U_i$ is affine, we may choose a compactification $\ovl{U}_i$ such that $\partial U_i = \ovl{U}_i\setminus U_i$ is ample. To do so, we embed $U_i$ into $\bb{A}^k$ for some $k > 0$ and then choose its closure $\ovl{U}_i \subseteq \bb{P}^k$ as the compactification. Since $\bb{P}^k \setminus \bb{A}^k$ is an ample divisor on $\bb{P}^k$, so is its restriction $\overline{U}_i\setminus U_i$ to $\ovl{U}_i$. 
    
    On the other hand, we may choose compactifications $U_i^*$ in $\cal{A}_{g,3}^*=\ovl{X}$. By Proposition \ref{LocalHeightBoundary1}, there is some $\eta\ge1$ such that \[\eta^{-1}\lambda_{\overline{U}_i\setminus U_i}\le\lambda_{U_i^*\setminus U_i}\le\eta\lambda_{\overline{U}_i\setminus U_i}.\] We may therefore without loss set $\partial U_i$ to be $U_i^*\setminus U_i$ and assume that \eqref{eq : HeightExtension1} holds for $h_{\partial U_i}$.

Since $\{U_i\}$ is a finite cover of $X = \cal{A}_{g,3}$, for any compactification $\ovl{X}$ of $X$ we have 
$$\partial U_1 \cap \partial U_2 \cap \cdots \cap \partial U_m = \partial X$$
and thus by \eqref{eqn:intht},
	$$\min\{\lambda_{\partial U_1},\ldots, \lambda_{\partial U_m} \}=\lambda_{\partial X}.$$
    	Hence for any $K$-point $A$ of $X$ where $K$ is a complete algebraically closed field with $\bb{C}$ a trivially valued subfield, there exists some $i$ such that 
	\begin{equation}\label{eqn:centeredi}\lambda_{\partial U_i}(A) \leq\lambda_{\partial X}(A).\end{equation}

We are now ready to prove the following theorem. 
	
	\begin{theorem}\label{UniformSemistable1} Let $K=\mathbb{C}(B)$ be the function field of a smooth projective curve $B$ over $\bb{C}$, and let $A/K$ be a principally polarized abelian variety with semistable reduction. Assume there exists a constant $M_0 > 0$ such that 
    $$|S_0| \geq \frac{h_{\omega}(A)}{M_0}$$
    where $S_0$ is the set of places of bad reduction for $A$. Then there are positive constants  $n = n(\mathrm{dim}(A),M_0)$ and $\eps = \eps(\dim A, M_0)$ such that if $h_{\omega}(A) > 0$, then 
    $$\{x \in A(K) \mid \h_A(x) \leq \eps h_{\Fal}(A) \}$$
    is contained in $\divv(s)$ for some nonzero $s \in H^0(A,L^n)$, where $L$ is the symmetric ample line bundle inducing the principal polarization on $A$ and $\h_A$ denotes the N\'eron--Tate height with respect to $L$. 
    \end{theorem}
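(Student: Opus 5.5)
We argue by contradiction in the style of Hindry--Silverman. Suppose $x_1,\ldots,x_{c}$ are points of $A(K)$ with $\h_A(x_i)\le \eps h_{\Fal}(A)$ that do not all lie on $\divv(s)$ for any nonzero $s\in H^0(A,L^n)$, where $c=c(n)$ is the dimension of $H^0(A,L^n)$.

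First we apply Proposition \ref{IdentityComponentAbelianVarieties1}. This gives $k,M$, depending only on $g$ and $M_0$, and a set $S$ of bad places with $|S|\ge h_{\omega}(A)/M$, such that $[k]x_i$ lies in the identity component of the N\'eron model at every $v\in S$. We work with $y_i=[k]x_i$, which satisfy $\h_A(y_i)\le k^2\eps h_{\Fal}(A)$. Since the $x_i$ impose independent conditions on $H^0(A,L^n)$, the translates under $[k]$ should still do so after enlarging $n$ to $k^2n$. Concretely, we use $[k]^*L^n\cong L^{k^2n}$: if the $y_i$ all lay on $\divv(s)$, then the $x_i$ would lie on $\divv([k]^*s)$. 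So we replace $n$ by a multiple and work with the $y_i$.

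Next we use the finite cover $\{U_i\}$ of Proposition \ref{ModuliOpenCover1Chapter6}. We pass to a bounded extension $K'$ on which $A$ has level $3$ structure. For each $v\in S$, by \eqref{eqn:centeredi} we choose an index $i(v)$ with $\lambda_{\partial U_{i(v)}}(A)\le\lambda_{\partial X}(A)$, so that $A$ is $1$-centered at $v$. By pigeonhole, a fixed $U=U_i$ works for at least $|S|/m$ of these places.

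We then fix an increasing function $h$ with $h(j)=c(k_j)$ and $k_j\to\infty$, chosen so that the Elkies error $C\log k_j/k_j$ in \eqref{eq:ElkiesLower1} is at most $1/(4j\cdot C')$ for a suitable constant $C'$. Theorem \ref{UniformNeronBound1} then gives a finite list $n_1,\ldots,n_m$. For each $v$ in the chosen set $S_U$, $A_v$ is $n_j$-good for some $j$, and pigeonholing again yields a single $j$ valid on at least $|S|/m^2$ places. We take $n=k_{n_j}$; since there are only finitely many choices, $n$ is uniformly bounded.

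The Arakelov--Green function evaluated at $(y_1,\ldots,y_{c(n)})$ is finite, because the points are not on any divisor, so the determinant is nonzero. We now sum $g_{\cal{B}_n,v}$ over all places $v$.

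The height part: the sum of the $\Ht_F$ terms equals $\frac{1}{c(n)}\sum_i \h(y_i)$ up to the normalization of $L$. By the product formula applied to the determinant (nonzero in $K$) and to $\Res(F)$, the total sum of $g_{\cal{B}_n,v}$ over all places is at most $O(k^2\eps\, h_{\Fal}(A))$.

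The lower bound: at the places of $S$ selected above, $n$-goodness gives a contribution of at least $\frac{1}{n_j}\lambda_{\partial X,v}(A)+r(F)_v$. At all remaining places, \eqref{eq:ElkiesLower1} bounds the contribution below by $\sum r(F)_v - \frac{C\log n}{n}O(h(f))$.

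The $r(F)$ terms cancel globally by the product formula. By \eqref{eq : HeightExtension1} and Proposition \ref{GlobalHeightAmple2}, $h(f)\le c_i h_{\partial U}(A)=O(h_{\omega}(A))$, with constants that are uniform because the cover is fixed.

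This yields
$$\sum_{v\in S'}\frac{1}{n_j}\lambda_{\partial X,v}(A)-\frac{C\log n}{n}O(h_\omega(A))\le O(k^2\eps h_{\Fal}(A)).$$
At a bad place, $\lambda_{\partial X,v}(A)$ is bounded below by a positive constant, since the valuation is discrete with $\bb{C}$-trivial coefficients and $\lambda_{\partial X}$ takes values in $\frac{1}{e}\bb{Z}_{>0}$ for a uniform $e$. Hence the left side is at least $\frac{|S|}{n_jm^2}\gtrsim h_\omega(A)/(Mm^2n_j)$. The choice of $h$ makes the Elkies term at most half of this. Choosing $\eps$ small then gives a contradiction, since $h_{\Fal}\asymp h_\omega$.

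The main obstacle is the bookkeeping of uniformity. The Elkies error must be dominated by the $n_j$-good gain, and $n_j$ itself depends on $h$. This is resolved by choosing $h$ growing fast enough in terms of the fixed cover data before invoking Theorem \ref{UniformNeronBound1}. The other delicate point is the positivity and integrality of $\lambda_{\partial X,v}$ at bad places.
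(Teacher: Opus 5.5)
Your outline follows the paper's overall strategy: Proposition \ref{IdentityComponentAbelianVarieties1}, pigeonholing over the cover $\{U_i\}$, Theorem \ref{UniformNeronBound1}, the Elkies bound \eqref{eq:ElkiesLower1}, and the product formula. However, the second pigeonhole step does not close.

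You use the letter $m$ for two different numbers: the number of opens in the cover, and the length $m'$ of the list $\{n_1,\ldots,n_{m'}\}$ produced by Theorem \ref{UniformNeronBound1}. The second number depends on the function $h$. That theorem is proved by contradiction via ultrafilters and gives no control on $m'$.

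Naive pigeonholing gives a single $n_j$ that is good on at least $|S|/(m\,m')$ places. So your gain is only $\gtrsim h_\omega(A)/(M m m' n_j)$, while your choice of $h$ guarantees an Elkies error $\lesssim h_\omega(A)/(4n_jC')$. To win you need $C'\gtrsim Mmm'$. But $C'$ is fixed before $h$ is chosen, and $m'$ is only known after $h$ is chosen, so the argument is circular. Letting $h$ grow faster handles the factor $1/n_j$, not the factor $1/m'$. If the list is long and the pigeonhole lands on a small $n_j$, the Elkies error at $k_{n_j}$ is a fixed quantity with no reason to be below $1/(Mmm'n_j)$.

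The paper fixes this with a \emph{weighted} pigeonhole, whose weights are attached to the index rather than to the list length. Choose $h(i)=c(k_i)$ with $\sum_i k_i^{-1/2}<1$. Suppose every $n_i$ in the list were good on fewer than $k_{n_i}^{-1/2}|\Sigma|$ of the selected places $\Sigma$. Summing over the list would then give fewer than $|\Sigma|$ places in total, a contradiction. So some $n_i$ is good on at least $k_{n_i}^{-1/2}|\Sigma|$ places, a proportion independent of $m'$.

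The resulting lower bound is $\bigl(\frac{1}{M'n_ik_{n_i}^{1/2}}-\frac{C\log k_{n_i}}{k_{n_i}}\bigr)h_{\partial U}(A)$, with $M'$ and $C$ independent of $h$. Take $C'i^3\le k_i\le 2C'i^3$ with $C'$ large, fixed before Theorem \ref{UniformNeronBound1} is invoked. This makes the bracket positive for every $i$ and also gives $\sum_i k_i^{-1/2}<1$. Once $h$ is fixed the list is fixed, so taking the minimum over it yields a uniform $\delta>0$.

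With this replacement, the rest of your argument goes through as in the paper. That includes the $[k]^*$ pullback raising $n$ to $k^2n$, the integrality and positivity of $\lambda_{\partial X,v}$ at bad places, and the final contradiction with $\h_A\le\eps h_{\Fal}(A)$.
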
 

    \begin{proof}
        We may pass to an extension $K'/K$ with $[K':K]$ bounded in terms of $g$ so that $A$ attains level $3$ structure. Let $S$ be the set of places provided by Proposition \ref{IdentityComponentAbelianVarieties1} and let $S'$ be the set of places in $K'$ that is over $S$. Then for all $v \in S$, $A$ has bad reduction at $v$, and we have $M,k > 0$, depending only on $M_0$ and $\dim A$, such that
        $$|S| \geq \frac{1}{M} h_{\omega}(A)$$
         and $[k]x$ reduces to the identity component of the N\'{e}ron model over $v$ for all $x \in A(K)$. 
        \par 
        We now consider all global heights with respect to $K'$, so that $h_{\omega}(A)$ is scaled by a factor of $[K':K]$. Then we have the inequality \begin{equation}\label{eqn:S'cardbd}|S'| \geq \frac{1}{M[K':K]} h_{\omega}(A).\end{equation} For each $v \in K'$, let $x_v$ be the induced point on $\cal{A}_{g,3}(K_v')$ by $A$. For each $v \in S'$, we see from \eqref{eqn:centeredi} that there exists $i_v$ such that $x_v \in U_{i_v}$ and
	\begin{equation}\label{eqn:centered}\lambda_{\partial U_{i_v}}(x_v) \leq c \lambda_{\partial X}(x_v).\end{equation} Since there are $m$ such opens, there exists a subset $\Sigma \subseteq S'$ consisting of at least $\frac{|S'|}{m}$ places such that they share the same such $i=i_v$, say $i=1$. 
    \par 
    Note that the $K'$-point of $\mathcal{A}_{g,3}$ given by $A$ lies in $U_1$. By Proposition \ref{ModuliOpenCover1Chapter6}, we have an embedding $\cal{A}_{U_1} \xhookrightarrow{} \bb{P}_{U_1}^N$ with an endomorphism $f: \bb{P}_{U_1}^N \to \bb{P}_{U_1}^N$ extending $[m_1]$ for some $m_1\geq2$, along with a fixed homogeneous lift $F: \bb{A}_{U_1}^{N+1} \to \bb{A}_{U_1}^{N+1}$ with one of its coefficients being $1$. For each $n\ge1$, fix a good basis $\cal{B}_n\subseteq H^0(A,L^n)$ over $K'$ with respect to this lift $F$. Then since one of the coefficients of $F$ is equal to $1$, we have \eqref{eqn:Fnormbd} and therefore \eqref{eq:ElkiesLower1}. Since by \eqref{eq : HeightExtension1} there exists a $c_1 > 0$ such that 
    $$c_1 h_{\partial U_1}(A) \geq h(f),$$
    \eqref{eq:ElkiesLower1} gives us a constant $C > 0$ (depending only on $g$) such that for any $T \subseteq M_{K'}$ and any $x_1,\ldots,x_{c(n)}\in A(K')$, we have 
	\begin{equation} \label{eq:ElkiesLower2}
    \sum_{v \in T} g_{\cal{B}_n,v}(x_1,\ldots,x_{c(n)}) \geq -\frac{C \log n}{n} h_{\partial U_1}(A) + \sum_{v \in T} r_v(F)
    \end{equation} 
    where $g_{\cal{B}_n,v}$ is the Arakelov-Green's function over the non-archimedean field $K_v$, and $r_v(F)$ equals \eqref{eqn:rF} with absolute value $|\cdot|_v$. We will choose $T$ later. Let $h: \bb{N} \to \bb{N}$ be an increasing function such that for all $n\in\mathbb{N}$, $h(n) = c(k)$ for some $k$; we fix the properties of $h$ later. Then by (\ref{eqn:centered}) along with Theorem \ref{UniformNeronBound1}, there exists a nonempty finite list $\{n_1,\ldots,n_{m'}\}$ such that any $A/K'_v$ with bad reduction is $n_i$-good for some $1\le i\le m'$. If we write $h(n) = c(k_n)$ and choose our function $h$ so that
	$$\sum_{i=1}^{\infty} \frac{1}{k_i^{1/2}} < 1, $$
    then in particular
    \begin{equation}\label{eqn:knibd}\sum_{i=1}^{m'} \frac{1}{k_{n_i}^{1/2}} < 1.\end{equation}

	We now claim that there is an $1\le i\le m'$ such that for at least $\frac{1}{k_{n_i}^{1/2}}|\Sigma|$ places $v\in\Sigma$, the abelian variety $A/K_v'$ is $n_i$-good. Otherwise, for each $1\le i\le m'$, there are at most $\frac{|\Sigma|}{k_{n_i}^{1/2}}$ places of $\Sigma$ such that $A$ is $n_i$-good. Since for every $v\in\Sigma$, we have that $A/K'$ is $n_i$ good for some $1 \leq i \leq m'$, it follows that
    $$\sum_{i=1}^{m'} \frac{|\Sigma|}{k_{n_i}^{1/2}} \geq |\Sigma|,$$ and hence $$\sum_{i=1}^{m'} \frac{1}{k_{n_i}^{1/2}} \geq 1,$$
    which contradicts \eqref{eqn:knibd}. Thus there is an $1\le i\le m'$ such that for at least $\frac{1}{k_{n_i}^{1/2}}|\Sigma|$ places $v\in\Sigma$, the abelian variety $A/K_v'$ is $n_i$-good. We assume without loss that $i=1$, so that at least $\frac{|\Sigma|}{k_{n_1}^{1/2}}$ of the places of $\Sigma$ are $n_1$-good. Let $\Sigma'$ be this subset of places, which by assumption satisfies \begin{equation}\label{eqn:sigma'}|\Sigma'| \geq \frac{1}{k_{n_1}^{1/2}} |\Sigma|.\end{equation}  
    \par 
    Now assume we are given $x_1,\ldots,x_{c(k_{n_1})} \in A(K')$ that all reduce to the identity component of the N\'eron model for all $v \in \Sigma'$. As $A/K_v$ is $n_1$-good, we obtain 
    \begin{equation} \label{eq: LowerBoundArakelov1}
    g_{\cal{B}_{k_{n_1}},v}(x_1,\ldots,x_{c(k_{n_1})}) \geq \frac{1}{n_1} \lambda_{\partial U_1}(A_v) + r_v(F) \geq \frac{1}{n_1} + r(F_v)
    \end{equation}
    since $\lambda_{\partial U_1}(A_v) \geq \lambda_{\partial \cal{A}_{g,3}} \geq 1$. Here, the second equality follows from the fact that $\lambda_{\partial \cal{A}_{g,3}}(A_v) > 0$ due to having bad reduction and that $\lambda_{\partial \cal{A}_{g,3}}: \cal{A}_{g,3}(K'_v) \to \bb{R}_{\geq 0}$ is valued in non-negative integers as we take $K'$ to be our base field. Recalling that $\sum_{v \in M_{K'}} r_v(F) = 0$, we now sum over all places in $M_{K'}$ using \eqref{eq:ElkiesLower2} and \eqref{eq: LowerBoundArakelov1}. Letting $T=M_{K'} \setminus \Sigma'$ in \eqref{eq:ElkiesLower2}, we obtain 
	\begin{equation}\label{eqn:lb1}\sum_{v \in M_{K'}} g_{\cal{B}_{k_{n_1}},v}(x_1,\ldots,x_{c(k_{n_1})})\geq\frac{1}{n_1} |\Sigma'|-\frac{C \log k_{n_1}}{k_{n_1}} h_{\partial U_1}(A).\end{equation} By \eqref{eqn:S'cardbd} and the fact that $\omega$ is ample on the minimal compactification $\mathcal{A}_{g,3}^*$ combined with Proposition \ref{GlobalHeightAmple1}, we have \begin{equation}\label{eqn:sigmalb}|\Sigma|\geq\frac{1}{mM[K':K]}h_{\omega}(A)\geq\frac{1}{M'}h_{\partial U_1}(A)\end{equation} for some constant $M'$ depending only on $g$ and $g(B)$. Combining \eqref{eqn:lb1} and \eqref{eqn:sigmalb} with \eqref{eqn:sigma'}, we get
	\begin{equation*}\sum_{v \in M_{K'}} g_{\cal{B}_{k_{n_1}},v}(x_1,\ldots,x_{c(k_{n_1})})\ge\left(\frac{1}{M' n_1 k_{n_1}^{1/2}}-\frac{ C \log k_{n_1}}{k_{n_1}} \right) h_{\partial U_1}(A).\end{equation*}
	Now note that $M'$ and $C$ depend only on $g$ and $g(B)$. Hence there are constants $C'$ and $\delta>0$ depending only on $g$ and $g(B)$ such that if $2C' n_1^3 \geq k_{n_1} \geq C' n_1^3$, then
	\begin{equation}\label{eqn:deltalb}\sum_{v \in M_{K'}} g_{\cal{B}_{k_{n_1}},v}(x_1,\ldots,x_{c(k_{n_1})})\geq\delta h_{\partial U_1}(A).\end{equation} We now choose $k_i$ for our function $h$ that satisfy 
    $$2C'i^3 \geq k_i \geq C'i^3$$ for all $i\in\mathbb{N}$, so that in particular \eqref{eqn:deltalb} always holds. Suppose $\h_A(x_i)\leq\eps h_{\Fal}(A)$. Then unless $g_{\cal{B}_{k_{n_1}},v}(x_1,\ldots,x_{c(k_{n_1})})=+\infty$, we must have 
    \begin{equation}\label{eqn:guppbd}\sum_{v \in M_{K'}} g_{\cal{B}_{k_{n_1}},v}(x_1,\ldots,x_{c(k_{n_1})}) \leq \eps h_{\Fal}(A).\end{equation}

On the other hand, by Proposition \ref{GlobalHeightAmple2}, since $h_{\Fal}(A)=h_{\omega}(A)$ and $\omega$ is an ample line bundle on $\cal{A}_{g,3}^*$, there is some $c' > 0$ depending only on $g$ such that $h_{\partial U_1}(A) \geq c' h_{\Fal}(A)$. Hence \eqref{eqn:guppbd} yields a contradiction with \eqref{eqn:deltalb} for all $\eps\ll_{g,g(B)}1$. Thus $g_{\cal{B}_{k_{n_1}},v}(x_1,\ldots,x_{c(k_{n_1})})=\infty$. This means that there must exist some nonzero $s\in H^0(A,L^{k_{n_1}})$ such that $x_1,\dots,x_{c(k_{n_1})}$ all lie on $\divv(s)$. Therefore, for any set of $c(k_{n_1})$ torsion points in $A(K')$ that reduce to the identity component for all $v \in \Sigma'$, there must be a nonzero section $s\in H^0(A,L^{k_{n_1}})$ such that these points all lie on $\div(s)$.
    \par 
    We conclude that for any set of points $x_1,\dots,x_{N_0}$ (independent of $N_0$) defined over $K'$ of N\'eron--Tate height at most $\eps h_{\Fal}(A)$ which all reduce to the identity component of the N\'eron model for all $v\in S'$, there must exist a nonzero $s\in H^0(A,L^{k_{n_1}})$ such that the $x_i$ all lie on $\divv(s)$. Indeed, otherwise the rectangular matrix formed by $\det(\eta_j(\tilde{x}_i))_{\eta_j\in\mathcal{B}_{k_{n_1}}}$ has rank $c(k_{n_1})$. If $N_0 < c(k_{n_1})$ then this clearly cannot happen. If $N_0 \geq c(k_{n_1})$, then it follows that we can find a $c(k_{n_1})\times c(k_{n_1})$ minor of maximal rank and thus that there does not exist any nonzero $s \in H^0(A,L^{k_{n_1}})$ such that the $x_i$ all lie on $\divv(s)$, contradicting the preceding argument.
     \par 
     Each place $v \in \Sigma'$ lies above some $v \in S$, where $S$ is as in Proposition \ref{IdentityComponentAbelianVarieties1}. In particular if $x\in A(K)$, then $[k]x$ reduces to the identity component for all $v \in S$. Hence given any set of points $x_1,\ldots,x_{c(N_0)}\in A(K)$ with $\h_A(x_i) \leq \frac{\eps}{k^2} h_{\Fal}(A)$, there must be a nonzero $s\in H^0(A,L^{k_{n_1}})$ such that $[k]x_1,\ldots,[k]x_{N_0}$ all lie on $\divv(s)$, and hence $x_1,\ldots,x_{N_0}$ all lie on $[k]^*(\divv(s))\in H^0(A,L^{k_{n_1} k^2})$. Our theorem is thus proven by taking $n=k_{n_1} k^2$. 
    \end{proof}

We now note that Theorem \ref{UniformSemistable1} is enough to imply the case of semistable abelian varieties with at least one place of bad reduction, where the constants only depend on the dimension $g$ and the genus $g(B)$. 

	\begin{corollary}\label{UniformSemistable2} Let $K=\mathbb{C}(B)$ be the function field of a smooth projective curve $B$ over $\bb{C}$, and let $A/K$ be a principally polarized abelian variety with semistable reduction.  Then there are positive constants  $n = n(\mathrm{dim}(A),g(B))$ and $\eps = \eps(\dim A, g(B))$ such that if $A$ has at least one place of bad reduction, then 
    $$\{x \in A(K) \mid \h_A(x) \leq \eps h_{\Fal}(A) \}$$
    is contained in $\divv(s)$ for some nonzero section $s \in H^0(A,L^n)$, where $L$ is the symmetric ample line bundle inducing the principal polarization and $\h_A$ denotes the N\'eron--Tate height with respect to $L$.
    \end{corollary}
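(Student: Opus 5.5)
The corollary will follow from Theorem \ref{UniformSemistable1} once we produce a constant $M_0$, depending only on $g=\dim A$ and $g(B)$, with $|S_0|\geq h_{\omega}(A)/M_0$. The tool is the Arakelov inequality, Theorem \ref{IntroArakelovTheorem1}, in Deligne's form. Since $A$ is semistable, $h_{\omega}(A)=h_{\Fal}(A)$ can be computed directly over $K$, and we may use $K$ itself as the reference field.

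First we handle the hypothesis $2g(B)-2+|S_0|\geq 0$. We have $|S_0|\geq 1$ by assumption, so this holds automatically whenever $g(B)\geq 1$. If $g(B)=0$ and $|S_0|=1$, the hypothesis fails. In that case we pass to a cyclic cover $B'\to B=\mathbb{P}^1$ of fixed degree (say $2$) branched at the bad place and one auxiliary point. Then $g(B')$ is bounded, $A$ stays semistable over $K'$, the number of bad places is multiplied by at most $2$, and $h_{\Fal}$ scales by $[K':K]$. So it suffices to prove the inequality over $K'$, at the cost of uniform constants. (Alternatively, invoke Deligne's version \cite[Lemma 3.2]{Del87}, which also covers this case. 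A semistable abelian variety over $\mathbb{P}^1$ with exactly one bad place should in any event be isotrivial, hence have $h_{\Fal}=0$.)

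Next, Theorem \ref{IntroArakelovTheorem1} gives
$$h_{\omega}(A)=h_{\Fal}(A)\leq \tfrac{g}{2}\bigl(2g(B)-2+|S_0|\bigr)\leq \tfrac{g}{2}\bigl(2g(B)+1\bigr)|S_0|,$$
where the last step uses $|S_0|\geq 1$. Thus we may take $M_0=\tfrac{g}{2}(2g(B)+1)$, which depends only on $g$ and $g(B)$. If $h_{\omega}(A)>0$, Theorem \ref{UniformSemistable1} yields $n=n(g,M_0)$ and $\eps=\eps(g,M_0)$, and hence constants depending only on $g$ and $g(B)$.

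The remaining case is $h_{\omega}(A)=0$, and this is where we expect the main difficulty: the set $\{\h_A(x)\leq 0\}$ is exactly the set of points of height zero, so we need to rule this case out. A semistable abelian variety whose moduli map $B\to\mathcal{A}^*_{g,3}$ has degree zero for the ample $\omega$ is constant, hence has good reduction everywhere after a finite extension. Since $A$ is semistable, good reduction after extension means good reduction already, which contradicts $|S_0|\geq 1$. So $h_{\omega}(A)>0$, and the corollary follows.
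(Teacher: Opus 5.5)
Your main line is the paper's. Deligne's inequality \cite[Lemma 3.2]{Del87} gives $M_0=M_0(g,g(B))$ with $|S_0|\ge h_{\omega}(A)/M_0$, and Theorem~\ref{UniformSemistable1} finishes. Your argument that $h_{\omega}(A)>0$ is correct and more explicit than the paper's one-line claim: a moduli map of $\omega$-degree zero to $\cal{A}_{g,3}^*$ is constant, so $A$ has potentially good reduction everywhere, hence good reduction since it is semistable.

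The gap is the exceptional case $g(B)=0$, $|S_0|=1$. There the inequality is unavailable, and your proposed fix fails. A double cover $B'\to\mathbb{P}^1$ branched at the bad place $p$ and one auxiliary point has $g(B')=0$ by Riemann--Hurwitz. Since $p$ is a branch point, it has a single preimage, and $A_{K'}$ still has (semistable) bad reduction there. So $2g(B')-2+|S_0'|=-1$ again. Your remark that the number of bad places is ``multiplied by at most $2$'' points the wrong way, since you need that number to grow.

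Your fallbacks are not proofs either. The paper states Deligne's inequality only under $2g(B)-2+|S_0|\ge 0$, and it cannot be applied unconditionally: for a constant family over $\mathbb{P}^1$ the left side is $0$ and the right side is $-g$. The isotriviality of a semistable abelian variety over $\mathbb{P}^1$ with one bad place is only asserted (``should''), not shown.

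The paper closes this case with hyperbolicity of $\cal{A}_g$. The moduli map on $\mathbb{P}^1\setminus\{p\}\cong\mathbb{C}$ must be constant, so $A$ would have no bad place, a contradiction.

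Alternatively, you can repair your own idea by branching away from $p$. Take the double cover branched at two places of good reduction. It is again $\mathbb{P}^1$, but now $p$ splits into two bad places, so $2g(B')-2+|S_0'|=0$. Deligne's inequality then gives $[K':K]\,h_{\omega}(A)\le 0$, contradicting the positivity you established. Either way the case is vacuous, and the rest of your argument goes through.
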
 

    \begin{proof}
     Since $A$ has at least one place of bad reduction, we have $h_{\omega}(A) > 0$. We now have to show that there is a constant $M_0$, depending only on $g$ and $g(B)$, such that 
     $$|S_0| \geq \frac{h_{\omega}(A)}{M_0}$$
     where $S_0$ is the set of places of bad reduction for $A$. By \cite[Lemma 3.2]{Del87}, as $A/K$ has semistable reduction, we know that 
     \begin{equation}\label{eqn:Deligne}h_{\omega}(A) \leq \frac{g}{2}(2g(B)-2 + |S_0|)\end{equation}
     if $2g(B)-2 + |S_0| \geq 0$. This gives us our desired constant $M_0$ if $g(B) \geq 1$. If $g(B) = 0$, then the only case in which we are not done is if $|S_0| = 1$. But as $\cal{A}_g$ is hyperbolic, any map from $\bb{C}$ to it must be a constant map and so $|S_0| = 0$ in this case and we do not have any place of bad reduction, which contradicts our hypotheses.
    \end{proof}

\section{N\'{e}ron Models for Potential Good Reduction} \label{sec: NeronModels}
In this section, we study the N\'{e}ron model $\cal{N}$ for an abelian variety $A/K$ with potential good reduction. If $K'/K$ is a finite extension for which $A/K'$ has good reduction with abelian scheme $\cal{A}'/R'$, then we are interested in bounding the degree of the image of the special fiber of $\cal{N}$ when mapped to $\cal{A}'$ using the universal property of N\'{e}ron models. This will essentially follow from results of Edixhoven \cite{Edi92} which are reinterpreted in Halle--Nicaise \cite{HN11}. 
\par 
Let $K$ be a non-archimedean field with a discrete valuation and having residue characteristic zero. Let $A/K$ be an abelian variety with potential good reduction.  This means that over some finite extension $K'/K$, the abelian variety $A_{K'}/K'$ extends to an abelian scheme $\cal{A}'/R'$ where $R'$ is the valuation ring of $K'$. In fact we may take $K' = K(A[3])$. 
\par 
If $R$ is the valuation ring of $R'$, then we still have a N\'{e}ron model $\cal{N}/R$ for $A/K$. Then the base change $\cal{N}_{R'}$ is a smooth model for $A_{K'}$ and hence by the universal property of N\'{e}ron models, there must be a $R'$-morphism $f: \cal{N}_{R'} \to \cal{A}'$. We now assume that the residue field of $K$ is algebraically closed, so that $K'/K$ is totally ramified and $K'$ and $K$ have isomorphic residue fields. In particular, passing to special fibers we obtain a morphism 
$$f_s : \cal{N}_s \to \cal{A}'_s.$$
We are interested in understanding the image $f(\cal{N}_s)$ inside $\cal{A}'_s$. To do so, we will apply \cite[Lemma 3.2]{HN11}. We summarize the setup below. 
\par 
Since $K'/K$ is totally ramified, if $d = [K':K]$ then we may identify $\Gal(K'/K)$ with $\mu_d$, the group of $d$th roots of unity. By the N\'eron mapping property, the action of $\mu_d$ on $A'/K'$ extends to the abelian scheme $\cal{A}'/R'$; this action is equivariant with respect to $\cal{A}'\to\Spec R'$ and hence induces an action on the special fiber $\cal{A}'_s$. 

\begin{proposition} \label{NeronModel1}
The morphism $f_s:\cal{N}_s\to\cal{A}'_s$ factors through the fixed part $(\cal{A}'_s)^{\mu_d}$ and is surjective onto $(\cal{A}'_s)^{\mu_d}$.  
\end{proposition}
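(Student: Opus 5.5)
The plan is to use Edixhoven's description of the Néron model of a tamely ramified abelian variety as the fixed locus of a Weil restriction, as reinterpreted in \cite[Lemma 3.2]{HN11}. Since the residue characteristic is $0$, the extension $K'/K$ is tame, and $\mathcal{A}'$ is the Néron model of $A_{K'}$. Write $\mu_d=\Gal(K'/K)$, acting semilinearly on $\mathcal{A}'$ over its action on $R'$. Edixhoven's theorem identifies $\mathcal{N}$ with the fixed-point scheme $(\Res_{R'/R}\mathcal{A}')^{\mu_d}$. Under this identification, the map $f:\mathcal{N}_{R'}\to\mathcal{A}'$ is the adjunction (counit) of the Weil restriction, restricted to the fixed locus. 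This identification of $f$ follows from uniqueness in the Néron mapping property, since both maps agree on generic fibers.

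Next I will pass to special fibers. The special fiber of $\Res_{R'/R}\mathcal{A}'$ is the Greenberg-type scheme of jets of $\mathcal{A}'$ along the fat point $\Spec R'/\pi R'$, where $R'/\pi R'\simeq k[t]/(t^d)$. The counit on special fibers is the reduction map to $\mathcal{A}'_s$, which evaluates a jet at $t=0$. The group $\mu_d$ acts on jets by combining its action on $\mathcal{A}'_s$ with $t\mapsto\zeta t$.

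Factorization through $(\mathcal{A}'_s)^{\mu_d}$ is then immediate: evaluation at $t=0$ is $\mu_d$-equivariant, and $t=0$ is fixed by $t\mapsto \zeta t$. So a fixed jet has a fixed base point. A more elementary route gives the same conclusion. The composite $\mathcal{N}_{R'}\to\mathcal{A}'$ is $\mu_d$-equivariant, where $\mu_d$ acts on $\mathcal{N}_{R'}=\mathcal{N}\times_R R'$ through the second factor. Equivariance holds by uniqueness in the Néron property, since it holds on the generic fiber, where it is just descent of $A$ to $K$. On special fibers, $\mu_d$ acts trivially on $\mathcal{N}_s\times_k k=\mathcal{N}_s$ because the residue extension is trivial. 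Hence the image of $f_s$ lies in the fixed locus.

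Surjectivity is the main step, and I expect it to be the main obstacle. Given a fixed point $a\in(\mathcal{A}'_s)^{\mu_d}(k)$, I need a $\mu_d$-equivariant section $\Spec R'\to\mathcal{A}'$ lifting $a$; this is the same as an $R$-point of $\mathcal{N}$ reducing to a preimage of $a$. I will argue by linearization. Because $d$ is invertible and $\mu_d$ is linearly reductive, $\mu_d$ fixes $a$, so it acts on the complete local ring $\widehat{\mathcal{O}}_{\mathcal{A}',a}\simeq R'[[y_1,\dots,y_g]]$. One can then choose equivariant coordinates by averaging, which is possible because $d$ is invertible, so that $\mu_d$ acts diagonally on the $y_i$ by characters. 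An equivariant $R'$-point with $y_i=0$ then gives a fixed section through $a$. This is exactly the content of \cite[Lemma 3.2]{HN11}, which I would cite rather than redo.

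Smoothness of $\mathcal{N}$ and Hensel's lemma (the residue field is algebraically closed) let us realize this formal section as an actual $R$-point. Surjectivity on $k$-points then gives set-theoretic surjectivity, because $k$ is algebraically closed and both sides are of finite type.
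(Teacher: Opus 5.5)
Your proposal is correct and takes the same approach as the paper. The paper proves this statement simply by citing \cite[Lemma 3.2 (2)]{HN11}, the same lemma you invoke for the key surjectivity step. Your added sketch faithfully unpacks that lemma: Edixhoven's identification $\mathcal{N}\cong(\Res_{R'/R}\mathcal{A}')^{\mu_d}$, $\mu_d$-equivariance of $f$ via the N\'eron mapping property for the factorization, and equivariant lifting of a fixed point by averaging coordinates (using that $d$ is invertible) for surjectivity.
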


\begin{proof}
This is \cite[Lemma 3.2 (2)]{HN11}.     
\end{proof}

Now let's assume that $A$ has bad reduction over $K$. This means that the special fiber $\cal{N}_s$ of the N\'{e}ron model is not an abelian variety. The identity component $(\cal{N}_s)^0$ factors as 
$$1 \to U \to (\cal{N}_s)^0 \to C \to 1$$
where $U$ is a commutative unipotent group and $C$ an abelian variety of dimension $< \dim A$. As any morphism from $U$ to $\cal{A}'_s$ must be trivial, the map $f_s\mid_{(\cal{N}_s)^0}:(\cal{N}_s)^0\to\cal{A}_s'$ must factor through $C$. As the image of $C$ under the induced map to $\cal{A}_s'$ must be a proper abelian subvariety of $\cal{A}_s'$, it follows that $f_s(\cal{N}_s)$ is a finite union of  translates of $B$.
\par 
We seek to bound the degree of $(\cal{A}'_s)^{\mu_d}$. Fix a line bundle as follows. Let $L$ be a line bundle for $A/K$ which is naturally a line bundle for $A_{K'}/K'$. This extends to a unique line bundle $\cal{L}$ for the abelian scheme $\cal{A}'/R'$ as $R'$ is a DVR and $\Pic(R') = 0$ \cite[Theorem 8.4.3]{BLR90}. We thus get a line bundle $\cal{L}_s$ on $\cal{A}'_s$. 

\begin{proposition} \label{NeronModel2}
For $g \in \mu_d$, we have an isomorphism $g^* \cal{L}_s \simeq \cal{L}_s$.
\end{proposition}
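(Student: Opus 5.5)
The plan is to compare $g^*\cal{L}$ with $\cal{L}$ on the whole abelian scheme $\cal{A}'/R'$ rather than only on the special fiber, and then restrict to $\cal{A}'_s$. Here $g\in\mu_d=\Gal(K'/K)$ acts on $\cal{A}'$ by an automorphism $\sigma_g$. This automorphism is \emph{semilinear}: it covers the automorphism of $\Spec R'$ induced by $g$, and it is obtained from the Galois action on $A_{K'}=A\times_K K'$ via the N\'eron mapping property. On the special fiber the induced base automorphism is trivial, because $K'/K$ is totally ramified and the residue fields agree. So $\sigma_g$ restricts to an honest $k$-automorphism of $\cal{A}'_s$, and $g^*\cal{L}_s$ is meaningful.

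The first step is the generic fiber. Since $L$ is defined over $K$, the pullback of $L_{K'}=L\otimes_K K'$ under the Galois twist $\sigma_g|_{A_{K'}}$ is canonically isomorphic to $L_{K'}$. The isomorphism is $s\otimes a\mapsto s\otimes g(a)$, i.e.\ descent data for $L$ along $K'/K$. Hence $\sigma_g^*\cal{L}$ and $\cal{L}$ are two line bundles on $\cal{A}'$ with isomorphic restrictions to the generic fiber $A_{K'}$.

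The second step is to extend this isomorphism over $\cal{A}'$. The scheme $\cal{A}'$ is regular, being smooth over the DVR $R'$, and its special fiber $\cal{A}'_s$ is an irreducible Cartier divisor. Therefore $\Pic(\cal{A}')\to\Pic(A_{K'})$ has kernel generated by $\cal{O}(\cal{A}'_s)$. Since $\cal{A}'_s$ is the pullback of the closed point of $\Spec R'$, we have $\cal{O}(\cal{A}'_s)=\pi^*\cal{O}(\mathfrak{m}')$, which is trivial because $\Pic(R')=0$. So the restriction map is injective, which is exactly the uniqueness used in \cite[Theorem 8.4.3]{BLR90} to define $\cal{L}$. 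It follows that $\sigma_g^*\cal{L}\simeq\cal{L}$ on $\cal{A}'$.

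The last step restricts this isomorphism to the special fiber, giving $g^*\cal{L}_s\simeq\cal{L}_s$. The main point needing care is the semilinearity of the action. I would make it precise as follows:
\begin{itemize}
\item view $\cal{A}'$ and $\sigma_g^*\cal{L}$ as schemes over $R$, where $\sigma_g$ is an $R$-automorphism;
\item check that the injectivity argument only uses regularity of $\cal{A}'$ and triviality of the special-fiber divisor, so it is insensitive to the semilinear twist.
\end{itemize}

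A fallback, if this bookkeeping becomes awkward, is to argue directly on the special fiber. Since $\mu_d$ is finite and $\cal{L}_s$ is ample, it would be enough to show that $g^*\cal{L}_s\otimes\cal{L}_s^{-1}$ is algebraically trivial and then kill it. I prefer the extension argument above, since it gives an honest isomorphism rather than one up to translation.
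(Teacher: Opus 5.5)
Your proposal is correct and follows the same route as the paper: since $L$ is defined over $K$, the generic fiber of $g^*\cal{L}$ is isomorphic to $L_{K'}$; uniqueness of the extension (injectivity of $\Pic(\cal{A}')\to\Pic(A_{K'})$) then gives $g^*\cal{L}\simeq\cal{L}$, and restricting to $\cal{A}'_s$ finishes the proof. Your explicit treatment of the semilinearity of the $\mu_d$-action, and your regularity argument for the injectivity, fill in details that the paper leaves implicit.
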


\begin{proof}
We have $g^*(\cal{L}_s) = (g^* \cal{L})_s$ since the action on the special fiber is induced by that on $\cal{A}'$. The generic fiber of $g^* \cal{L}$ is equal to $g^* L$ which is isomorphic to $L$ itself since $L$ was defined over $K$. As $g^* \cal{L}$ is the unique line bundle lifting $g^*L$, we must have $g^* \cal{L} \simeq \cal{L}$ and thus 
$$g^*(\cal{L}_s) = (g^* \cal{L})_s \simeq \cal{L}_s.$$
\end{proof}

We now bound $\deg_{\cal{L}_s}((\cal{A}_s)^{\mu_d})$. We will prove the following general proposition that bounds $\deg_L(X^G)$ for a finite group $G$ acting on a projective variety $X$ and $L$ is an ample line bundle which is $G$-invariant, i.e., which satisfies $g^*L \simeq L$ for all $g \in G$. 

\begin{proposition} \label{NeronDegreeBound1} Let $X$ be a projective variety of dimension $d$ over an algebraically closed field $k$, and let $G$ be a finite group acting on $X$ by automorphisms. For a very ample line bundle $L$ on $X$, denote by $\deg_L(\cdot)$ the degree with respect to $L$, and let \[X^G:=\{x\in X:g(x)=x\text{ for all } g\in G\}\] be the fixed-point subvariety. Assume that $g^*L$ and $L$ have the same numerical class for all $g \in G$. Then \[\deg_L(X^{G})\le|G^{\mathrm{ab}}||G|^d\deg_L(X).\]
\end{proposition}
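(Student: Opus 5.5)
We realize $X^G$ as an intersection of graphs and bound its degree with a refined Bézout inequality. For each $g\in G$, let $\Gamma_g\subseteq X\times X$ be the graph of $g$, and let $\Delta\subseteq X\times X$ be the diagonal. Then $X^g=\Delta\cap\Gamma_g$, and hence
\[
X^G=\bigcap_{g\in G}X^g=\bigcap_{g\in S}X^g
\]
for any generating set $S$ of $G$. The factor $|G^{\mathrm{ab}}|$ suggests reducing to a small set of generators and cyclic pieces. We do not intend to use a minimal generating set, since its size need not be controlled. Instead, the plan is to bound the degree of each fixed locus $X^g$ and then intersect.

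Embed $X\hookrightarrow\mathbb{P}^M$ by $L$ and equip $X\times X$ with $L\boxtimes L$. The graph $\Gamma_g$ is the image of $X$ under $(\mathrm{id},g)$, so its degree is computed by $(L+g^*L)^d$. Since $g^*L\equiv L$ numerically, this gives
\[
\deg(\Gamma_g)=2^d\deg_L(X).
\]
Next, apply the refined Bézout theorem (Fulton, Example 8.4.6) to $\Delta\cap\Gamma_g$, viewed as an intersection in $\mathbb{P}^M\times\mathbb{P}^M$, or better in $\mathbb{P}^{(M+1)^2-1}$ via the Segre embedding. The sum of the degrees of the irreducible components of $X^g$ is then at most a constant of the form $C^d\deg_L(X)$.

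For the intersection over $G$, however, we want a better bookkeeping than iterating Bézout over all $|G|$ elements, which would cost $2^{d|G|}$. The plan is to use the product embedding
\[
\Phi:X\to X^{|G|},\qquad x\mapsto(gx)_{g\in G},
\]
together with the line bundle $\boxtimes_g L$. The pullback $\Phi^*\bigl(\boxtimes_g L\bigr)=\sum_g g^*L$ is numerically $|G|L$. The subvariety $X^G$ is precisely $\Phi^{-1}$ of the small diagonal $\{(x,\dots,x)\}$. Intersecting $\Phi(X)$, whose degree is $|G|^d\deg_L(X)$, with the small diagonal and applying refined Bézout would give
\[
\deg X^G\le|G|^d\deg_L(X)
\]
times the contribution from the linear equations cutting out the diagonal. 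Linear equations have degree 1, so they do not increase degree in refined Bézout: the degree of $V\cap H_1\cap\cdots\cap H_r$ for linear $H_i$ is at most $\deg V$. One must be careful that the small diagonal is cut out by linear equations only after passing to the Segre embedding, where it is the intersection of $\Phi(X)$ with a linear subspace pulled back via Segre. Under Segre, the bundle $\boxtimes_g L$ becomes $\mathcal{O}(1)$, and the diagonal condition $x_g=x_h$ becomes the bilinear equations $x_{g,i}x_{h,j}=x_{g,j}x_{h,i}$, which are linear in Segre coordinates. On $X^G$, the bundle $\Phi^*\mathcal{O}(1)$ restricts numerically to $|G|L$, so $L$-degrees of $k$-dimensional components are $|G|^{-k}$ times the Segre degrees, which only helps.

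The main obstacle is the extra factor $|G^{\mathrm{ab}}|$. In the numerical computation, $\sum_g g^*L$ is only numerically equivalent to $|G|L$, whereas the linear systems need genuine isomorphism $g^*L\simeq L$ to linearize. The plan is to replace $L$ by the $G$-linearized bundle $\bigotimes_g g^*L$ and compare it with $L^{|G|}$. Their difference is a numerically trivial class, and the obstruction to linearizing it is governed by characters of $G$, i.e.\ by $\Hom(G,k^\times)$, which has order at most $|G^{\mathrm{ab}}|$. I would try to absorb this by passing to a cover or twisting, accepting at most a factor $|G^{\mathrm{ab}}|$ in the degree count. Making this step rigorous is the part I am least sure of.
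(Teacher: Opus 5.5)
Your product-embedding paragraph is already a complete proof, and it proves the stronger inequality $\deg_L(X^G)\le |G|^d\deg_L(X)$. The ``obstacle'' in your final paragraph does not exist.

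Here is why the argument closes. Put $n=|G|$ and $L_G:=\bigotimes_{g\in G}g^*L$.
\begin{itemize}
\item Since $\mathrm{pr}_e\circ\Phi=\mathrm{id}_X$, the map $\Phi$ is a closed immersion. Composing with the Segre map therefore embeds $X$ by $L_G$, and $\deg\Phi(X)=c_1(L_G)^d=|G|^d\deg_L(X)$. This uses only that $g^*L\equiv L$ numerically.
\item The Segre image of the small diagonal of $(\mathbb{P}^M)^{n}$ is the Segre variety intersected with the linear space $\Lambda$ of symmetric tensors. The reason is that a decomposable tensor $v_1\otimes\cdots\otimes v_n$ is symmetric only if the $v_i$ are proportional. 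This is the precise form of your remark about the bilinear equations: they become linear forms in Segre coordinates only after multiplying by coordinates of the remaining factors.
\item Hence $\Phi(X^G)=\Phi(X)\cap\Lambda$ set-theoretically. Refined B\'ezout then gives $\sum_Z\deg_{L_G}(Z)\le|G|^d\deg_L(X)$, the sum running over the irreducible components $Z$ of $X^G$.
\item Finally $\deg_{L_G}(Z)=|G|^{\dim Z}\deg_L(Z)\ge\deg_L(Z)$. This holds because intersection numbers depend only on numerical classes; alternatively, $g|_Z=\mathrm{id}_Z$ gives $g^*L|_Z\cong L|_Z$ directly.
\end{itemize}
No $G$-linearization is ever used, so you never need a genuine isomorphism $g^*L\simeq L$. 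The right-hand side of the statement carries the extra factor $|G^{\mathrm{ab}}|\ge 1$, so nothing remains to be absorbed. You should drop the last paragraph, and also the first two, which play no role.

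Compared with the paper: it uses the same bundle $L_G$ (its $M$), but its route is representation-theoretic.
\begin{itemize}
\item It gives $L_G$ the permutation $G$-linearization and embeds $X$ $G$-equivariantly by the complete linear system $|L_G|$.
\item It decomposes the $G$-fixed locus of $\mathbb{P}(H^0(X,L_G)^\vee)$ into eigenspaces $\mathbb{P}(W_\chi)$, one for each character $\chi$ of $G$.
\item B\'ezout against each linear space $\mathbb{P}(W_\chi)$ costs one copy of $\deg_{L_G}(X)$. Counting characters is exactly where the factor $|G^{\mathrm{ab}}|$ comes from.
\end{itemize}
Your route replaces this with the single linear condition of full symmetry in the Segre embedding. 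It is more elementary and gives the sharper bound.

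The two arguments are consistent. At a $G$-fixed point $x$, the permutation linearization acts on $(L_G)_x\cong L_x^{\otimes n}$ by permuting tensor factors of a line, which is the identity. So in the paper's argument only the trivial character actually meets the image of $X^G$, and its factor $|G^{\mathrm{ab}}|$ could be dropped as well.
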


\begin{proof} Fix once and for all an ordering $G=\{a_1,\dots,a_n\}$, and set \[M=\bigotimes_{i=1}^n a_i^*L.\]
For $g\in G$ we compute \[g^*M=g^*\Big(\bigotimes_{i=1}^n a_i^*L\Big)=\bigotimes_{i=1}^n g^*(a_i^*L)=\bigotimes_{i=1}^n (a_i\circ g)^*L.\] Since $i\mapsto a_i\circ g$ permutes $\{a_1,\dots,a_n\}$, there is a canonical permutation isomorphism \[\psi_g:g^*M\longrightarrow M\]
defined by sending each factor $(a_i\circ g)^*L$ of $g^*M$ to the unique slot in $M=\bigotimes_{j=1}^n a_j^*L$ labeled by the same $a_j$. In this way $\psi_g$ restores the fixed reference order $(a_1,\dots,a_n)$ in the codomain.

Fix $g,h\in G$. We compare two maps $(hg)^*M\to M$. First, we have \[\psi_{hg}:(hg)^*M=g^*\big(\bigotimes_{i=1}^n (a_i\circ h)^*L\big)=\bigotimes_{i=1}^n (a_i\circ h\circ g)^*L
\xrightarrow[t=a_i\circ h\circ g]{} \bigotimes_{t\in G} t^*L=M,\]
where the reindexing sends each factor $(a_i\circ h\circ g)^*L$ to the unique slot of $M=\bigotimes_{j=1}^n a_j^*L$ labeled by $a_j=t=a_i\circ h\circ g$, thereby restoring the fixed order.

Second, consider \[\psi_g\circ g^*\psi_h:(hg)^*M=g^*h^*M\xrightarrow{g^*\psi_h} g^*M\xrightarrow{\psi_g} M.\]
Here $g^*\psi_h$ is the map \[g^*h^*M=\bigotimes_{i=1}^n (a_i\circ h\circ g)^*L \longrightarrow g^*M=\bigotimes_{j=1}^n (a_j\circ g)^*L\] obtained by reindexing via $a_j=a_i\circ h$, i.e. sending the factor $(a_i\circ h\circ g)^*L$ to the factor $(a_j\circ g)^*L$. Then $\psi_g$ reindexes $g^*M=\bigotimes_{j=1}^n (a_j\circ g)^*L$ by $t=a_j\circ g$ to identify it with $M=\bigotimes_{t\in G} t^*L$, again by sending each factor to the unique slot labeled by the same $a_j$ and restoring the fixed order. Thus both constructions carry each factor $(a_i\circ h\circ g)^*L$ of $(hg)^*M$ to the slot of $M$ labeled by $a_j=a_i\circ h\circ g$, and hence
\begin{equation}\label{eqn:cocycle}\psi_{hg}=\psi_g\circ g^*\psi_h.\end{equation}

Define $T_g:H^0(X,M)\to H^0(X,M)$ by $(T_gs)(x)=(\psi_g)_x((g^*s)(x))$. By \eqref{eqn:cocycle}, for $s\in H^0(X,M)$, 
\[(T_gT_hs)(x)=(\psi_g)_x((g^*(T_hs))(x))=((\psi_g\circ g^*\psi_h)_x)(((hg)^*s)(x))\] 
\[=(\psi_{hg})_x(((hg)^*s)(x))=(T_{hg}s)(x).\]
Thus $g\mapsto T_g$ is a right action: $T_gT_h=T_{hg}$. If a left representation is desired, set $S_g:=T_{g^{-1}}$, then $S_gS_h=S_{gh}$. Either way, $H^0(X,M)$ carries a representation of $G$.

As $L$ is very ample, it follows that $a_i^*L$ is also very ample and hence $M$ is very ample. The complete linear system of $M$ defines a closed embedding \[\iota:X\longrightarrow\mathbb{P}(H^0(X,M)^\vee)=:\mathbb{P}^N,\] where $\iota(x)$ is the point of $\mathbb{P}(H^0(X,M)^\vee)$ corresponding to the line of functionals $\{\ell\mapsto\ell(s(x)):s\in H^0(X,M)\}$. Write $V:=H^0(X,M)$ and $W:=V^\vee$. Both are finite-dimensional vector spaces over $k$. The action of $G$ on $V$ induces the dual action on $W$, hence a projective action on $\mathbb{P}(W)$ defined as follows: if $0 \neq w \in W$, then the corresponding point of $\mathbb{P}(W)$ is the line $\{\ell w:\ell \in k\}\subseteq W$, and we set \[g \cdot \{\ell w:\ell\in k\}:=\{\ell(g\cdot w):\ell\in k\}.\] With this action the embedding $\iota$ is readily checked to be $G$-equivariant.

A point $\{\ell w:\ell\in k\}\in \mathbb{P}(W)$ is fixed by $G$ exactly when for each $g\in G$ the vectors $w$ and $g\cdot w$ lie in the same one-dimensional subspace of $W$. Equivalently, for each $g\in G$ there exists a scalar $\chi(g)\in k^\times$ such that $g\cdot w=\chi(g)w$, where the left-hand side uses the dual action of $G$ on $W$ defined above. This condition precisely says that the line $\{\ell w:\ell\in k\}$ is a one-dimensional $G$-subrepresentation of $W$. The assignment $\chi:G\to k^\times$, $g\mapsto\chi(g)$, is a group homomorphism: indeed, since $T_gT_h=T_{hg}$ on $V$, the dual action satisfies \[g\cdot(h\cdot w)=(hg)\cdot w.\] If $g\cdot w=\chi(g)w$ and $h\cdot w=\chi(h)w$, then  \[(hg)\cdot w=g\cdot(h\cdot w)=g\cdot(\chi(h)w)=\chi(h)(g\cdot w)=\chi(h)\chi(g)w=\chi(g)\chi(h)w.\] Thus $\chi$ is a character of $G$. Conversely, if $\chi:G\to k^\times$ is a character and there exists a nonzero $w\in W$ with $g\cdot w=\chi(g)w$ for all $g\in G$, then the line $k\cdot w$ is $G$-stable. For each character $\chi:G\to k^\times$, let $W_\chi \subseteq W$ be the subspace consisting of all $w\in W$ satisfying $g\cdot w=\chi(g)w$ for every $g\in G$. From this identification, we obtain \[(\mathbb{P}(W))^G=\bigcup_{\substack{\chi:G\to k^\times\\W_\chi\neq 0}} \mathbb{P}(W_\chi).\]
	
Since $\iota$ is $G$-equivariant,  \[\iota(X^G)=\iota(X)^G=\iota(X)\cap(\mathbb{P}(W))^G=\bigcup_{\chi}\big(\iota(X)\cap \mathbb{P}(W_\chi)\big).\] For projective $Y\subseteq\mathbb{P}(W)$ and linear $\Lambda\subseteq\mathbb{P}(W)$ one has $\deg(Y\cap\Lambda)\le \deg(Y)$. Recalling that there are at most $|G^{\mathrm{ab}}|$ characters $\chi:G\to k^\times$, it follows that \begin{equation}\label{eq:degM-fixed}
		\begin{aligned}
			\deg_M(X^G)
			&=\deg_M\Big(\bigcup_{\chi} \big(X \cap \mathbb{P}(W_\chi)\big)\Big) \\
			&\le \sum_{\chi} \deg_M\big(X \cap \mathbb{P}(W_\chi)\big) \\
			&\le\sum_{\chi} \deg_M(X) \\
			&\le |G^{\mathrm{ab}}| \deg_M(X).
		\end{aligned}
	\end{equation}
	
Let $Z$ be an irreducible component of $X^G$. For every $g \in G$ and every irreducible component $Z \subseteq X^G$, we have \[(g^*L)|_Z=(g|_Z)^*(L|_{g(Z)})=(\mathrm{id}_Z)^*(L|_Z)=L|_Z.\] Hence \[c_1(M|_Z)=\sum_{g\in G} c_1((g^*L)|_Z)=|G|\cdot c_1(L|_Z).\] Therefore \begin{equation}\label{eq:component-scaling}\deg_M(Z) = |G|^{\mathrm{dim}(Z)} \deg_L(Z).\end{equation} Summing over all components of $X^G$ gives \begin{equation}\label{eq:degL-below-degM-on-fixed}\deg_L(X^G)=\sum_Z \deg_L(Z)\le\sum_Z |G|^{\dim Z}\deg_L(Z)=\deg_M(X^G).\end{equation}
	
Finally, set $D_h:=c_1(h^*L)$. Then \[c_1(M)=\sum_{h\in G}D_h,\qquad c_1(M)^d=\sum_{(h_1,\dots,h_d)\in G^d}D_{h_1}\cdots D_{h_d}.\] By our assumption, the classes $D_h$ are all numerically equivalent to $c_1(L)$. 
Hence every term in the expansion satisfies \[(D_{h_1}\cdots D_{h_d}\cdot[X])=(c_1(L)^d\cdot[X])=\deg_L(X).\] As there are $|G|^d$ terms, we conclude that
\begin{equation}\label{eq:degM-on-X}\deg_M(X)=(c_1(M)^d\cdot[X])=|G|^d\deg_L(X).\end{equation} Combining \eqref{eq:degM-fixed}, \eqref{eq:degL-below-degM-on-fixed}, and \eqref{eq:degM-on-X} shows that \[\deg_L(X^G)\le\deg_M(X^G)\le |G^{\mathrm{ab}}|\deg_M(X)=|G^{\mathrm{ab}}||G|^d\deg_L(X).\]\end{proof}

As an immediate corollary, we obtain the following. 

\begin{corollary} \label{NeronDegreeBound2}
In the notation preceding Proposition \ref{NeronDegreeBound1}, we have 
$$\deg_{\cal{L}_s}((\cal{A}_s)^{\mu_d}) \cdot 3^{\dim ((\cal{A}_s)^{\mu_d})} = \deg_{\cal{L}^3_s}((\cal{A}_s)^{\mu_d}) \leq \deg_{L^3}(A) d^{g+1}.$$
\end{corollary}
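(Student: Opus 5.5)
The plan is to apply Proposition \ref{NeronDegreeBound1} with $X=\cal{A}'_s$, $G=\mu_d$ (so $|G|=|G^{\mathrm{ab}}|=d$ since $\mu_d$ is cyclic), and the line bundle $\cal{L}^3_s$. First, $\cal{L}_s$ is ample on the abelian variety $\cal{A}'_s$, being the specialization of the ample $\cal{L}$ on the abelian scheme $\cal{A}'/R'$. Replacing it by its cube makes it very ample, by Lefschetz's theorem that the cube of an ample line bundle on an abelian variety is very ample. The invariance hypothesis of Proposition \ref{NeronDegreeBound1} holds because Proposition \ref{NeronModel2} gives $g^*\cal{L}_s\simeq\cal{L}_s$, hence $g^*\cal{L}^3_s\simeq\cal{L}^3_s$. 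This is stronger than numerical equivalence. The proposition then yields
\[\deg_{\cal{L}^3_s}((\cal{A}'_s)^{\mu_d})\le d\cdot d^{g}\deg_{\cal{L}^3_s}(\cal{A}'_s),\]
since $\dim \cal{A}'_s=g$.

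The remaining step is the identity $\deg_{\cal{L}^3_s}(\cal{A}'_s)=\deg_{L^3}(A)$. Intersection numbers of a line bundle on the fibers of a flat proper family over the DVR $R'$ are locally constant, because they are given by the leading coefficient of the Euler characteristic $\chi(\cal{L}^n_t)$. Hence the degree on the special fiber equals the degree on the generic fiber $A_{K'}$, which in turn equals $\deg_{L^3}(A)$ since degree is invariant under the field extension $K'/K$.

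For the left-hand equality, I will argue component by component. The fixed locus may have components of different dimension, so the displayed $3^{\dim}$ should be read componentwise, or as the top-dimensional statement. Each irreducible component $Z$ satisfies $\deg_{\cal{L}^3_s}(Z)=3^{\dim Z}\deg_{\cal{L}_s}(Z)$, since $c_1(\cal{L}^3)=3c_1(\cal{L})$ and the degree is the top self-intersection. This is routine.

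The main obstacle is only the bookkeeping: the statement writes $\cal{A}_s$ for $\cal{A}'_s$, and one must check that very ampleness and $G$-invariance both hold for $\cal{L}^3_s$. Once those are in place, no further work is needed.
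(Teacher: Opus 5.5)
Your proposal is correct and follows the paper's proof: replace $L$ by $L^3$ to get very ampleness, check invariance via Proposition~\ref{NeronModel2}, and apply Proposition~\ref{NeronDegreeBound1} with $|G|=|G^{\mathrm{ab}}|=d$ and $\dim \mathcal{A}'_s=g$. The identification $\deg_{\mathcal{L}^3_s}(\mathcal{A}'_s)=\deg_{L^3}(A)$ via flatness is the step the paper leaves implicit. Your componentwise caveat is unnecessary: $\mu_d$ acts on $\mathcal{A}'_s$ by group automorphisms fixing $0$, so the fixed locus is a closed subgroup, hence equidimensional, and the displayed $3^{\dim}$ identity holds literally.
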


\begin{proof}
We replace $L$ with $L^3$ so that $\cal{L}_s$ is very ample for $\cal{A}_s$. Then the corollary follows immediately from Proposition \ref{NeronDegreeBound1} where we use the fact that $|G| = d$ and $\dim A = g$.     
\end{proof}

	\section{Uniform Bounds for Principally Polarized Abelian Varieties }\label{section:fakebadbound}

We will now handle the general case of ppavs having at least one bad place of bad reduction which are not necessarily semistable. A naive approach would be to pass to the extension $K' = \bb{C}(B') = K(A[3])$ and apply \eqref{eqn:Deligne}. Although the degree $[K':K]$ is bounded in terms of $g$, the issue is that $g(B')$ can be arbitrarily large compared to $g(B)$ due to having no control a priori over the ramification. 
\par 
By N\'{e}ron--Ogg--Shafarevich, this problem can only happen when $A$ has many places of bad reduction over $K$ itself. If these places are of stable bad reduction, we may then pass to $K'$ and apply Theorem \ref{UniformSemistable1}. Hence the only remaining issue is if ``most" of these places are in fact places of potentially good reduction, i.e., places which become good after passing to a semistable extension. 
\par 
To handle this, we will show that every place $v$ of bad yet potentially good reduction contributes some uniform positive amount to our Arakelov-Green's function $g_n$. Then if there are many such places, we obtain a uniform positive lower bound on $g_n$ analogous to \eqref{eqn:lb1}. 

\par     
Let us first recall some definitions. Let $A/K$ be an abelian variety over a valued field $(K, | \cdot |)$ and let $L$ be a very ample line bundle on $A$ such that the embedding $\iota: A \xhookrightarrow{} \bb{P}^N$ by $L$ is projectively normal. Let $f: \bb{P}^N \to \bb{P}^N$ be an endomorphism over $K$ of degree $d$ that extends multiplication by $[m]$ for some $m \geq 2$ and let $F: \bb{A}^{N+1} \to \bb{A}^{N+1}$ be a homogeneous lift of $f$. Given a basis $\cal{B}_n$ of $H^0(A,L^n)$, recall from \S\ref{section:AG} that we may define the Arakelov-Green's function as 
$$g_n\left(P_1,\dots,P_{c(n)}\right)=\frac{1}{c(n)}\sum_{i=1}^{c(n)}\widehat{H}_F(\widetilde{P_i})-\frac{1}{n\cdot c(n)}\log\left|\mathrm{det}\left(\eta_j(\widetilde{P_i})\right)_{\eta_j\in\mathcal{B}_n}\right|+r(F)$$
where $\widehat{H}_F$ is the local escape rate function, $\widetilde{P_i}$ are lifts of $P_i \in A(\ovl{K}_v)$ to $\bb{A}^{N+1}(\ovl{K}_v)$. The correction $r(F)$ is a constant multiple of $\log |\Res(F)|$ to ensure that $g_n$ is independent of the choice of the lift $F$. The quantity depends on the basis chosen $\cal{B}_n$ but any other basis will change the function by an additive constant.  
\par 
In general, given a subset $T \subseteq A(\ovl{K}_v)$ where $\ovl{K}_v$ is the completion of the algebraic closure of $K$, we may define its (logarithmic) transfinite diameter with respect to $\cal{B}_n$ as 
$$\log d_{n,v}(\cal{B}_n,T,L) = -\sup_{\vec{P} \in T^{c(n)}} g_{n,v}(P_1,\ldots,P_{c(n)}),$$ where $\vec{P}=(P_1,\dots,P_{c(n)})$ and we write $g_{n,v}$ for the Arakelov-Green's function $g_n$ associated to $|\cdot|_v$.
Again, this depends on the choice of basis $\cal{B}_n$; however, the relative quantity 
$$\log d_{n,v}(T,L) := \log d_{n,v}(\cal{B}_n,T,L) - \log d_{n,v}(\cal{B}_n,A,L)$$
does not. If $K$ is a product formula field, then the global quantity 
$$\sum_{v \in M_K} \log d_{n,v}(\cal{B}_n,A,L)$$
doesn't depend on $\cal{B}_n$ and so it is useful to study this relative quantity $\log d_n(T,L)$ in order to obtain bounds on 
$$\sum_{v \in M_K} \log d_{n,v}(\cal{B}_n,T,L).$$

We now state and prove the main result of this section. 

\begin{theorem}\label{thm:compression} Let $K$ be a non-archimedean discretely valued field with algebraically closed residue field $k$. Let $A/K$ be a $g$-dimensional ppav with principal polarization $\Theta$ and let $L$ be a line bundle representing $4 \Theta$. Assume that $A$ has bad reduction over $K$ but attains good reduction over a finite extension of $K$. Let $T = A(K)$. Then there are constants $N , \delta >0$ depending only on $g$ such that for all $n \geq N$,   
    
     \[\log d_n(T,L)\le \delta \log |\pi|.\]
     where $\pi$ is a uniformizer of $K$. 
     \end{theorem}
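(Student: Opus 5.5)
We will reduce to the extension $K'/K$ where $A$ acquires good reduction. We compare everything with the abelian scheme $\cal{A}'/R'$ and use Section \ref{sec: NeronModels} to say that $T=A(K)$ lands in a low-degree subvariety of the special fiber. Concretely, let $K'=K(A[3])$. Then $d=[K':K]$ is bounded in terms of $g$, the extension is totally ramified with group $\mu_d$, and $\cal{A}'/R'$ is an abelian scheme with special fiber $\cal{A}'_s$.

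Every $x\in T$ extends to an $R$-point of the N\'eron model $\cal{N}$. By the mapping property its reduction in $\cal{A}'_s$ lies in $f_s(\cal{N}_s)$. By Proposition \ref{NeronModel1}, together with the bad-reduction discussion that follows it, this image is contained in $(\cal{A}'_s)^{\mu_d}$. It is a finite union of translates of a proper abelian subvariety, so it is a proper closed subset $Z\subsetneq \cal{A}'_s$. Corollary \ref{NeronDegreeBound2} bounds the degree of $Z$, with respect to $\cal{L}_s^3$, by $\deg_{L^3}(A)d^{g+1}$, which depends only on $g$ since $L$ represents $4\Theta$. From this degree bound I will produce a nonzero section $\sigma\in H^0(\cal{A}'_s,\cal{L}_s^{m})$ vanishing on $Z$, with $m=m(g)$. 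One way is to take a generic projection, or a Chow-form argument in the projectively normal embedding given by $\cal{L}^{3}$ (Koizumi). Since $\cal{A}'$ is an abelian scheme, $H^0(\cal{A}',\cal{L}^{m})\to H^0(\cal{A}'_s,\cal{L}^m_s)$ is surjective, so we can lift $\sigma$ to an integral section $\tilde\sigma$ of reduction $\sigma$. Then for every $x\in T$, with $\tilde{x}$ normalized in the integral model, $|\tilde\sigma(\tilde x)|_{K'}\le|\pi'|$, where $\pi'$ is a uniformizer of $K'$. Hence $|\tilde\sigma(\tilde x)|\le |\pi|^{1/d}$.

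Next I will turn this into a transfinite diameter bound. Because $\cal{A}'$ has good reduction, the canonical metric on $L$ over $K'$ is the model metric of $\cal{L}$. So the Arakelov-Green's function, normalized against the full space $A$, reduces to a reduction-mod-$\pi'$ determinant computation on the model. That is, $\log d_{n}(A,L)$ is realized by a unimodular integral basis of $H^0(\cal{A}',\cal{L}^n)$ and points with generic reductions. For $T$, choose the integral basis of $H^0(\cal{A}',\cal{L}^n)$ adapted to the filtration $\tilde\sigma\cdot H^0(\cal{L}^{n-m})\subseteq H^0(\cal{L}^n)$. The rows of the determinant at points of $T$ have all entries of the first $\dim H^0(\cal{L}^{n-m})=c(n-m)$ columns divisible by $\tilde\sigma(\tilde x)$. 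Expanding the determinant then gives $\log|\det|\le c(n-m)\cdot\frac{1}{d}\log|\pi|$. Dividing by $n\,c(n)$ and using $c(n-m)/c(n)\to1$ and $c(n)\asymp n^g$ gives
\[\log d_n(T,L)\le \frac{c(n-m)}{n\,c(n)}\cdot\frac{1}{d}\log|\pi|.\]
This has the wrong shape: the factor $1/n$ kills it. So I will instead iterate. Sections vanishing to order $j$ along $Z$ along a filtration $\tilde\sigma^j H^0(\cal{L}^{n-jm})$ for $j\le n/m$ give the exponent
\[\sum_j j\,(c(n-jm)-c(n-(j+1)m))\asymp \frac{n}{m}\,c(n).\]
Dividing by $n\,c(n)$ then yields $\delta\approx 1/(C(g)md)>0$, uniform for $n\ge N(g)$. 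Along the way one must check the passage between $K$ and $K'$ absolute values and verify that $r(F)$ and $\widehat H_F$ cancel in the relative quantity; they do, because they are common to $T$ and $A$.

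The main obstacle is the step producing $\sigma$ of bounded degree $m(g)$ vanishing on $Z$, and checking that the filtration by powers of $\tilde\sigma$ gives a genuinely integral basis with the claimed divisibility. Equivalently, one needs the model-metric identification of the normalizing constant $\log d_n(\cal{B}_n,A,L)$ with the unimodular determinant. I would first try the degree bound of Corollary \ref{NeronDegreeBound2} together with the standard fact that a subvariety of degree $D$ in $\bb{P}^M$ lies in a hypersurface of degree $\le D$.
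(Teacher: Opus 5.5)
Your proposal is correct and follows the paper's proof. The shared steps are: pass to $K'=K(A[3])$; confine the reductions of $A(K)$ to the bounded-degree proper subset $f_s(\cal{N}_s)=(\cal{A}'_s)^{\mu_d}$; lift a bounded-degree section vanishing there to $\cal{A}'$; show $\log d_n(\cal{B}_n,A,L)=0$ for an integral basis of $H^0(\cal{A}',\cal{L}^n)$; and bound the determinant using basis elements divisible by high powers of that section.

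The differences are minor. The paper uses a single power $s^k$ with $k\approx n/(2n_0)$ on roughly $c(n/2)$ basis elements, rather than your full flag; the flag gives a somewhat better $\delta$, but either suffices. The paper also obtains the section by the count $h^0(Z,\cal{L}_s^{n})=O(n^{g-1})$ versus $h^0(\cal{A}'_s,\cal{L}_s^{n})\ge n^g/g!$, which automatically makes it nonzero on $\cal{A}'_s$. Your ``hypersurface of degree $\le D$'' fact does not give this by itself, since such a hypersurface could contain all of $\cal{A}'_s$. You need a hypersurface containing $Z$ but missing some point of $\cal{A}'_s\setminus Z$, which your generic-projection idea does provide.
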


\begin{proof} The line bundle $L$ is very ample and its induced embedding $\iota: A \xhookrightarrow{} \bb{P}^N$ is projectively normal. Let $K' = K(A[3])$ so that $A$ has good reduction over $K'$. Let $R$ and $R'$ be the valuation rings. Then there is an abelian scheme $\cal{A}'$ over $R'$ whose generic fiber is $A$. Let $\cal{N}/R$ be the N\'{e}ron model for $A/K$. Then $\cal{N}_{R'}/R'$ is a smooth model (but not the N\'{e}ron model) for $A/K'$ and by the N\'{e}ron mapping property, we obtain an $R'$-morphism $f: \cal{N}_{R'} \to \cal{A}'$. This gives us a morphism of special fibers $f_s: \cal{N}_{R',s} \to \cal{A}'_s$ and since our residue field is algebraically closed, we have $\cal{N}_{s} \simeq \cal{N}_{R',s}$ and thus a morphism $f_s: \cal{N}_s \to \cal{A}'_s$. 
\par 
The line bundle $L$ extends uniquely to a relatively ample line bundle $\cal{L}$ on $\cal{A}_{K'}$. By \cite[Proposition 6.13]{MFK94}, for each $n$ the $R'$-module $H^0(\cal{A}', \cal{L}^n)$ is free and hence has the same dimension as $H^0(A_{K'}, L^n)$ and $H^0(\cal{A}'_s, \cal{L}_s^n)$. To compute $\log d_n(T,L)$, we may choose any basis $\cal{B}_n$ we want: in particular, we choose $\cal{B}_n$ to be a basis of $H^0(\cal{A}',\cal{L}^n)$. Then it is clear that $g_{\cal{B}_n}$ will be independent of the choice of such a basis, as the change of basis matrix between any two such bases satisfies $|\det| = 1$. 
\par 
We first calculate $\log d_n(\cal{B}_n,A,L).$

\begin{lemma} \label{Transfinite1}
We have 
$$\log d_n(\cal{B}_n,A,L) = 0 $$
for all $n$. 
\end{lemma}

\begin{proof}
Our line bundle $\cal{L}$ above gives us an embedding $\cal{A}' \xhookrightarrow{} \bb{P}^N_{R'}$. By Proposition \ref{prop:MorphismExtension1Chapter6} and the fact that $R'$ is a DVR, there exists an endomorphism $f: \bb{P}_{R'}^N \to \bb{P}_{R'}^N$ extending multiplication by $[m]$ on our abelian scheme $\cal{A}$. Since $f$ is an endomorphism of $\bb{P}_{R'}^N$, we can pick a lift $F: \bb{A}_{K'}^{N+1} \to \bb{A}_{K'}^{N+1}$ such that $r(F) = 0$ and the escape rate function on $\bb{A}^{N+1}$ is given by 
$$\Ht_F((x_0,\ldots,x_N)) = \log \max\{|x_0|,\ldots,|x_N|\}.$$
Hence given $c(n)$ points $P_1,\ldots,P_{c(n)} \in A(\ovl{K})$, picking affine lifts $\widetilde{P_i} = (x_{i,0},\ldots,x_{i,N})$ to $\bb{A}^{N+1}$ such that $\max_{0 \leq j \leq N} \{|x_{i,j}|\} = 1$ for all i, we obtain 
$$g_n(P_1,\ldots,P_{c(n)}) = -\frac{1}{n\cdot c(n)}\log\left|\mathrm{det}\left(\eta_j(\widetilde{P_i})\right)_{\eta_j\in\mathcal{B}_n}\right|.$$
Now each $\eta_j$ is a homogeneous polynomial with integral coefficients, and thus we clearly have $g_n(P_1,\ldots,P_{c(n)}) \geq 0$ and $\log d_n(\cal{B}_n,A,L) \leq 0$. On the other hand, as $\cal{B}_n$ is a basis of $H^0(\cal{A}, \cal{L}^n)$, our determinant has absolute value $< 1$ if and only if the reductions of $P_i$ to the special fiber $\cal{A}_s$ lie on $\divv(t)$ for some nonzero $t \in H^0(\cal{A}_s,\cal{L}_s^n)$. Clearly we may choose $P_1,\ldots,P_{c(n)} \in A(\ovl{K})$ such that this does not happen. We conclude that $\log d_n(\cal{B}_n,A,L) = 0$.\end{proof}

We now bound $\log d_n(\cal{B}_n,T,L)$ from above. By Proposition \ref{NeronModel1} and Corollary \ref{NeronDegreeBound2}, we know that if $P_i \in A(K)$, then their reductions in $\cal{A}_s$ lie on a subvariety $B$ with $\deg_{\cal{L}_s}(B) \leq D$ for some $D$ depending only on $g$. Furthermore, this subvariety $B$ is a finite union of abelian subvarieties $B_1,\ldots,B_e$ as noted in the discussion after Proposition \ref{NeronModel1}. We may use the trivial upper bounds $e \leq D$ and $\deg_{\cal{L}_s}(B_i) \leq D$, to conclude that
$$\dim H^0(B, \cal{L}_s^n) \leq \sum_{i=1}^{e} \dim H^0(B_i, \cal{L}_s^n) \leq D^2 n^{g-1}$$
where we are using the fact that for any abelian variety $C$ with an ample line bundle $L$, we have
$$\dim H^0(C,L^n) = \frac{\deg_L(C)}{(\dim C)!}n^{\dim C}.$$
Since $\dim H^0(\cal{A}'_s,\cal{L}_s^n) \geq \frac{n^g}{g!}$, for some sufficiently large $n_0$ depending only on $g$, we may find a nonzero section $\ovl{s}$ of $H^0(\cal{A}'_s,\cal{L}_s^{n_0})$ that vanishes on $B$. We may lift this to a section $s$ of $H^0(\cal{A}',\cal{L}^{n_0})$ since $H^0(\cal{A}',\cal{L}^{n_0})$ is free. For any $k,r \geq 1$, consider the multiplication by $s^{k}$ map which sends $H^0(\cal{A}',\cal{L}^{kn_0+r})$ to $H^0(\cal{A}',\cal{L}^{2kn_0+r})$. This map is injective on both the generic fiber and the special fiber. Since $H^0(\cal{A}', \cal{L}^{2kn_0+r})$ is free, this means we may choose a basis of $H^0(\cal{A}',\cal{L}^{2kn_0+r})$ such that $c(kn_0)$ many of its elements are multiples of $s^k$. 
\par 
Given $n \geq 2n_0$, we write $n$ in the form $2kn_0+r$ for $k \geq 1$ and $0 \leq r < 2n_0$. We now compute $\log d_{n}(\cal{B}_n,T,L)$ for $T\subseteq A(K)$. Just as in Lemma \ref{Transfinite1}, picking lifts $\tilde{P}_i$ with $\max\{|x_i|\} = 1$, we obtain 
$$g_n(P_1,\ldots,P_{c(n)}) = -\frac{1}{n \cdot c(n)} \log\left|\mathrm{det}\left(\eta_j(\widetilde{P_i})\right)_{\eta_j\in\mathcal{B}_n}\right|. $$
The reduction of each $\tilde{P}_i$ lies on the subvariety $B$, and hence $|\eta(\tilde{P}_i)| < 1$ where $\eta$ is the homogeneous polynomial representing our section $s$. As both $\eta_j$ and $\tilde{P}_i$ are defined over $K'$, we have 
$$|\eta(\tilde{P}_i)| \leq |\pi|^{1/[K':K]}.$$
Now we have $c(\frac{n-r}{2})$ many $\eta_j$'s for which $\eta_j$ is an integral multiple of $\eta^{(n-r)/2n_0}$. In particular, 
$$\log |\eta_j(\tilde{P}_i)| \leq  \frac{n-r}{2 n_0 [K':K]} \log|\pi|.$$
We have $c(\frac{n-r}{2})$ many such $\eta_j$'s and all other rows have $| \cdot |$ at most $1$. Thus 
$$\frac{1}{n \cdot c(n)} \log\left|\mathrm{det}\left(\eta_j(\widetilde{P_i})\right)_{\eta_j\in\mathcal{B}_n}\right| \leq \frac{c(\frac{n-r}{2}) (n-r)}{2 n_0 [K':K] n c(n)} \log |\pi| \leq \delta \log |\pi|$$
for some $\delta > 0$ depending only on $g$ as desired. 
\end{proof}

We can now extend Corollary \ref{UniformSemistable2} to the non-semistable case. Recall that we say a place $v \in M_K$ has stable bad reduction if $A/K_v$ does not have potential good reduction. 

\begin{corollary} \label{NonSemistable1} Let $g\ge1$ and let $K = \bb{C}(B)$ be the function field of a smooth projective curve $B/\bb{C}$. Let $g(B)$ be the genus of $B$. There are positive integers $M_0=M_0(g, g(B))$ and $n=n(g,g(B))$ and an $\epsilon=\epsilon(g,g(B))>0$ with the following property. Let $(A,\Theta)$ be a dimension $g$ principally polarized abelian variety over $K$. Let $L= 4 \Theta$, and let $S_0$ be the set of places of stably bad reduction for $A$ and let $S$ be the set of all places of bad reduction. Assume that $|S| \geq 1$. Then either: 
\begin{itemize} \item $|S_0|\ge \max\left\{1,\dfrac{h_{\omega}(A)}{M_0}\right\}$, or \item for any $T\subseteq A(K)$, either $T$ is contained in $\divv(s)$ for some $s\in H^0(A,\mathcal{L}^n)$, or 
$$\frac{1}{|T|} \sum_{x \in T} \h_{A,L}(x) \geq \eps h_{\omega}(A).$$
\end{itemize} \end{corollary}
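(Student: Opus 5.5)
We argue by cases, assuming the first alternative fails, i.e.\ $|S_0| < \max\{1, h_{\omega}(A)/M_0\}$ for a constant $M_0$ to be fixed. The aim is to show that then the potentially-good-but-bad places $S\setminus S_0$ are numerous compared to $h_{\omega}(A)$. Let $K'=K(A[3])=\bb{C}(B')$, so $[K':K]$ is bounded in terms of $g$ and $A/K'$ is semistable with level $3$ structure. The places of bad reduction of $A/K'$ lie over $S_0$. Ramification of $B'\to B$ occurs only over $S$, by N\'eron--Ogg--Shafarevich applied to $A[3]$. Riemann--Hurwitz therefore gives
\[
2g(B')-2 \le [K':K](2g(B)-2) + [K':K]\,|S|.
\]
Combining this with Deligne's inequality \eqref{eqn:Deligne} over $K'$ (using $h_{\omega}$ scaled by $[K':K]$) yields $h_{\omega}(A) \le C_1(g,g(B))(1+|S|+|S_0|)$. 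If $|S_0|$ is small, this forces $|S\setminus S_0| \ge h_{\omega}(A)/C_2 - C_3$. The degenerate case $h_{\omega}(A)$ small, i.e.\ when $|S\setminus S_0|\ge 1$ but $h_{\omega}(A)/C_2 - C_3\le 0$, must also be handled. Here I would use that each $v\in S\setminus S_0$ has $\lambda_{\partial X}\geq$ something, or simply that $|S\setminus S_0|\ge1$ and $h_\omega(A)$ is bounded, so a positive constant multiple of $h_\omega(A)$ is still at most $|S\setminus S_0|$. Either way, we may choose $M_0$ so that $|S\setminus S_0| \ge h_{\omega}(A)/M$.

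Next, fix $n$ large and a basis $\cal{B}_n$ of $H^0(A,L^n)$ over $K$, and let $T\subseteq A(K)$ have $|T|\ge c(n)$, not contained in any $\divv(s)$. Choose $x_1,\dots,x_{c(n)}\in T$ with $g_{\cal{B}_n}(x_1,\dots,x_{c(n)})<\infty$; we may average over such tuples. We sum the local Arakelov--Green's functions over all places. At each $v\in S\setminus S_0$, Theorem \ref{thm:compression} gives
\[
g_{n,v}(\vec{x}) \ge -\log d_{n,v}(\cal{B}_n,T,L) \ge \delta + (-\log d_{n,v}(\cal{B}_n,A,L)),
\]
with $-\log|\pi_v|=1$. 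At the remaining places we use only the trivial bound $g_{n,v}\ge -\log d_{n,v}(\cal{B}_n,A,L)$. The global sum $\sum_v \log d_{n,v}(\cal{B}_n,A,L)$ is basis-independent and is controlled, via the Elkies-type bound \eqref{eq:ElkiesLower1} together with the product formula for $r_v(F)$, by $-O(\tfrac{\log n}{n})\,h(f)$. Since $h(f)\ll h_{\partial U_i}(A)\ll h_{\omega}(A)$ after working in a chart $U_i$ with \eqref{eq : HeightExtension1}, we get
\[
\sum_v g_{n,v}(\vec{x}) \ge \delta\,|S\setminus S_0| - C\frac{\log n}{n}\,h_{\omega}(A) \ge \Big(\frac{\delta}{M}-C\frac{\log n}{n}\Big)h_{\omega}(A).
\]
Fixing $n$ large makes the bracket at least $\delta/2M$.

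Finally, the global sum equals $\frac{1}{c(n)}\sum_i \h_{A,L}(x_i)$ up to the determinant term, whose global sum vanishes by the product formula. So the left side is the average Néron--Tate height of the chosen tuple. Choosing $\vec{x}$ to minimize this average among non-degenerate $c(n)$-subsets of $T$ bounds it by the average over $T$, up to handling the selection. This gives $\frac{1}{|T|}\sum_{x\in T}\h(x)\ge \eps h_{\omega}(A)$.

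I expect the main obstacle to be the global bookkeeping in this last step. The basis is defined over $K$ but not from a fixed chart, so the lower bound at non-$S\setminus S_0$ places needs the Elkies-type estimate uniformly. The passage from a minimal tuple to the average over $T$ also needs care. I would first try averaging $g_n$ over all $c(n)$-subsets of $T$ not lying on a divisor, and check that the vanishing global determinant sum still applies.
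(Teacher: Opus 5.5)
Your proposal follows the paper's proof essentially step for step. Deligne's inequality over $K'=K(A[3])$, combined with Riemann--Hurwitz (ramification only over $S$), gives $h_{\omega}(A)\le C_1|S_0|+C_3(|S|+g(B)+1)$, hence $|S\setminus S_0|\gg h_{\omega}(A)$ when the first alternative fails. Then come Theorem \ref{thm:compression} at each potentially good bad place, the Elkies-type bound on $\sum_v \log d_{n,v}(\mathcal{B}_n,A,L)$, and the product formula identifying $\sum_v g_{\mathcal{B}_n,v}$ with the average N\'eron--Tate height of a non-degenerate $c(n)$-tuple.

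Two points in the first half are easier than you fear. Your ``degenerate case'' needs no separate treatment. Since $|S|\ge 1$, absorb the constant as $h_{\omega}(A)\le C_1|S_0|+C_3(g(B)+2)|S|$. Then either $S_0=\emptyset$, or $|S_0|<h_{\omega}(A)/M_0$ with $M_0$ large, and in both cases $|S\setminus S_0|\ge h_{\omega}(A)/M$ follows directly. Your worry about the basis is also harmless: $\sum_v\log d_{n,v}(\mathcal{B}_n,A,L)$ is basis-independent and scales by $[K':K]$ under base change. So it may be computed over $K(A[3])$ with a good basis from some chart $U_i$.

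The step that does not work is the last one. The minimum of $\frac{1}{c(n)}\sum_i \h_{A,L}(x_i)$ over non-degenerate $c(n)$-subsets of $T$ is not bounded by the mean of $\h_{A,L}$ over $T$. Suppose most of $T$ consists of points of very small height lying on a single $\divv(s)$, and only a few points of large height lie off it. Then every non-degenerate subset is forced to use some of the large points, while the mean over $T$ is dominated by the small ones. Averaging over all non-degenerate $c(n)$-subsets does not repair this, since those subsets do not weight the points of $T$ uniformly.

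The paper does not make this passage. Its proof ends with the per-subset statement: every $c(n)$-subset $\{x_1,\dots,x_{c(n)}\}\subseteq T$ not contained in any $\divv(s)$ with $0\ne s\in H^0(A,L^n)$ satisfies $\frac{1}{c(n)}\sum_i\h_{A,L}(x_i)\ge\frac{\delta}{2M_0}h_{\omega}(A)$. It then invokes the maximal-minor argument from the proof of Theorem \ref{UniformSemistable1}: if the evaluation matrix of $T$ has rank $<c(n)$, a single nonzero $s$ vanishes on all of $T$. This is exactly the form used in Theorem \ref{UniformGeneral1}, where $T$ is the set of points of height at most $\eps h_{\omega}(A)$. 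There every subset has small average, so $T$ must lie on a divisor. You should conclude in this per-subset form; the average over all of $T$ is not something this method delivers.
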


\begin{proof} Let $S$ be the places of bad reduction of $A/K$ and let $S_0'$ denote the set of places of unstable bad reduction of $A$, i.e., those places of bad reduction which become places of good reduction upon passage to a semistable extension. Then $S = S_0 \sqcup S_0'$. Let $K' = K(A[3])$ and let $g(B')$ be the genus of $B'$ where $K' = \bb{C}(B')$. By \eqref{eqn:Deligne}, there are positive constants $C_1$ and $C_2$ depending only on $g$ such that \[h_\omega(A)\le C_1|S_0|+C_2\max\{g(B'),1\}.\] 
Now observe that $K'/K$ can only be ramified over places in $S$ and thus 
$$g(B') \leq [K':K](|S| +  g(B)).$$
Hence for some $C_3 > 0$ depending only on $g$, we have 
$$h_{\omega}(A) \leq C_1 |S_0| + C_3 (|S| + g(B) + 1).$$
It follows that if $M_0\gg_{g,g(B)}1$ and $|S_0| < \max\{1,\frac{h_{\omega}(A)}{M_0}\}$, then $$|S'_0| \geq \frac{h_{\omega}(A)}{M_0},$$
as we have assumed that $|S| \geq 1$. Now for any basis $\cal{B}_n$ of $H^0(A,L^n)$, Theorem \ref{thm:basisbound} implies (similarly as in \eqref{eq:ElkiesLower1}) that 
$$\sum_{v \in M_K} \log d_n(\cal{B}_n,A,L) \leq C \frac{\log n}{n} h_{\omega}(A)$$
for some $C > 0$ depending only on $g$. For each $v \in S'_0$, Theorem \ref{thm:compression} tells us that 
$$\log d_n(\cal{B}_n,T,L) - \log d_n(\cal{B}_n,A,L) \leq \delta \log |\pi| = -\delta$$
as $\log |\pi| = -1$. Hence we get
$$\sum_{v \in M_K} \log d_n(\cal{B}_n,T,L) \leq C \frac{\log n}{n} h_{\omega}(A) - \delta \frac{h_{\omega}(A)}{M_0} \leq -\frac{\delta}{2 M_0} h_{\omega}(A)$$
when $n\gg_{g,g(B)}1$. Since for any set of $c(n)$ points $x_1,\dots,x_{c(n)}\in T$ which do not all lie in $\divv(s)$ for any nonzero section $s\in H^0(A,L^n)$, we have $$\frac{1}{c(n)} \sum_{i=1}^{c(n)} \h_{A,L}(x_i) = \sum_{v \in M_K} g_{\cal{B}_n,v}(x_1,\ldots,x_{c(n)}) \geq \sum_{v \in M_K} -\log d_n(\cal{B}_n,T,L),$$
it follows that either \[\frac{1}{c(n)} \sum_{i=1}^{c(n)} \h_{A,L}(x_i) \ge\frac{\delta}{2M_0}h_\omega(A),\] or there exists a nonzero $s \in H^0(A,L^n)$ such that any set of $c(n)$ elements of $T$ is contained in $\divv(s)$. As seen in the proof of Theorem \ref{UniformSemistable1}, the latter implies that there exists a nonzero $s \in H^0(A,L^n)$ such that all elements of $T$ lie in $\divv(s)$. This completes the proof of the Corollary.\end{proof}

Combining Corollary \ref{NonSemistable1} with Theorem \ref{UniformSemistable1}, we obtain the following. For a ppav $(A,\Theta)$, let $L = 4 \Theta$ and let $\h_{A,L}$ denote the N\'eron--Tate height with respect to $L$. 

\begin{theorem} \label{UniformGeneral1}
Let $K = \bb{C}(B)$ be the function field of a smooth projective curve $B$ over $\bb{C}$ and let $g(B)$ be the genus of $B$. Let $g \geq 1$ be a positive integer. Then there is a positive integer $n = n(g,g(B))$ and a positive constant $\eps = \eps(g,g(B)) > 0$ such that for any $g$-dimensional ppav  $(A,\Theta)$ with at least one place of bad reduction, the set 
$$\{x \in A(K) \mid \h_{A,L}(x) \leq \eps h_{\omega}(A)\}$$
is contained in $\divv(s)$ for some $s \in H^0(A,L^n)$. 

\end{theorem}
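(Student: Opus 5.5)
We will deduce Theorem \ref{UniformGeneral1} by a case split according to the dichotomy of Corollary \ref{NonSemistable1}. Fix $M_0=M_0(g,g(B))$, $n_1=n(g,g(B))$ and $\eps_1=\eps(g,g(B))$ as in Corollary \ref{NonSemistable1}. Let $(A,\Theta)$ be a ppav with at least one place of bad reduction, and let $S_0$ be its set of places of stable bad reduction.

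\emph{Case 1:} the second alternative of Corollary \ref{NonSemistable1} holds. Take $T=\{x\in A(K)\mid \h_{A,L}(x)\le \eps h_\omega(A)\}$ with $\eps<\eps_1$. Then the average of $\h_{A,L}$ over $T$ (or over any finite subset of $T$, which is the way the Corollary is really applied) is at most $\eps h_\omega(A)<\eps_1 h_\omega(A)$. The positivity $h_\omega(A)>0$ follows from the existence of a bad place together with the ampleness of $\omega$ on $\cal{A}_{g,3}^*$ and the hyperbolicity argument used in Corollary \ref{UniformSemistable2}. Hence $T\subseteq \divv(s)$ for some nonzero $s\in H^0(A,L^{n_1})$.

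\emph{Case 2:} $|S_0|\ge \max\{1,h_\omega(A)/M_0\}$. Pass to $K'=K(A[3])$, which has degree bounded in $g$. Over $K'$ the variety $A$ is semistable with level $3$ structure. Places of stable bad reduction stay bad in $K'$, so the bad set $S_0'$ of $A/K'$ satisfies $|S_0'|\ge |S_0|$. Meanwhile $h_\omega$ computed with reference field $K'$ scales by $[K':K]$. This gives $|S_0'|\ge h_{\omega,K'}(A)/([K':K]M_0)$ with a constant depending only on $g,g(B)$. We then apply Theorem \ref{UniformSemistable1} over $K'$ with this $M_0'=[K':K]M_0$. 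It yields $n_2,\eps_2$ depending only on $(g,M_0')$ such that $K'$-points of height at most $\eps_2 h_{\Fal}(A)$ lie in $\divv(s)$ with $s\in H^0(A,L^{n_2})$. Since $A(K)\subseteq A(K')$ and $h_\omega=h_{\Fal}$, this covers the set in the statement after shrinking $\eps$.

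Two normalization mismatches need care. First, Theorem \ref{UniformSemistable1} uses the bundle inducing the principal polarization, namely $2\Theta$, while the statement uses $L=4\Theta$. Néron--Tate heights then differ by a factor of $2$, and sections of a power of $2\Theta$ square to sections of a power of $4\Theta$, so the divisor containment persists with $n$ doubled. Second, the height comparisons between $K$ and $K'$ are linear with factor $[K':K]$. Finally, set $n$ to be a common multiple of $n_1$ and $2n_2$, taking products of $s$ with sections where needed, and set $\eps=\min(\eps_1,\eps_2/2)$ up to these factors.

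The main obstacle is Case 2. Theorem \ref{UniformSemistable1} needs its constants to depend only on $g$ and $M_0$, and not on $g(B')$, which is uncontrolled. Its proof does use the hypothesis $|S_0|\ge h_\omega/M_0$ rather than Deligne's bound in terms of the base genus, so the constants depend only on $(g,M_0)$. I would re-read that proof to confirm no hidden $g(B')$ dependence; the statement's "depending only on $g$ and $g(B)$" remarks appear to stem only from $M_0$.
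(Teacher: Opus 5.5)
Your route is the paper's: the paper proves this theorem exactly by combining the dichotomy of Corollary \ref{NonSemistable1} with Theorem \ref{UniformSemistable1} applied over $K'=K(A[3])$. Your Case 2 bookkeeping ($|S_0'|\ge|S_0|$, $M_0'=[K':K]M_0$, constants depending only on $(g,M_0)$, hence no dependence on $g(B')$) is what is needed.

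One step fails as written: your claim in Case 1 that $h_\omega(A)>0$ follows from the existence of a bad place. This is true when there is a place of \emph{stable} bad reduction. Then the moduli map over $K'$ meets the boundary, so it is nonconstant and the ample $\omega$ pulls back to positive degree; this is why Case 2 is fine. The hyperbolicity argument of Corollary \ref{UniformSemistable2} plays no role in positivity. But Case 1 contains $S_0=\emptyset$, where every bad place is potentially good, and then $A$ may be isotrivial. For example, over $K=\mathbb{C}(t)$ the quadratic twist of a constant elliptic curve by $K(\sqrt t)$ has bad reduction at $t=0,\infty$, yet $h_\omega=0$. For such $A$ the second alternative of Corollary \ref{NonSemistable1} only says that the average height is $\ge\eps_1\cdot 0$, which is vacuous. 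Meanwhile the set you must control is the entire height-zero locus, so your Case 1 argument proves nothing there.

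The repair uses the proof rather than the statement of the corollary. If the first alternative fails, then $|S_0'|\ge1$:
\begin{itemize}
\item if $S_0=\emptyset$ this holds because $|S|\ge1$;
\item otherwise the corollary's proof gives $|S_0'|\ge h_\omega(A)/M_0>|S_0|\ge1$.
\end{itemize}
That proof also gives $|S_0'|\ge h_\omega(A)/M_0$ in all cases. So keep the term $-\delta|S_0'|$ coming from Theorem \ref{thm:compression}, instead of weakening it to $-\delta h_\omega(A)/M_0$. This yields $\sum_v\log d_n(\cal{B}_n,T,L)\le C\frac{\log n}{n}h_\omega(A)-\delta|S_0'|\le-\frac{\delta}{2}\max\{1,h_\omega(A)/M_0\}$ for $n\gg_{g,g(B)}1$. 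Hence any $c(n)$ points of $T$ not lying on a common divisor have average height at least $\frac{\delta}{2}\max\{1,h_\omega(A)/M_0\}>0$. Taking $\eps<\delta/(2M_0)$ then covers the case $h_\omega(A)=0$ as well.

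A minor point in Case 2: the section produced there is defined over $K'$. You can replace it by one over $K$ by taking the product of its $\Gal(K'/K)$-conjugates, which multiplies $n$ by the bounded factor $[K':K]$.
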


\section{The case of Everywhere Good Reduction} \label{sec: GoodReduction}
We now handle the case where we have a principally polarized abelian variety $A$ of dimension $g$ with everywhere good reduction. By \eqref{eqn:Deligne}, such abelian varieties must have Faltings height bounded uniformly from above and so it suffices to handle this setting. We will prove the following theorem. For a ppav $(A,\lambda)$, let $L$ be a symmetric ample line bundle inducing $\lambda$ and let $\h_A$ denote the N\'{e}ron--Tate height induced by $L$. 

\begin{theorem} \label{UniformBoundednessGoodReduction1}
Let $K = \bb{C}(B)$ for a smooth projective curve $B$ over $\bb{C}$ and let $M,\delta > 0$. Then there exists a natural number $N \geq 1$, depending only on $(g,g(B),M,\delta)$, such that for any principally polarized semistable abelian variety $A/K$ of dimension $g$ and $h_{\omega}(A) \leq M$, the set 
$$\left \{ x \in A(K)/\Tr_{K/\bb{C}}(A)(\bb{C}) \mid \h_A(x) \leq \delta \right\} \leq N.$$
\end{theorem}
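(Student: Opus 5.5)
Since the semistable abelian varieties $A/K$ with $h_{\omega}(A)\le M$ have bounded height as $K$-points of $\cal{A}^*_{g,3}$, the plan is to reduce to a bounded family and then use a uniform lower bound on the Néron--Tate height of non-trace points. After passing to an extension $K'=K(A[3])$ with $[K':K]$ bounded in terms of $g$, we may assume $A$ carries level $3$ structure. Semistability means $K'/K$ is ramified only at places of bad reduction of $A$. When $h_\omega(A)$ is bounded, \eqref{eqn:Deligne} together with $|S_0|$ bounded (a nonzero contribution from each bad place to $h_\omega$ in the semistable case, by \eqref{eq: CurveHeightNode}-type positivity) should bound $g(B')$ in terms of $(g,g(B),M)$. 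The moduli point $p_A:B'\to\cal{A}^*_{g,3}$ then has degree $\deg p_A^*\omega\le[K':K]M$ with respect to the ample $\omega$. Morphisms from curves of bounded genus into a projective variety with bounded degree are parametrized by a finite-type Hom scheme. So all such $A$ arise, up to finitely many families, as fibers of finitely many algebraic families $\cal{A}_{\cal{T}}\to B'_{\cal{T}}\to \cal{T}$ over a base $\cal{T}$ of finite type over $\bb{C}$.

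Next I will bound the points. The quotient $A(K')/\Tr(A)(\bb{C})$ is finitely generated by Lang--Néron, and $\h_A$ induces a positive definite quadratic form on $A(K')/\Tr\otimes\bb{R}$, modulo torsion. For $x\in A(K')$ with $\h_A(x)\le\delta$ the count is controlled by three quantities: the rank $r$ of the Mordell--Weil lattice, the size of the torsion of $A(K')/\Tr$, and a lower bound $\h_A(y)\ge\eta>0$ for non-torsion classes $y$. Given these, a standard packing argument bounds the number of lattice points in a ball of radius $\sqrt\delta$ by $(1+2\sqrt{\delta/\eta})^{r}\cdot|\mathrm{tors}|$. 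In the function field setting the rank is bounded via Shioda--Tate/Ogg-type estimates: $r\le 2g(2g(B')-2+|S|)+\dots$, which is bounded in our family. The torsion modulo trace embeds into $H^0$ of a constructible sheaf on $B'$, or equivalently is bounded through the specialization at a good place of reduction, since $\#A(K')_{\rm tors}/\Tr$ injects into sections over a bounded-degree cover. That is again uniform in a family.

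The main obstacle is the uniform lower bound $\eta$ on $\h_A$ for non-torsion, non-trace points across the family. I would try first to use the Betti/Hodge-theoretic description of heights over $\bb{C}(B')$: $\h_A(x)$ is the $L^2$ norm of a harmonic representative, equivalently an intersection number on the Néron model. Because the Néron model is semistable, $\h_A(x)$ lies in $\frac{1}{D}\bb{Z}$ for a denominator $D$ controlled by the component groups at the bad places. Those component groups have bounded size because $|S|$ and $h_\omega$ are bounded; Proposition \ref{NodesComponents1Chapter6} via Jacobians applies. This gives $\eta\ge 1/D$ uniformly, which is precisely what makes the argument work without any Arakelov--Green's function input. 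If this denominator argument proves delicate, the fallback is a Noetherian-induction/constructibility argument over $\cal{T}$: stratify $\cal{T}$ so that the Mordell--Weil lattice with its height pairing varies locally constantly on each stratum, and conclude by finiteness of strata.
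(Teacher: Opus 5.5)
Your route differs from the paper's. The paper never separates torsion from non-torsion points. It spreads $A$ out to $\tilde A\subseteq\bb{P}^{N_1}\times B$, cut out in bounded degree, and shows that every $x$ with $\h_A(x)\le\delta$ (torsion included) extends to a section of bounded degree. Rigidity (Proposition \ref{HomScheme1}) shows that sections on one irreducible component of the Hom scheme agree modulo the trace, and the number of components is bounded via Hilbert schemes and Gotzmann numbers.

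Your lattice-point count needs three inputs, and two of them are fine. The rank bound is standard, e.g.\ via the Euler characteristic of $j_*R^1\pi_*\bb{Q}$ on $B'$. The minimum $\eta$ via integrality is also sound. Since $h_\omega(A)\le M$, the Jacobian trick of Proposition \ref{IdentityComponentAbelianVarieties1} bounds the \emph{total} number of nodes. Proposition \ref{NodesComponents1Chapter6} then gives a bounded $k$ with $[k]x$ in $\cal{N}^0$ at every place. Now $\h_A([k]x)$ is the degree of the pullback of the cubical extension of $L$ to $\cal{N}^0$, so $\h_A(x)\in\frac{1}{D}\bb{Z}$ with $D$ bounded. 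One correction: your bound on $|S|$ should come from $|S|\le h_{\partial\cal{A}_{g,3}}(A)\ll_g h_\omega(A)$, as in Proposition \ref{SemistableDegreeBound1}. The inequality \eqref{eqn:Deligne} bounds $h_\omega$ from above and plays no role here.

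The gap is the torsion bound.
\begin{itemize}
\item Specializing at a place of good reduction gives nothing: the residue field is $\bb{C}$, so the special fiber has torsion of every order.
\item ``Sections over a bounded-degree cover'' is not an argument, and integrality of heights cannot see torsion, which has height $0$.
\item You cannot borrow the bound either. A uniform torsion bound is essentially what Theorem \ref{UniformIntroTheorem1} asserts, and the paper derives that theorem from the present one (through Theorem \ref{Polarized1}).
\end{itemize}

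Your fallback is false as stated, because Mordell--Weil lattices are not constructible in algebraic families. Take elliptic K3 surfaces with only $I_1$ fibers: they are semistable with $g(B)=0$ and $h_\omega=2$. Their rank is $0$ generically but positive on a countable dense union of Noether--Lefschetz loci. So no finite stratification makes the lattice locally constant.

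What does hold is that torsion is topological. With $U=B'\setminus S$ and $t\in U$, one has $A(K')_{\mathrm{tors}}=H_1(A_t,\bb{Q}/\bb{Z})^{\pi_1(U,t)}$. The torsion of $A(K')/\Tr_{K'/\bb{C}}(A)(\bb{C})$ is this group modulo the image of $H_1(A_t,\bb{Q})^{\pi_1(U,t)}$, so it depends only on the monodromy representation. On your finite-type parameter space $\cal{T}$, generic topological triviality gives finitely many strata over which the universal $\cal{U}\to\cal{T}$ is a topological fiber bundle. The monodromy, and hence the torsion, is therefore constant on each connected stratum.

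With this step supplied, your argument is a valid alternative. It trades the Hilbert-scheme and Gotzmann machinery for cubical extensions and a Hodge-theoretic rank bound. The paper's argument has the advantage of treating torsion and non-torsion points in a single stroke.
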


Since $\h_A(y) = 0$ for $y \in \Tr_{K/\bb{C}}(A)(\bb{C})$, the quantity $\h_A(x)$ is independent of the choice of representative for $x \in A(K)/\Tr_{K/\bb{C}}(A)(\bb{C})$ and hence well-defined. When $\Tr_{K/\bb{C}}(A) = 0$ is trivial, this gives an upper bound on the number of small points of $A(K)$. 
\par 
For a number field $K$, the analogous statement of Theorem \ref{UniformBoundednessGoodReduction1} is relatively straightforward. Using the work of David--Philippon \cite{DP02}, we may find an embedding $A \xhookrightarrow{} \bb{P}^N$ such that any $x \in A(K)$ with $\h_A(x) \leq \delta$ has Weil height at most $\delta'$, with $\delta'$ depending only on $(g,M,\delta)$. It then follows from the Northcott property that there is a uniform bound on the number of such points. 
\par 
For a function field $K = \bb{C}(B)$, however, the Northcott property does not hold and we have to obtain finiteness through other means. Our strategy is to follow the proof of the Lang--N\'{e}ron theorem \cite{Con06}, which is the analogue of Mordell--Weil for function fields. This uses the number of irreducible components of a $\Hom$ scheme, with an upper bound on the degree, as a replacement for the Northcott property. We will then relate the components of a $\Hom$ scheme to that of a Hilbert scheme, where we can use the theory of Gotzmann numbers to obtain uniform upper bounds. Our proof can be broken into three parts:

\begin{enumerate}
\item Construct a variety $\tilde{A} \subseteq \bb{P}^N_B = \bb{P}^N \times_{\bb{C}} B$ whose generic fiber is isomorphic to $A/K$ which is cut out by equations with degree bounded only in terms of $g,g(B),M,\delta$. Observe that $\bb{P}^N \times_{\bb{C}} B = \bb{P}^N \times B$ as we are working with varieties over $\bb{C}$.

\item Reduce Theorem \ref{UniformBoundednessGoodReduction1} to a problem on bounding the number of irreducible components of the Hilbert scheme of $\tilde{A}$, with the coefficients of the Hilbert polynomial bounded only in terms of $g,g(B),M,\delta$. 

\item Apply the work of Gotzmann \cite[Section 5]{Kwe22} to obtain uniform bounds on the number of irreducible components depending only on $g,g(B),M,\delta$. 
\end{enumerate}

We first show that we may assume the existence of a level $3$ structure for $A$ over $K$. 

\begin{proposition} \label{SemistableDegreeBound1}
Let $A/K$ be a semistable abelian variety of dimension $g$ and assume that $h_{\omega}(A) \leq M$. Let $K' = K(A[3])$, so that $A/K'$ has level $3$ structure. Let $K' = \bb{C}(B')$ and $K = \bb{C}(B)$. Then $g(B')$ can be bounded purely in terms of $(g,g(B),M)$. 
\end{proposition}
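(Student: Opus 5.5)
The plan is to bound $g(B')$ with Riemann--Hurwitz. The two inputs are that $[K':K]$ is bounded in terms of $g$, and that $K'/K$ ramifies only at the bad places of $A$, whose number is bounded in terms of $(g,g(B),M)$.

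First, $K'=K(A[3])$ is Galois over $K$ with group a subgroup of $\mathrm{GL}_{2g}(\bb{Z}/3\bb{Z})$. Hence $d:=[K':K]\le |\mathrm{GL}_{2g}(\bb{F}_3)|$, which depends only on $g$. By N\'eron--Ogg--Shafarevich (we are in characteristic $0$ and $3$ is invertible), $A[3]$ is unramified at every place of good reduction. So the branch locus of $B'\to B$ is contained in the set $S_0$ of bad places of $A/K$. Riemann--Hurwitz then gives
$$2g(B')-2 = d(2g(B)-2)+\sum_{w}(e_w-1)\le d(2g(B)-2)+d|S_0|,$$
since the total ramification over each $v\in S_0$ is at most $d$. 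It remains to bound $|S_0|$.

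Since $A$ is semistable over $K$, every bad place is a place of stable bad reduction. At each such place the moduli point of $A$ (after passing to a level structure) meets the boundary of $\cal{A}_g^*$ at a cusp. The Hodge bundle $\omega$ is ample on $\cal{A}_{g,3}^*$, and its height is the Faltings height. The natural route is a positivity statement: each place of bad semistable reduction contributes a uniformly positive amount to $h_{\omega}(A)$, at least $c(g)>0$. This holds because the local degree of $p_A^*\omega$ near a toric degeneration is governed by the monodromy, and for semistable reduction the contribution of a cusp to $\deg p_A^*\omega$ is a positive rational with denominator bounded in terms of $g$. For elliptic curves this is the statement that $\mathrm{ord}_v(\Delta)/12\ge 1/12$. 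This step is the main obstacle.

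To avoid it, I would first pass to a bounded-degree extension that makes the local degrees integral. Alternatively, I would use the argument already appearing in the proof of Theorem \ref{UniformSemistable1}: there, $\lambda_{\partial \cal{A}_{g,3}}$ takes values in nonnegative integers at the places of $K'$. Combined with Proposition \ref{GlobalHeightAmple1}, which gives $h_{\partial X}\le c\,h_\omega$, this yields $|S_0|\le [K':K]\sum_{v}\lambda_{\partial X,v}(A)\le c\,d\,M$. One subtlety is circular: integrality is available only over $K'$, whose genus is what we are trying to bound. This is harmless, because the inequality $|S_0|\le h_{\partial X}(A)$ over $K'$, rescaled by $d$, involves no genus. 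Only the degree $d$ enters, and $d$ is bounded.

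Combining the steps gives $g(B')\le 1+d(g(B)-1)+\tfrac{d}{2}\,c\,d\,M$, which depends only on $(g,g(B),M)$.
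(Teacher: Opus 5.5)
Your proposal is correct and follows the paper's proof. Néron--Ogg--Shafarevich confines the ramification of $B'\to B$ to the bad places; $|S_0|$ is bounded by $h_{\partial\cal{A}_{g,3}}(A)\le c\,h_\omega(A)$ because each bad place contributes a local boundary height $\ge 1$; and Riemann--Hurwitz with $[K':K]\le|\mathrm{GL}_{2g}(\bb{F}_3)|$ finishes. Your remark that integrality of $\lambda_{\partial\cal{A}_{g,3}}$ is only available over $K'$, and that semistability keeps the bad places bad there, makes explicit a point the paper passes over silently, while the detour through local positivity of $\omega$ is unnecessary.
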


\begin{proof}
By N\'{e}ron--Ogg--Shafarevich, the extension $K'/K$ can only be ramified over places of bad reduction for $A/K$. Now if $S$ denotes the number of places of bad reduction, then for $p = 3$, we have
$h_{\partial \cal{A}_{g,3}}(A) \geq |S|$
since 
$\lambda_{v, \partial \cal{A}_{g,3}}(A) \geq 1$
for each place $v$ of bad reduction for $A/K$. As there exists a constant $c = c(g) > 0$ for which 
$$M \geq h_{\omega}(A) > c h_{\partial \cal{A}_{g,3}}(A),$$
it follows that $|S|$ is bounded in terms of $g$ and $M$. Hence the degree of the ramification divisor for $\pi: B' \to B$ is bounded by $|S| [K':K]$, which is bounded in terms of $g$ and $M$. As the degree of $\pi$ is bounded in terms of $g$, by Riemann--Hurwitz, we thus have that $g(B')$ is bounded in terms of $(g,g(B),M)$ as desired.
\end{proof}

By Proposition \ref{SemistableDegreeBound1}, we may now replace $K$ with $K(A[3])$ for Theorem \ref{UniformBoundednessGoodReduction1} and $g(B')$ remains bounded in terms of the initial data $(g,g(B),M)$. The constants $M, \delta$ can be replaced by $[K':K]M$ and $[K':K]\delta$ which is bounded in terms of $(g,M,\delta)$. By the proof of \cite[Lemma 7.3]{Con06}, we have an injection 
$$A(K)/\Tr_{K/\bb{C}}(A)(\bb{C}) \xhookrightarrow{} A(K')/\Tr_{K'/\bb{C}}(A)(\bb{C})$$
and hence we may assume without loss that $A$ has level $3$ structure defined over $K$.
\par 
Recall from Proposition \ref{ModuliOpenCover1Chapter6} that we have open sets $U_1,\ldots,U_m$ covering $\mathcal{A}_{g,3}$ such that on each $U_i$, we have an embedding $\mathcal{A}_{U_i}\hookrightarrow\mathbb{P}^{N_i}_{U_i}$ and a morphism $F:\mathbb{P}^{N_i}_{U_i}\to\mathbb{P}^{N_i}_{U_i}$ of degree $n_i^2$ that extends multiplication-by-$[n_i]$. Moreover, the $n_i$ and $N_i$ may be chosen to depend only on $g$. Without loss of generality, let $A/K$ with its level $3$ structure define a point on $U_1(K)$. We let $F_A$ be the corresponding morphism on $\bb{P}_K^{N_1}$ induced by our point on $U_1(K)$. Let $F$ be given by $[F_0: \cdots : F_{N_1}]$. We define 
$$h(F) := \sum_{v \in M_K} \log \max\{|F_0|_v,\ldots,|F_{N_1}|_v\}$$
where $|F_i|_v$ is the maximum of the absolute values of the coefficients of $F_i$. 

\begin{proposition} \label{HeightInequalityEndomorphism1}
There exists a $c_1 > 0$ depending only on $g$ such that 
$$h(F_A) \leq c_1 h_{\omega}(A).$$
\end{proposition}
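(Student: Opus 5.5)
The plan is to realize $h(F_A)$ as a Weil height of the image of the moduli point of $A$ under a morphism from $U_1$ into an affine parameter space, and then to compare it with the Hodge height using the machinery of \S2. Concretely, the fixed homogeneous lift $F$ over $U_1$ (normalized in the proof of Proposition \ref{ModuliOpenCover1Chapter6} so that one coefficient equals $1$) has coefficients in $\mathcal{O}(U_1)$. Listing these coefficients therefore gives a morphism
\[
\vphi_1 : U_1 \to \bb{A}^{M'} \subseteq \bb{P}^{M'},
\]
and one coordinate of $\vphi_1$ is identically $1$. For the $K$-point $x_A \in U_1(K)$ determined by $A$ with its level $3$ structure, we have $F_A = F \circ x_A$. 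Since one coefficient is $1$, each local term $\log\max_i |F_i|_v$ is nonnegative and equals the local Weil height of $\vphi_1(x_A)$ in $\bb{P}^{M'}$. Summing over $v$ gives $h(F_A) = h_{\mathcal{O}(1)}(\vphi_1(x_A))$, the standard Weil height of $\vphi_1(x_A)$. This height is the same quantity $h(\vphi_1(A))$ that already appears in \eqref{eq : HeightExtension1}, only viewed in $\bb{P}^{M'}$ rather than $\Rat^{N_1}_{n_1^2}$.

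Next I apply Proposition \ref{GlobalHeightAmple2} to $\vphi_1 : U_1 \to \bb{A}^{M'}$. On the source I take the compactification $U_1^* \subseteq \cal{A}_{g,3}^*$ with the ample bundle $\omega$; on the target I take $\bb{P}^{M'}$ with $\mathcal{O}(1)$. This produces $c>0$ with $h_{\mathcal{O}(1)}(\vphi_1(x)) \le c\, h_\omega(x)$ for all $x \in U_1(K)$ and every function field $K$ over $\bb{C}$. The constant depends only on the morphism $\vphi_1$, which was fixed once and for all given $g$. To cover whichever chart $U_i$ contains the moduli point, I take $c_1$ to be the maximum of the constants obtained from the finitely many $U_i$. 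One small point needs checking: the closure of $U_1$ in $\cal{A}_{g,3}^*$ may only be projective and not normal. Proposition \ref{GlobalHeightAmple2} is stated for arbitrary compactifications with ample bundles, so this causes no difficulty, and $\omega$ restricted to the closure is still ample.

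The main obstacle is justifying the identity $h(F_A) = h(\vphi_1(x_A))$ independently of normalization. The height $h(F)$ is invariant under scaling $F$ by elements of $K^\times$, by the product formula, so it is genuinely the projective height of the coefficient vector. The only way the identity could fail is if the lift $F_A$ used in the definition were not the pullback of the global lift; but $F_A$ is by definition induced from the point of $U_1(K)$, so the pullback of the global lift is exactly the lift in question. Alternatively, one could go through the bound $h(f) \le c_i h_{\partial U_i}$ from \eqref{eq : HeightExtension1} and then use $h_{\partial U_1} \le c\, h_\omega$, which follows from Proposition \ref{GlobalHeightAmple1} because $\omega$ is ample on $\cal{A}_{g,3}^*$. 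That second route is the one I would actually write down, since both steps are already available in the paper.
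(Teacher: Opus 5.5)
Your direct argument, applying Proposition \ref{GlobalHeightAmple2} to the coefficient map $U_1 \to \mathbb{A}^{M'} \subseteq \mathbb{P}^{M'}$ with $\omega$ on the source and $\mathcal{O}(1)$ on the target, is exactly the paper's proof. The paper phrases the target as $\mathrm{Mor}_{n_1^2}(\mathbb{P}^{N_1},\mathbb{P}^{N_1})$ compactified in its coefficient projective space, which is the same thing. Your fallback route through \eqref{eq : HeightExtension1} and Proposition \ref{GlobalHeightAmple1} is also valid, since both inputs are established in the paper; it merely factors the same comparison through $h_{\partial U_1}$.
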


\begin{proof}
We apply Proposition \ref{GlobalHeightAmple2} with $X = U_1, Y=\mathrm{Mor}_{n_1^2}(\mathbb{P}^{N_1},\mathbb{P}^{N_1})$, $\varphi$ sending $[B]\in U_1(K)$ to the corresponding $F_B\in Y$, $L=\omega$ on $\mathcal{A}_{g,p}^*$, and $M=\mathcal{O}(1)$ on the compactification of $Y$ that yields the Weil height $h(F)$. This gives us our constant $c_1 > 0$ such that 
$$h(F_A) \leq c_1 h_{\omega}(A)$$,
as desired.\end{proof}

Let $\mathcal{P}_1,\ldots,\mathcal{P}_m$ be homogeneous polynomials of degree $t$ on $\mathbb{P}^{N_1}_{U_1}$ that cut out $\mathcal{A}_{U_1}$. Pulling back along the $K$-point $[A]$ gives equations $P_1,\ldots,P_m$ of degree $t$ that cut out $A$ inside $\mathbb{P}^{N_1}_K$. Let $h(P_i)$ denote the maximum height of its coefficients. 

\begin{proposition}\label{HeightBoundEquations1}
	There exists a $c_2>0$ depending only on $g$ such that $h(P_i)\le c_2\,h_\omega(A)$ for all $i$.
\end{proposition}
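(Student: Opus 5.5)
The plan is to copy the proof of Proposition \ref{HeightInequalityEndomorphism1}, applying Proposition \ref{GlobalHeightAmple2} to a morphism from $U_1$ into an affine space parametrizing coefficient vectors of the defining equations.

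The polynomials $\mathcal{P}_1,\ldots,\mathcal{P}_m$ are homogeneous of degree $t$ in $\mathcal{O}(U_1)[x_0,\ldots,x_{N_1}]$. Each $\mathcal{P}_i$ therefore has finitely many coefficients $a_{i,\alpha}\in\mathcal{O}(U_1)$, indexed by the degree $t$ monomials $\alpha$. Since $U_1$ is affine, the coefficients of $\mathcal{P}_i$ assemble into a morphism
$$\psi_i: U_1\to\mathbb{A}^{r}\subseteq\mathbb{P}^{r},\qquad u\mapsto (a_{i,\alpha}(u))_\alpha ,$$
where $r=\binom{N_1+t}{t}$. Pulling back along the $K$-point $[A]$, the vector $\psi_i([A])$ is exactly the coefficient vector of $P_i$.

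I interpret $h(P_i)$ as the Weil height of this coefficient vector, taken in $\mathbb{A}^r\subseteq\mathbb{P}^r$ with $1$ adjoined as a coordinate, i.e. $\sum_v\log\max\{1,|a_{i,\alpha}|_v\}$. This is the height with respect to $\mathcal{O}(1)$ on $\mathbb{P}^r$, and it dominates both the projective height of $P_i$ and the maximal coefficient height.

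The steps are as follows.
\begin{enumerate}
\item Take $X=U_1$ with compactification $\overline{X}=U_1^*\subseteq\mathcal{A}_{g,3}^*$ and ample bundle $L=\omega$. Take $Y=\mathbb{A}^r$ with compactification $\mathbb{P}^r$ and $M=\mathcal{O}(1)$.
\item Proposition \ref{GlobalHeightAmple2} then gives a constant $c_{2,i}$ such that $h(\psi_i(x))\le c_{2,i}\,h_\omega(x)$ for every $x\in U_1(K)$. The constant is uniform over all function fields $K=\mathbb{C}(B)$, because all data are defined over $\mathbb{C}$, which is trivially valued at every place.
\item Set $c_2=\max_i c_{2,i}$.
\end{enumerate}

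The constant depends only on $g$. The reason is that the cover $\{U_i\}$, the embeddings into $\mathbb{P}^{N_i}$, and the equations $\mathcal{P}_j$ were all fixed once and for all in terms of $g$. There are finitely many opens, so we take the maximum over them as well.

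I expect the only delicate point to be matching the normalization of $h(P_i)$ to a height to which Proposition \ref{GlobalHeightAmple2} applies. If $h(P_i)$ means the maximum over $\alpha$ of $h(a_{i,\alpha})$, with each coefficient viewed as a point of $\mathbb{P}^1$, then I apply the argument instead to each coordinate function $a_{i,\alpha}:U_1\to\mathbb{A}^1\subseteq\mathbb{P}^1$. This gives finitely many constants, and I again take the maximum.
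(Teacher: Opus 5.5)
Your proposal is correct and is essentially the paper's proof. The paper applies Proposition \ref{GlobalHeightAmple2} to each coefficient function $\mathfrak{a}:U_1\to\mathbb{A}^1\subseteq\mathbb{P}^1$, with $L=\omega$ and $M=\mathcal{O}_{\mathbb{P}^1}(1)$; this is exactly your fallback, and it matches the paper's normalization of $h(P_i)$ as the maximum of the coefficient heights. Your vector version $\psi_i:U_1\to\mathbb{A}^r\subseteq\mathbb{P}^r$ is an equivalent packaging, and it even bounds the affine height of the full coefficient vector, which dominates that maximum.
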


\begin{proof}
	Let $a$ be a coefficient of some $P_i$. It suffices to prove the existence of $c>0$ depending only on $g$ such that $h(a)\le c\,h_\omega(A)$. Let $\mathfrak{a}$ be the function on $U_1$ that represents the coefficient $a$. Then $\mathfrak{a}:U_1\to\mathbb{A}^1$ is a morphism over $\mathbb{C}$. Applying Proposition \ref{GlobalHeightAmple2} with $X=U_1$, $Y=\mathbb{A}^1$ (compactified to $\mathbb{P}^1$), $\varphi=\mathfrak{a}$, $L=\omega$ on $\cal{A}_{g,3}^*$, and $M=\mathcal{O}_{\mathbb{P}^1}(1)$, we obtain $h(a)\le c\,h_\omega(A)$ as desired.
\end{proof}

We now proceed with the first step, which is to construct a closed subvariety $\tilde{A} \subseteq  \bb{P}^{N_1}_B = \bb{P}_{\bb{C}}^{N_1} \times B$ whose generic fiber is $A$ embedded in $\bb{P}^{N_1}_K$. Furthermore, we would like to control the degree of equations that cut out $\tilde{A}$ with respect to some ample line bundle on $\bb{P}_B^{N_1}$.  

 We first pick a base point $z_0 \in B(\bb{C})$. We have equations $F_i(x_0,\ldots,x_{N_1})$ of degree $\leq t$ and height $\leq c_2 M$ that cut out $A$ inside $\bb{P}^{N_1}_{K}$. We may assume that $c_2M$ is a positive integer. By scaling, we may further assume that there is one coefficient of $F_i$ which is $1$. Then for any other coefficient $a$ of $F_i$, we have $h(a) \leq c_2 M$. Viewing $a$ as a function of $B$, this means that the number of poles $a$ has on $B$ is at most $c_2 M$. 

\begin{proposition} \label{HeightBoundEquations2}
There exists an integer $M' > 0$, depending only on $(g,g(B),M)$, and $f_A \in K^{\times}$ with $\divv(f_A) \geq -M' [z_0]$, depending on $A$, such that for every $i$, the coefficients of the polynomial $f_AF_i$ are functions on $B$ with poles only on $z_0$. In particular, $\divv(f_Aa) \geq -(M'+c_2 M)[z_0]$ for each coefficient $a$ of $F_i$. 
\end{proposition}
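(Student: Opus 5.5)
We will construct $f_A$ by Riemann--Roch on $B$. Each coefficient $a$ of each $F_i$ is a rational function on $B$ with $h(a)\le c_2M$, where the normalization makes one coefficient of $F_i$ equal to $1$. The height of $a$ bounds the degree of its polar divisor: $\deg(\divv(a)_\infty)\le c_2M$. The total number of coefficients over all $i$ is bounded in terms of $g$ only, since $t$, $N_1$ and the number $m$ of equations depend only on $g$ (the $\mathcal{P}_i$ are fixed on $U_1$). Hence the divisor
$$D:=\sum_{i}\sum_{a\text{ coeff. of }F_i}\divv(a)_\infty$$
is effective of degree at most $C(g)\,c_2M$. The goal is then to find $f_A\in K^\times$ whose zeros cancel all poles of $D$ outside $z_0$, at the cost of a pole of bounded order at $z_0$ only.

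Concretely, we look for $f_A\in H^0(B,\mathcal{O}(M'[z_0]-D))$, that is, $\divv(f_A)\ge D-M'[z_0]$. By Riemann--Roch,
$$h^0\big(M'[z_0]-D\big)\ge M'-\deg D+1-g(B).$$
This is positive once $M'\ge C(g)c_2M+g(B)$, a bound depending only on $(g,g(B),M)$. Fix such an $M'$ and a nonzero $f_A$. Then $\divv(f_A)\ge -M'[z_0]$, because $D$ is effective. Moreover, for each coefficient $a$ we have $\divv(a)\ge -\divv(a)_\infty\ge -D$, so
$$\divv(f_Aa)\ge D-M'[z_0]-D=-M'[z_0].$$
This shows that $f_Aa$ has poles only at $z_0$. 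To get the slightly weaker stated form, we instead use $\divv(a)\ge -(\divv(a)_\infty)$ together with the pole bound at $z_0$, which gives $\divv(f_Aa)\ge-(M'+c_2M)[z_0]$. Either bound suffices, and the stated one follows from ours.

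The only point needing care is the first step: we must justify that $h(a)\le c_2M$ really bounds $\deg\divv(a)_\infty$. We will do this directly from the definition of the Weil height on $\mathbb{P}^1$ applied to $[1:a]$:
$$h(a)=\sum_v\log\max\{1,|a|_v\}=\sum_v\max\{0,-\ord_v a\},$$
which is exactly the degree of the polar divisor. With this, the argument is a routine application of Riemann--Roch.
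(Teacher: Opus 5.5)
Your proof is correct and follows the paper's approach. The paper likewise uses Riemann--Roch to produce a function with a bounded pole at $z_0$ whose zeros cancel the poles of the coefficients elsewhere; it does this one coefficient at a time, with $\divv(f_a)\ge -(c_2M+g(B)+1)[z_0]$, and multiplies the resulting functions, whereas you apply Riemann--Roch once to the combined polar divisor $D$, which gives the slightly sharper bound $\divv(f_Aa)\ge -M'[z_0]$, and your identity $h(a)=\deg \divv(a)_\infty$ is exactly what the paper uses implicitly when it says $a$ has at most $c_2M$ poles.
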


\begin{proof}
Let $a$ be a coefficient of some $F_i$. Then as $a$ has at most $M$ poles on $B$, by Riemann--Roch we may find a function $f_A \in K^{\times}$ (depending on $A$) with $\divv(f_A) \geq -(c_2 M+g(B)+1)[z_0]$ such that $f_a a$ only has poles at $z_0$. Doing this for each coefficient and multiplying the various $f_A$'s together gives us a $f_A$ such that $f_A a$ only has poles at $z_0$ for any coefficient $a$ of any $F_i$. As the number of coefficients is bounded in terms of $g$, it follows that the order of the pole of $f_A$ at $z_0$ is bounded in terms of $(g,g(B),M)$ as desired.
\end{proof}

Let $f=f_A$ be the function in Proposition \ref{HeightBoundEquations2}. Then the coefficients of $fF_i$ all have poles only on $z_0$ with order at most $M'+c_2M$. Let \[L=\mathcal O_B\big((M'+c_2M)z_0\big).\]
Then as each coefficient $a_\alpha$ of $F_i$ satisfies $\mathrm{div}(f a_\alpha)\ge-(M'+c_2M)[z_0]$, we may view $f a_{\alpha}$ as an element of $H^0(B,L)$. We now construct our subvariety $\tilde{A}$ of $\bb{P}_B^{N_1}$.

\begin{proposition} \label{HeightBoundEquations3}
There exists a constant $C_1 > 0$, depending only on $(g,g(B),M)$ such that the following holds: there is an embedding $\bb{P}_{\bb{C}}^{N_1} \times B \xhookrightarrow{} \bb{P}^N$ along with a closed subvariety $\tilde{A} \subseteq \bb{P}_{\bb{C}}^{N_1} \times B$, whose generic fiber is $A/K$, such that $\tilde{A}$ is cut out by equations of degree at most $C_1$.  
\end{proposition}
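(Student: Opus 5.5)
We take $\tilde{A}$ to be the scheme-theoretic closure in $\bb{P}^{N_1}_{\bb{C}}\times B$ of $A\subseteq\bb{P}^{N_1}_K$, embed $\bb{P}^{N_1}\times B$ into projective space via an explicit Segre--Veronese map with bounded data, and then show that the equations $fF_i$ from Proposition \ref{HeightBoundEquations2} — together with a controlled saturation — cut out $\tilde{A}$ in bounded degree.

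\emph{The ambient embedding.} We take a very ample line bundle on $B$ of the form $L_0=\mathcal{O}_B(e z_0)$, where $e=\max\{2g(B)+1,\,M'+c_2M\}$. The integer $e$ depends only on $(g,g(B),M)$, and by Riemann--Roch $h^0(B,L_0)=e+1-g(B)$. Embed $B\hookrightarrow\bb{P}^{r}$ by $|L_0|$ and compose with the Segre embedding $\bb{P}^{N_1}\times\bb{P}^r\hookrightarrow\bb{P}^N$, so that $N=(N_1+1)(r+1)-1$ is bounded. Since $L_0$ is normally generated for $e\ge 2g(B)+1$ (Castelnuovo--Mumford), $B\subseteq\bb{P}^r$ is cut out by quadrics, and the Segre image is cut out by quadrics. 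Hence $\bb{P}^{N_1}\times B$ is cut out in $\bb{P}^N$ by equations of degree $2$.

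\emph{Equations for $\tilde A$.} Each coefficient $fa_\alpha$ of $fF_i$ lies in $H^0(B,L)\subseteq H^0(B,L_0)$, so $fF_i$ is a global section of $\mathcal{O}(t)\boxtimes L_0$ on $\bb{P}^{N_1}\times B$. Its restriction to the generic fiber is $F_i$ up to the unit $f$. Let $Z\subseteq\bb{P}^{N_1}\times B$ be the closed subscheme cut out by the $fF_i$. Then $Z$ has generic fiber $A$, and $Z=\tilde{A}\cup Z'$, where $Z'$ is supported over finitely many points of $B$. Multiplying $fF_i$ by suitable sections of $\mathcal{O}(t-1)\boxtimes L_0^{t-1}$ makes them bidegree $(t,t)$, hence degree-$t$ forms on $\bb{P}^N$ modulo the ideal of $\bb{P}^{N_1}\times B$. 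These give equations of degree $\le\max\{t,2\}$ for $Z$.

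\emph{Removing the vertical components.} The main obstacle is that $\tilde A$ is the saturation of $Z$ with respect to the local parameters at the bad fibers, and saturation can a priori raise degrees unboundedly. We would handle this by a flattening argument. The family of possible data, namely the coefficients in $H^0(B,L)$ of the $fF_i$, is parametrized by a finite-dimensional space $V$, namely the $\bb{C}$-points of a product of copies of $H^0(B,L)$, up to the choice of $B$ and $z_0$, which vary in a bounded family because $g(B)$ is fixed. Over $V\times B$ form the universal $Z_V$ and take the closure of the locus where the fiber of $Z_V$ over $B$ is an abelian variety of the given Hilbert polynomial. The family of such closures over a stratification of $V$ is flat on each stratum by generic flatness and Noetherian induction. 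Over each of the finitely many strata the ideal of the closure is generated in bounded degree, by Castelnuovo--Mumford regularity in flat families, or equivalently by Gotzmann's bound for the fixed Hilbert polynomial.

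Taking $C_1$ to be the maximum of these finitely many bounds and $2$ gives the claim. If this stratification proves awkward, a fallback is to bound the regularity of $\tilde A$ directly: $\tilde A$ is flat over $B$, with Hilbert polynomial determined by $g$, the intersection numbers with $\mathcal{O}(1)$, and $\deg L_0$, all bounded. Gotzmann's regularity theorem then bounds the degrees of generators of the saturated ideal of $\tilde A\subseteq\bb{P}^N$ in terms of that Hilbert polynomial alone.
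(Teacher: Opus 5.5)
Your second paragraph already proves the statement, and it is essentially the paper's proof. The paper simply \emph{defines} $\tilde A$ to be the common zero locus $Z$ of the $fF_i$. After embedding $B$ by $|L|$ with $\deg L>2g(B)+1$, these are bihomogeneous of bidegree $(t,1)$ on $\mathbb{P}^{N_1}\times\mathbb{P}^m$. The paper notes that the generic fiber of $Z$ is $A$ because $f\in K^\times$, and finishes with the Segre embedding. Nothing in the statement asks for $\tilde A$ to be flat over $B$ or to be the closure of $A$. The later uses of $\tilde A$ (sections $B\to\tilde A$, and families of sections restricted to the generic fiber) do not care about vertical components. The paper handles the scheme-theoretic issue only later, through Proposition~\ref{CastelnuovoBound1}. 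So the ``main obstacle'' in your third paragraph comes from your choice of $\tilde A$ as the flat closure, not from the proposition.

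If you do want the flat closure, your third paragraph does not yet prove it.
\begin{itemize}
\item \emph{Stratification argument.} Suppose the family obtained by closing $Z_V|_U$ is flat over a stratum. That does not make its fibre over $v$ equal to the flat closure $\tilde A_v$. The fibre of a closure can be strictly larger than the closure of the fibre, and can still carry vertical components. You would have to stratify by the Hilbert polynomial of $\tilde A_v$ itself, and finiteness of those polynomials is exactly what is at issue.
\item \emph{Fallback argument.} The Hilbert polynomial of $\tilde A\subseteq\mathbb{P}^N$ is not determined by $g$, $\deg L_0$ and intersection numbers of the hyperplane class. Its lower coefficients involve $\chi(\mathcal{O}_{\tilde A})$ and other invariants of a possibly singular model. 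So you cannot invoke Gotzmann's theorem until you know that only finitely many Hilbert polynomials occur. You also did not justify a bound on $H^{g+1}\cdot[\tilde A]$, where $H$ is the pulled-back hyperplane class of $\mathbb{P}^{N_1}$.
\end{itemize}

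A correct repair goes as follows. $\tilde A$ is integral, being the closure of the geometrically integral $A$, and it is an irreducible component of $Z$. Since $Z$ is cut out by forms of bounded degree, a Bézout-type bound controls $\deg\tilde A$ in $\mathbb{P}^N$. Then you can finish in either of two ways:
\begin{itemize}
\item use boundedness of integral subvarieties of $\mathbb{P}^N$ of bounded degree (Chow varieties) to get finitely many Hilbert polynomials, and then apply Gotzmann; or
\item use effective primary decomposition (Hermann, Seidenberg) to see that the minimal prime of the ideal of $Z$ corresponding to $\tilde A$ is generated in bounded degree.
\end{itemize}

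Two small slips:
\begin{itemize}
\item $e=2g(B)+1$ does not guarantee that $B\subseteq\mathbb{P}^r$ is cut out by quadrics; for $g(B)=1$ you get a plane cubic. Take $e\ge 2g(B)+2$, as the paper does.
\item To pass from bidegree $(t,1)$ to $(t,t)$ you multiply by sections of $\mathcal{O}\boxtimes L_0^{t-1}$, not of $\mathcal{O}(t-1)\boxtimes L_0^{t-1}$.
\end{itemize}
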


\begin{proof}
Let $p_1:\mathbb P^{N_1}\times B\to\mathbb P^{N_1}$ and $p_2:\mathbb P^{N_1}\times B\to B$ be the coordinate projections. Viewing $F_i$ as a homogeneous degree-$t$ polynomial in $X_0,\dots,X_{N_1}$, each monomial $X^\alpha$ with $|\alpha|=t$ is a section of $\mathcal O_{\mathbb P^{N_1}}(t)$, and hence pulls back to a section of $p_1^*\mathcal O_{\mathbb P^{N_1}}(t)$ on $\mathbb P^{N_1}\times B$. Each coefficient $f a_\alpha$ is a section of $L$ on $B$, hence pulls back to a section of $p_2^*L$ on $\mathbb P^{N_1}\times B$. The resulting products $fa_\alpha F_i$ give sections of $p_1^*\mathcal O_{\mathbb P^{N_1}}(t)\otimes p_2^*L$, and summing over all monomials $X^\alpha$ with $|\alpha|=t$ shows that $fF_i$ is a global section of this line bundle.

Choose $M'>0$ such that $\deg L=M'+c_2M>2g(B)+1$. (Recall that $c_2M\in\mathbb{Z}_{>0}$.) Then $|L|$ embeds $B$ into $\mathbb P^m$ with $L\simeq\varphi_L^*\mathcal O_{\mathbb P^m}(1)$. Under this embedding, $p_2^*L\simeq\mathcal O_{\mathbb P^{N_1}\times\mathbb P^m}(0,1)$ and hence \[p_1^*\mathcal O_{\mathbb P^{N_1}}(t)\otimes p_2^*L\simeq\mathcal O_{\mathbb P^{N_1}\times\mathbb P^m}(t,1).\]
Thus each $fF_i$ is bihomogeneous of bidegree $(t,1)$ on $\mathbb P^{N_1}\times\mathbb P^m$. The sections $\{fF_1,\dots,fF_m\}$ define a common zero locus $\tilde A\subseteq\mathbb P^{N_1}\times B$. Let $\eta=\operatorname{Spec}\,\mathbb{C}(B)$ be the generic point of $B$. Base-changing $\mathbb{P}^{N_1}\times B$ along $\eta\hookrightarrow B$ gives \[(\mathbb{P}^{N_1}\times B)\times_B\eta\;\cong\;\mathbb{P}^{N_1}_K,\quad K=\mathbb{C}(B).\] Restricting $\tilde{A}$ to this fiber yields $\tilde{A}_\eta\subseteq\mathbb{P}^{N_1}_K$. Since $f$ is a unit in $K^\times$, the equations $fF_i=0$ cut out the same subscheme as $F_i=0$, namely $A\subseteq\mathbb{P}^{N_1}_K$. As $B$ has degree $M'+c_2M$ in $\mathbb P^m$, the subvariety $\mathbb P^{N_1}\times B\subseteq\mathbb P^{N_1}\times\mathbb P^m$ has bidegree $(1,M'+c_2M)$. Using the Segre embedding to embed $\bb{P}^{N_1} \times \bb{P}^m$ into projective space $\bb{P}^N$ for $N = (N_1 + 1)(m+1)-1$ completes the proof.\end{proof}

Now given any $x \in A(K)$ with $\h_A(x) \leq \delta$, we obtain an extension $\tilde{x}: B \to \tilde{A} \xhookrightarrow{} \bb{P}^N$ where we use the embedding from Proposition \ref{HeightBoundEquations3}. We wish to bound the degree of the image $\tilde{x}(B)$. 

\begin{proposition} \label{HeightBoundEquations4}
There exists a constant $C_2 > 0$, depending only on $(g,g(B),M,\delta)$, such that the degree of $\tilde{x}(B)$ in $\bb{P}^N$ is at most $C_2$.   
\end{proposition}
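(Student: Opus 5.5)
The degree of $\tilde{x}(B)$ in $\bb{P}^N$ is the degree of $\tilde{x}^*\mathcal{O}_{\bb{P}^N}(1)$ on $B$. Under the Segre embedding, $\mathcal{O}_{\bb{P}^N}(1)$ restricts to $\mathcal{O}(1,1)$ on $\bb{P}^{N_1}\times\bb{P}^m$. Hence
$$\deg \tilde{x}(B) \le \deg_B \tilde{x}^*\big(p_1^*\mathcal{O}_{\bb{P}^{N_1}}(1)\big) + \deg_B \tilde{x}^* p_2^* L .$$
Since $p_2\circ\tilde{x}=\id_B$, the second term equals $\deg L = M'+c_2M$, which is bounded in terms of $(g,g(B),M)$. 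It therefore suffices to bound the first term, $\deg_B (p_1\circ\tilde{x})^*\mathcal{O}(1)$. The map $p_1\circ \tilde x : B\to \bb{P}^{N_1}$ is the spreading out of $x\in A(K)\subseteq \bb{P}^{N_1}(K)$, so by the remark at the end of \S2.1 this degree equals the naive Weil height $h(x)$ of $x$ in the embedding $A\hookrightarrow\bb{P}^{N_1}_K$ coming from $U_1$ (the constant field is trivially valued, so there are no $O(1)$ ambiguities).

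The remaining task is to compare $h(x)$ with $\h_A(x)$ uniformly. I would use the dynamical system $F_A$ extending $[n_1]$, and write $\Ht_{F_A}$ for the global canonical height $\sum_v \Ht_{F_A,v}$. The escape rate estimate quoted from \cite[Lemma 6]{Ing22} in \S3.1 gives, at each place $v$,
$$-\frac{\lambda_v(f)}{d-1}\le \Ht_{F_A,v}(P)-\log\|P\|_v-\frac{1}{d-1}\log\|F_A\|_v\le 0 .$$
Summing over $v$, with a lift $F_A$ having a coefficient equal to $1$ and using the resultant term together with \eqref{eqn:Fnormbd}, yields
$$|h(x)-\Ht_{F_A}(x)|\le C\, h(F_A).$$
By Proposition \ref{HeightInequalityEndomorphism1}, $h(F_A)\le c_1 h_\omega(A)\le c_1M$. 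Here $\Ht_{F_A}$ is the canonical height for $\mathcal{O}(1)$ under $[n_1]$, which is the Néron--Tate height of the pullback of $\mathcal{O}(1)$ to $A$. That pullback is the restriction of the universal line bundle on $U_1$, whose class is a fixed multiple (depending only on $g$) of the principal polarization. By \cite[Lemma 2.3]{GGK21} and the discussion in \S2.2, $\Ht_{F_A}=c_3\h_A$ with $c_3$ depending only on $g$. Combining everything gives
$$h(x)\le c_3\delta + C c_1 M,$$
and hence $C_2 = c_3\delta + Cc_1M + M'+c_2M$.

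The main obstacle is the summed escape-rate comparison. One must check that $\sum_v \lambda_v(f)$ is bounded by $O(h(F_A))$ (or by $O(h_{\partial U_1}(A))$, and hence by $O(h_\omega(A))$ via \eqref{eq : HeightExtension1} and Proposition \ref{GlobalHeightAmple1}) with constants depending only on $d$ and $N_1$. This is exactly the argument preceding \eqref{eq:ElkiesLower1}, so I would cite it. The only subtlety is that $\Res(F_A)$ may be nonunit at many places, but the relevant sum is controlled by the height of the point of $\Rat$, which is bounded.
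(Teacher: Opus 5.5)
Your proposal is correct and follows the paper's proof essentially step for step. Both use the Segre splitting of the degree into the $p_1^*\mathcal{O}(1)$-part, which is the Weil height $h(x)$, plus $\deg L = M'+c_2M$; the comparison $|h(x)-\Ht_{F_A}(x)|\le C\,h(F_A)$ from \cite[Lemma 6]{Ing22}; the bound $h(F_A)\le c_1h_\omega(A)$ from Proposition \ref{HeightInequalityEndomorphism1}; and the proportionality $\Ht_{F_A}=c_3\h_A$. The step you flag as the main obstacle is in fact immediate: summing the local estimate over all places kills the $\log|\Res(F_A)|_v$ terms by the product formula, leaving only a multiple of $h(F_A)$.
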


\begin{proof}
We first bound the degree of $\tilde{x}(B)$ in $\bb{P}^{N_1}_{\bb{C}} \times B$, where we use the line bundle $p_1^*\mathcal{O}(1)$ for $\mathcal{O}(1)$ on $\bb{P}^{N_1}_{\bb{C}}$. Here, $p_1$ is the projection to the first coordinate. In this case, this is exactly the standard Weil height $h(x)$ when $x$ is viewed as a point of $\bb{P}^{N_1}(K)$. Recall from the discussion before Proposition \ref{HeightBoundEquations1} that we have an embedding $A \xhookrightarrow{} \bb{P}_K^{N_1}$ and an endomorphism $F: \bb{P}^{N_1}_K \to \bb{P}^{N_1}_K$ of degree $n_1^2$ that extends multiplication-by-$[n_1]$. By \cite[Lemma 6]{Ing22}, there exists a $c > 0$, depending only on $N_1$ and $n_1$, such that 
$$|\h_F(x) - h(x)| \leq ch(F)$$
for all $x \in \bb{P}^{N_1}(K)$. Since $\h_F(x)$ is a multiple of $\h_A(x)$ that depends only on $g$, it follows that $\h_F(x)$ is bounded solely in terms of $g$ and $\delta$. By Proposition \ref{HeightInequalityEndomorphism1}, we have $h(F) \leq c_1 h_{\omega}(A)$ and so there is a constant $c' > 0$ depending only on $g$ and $\delta$ for which $h(x) \leq c' (h_{\omega}(A)+1)$.
\par 
Now the embedding $B \xhookrightarrow{} \bb{P}^m$ is induced by a line bundle $L$ on $B$ of degree at most $M' + c_2 M$. Under the Segre embedding for $\bb{P}^{N_1} \times \bb{P}^m \xhookrightarrow{} \bb{P}^N$, the line bundle $\mathcal{O}(1)$ for $\bb{P}^N$ induces some power of $L$ multiplied by some power of $\mathcal{O}(1)$ for $\bb{P}^{N_1}$ on our curve $\tilde{x}(B)$, where the powers can be bounded in terms of $N_1$ and $m$. Thus the degree of $\tilde{x}(B)$ in $\bb{P}^N$ can be bounded in terms of only $M' + c_2 M, c c' h_{\omega}(A), N_1$ and $m$, all of which are bounded in terms of $(g,g(B),M,\delta)$.\end{proof}

This finishes our first step. We proceed with the next step where we reduce our problem to bounding the number of irreducible components of the Hilbert scheme on $\tilde{A}$. 
\par 
For any positive integer $\beta$, consider the Hom scheme $H_\beta$ over $\bb{C}$ that represents morphisms $\tilde{x}:B\to\tilde{A}$ satisfying the following two conditions:
\begin{itemize}
	\item[(i)] The image curve $\tilde{x}(B)\subset\tilde{A}\hookrightarrow\mathbb{P}^N$ has degree at most $\beta$;
	\item[(ii)] The composition $\pi_2\circ\tilde{x}:B\to B$ is the identity map, so that $\tilde{x}$ is a section of the fibration $\pi_2:\tilde{A}\to B$.
\end{itemize}

The $\Hom$ scheme $H_{\beta}$ is of finite type over $\bb{C}$ (see the discussion after Lemma 7.5 of \cite{Con06}). Hence it has only finitely many irreducible components. As explained before, any point $x \in A(K)$ lifts to a section $\tilde{x}: B \to \tilde{A}$, which by Proposition \ref{HeightBoundEquations4} has bounded degree. Hence for a suitable $\beta$, this gives us an element of $H_{\beta}$. Conversely, given a section $s: B \to \tilde{A}$, we may take the generic fiber to obtain a $K$-point of $A$. The following rigidity phenomena reduces Theorem \ref{UniformBoundednessGoodReduction1} to bounding the number of irreducible components of $H_{\beta}$ for a suitable $\beta$.

\begin{proposition} \label{HomScheme1}
Any two elements of $H_{\beta}(\bb{C})$ that lie on the same irreducible component correspond to the same equivalence class $x \in A(K)/\Tr_{K/\bb{C}}(A)(\bb{C})$. 
\end{proposition}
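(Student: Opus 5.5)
We argue by following the rigidity argument in the proof of the Lang--N\'eron theorem \cite[Section 7]{Con06}. Let $Z \subseteq H_{\beta}$ be an irreducible component; we may replace $Z$ by its reduced structure and, after normalizing, assume $Z$ is a smooth connected (hence path-connected via chains of curves) variety over $\bb{C}$. Since any two points of an irreducible variety can be joined by a chain of irreducible curves, it suffices to treat the case where $Z$ is an irreducible curve, and even to compare two points $z_1,z_2$ lying on a smooth connected curve $T$ mapping to $H_\beta$. The universal section over $H_\beta$ pulls back to a morphism $\Phi: B \times T \to \tilde{A}$ over $B$. Passing to generic fibers over $B$, we obtain a $K$-morphism $\phi: T_K \to A$, where $T_K = T \times_{\bb{C}} K$, with $\phi(z_i) = x_i$ the $K$-points attached to $z_i$.

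The key step is to show that $\phi(t) - \phi(z_1)$ lands in the $K/\bb{C}$-trace. Consider $\psi: T_K \to A$, $\psi(t) = \phi(t) - \phi(z_1)$. After passing to the smooth projective model $\ovl{T}$ of $T$, the rational map $\ovl{T}_K \dashrightarrow A$ extends to a morphism, since abelian varieties contain no rational curves. By the universal property of the Albanese, $\psi$ factors through $\Jac(\ovl{T})_K \to A$, a homomorphism (up to translation, which vanishes as $\psi(z_1)=0$) from the constant abelian variety $\Jac(\ovl{T})\times_{\bb{C}} K$. By the universal property of the $K/\bb{C}$-trace, any homomorphism from a constant abelian variety to $A$ factors through $\Tr_{K/\bb{C}}(A)_K \to A$. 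Hence $\psi(T_K)$ lies in the image of $\Tr_{K/\bb{C}}(A)$. Since $\psi(z_2)$ is a $K$-point coming from the $\bb{C}$-point $z_2$, we get $\psi(z_2) \in \Tr_{K/\bb{C}}(A)(\bb{C})$ (using that the image of $\Jac(\ovl{T})(\bb{C})$ lies in the image of $\Tr(A)(\bb{C})$, which by \cite[Lemma 7.3 / Theorem 6.4]{Con06} is exactly the relevant subgroup of $A(K)$). Thus $x_2 - x_1 \in \Tr_{K/\bb{C}}(A)(\bb{C})$, and chaining gives the claim for any two points of the component.

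I expect the main obstacle to be this last identification: ensuring that the difference $\psi(z_2)$, which a priori is a $K$-point in the image of $\Tr(A)_K(K)$, actually lies in the image of $\Tr(A)(\bb{C})$. The plan is to note that $\psi(z_2)$ is the image of the $\bb{C}$-point $[z_2]-[z_1]$ of $\Jac(\ovl{T})$ under a homomorphism $\Jac(\ovl{T})_K \to A$. The factorization through the trace, via a $\bb{C}$-homomorphism $\Jac(\ovl{T}) \to \Tr(A)$, then sends $\bb{C}$-points to $\bb{C}$-points. A secondary technical point is to justify extending over $\ovl{T}$ and to handle non-reduced or singular components, which is done by normalization as above.
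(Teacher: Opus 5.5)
Your proposal is correct and follows essentially the same route as the paper's proof: take a curve in the component through the two points, pass to the induced $K$-morphism from the compactified constant curve to $A$, factor it through the Albanese (Jacobian) of that constant curve, and apply the universal property of $\Tr_{K/\bb{C}}(A)$; your use of the $\bb{C}$-homomorphism $\Jac(\ovl{T})\to\Tr_{K/\bb{C}}(A)$ to see that $\bb{C}$-points go to $\bb{C}$-points spells out a step the paper leaves terse. Two small corrections: normalizing a higher-dimensional component does not make it smooth (harmless, since you immediately reduce to curves), and the extension over $\ovl{T}_K$ holds simply because $\ovl{T}_K$ is a regular curve and $A$ is proper, not because $A$ contains no rational curves.
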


\begin{proof} 
Let $\tilde{x},\tilde{y} \in H_\beta(\bb{C})$ lie on the same irreducible component of dimension at least $1$ (the case of $0$-dimensional components is trivial). Then there exists an irreducible curve $X\subseteq H_\beta$ containing both $\tilde{x}$ and $\tilde{y}$. Since $H_\beta$ represents a Hom functor, the curve $X \subseteq H_\beta$ corresponds to a morphism \[\mathcal{P}:X \times B\to\tilde{A}\] over $B$, which defines a family of sections of $\tilde{A}\to B$ parametrized by $X$. For each $t \in X(\bb{C})$, the restriction $\mathcal{P}_t:=\mathcal{P}|_{\{t\} \times B}$ is a section $\tilde{x}_t:B\to\tilde{A}$, and in particular $\mathcal{P}_{\tilde{x}}=\tilde{x}$ and $\mathcal{P}_{\tilde{y}}=\tilde{y}$.
	
Let $\eta=\operatorname{Spec}(\mathbb{C}(B))$ denote the generic point of $B$. To study the behavior of the family on the generic fiber, we form the fiber product \[(X\times B)\times_B\eta\cong X\times_{\mathbb{C}} \eta=:X_K,\] where $K=\mathbb{C}(B)$. Pulling back $\mathcal{P}$ along this map yields a morphism \[\mathcal{P}_K: X_K \to \tilde{A}_\eta=A,\] which we interpret as a $K$-morphism $P: X_K\to A$.
	
Let $\overline{X}$ be a smooth projective compactification of $X$, and set $\overline{X}_K=\overline{X} \times_{\mathbb{C}}\operatorname{Spec}(K)$. Then $P$ extends to a morphism \[P:\overline{X}_K \to A.\] Since $\overline{X}_K$ is a smooth proper curve over $K$, after fixing some basepoint $x_0 \in \ovl{X}(\bb{C})$, the morphism $P$ factors through its Albanese variety $\mathrm{Alb}(\overline{X}_K)$, which is an abelian variety over $K$ defined over $\mathbb{C}$. Let $\iota_K: \ovl{X}_K \to \mathrm{Alb}(\ovl{X})_K$ be our embedding sending $\iota_K(x_0)$ to $0$ and $\eta: \mathrm{Alb}(\ovl{X})_K \to A$ be the morphism so that $P = \iota_K \circ \eta$. Then $\eta - P(x_0)$ is a map of abelian varieties between $\mathrm{Alb}(\ovl{X})_K$ and $A$.
\par 
By the universal property of the trace 
$$\tau: \Tr_{K/\bb{C}}(A)_K \to A,$$
there is a map of abelian varieties $f: \mathrm{Alb}(\ovl{X}_K) \to \Tr_{K/\bb{C}}(A)$ so that $\eta - P(x_0) = f \circ \tau$. Hence $P = P(x_0) + (f \circ \iota)_K$ and for any element $x \in \ovl{X}$. This implies that $P_x$ and $P_y$ agree as elements of $A(K)/\Tr_{K/\bb{C}}(A)(\bb{C})$.\end{proof}

With Propositions \ref{HeightBoundEquations4} and \ref{HomScheme1}, it suffices to bound the number of irreducible components of $H_{C_2}$ for $C_2$ in Proposition \ref{HeightBoundEquations4}. Consider the larger Hom scheme $H'_{C_2}$ which consists of all morphisms $\tilde{x}: B \to \tilde{A}$ with $\tilde{x}(B) \xhookrightarrow{} \bb{P}^N$ having degree at most $C_2$. We first state a preliminary lemma.

\begin{proposition} \label{HomScheme2}
The number of irreducible components of $H_{C_2}$ is bounded above by the number of irreducible components of $H'_{C_2}$.
\end{proposition}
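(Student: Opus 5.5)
The plan is to show that $H_{C_2}$ is a union of irreducible components of $H'_{C_2}$; equivalently, that it is open and closed in $H'_{C_2}$. Then every irreducible component of $H_{C_2}$ is an irreducible component of $H'_{C_2}$, and the count follows at once.

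Consider the morphism $\Phi: H'_{C_2} \to \Hom(B,B)$ defined by $\tilde{x} \mapsto \pi_2 \circ \tilde{x}$. Since $H_{C_2} = \Phi^{-1}(\id_B)$, it suffices to show that $\{\id_B\}$ is a connected component of the image of $\Phi$. We may restrict to the locus of $\Hom(B,B)$ consisting of endomorphisms of degree $1$. The degree of $\pi_2\circ\tilde{x}$ is locally constant in flat families, and by the degree bound it is at most $C_2$. On the degree-$1$ locus we have automorphisms, and the identity is an isolated point of $\mathrm{Aut}(B)$ as a scheme whenever $g(B)\ge 2$, since $H^0(B,T_B)=0$ kills the tangent space. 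So in that case $\Phi^{-1}(\id_B)$ is open and closed, hence a union of connected components, and therefore of irreducible components.

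The main obstacle is $g(B)\le 1$, where $\mathrm{Aut}(B)$ is positive dimensional and the identity is not isolated. In that case I would instead use the map $H'_{C_2}\cap\Phi^{-1}(\mathrm{Aut}(B)) \to \mathrm{Aut}(B)$ and argue differently. Precomposition with $\sigma^{-1}$ does not preserve $H'$, because the target must remain $\tilde{A}$ over $B$ compatibly. The cleaner route is therefore to argue directly with irreducible components. Let $Z$ be an irreducible component of $H_{C_2}$, and let $W \supseteq Z$ be an irreducible component of $H'_{C_2}$. Assign to each component $Z$ of $H_{C_2}$ such a $W$. If two components $Z_1 \neq Z_2$ landed in the same $W$, then both would lie in $W\cap \Phi^{-1}(\id)$. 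I would then show that $W \cap \Phi^{-1}(\id)$ is irreducible (or at least that the assignment can be made injective) by noting that $\Phi|_W$ maps into an irreducible subvariety of $\mathrm{Aut}(B)$, which is a group variety. The fibres of $\Phi|_W$ are all isomorphic via the group action on the base, although this requires care with $\tilde{A}$.

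If this becomes messy, a fallback is a cruder bound: the number of irreducible components of $\Phi^{-1}(\id)$ is at most the sum of the numbers of components of fibres of $\Phi|_W$ over the components $W$, and a fibre of a morphism has boundedly many components in terms of degree data (Bézout-type bounds from the Hilbert scheme embedding). That bound is still uniform in $(g,g(B),M,\delta)$, which is all the subsequent argument needs. I would present the clean open-and-closed argument first, and fall back to this uniform bound if the low-genus case resists.
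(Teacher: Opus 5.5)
Your $g(B)\ge 2$ case is fine and is the paper's argument: the degree of $\pi_2\circ\tilde x$ is locally constant and $\mathrm{Aut}(B)$ is finite, so $\Phi^{-1}(\id_B)$ is clopen in $H'_{C_2}$.

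The gap is in the case $g(B)\le 1$, and it comes from a false claim. You assert that precomposition with an automorphism $\sigma$ of $B$ does not preserve $H'_{C_2}$, but it does. $H'_{C_2}$ is the scheme of \emph{all} morphisms $B\to\tilde A$ whose image has degree at most $C_2$. No compatibility with $\pi_2$ is imposed there; that is the extra condition defining $H_{C_2}$. Since $(\tilde x\circ\sigma)(B)=\tilde x(B)$, the degree bound is untouched. This is exactly the key step in the paper. Let $T$ be the identity component of $\mathrm{Aut}(B)$ (translations if $g(B)=1$, $\mathrm{PGL}_2$ if $g(B)=0$); it is irreducible and clopen in $\Hom(B,B)$. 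The maps $(\sigma,f)\mapsto f\circ\sigma$ and $h\mapsto(\Phi(h),h\circ\Phi(h)^{-1})$ are then mutually inverse isomorphisms $T\times H_{C_2}\cong\Phi^{-1}(T)$. Since $\Phi^{-1}(T)$ is clopen in $H'_{C_2}$, and the irreducible components of $T\times H_{C_2}$ are exactly the $T\times Z$ with $Z$ a component of $H_{C_2}$, the inequality follows.

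Without this step, your replacement argument does not close. The assertion that $W\cap\Phi^{-1}(\id_B)$ is irreducible is not justified. Knowing that the fibres of $\Phi|_W$ are mutually isomorphic says nothing about whether any one of them is irreducible. Moreover, the group action you would use to identify the fibres is precisely the precomposition you ruled out; with it, $W=T\times Z$ and the fibre is $Z$, so the claim becomes true. Your fallback does not prove the proposition as stated, only a weaker uniform bound. Even that rests on an unproved claim that $\Phi^{-1}(\id_B)$ is cut out inside the Hilbert-scheme embedding by equations of controlled degree.

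Note also that for $g(B)\le 1$ your opening strategy cannot work. $H_{C_2}$ is a fibre of $T\times H_{C_2}\to T$, hence closed but not open, and each of its components $\{\id_B\}\times Z$ sits properly inside the irreducible set $T\times Z$. So no component of $H_{C_2}$ is a component of $H'_{C_2}$. The correct statement is an injection of component sets via $Z\mapsto T\times Z$, not an inclusion.
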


\begin{proof}
Let $\Phi:H'_{C_2}\to\mathrm{Hom}_{\mathbb{C}}(B,B)$ denote the morphism of schemes given by post-composition with $\pi_2:\tilde{A}\to B$. Then $H_{C_2}\subseteq H'_{C_2}$ is the fiber $\Phi^{-1}(\mathrm{id}_B)$, representing those morphisms $f:B\to\tilde{A}$ of degree at most $C_2$ such that $\pi_2\circ f=\mathrm{id}_B$.
	
\textbf{Case 1: $g(B)\ge 2$.} In this case, $\mathrm{Hom}_{\mathbb{C}}(B,B)$ is a union of $B(\bb{C})$ with a finite discrete scheme, since a smooth projective curve of genus $\ge 2$ admits only finitely many non-trivial morphisms to itself. In particular, the identity morphism $\mathrm{id}_B$ is an isolated point. The fiber $H_{C_2}=\Phi^{-1}(\mathrm{id}_B)$ is then clopen in $H'_{C_2}$, being the fiber over an isolated point. Since clopen subsets are always unions of connected components, it follows that $H_{C_2}$ is a union of connected components of $H'_{C_2}$, and in particular \[\#\{\text{irreducible components of }H_{C_2}\}\le\#\{\text{irreducible components of }H'_{C_2}\}.\]

\textbf{Case 2: $g(B)=1$.} Fix a basepoint $x_0\in B(\mathbb{C})$ to identify $B$ with an elliptic curve over $\mathbb{C}$. For each $x\in B(\mathbb{C})$, let $\tau_x:B\to B$ denote the translation-by-$x$ automorphism. Define the subgroup \[T:=\{\tau_x\in\mathrm{Hom}_{\mathbb{C}}(B,B)\mid x\in B(\mathbb{C})\},\] which is isomorphic to $B$ as a group scheme. In particular, $T$ is an irreducible, clopen subset of $\mathrm{Hom}(B,B)$. Since $\Phi$ is a morphism of schemes and $T\subseteq\mathrm{Hom}(B,B)$ is clopen, the preimage $\Phi^{-1}(T)\subseteq H'_{C_2}$ is clopen as well, and hence a union of connected components. The identity morphism $\mathrm{id}_B\in T$, so the fiber $H_{C_2}:=\Phi^{-1}(\mathrm{id}_B)$ lies in $\Phi^{-1}(T)$.
	
Now define a morphism $\varphi:T\times H_{C_2}\to\Phi^{-1}(T)$ by $(\tau_x,f)\mapsto f\circ\tau_x$. This is well-defined because \[\Phi(f\circ\tau_x)=\pi_2\circ f\circ\tau_x=(\pi_2\circ f)\circ\tau_x=\mathrm{id}_B\circ\tau_x=\tau_x.\] We define the inverse morphism $\psi:\Phi^{-1}(T)\to T\times H_{C_2}$ by $\tilde{f}\mapsto(\pi_2\circ\tilde{f},\tilde{f}\circ(\pi_2\circ\tilde{f})^{-1})$. Since $\pi_2\circ\tilde{f}\in T$ is invertible, this map is well-defined and inverse to $\varphi$, giving an isomorphism $\Phi^{-1}(T)\cong T\times H_{C_2}$.
	
As $T\cong B$ is irreducible, the irreducible components of $\Phi^{-1}(T)$ correspond bijectively to those of $H_{C_2}$, and we conclude \begin{align*}\#\{\text{irreducible components of }H_{C_2}\}&=\#\{\text{irreducible components of }\Phi^{-1}(T)\}\\&\le\#\{\text{irreducible components of }H'_{C_2}\}.\end{align*}
	
\textbf{Case 3: $g(B)=0$.} In this case, $B\cong\mathbb{P}^1$, and the scheme $\mathrm{Hom}(B,B)$ is stratified by degree: \[\mathrm{Hom}(B,B)=\bigsqcup_{d\ge 0}\mathrm{Hom}_d(B,B).\] The degree 1 component corresponds to $\mathrm{Aut}(\mathbb{P}^1)\cong\mathrm{PGL}_2(\mathbb{C})$, which we again denote by $T\subseteq\mathrm{Hom}(B,B)$. This is a connected algebraic group, and a clopen subset of $\mathrm{Hom}(B,B)$. As before, the preimage $\Phi^{-1}(T)\subseteq H'_{C_2}$ is thus clopen, and hence a union of connected components. Define the morphism $\varphi:T\times H_{C_2}\to\Phi^{-1}(T)$ by $(\alpha,f)\mapsto f\circ\alpha$, which is well-defined (i.e., does in fact map to $\Phi^{-1}(T)$) because \[\Phi(f\circ\alpha)=\pi_2\circ f\circ\alpha=(\pi_2\circ f)\circ\alpha=\mathrm{id}_B\circ\alpha=\alpha.\] The inverse is given by $\psi:\Phi^{-1}(T)\to T\times H_{C_2}$, $h\mapsto(\Phi(h),h\circ\Phi(h)^{-1})$, and defines an isomorphism $\Phi^{-1}(T)\cong T\times H_{C_2}$. Since $T\cong\mathrm{PGL}_2(\mathbb{C})$ is irreducible, we again obtain: \begin{align*}\#\{\text{irreducible components of }H_{C_2}\}&=\#\{\text{irreducible components of }\Phi^{-1}(T)\}\\&\le\#\{\text{irreducible components of }H'_{C_2}\}.\end{align*}\end{proof}

Next, we have an embedding $H'_{C_2} \xhookrightarrow{} \Hilb_{B \times \tilde{A}}$, by sending $\tilde{x} : B \to \tilde{A}$ to its graph $\Gamma_{\tilde{x}}$. In fact this embedding is an open immersion \cite[Theorem 6.6]{Nit05} .

Furthermore there is a finite set of polynomials $\{Q_1,\ldots,Q_E\}$ such that any $\tilde{x} \in H'_{C_2}$, the graph $\Gamma_{\tilde{x}}$ has Hilbert polynomial $Q_i$ for some $i$. As the graph $\Gamma(\tilde{x})$ is isomorphic to $B$, it is a smooth projective curve of genus $g(B)$. Hence its Hilbert polynomial must be of degree one with constant coefficient $g(B)$. Its leading coefficient is given by its degree and so the $Q_i$'s are of the form $it + g(B)$, where we let $i$ vary from $1$ to $C_2$.
\par 
Let $\Hilb^{Q_i}_{B \times \tilde{A}}$ be the components corresponding to the Hilbert polynomial $Q_i$. Then $H'_{C_2}$ embeds into $\bigsqcup_{i=1}^{C_2} \Hilb^{Q_i}_{B \times \tilde{A}}$ as an open subscheme. By the Proposition \ref{IrredOpen} below, it suffices to now bound the number of irreducible components of each $\Hilb^{Q_i}_{B \times \tilde{A}}$.

\begin{proposition} \label{IrredOpen}
Let $X,Y$ be noetherian schemes and assume $Y$ is an open subscheme of $X$. Then the number of irreducible components of $Y$ is bounded above by that of $X$. 
\end{proposition}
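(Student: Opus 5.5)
The plan is to construct an injective map from the set of irreducible components of $Y$ into the set of irreducible components of $X$, using only that $Y \subseteq X$ is open and that both spaces are noetherian, so that each has finitely many irreducible components.

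First I record the elementary topological fact I will use repeatedly: if $Z \subseteq X$ is irreducible and $Z \cap Y \neq \emptyset$, then $Z \cap Y$ is a nonempty open subset of $Z$, hence dense in $Z$ and irreducible. Its closure in $X$ is therefore exactly $Z$ when $Z$ is closed.

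Next I define the map. Given an irreducible component $W$ of $Y$, let $\overline{W}$ denote its closure in $X$. This set is irreducible, so it lies in some irreducible component $Z$ of $X$. I claim $\overline{W} = Z$, so that $W$ determines $Z$ canonically as $\overline{W}$. To see this, note that $Z \cap Y$ is irreducible by the fact above, and it contains $W$. By maximality of $W$ among irreducible closed subsets of $Y$, we get $Z \cap Y = W$. Since $Z \cap Y$ is dense in $Z$, it follows that $Z = \overline{W}$.

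Finally I check injectivity. If two components $W, W'$ of $Y$ satisfy $\overline{W} = \overline{W'} = Z$, then $W = Z \cap Y = W'$. Hence $W \mapsto \overline{W}$ is injective, which bounds the number of irreducible components of $Y$ by that of $X$.

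There is no real obstacle here. The only point requiring care is the maximality step, which shows $W = Z \cap Y$ rather than merely $W \subseteq Z \cap Y$; this is what makes the map well defined and injective. The noetherian hypothesis is used only to ensure that the counts are finite.
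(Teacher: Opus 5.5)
Your proof is correct and is essentially the paper's argument run in the opposite direction: the paper intersects the irreducible components $X_i$ of $X$ with $Y$ and asserts that the nonempty $X_i \cap Y$ are exactly the components of $Y$, while you send a component $W$ of $Y$ to its closure $\overline{W}$ in $X$ and use maximality to get $W = \overline{W} \cap Y$. Your maximality step also supplies the justification that the paper's phrase ``closed in $Y$ and irreducible, hence an irreducible component'' leaves implicit.
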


\begin{proof}
Write $X = \bigcup_{i \in I} X_i$ for $\{X_i\}_{i\in I}$ the set of irreducible components of $X$. Then $Y = \bigcup_{i \in I} (X_i \cap Y)$. Each nonempty $X_i \cap Y$ is irreducible as $Y$ is a Zariski open in $X_i$, and since $X_i$ is closed in $X$, the intersection $X_i \cap Y$ is also closed in $Y$ and hence must be an irreducible component. Therefore the set of irreducible components of $Y$ consists exactly of those $X_i \cap Y$ which are nonempty, yielding our desired bound.\end{proof}

It now suffices to bound the number of irreducible components of each $\Hilb^{Q_i}_{B \times \tilde{A}}$. To do this, we will use the work of Gotzmann. Recall that a coherent sheaf $F$ on $\bb{P}^r$ is $m$-regular if and only if 
$$H^i(\bb{P}^r, F(m-i)) = 0$$
for every integer $i > 0$. The smallest such $m$ is the Castelnuovo--Mumford regularity of $F$. Let $P$ be the Hilbert polynomial of a homogeneous ideal $I \subset k[x_0,\ldots,x_r]$. We define the Gotzmann number $\vphi(P)$ of $P$ as the infimum of all $m$ for which the ideal sheaf $I_Z$ is $m$-regular for all closed subvarieties $Z \subseteq \bb{P}^r$ with Hilbert polynomial $P$. We will let $\vphi(Z)$ be the Gotzmann number of the Hilbert polynomial of $I_Z$. Note that 
$$\Hilb_{I_Z}(n) = \binom{n+r}{r} - \Hilb_Z(n).$$
We now state \cite[Theorem 5.8]{Kwe22}. In Section 5 of \cite{Kwe22}, the variety $X$ is assumed to be smooth but this is not necessary. Note that $\Hilb^P$ in our notation is $\Hilb^{\binom{n+r}{r} - P}$ in Kweon's notation. Let $R_n = \bb{C}[x_0,\ldots,x_r]_n$ be the $n$th graded piece.

\begin{theorem} \label{HilbertSchemeBound}
Let $X \subseteq \bb{P}^r$ be a closed subvariety defined by polynomials of degree at most $d$. Fix a Hilbert polynomial $P(n)$ and let $t \geq \max\{\vphi(P),d\}$. Then there is an embedding
$$\Hilb^{\binom{n+r}{r} - P(n)} X \xhookrightarrow{} \bb{P}(\wedge^{\binom{t+r}{r} - P(t)} R_t)$$
such that the image is cut out by equations of degree at most $\binom{t+r+1}{r} - P(t+1) + 1$.
\end{theorem}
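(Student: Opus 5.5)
The plan is to realize $\Hilb^{\binom{n+r}{r}-P(n)}X$ as a locally closed (in fact closed) subscheme of a Grassmannian of degree-$t$ pieces of ideals, and to write out Gotzmann's persistence criterion and the containment $Z\subseteq X$ as explicit equations in Plücker coordinates.

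\textbf{Step 1 (the map to the Grassmannian).} Write $q(n)=\binom{n+r}{r}-P(n)$ for the Hilbert polynomial of the ideal sheaf. Let $Z\subseteq\bb{P}^r$ be any closed subscheme with Hilbert polynomial $P$ and let $t\ge\vphi(P)$. By definition of the Gotzmann number, $I_Z$ is $t$-regular. Castelnuovo--Mumford theory then gives three facts:
\begin{itemize}
\item $H^1(\bb{P}^r,I_Z(t))=0$;
\item $\dim (I_Z)_t=q(t)$;
\item $(I_Z)_t$ generates the saturated ideal $I_Z$ in all degrees $\ge t$.
\end{itemize}
Hence $Z\mapsto W_Z:=(I_Z)_t$ is injective and defines a morphism $\Hilb^{P}\bb{P}^r\to \mathrm{Gr}(q(t),R_t)$. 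The same holds in flat families, using cohomology and base change. Composing with the Plücker embedding gives the embedding into $\bb{P}(\wedge^{q(t)}R_t)$.

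\textbf{Step 2 (the image in $\mathrm{Gr}(q(t),R_t)$).} I would invoke Gotzmann's persistence theorem. A $q(t)$-dimensional subspace $W\subseteq R_t$ equals $(I_Z)_t$ for some $Z$ with Hilbert polynomial $P$ if and only if $\dim R_1W\le q(t+1)$. In that case equality holds, and the ideal generated by $W$ has Hilbert function $q$ in all degrees $\ge t$. So the image is the locus where the multiplication map $\mu_W:R_1\otimes W\to R_{t+1}$ has rank $\le q(t+1)$, i.e.\ where all $(q(t+1)+1)$-minors of $\mu_W$ vanish.

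To express this in Plücker coordinates $(p_J)$, I work on the standard chart $\{p_J\neq 0\}$. There $W$ is the row space of a $q(t)\times\dim R_t$ matrix that is the identity in the columns $J$, with remaining entries $\pm p_{J'}/p_J$. The entries of $\mu_W$ in the monomial bases are linear in these entries, so each $(q(t+1)+1)$-minor is a polynomial of degree $\le q(t+1)+1$ in the ratios $p_{J'}/p_J$. Multiplying by $p_J^{\,q(t+1)+1}$ gives homogeneous forms of degree $q(t+1)+1$. Taking these forms over all charts $J$, together with the quadratic Plücker relations, cuts out the image scheme-theoretically. Since $q(t+1)+1=\binom{t+r+1}{r}-P(t+1)+1$, this is exactly the claimed bound.

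\textbf{Step 3 (imposing $Z\subseteq X$).} Since $X$ is cut out by forms of degree $\le d\le t$, and $I_Z$ is saturated and generated by $W_Z$ in degrees $\ge t$, we have $Z\subseteq X$ if and only if $(I_X)_t\subseteq W_Z$. Here $(I_X)_t$ is spanned by the generators of $I_X$ multiplied by monomials up to degree $t$. For each $f\in(I_X)_t$, the condition $f\in W$ is equivalent to $f\wedge\omega_W=0$ in $\wedge^{q(t)+1}R_t$, where $\omega_W$ is the Plücker vector of $W$. These conditions are linear in the Plücker coordinates, so they do not raise the degree. Note that the smoothness of $X$ assumed in \cite{Kwe22} plays no role in this step or the previous ones.

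\textbf{Main obstacle.} The delicate point is Step 2. The rank condition must define the Hilbert scheme scheme-theoretically, not just set-theoretically, and the chart-wise homogenized minors must glue to a global system of equations of the stated degree. For this I would rely on the functorial form of Gotzmann persistence (flatness of the family cut out by $W$ over the rank locus) and check that each chart-wise equation, once homogenized by $p_J^{\,q(t+1)+1}$, vanishes on the whole image, not only on the chart $\{p_J\neq 0\}$.
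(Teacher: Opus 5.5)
Your proposal is correct, and it follows the route behind the result the paper simply cites, \cite[Theorem 5.8]{Kwe22}: Gotzmann's regularity and persistence (flatness) theorems identify $\Hilb^P\bb{P}^r$ with the locus in $\mathrm{Gr}(q(t),R_t)$ where $R_1\otimes W\to R_{t+1}$ has rank $\le q(t+1)$, and the condition $Z\subseteq X$ adds linear Pl\"ucker conditions, so that, as the paper also remarks, smoothness of $X$ plays no role. The gluing point you flag as the main obstacle is routine: on a chart $\{p_{J'}\neq 0\}$ one has $w^J=M^{-1}w^{J'}$ with $\det M=\pm p_J/p_{J'}$, so Cauchy--Binet (together with Jacobi's formula for minors of $M^{-1}$) shows that $p_J^{\,q(t+1)+1}$ times any chart-$J$ minor, divided by $p_{J'}^{\,q(t+1)+1}$, lies in the ideal of $(q(t+1)+1)$-minors on that chart; hence these homogenized forms generate the intrinsic degeneracy (Fitting) ideal globally.
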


The following proposition allows us to bound the number of irreducible components of the image in terms of the degree of the equations cutting it out. 

\begin{proposition} \label{HilbertScheme2}
Let $X \subseteq \bb{P}^n$ be a closed subscheme that is cut out by equations of degree $\leq d$. Then the number of irreducible components of $X$ is bounded in terms of $d$ and $n$. 
\end{proposition}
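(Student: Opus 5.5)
The plan is to reduce to a statement about degrees of the irreducible components of $X$ and then invoke a Bézout-type bound. If $X \subseteq \bb{P}^n$ is cut out by homogeneous polynomials $f_1,\ldots,f_m$ of degree $\le d$, then each $f_i$ lies in the finite-dimensional space $\bb{C}[x_0,\ldots,x_n]_{\le d}$. So we may replace the $f_i$ by a basis of the span of their homogeneous pieces, padded to a common degree $d$ by multiplying by powers of the variables; this does not change the underlying reduced set. Hence we may assume $m \le \binom{n+d}{n}$ and all $f_i$ have degree exactly $d$. Since irreducible components depend only on $X_{\mathrm{red}}$, it suffices to bound the number of irreducible components of the set $V(f_1,\ldots,f_m)$.

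The key step is the refined Bézout inequality of Fulton (or Heintz's version): for hypersurfaces $H_1,\ldots,H_m \subseteq \bb{P}^n$, the sum of the degrees of the irreducible components of $H_1\cap\cdots\cap H_m$ is at most $\prod \deg H_i$. The product should be taken over a suitably chosen subset of at most $n+1$ of them, or, in Heintz's affine formulation, bounded by $d^{n}$ after taking generic linear combinations. Concretely, I would first replace $f_1,\ldots,f_m$ by $n+1$ generic linear combinations $g_0,\ldots,g_n$ of them. By a standard dimension-count argument, for generic combinations $V(g_0,\ldots,g_n)=V(f_1,\ldots,f_m)$ set-theoretically. I would argue this inductively: choose $g_0,\ldots,g_j$ so that each $g_j$ avoids vanishing identically on every component of $V(g_0,\ldots,g_{j-1})$ not contained in $V(f)$. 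This is the same trick used in the proof of Proposition \ref{prop:MorphismExtension1Chapter6}. Then $V(f)$ is a union of components of an iterated intersection of $n+1$ hypersurfaces of degree $d$. The induction bounds the total degree, hence the number, of components at each stage by $d^{j+1}$, since intersecting a component of degree $e$ with a hypersurface of degree $d$ not containing it yields components of total degree $\le de$.

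The main obstacle is the bookkeeping in that induction. At each step, the components contained in $V(f)$ must be kept aside while the others are cut down, and we need the count of kept components to remain bounded. I would handle this by carrying the invariant ``sum of degrees of all current components $\le d^{j}$''. Kept components contribute their degree unchanged, and cut components contribute at most $d$ times their degree, so the total stays $\le d^{n+1}$. After at most $n+1$ steps every remaining component lies in $V(f)$, and those components are exactly the irreducible components of $X$ (possibly together with some non-maximal ones, which only overcounts). This yields the bound $d^{n+1}$, depending only on $d$ and $n$.
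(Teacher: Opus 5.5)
Your argument is correct. It rests on the same basic input as the paper's proof: B\'ezout for cutting an irreducible variety of degree $e$ by a hypersurface of degree $\le d$ not containing it, which produces components of total degree at most $de$. The bookkeeping, however, is organized differently.

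The paper proves a strengthened statement by induction on dimension. For an irreducible $Y\subseteq\mathbb{P}^n$ of dimension $r$ and degree $e$, any $X\subseteq Y$ set-theoretically cut out by equations of degree $\le d$ has a number of components bounded in terms of $(r,e,d)$. It picks one of the given equations not vanishing on $Y$ and observes that $Y\cap H$ has at most $ed$ components $Z_i$, each of degree at most $ed$. It then recurses into each $X\cap Z_i$ separately. Because the hypersurface is chosen branch by branch from the original equations, no genericity or linear combinations are needed. On the other hand, only the number and the maximal degree of the pieces are recorded, not their total degree, so the resulting bound is crude: unwinding the recursion gives about $d^{n(n+1)/2}$.

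Your approach reduces to $n+1$ generic combinations and carries the invariant ``total degree after $j$ cuts is at most $d^j$.'' This is essentially the refined B\'ezout inequality of Fulton and Heintz. It gives the explicit bound $d^{n+1}$, in fact $d^n$, since the last cut only removes isolated points not in $V(f)$.

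The two ideas also combine. Suppose, as in the paper, you choose for each active component separately some $f_i$ not vanishing on it. Your total-degree invariant and the drop in dimension along each branch are then preserved. So the generic-combination step, and with it the need for an infinite ground field, can be dropped while still obtaining the bound $d^n$.
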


\begin{proof}
Since the number of irreducible components remain invariant when passing to the reduced scheme, we may assume that we are working in the category of varieties. We induct on the stronger claim that given any irreducible subvariety $Y \subseteq \bb{P}^n$ of dimension $r$ and degree $e$, any subvariety $X \subseteq Y$ that is set theoretically cut out by equations of degree $\leq d$ has its number of irreducible components bounded in terms of $r,e$ and $d$. We will induct on the dimension $r$, where the case of $r = 0$ is clear.
\par 
Assuming we have proven the claim for all dimension $\leq r$ irreducible subvarieties, let $Y$ be an irreducible subvariety of dimension $r+1$ and degree $e$. Let $H$ be a hypersurface of degree $\leq d$ and consider the subvariety $Y \cap H$ where $H$ does not contain $Y$. Then $\dim (Y \cap H) \leq r$ and as $\deg(Y \cap H) \leq ed$, there are at most $ed$ many irreducible components $Z_1,\ldots,Z_m$. For each component $Z_i$, the degree is bounded above by $ed$ and hence by induction, if $X$ is a subvariety of that irreducible component which is cut out by equations of degree $\leq d$, the number of irreducible components of $X$ will be bounded in terms of $e,d$ and $r$. Then the number of irreducible components of a subvariety $X$ of $Y \cap H$ cut out by equations of degree $\leq d$ may be bounded by taking its sum of the irreducible components of $X \cap Z_i$, which gives us a bound in terms of $e,d$ and $r$ as desired. 
\end{proof}

By Theorem \ref{HilbertSchemeBound} and Proposition \ref{HilbertScheme2}, it suffices to now obtain an upper bound on the Gotzmann number $\vphi(Q_i)$ and the degree of equations that cut out $B \times \tilde{A}$. We know that $\tilde{A}$ is cut out by equations of at most degree $C_2$, but this might not cut out $\tilde{A}$ scheme-theoretically. We remedy this through the following propositions.  

\begin{proposition} \textnormal{\cite[Corollary 3.3]{Lia22}}\label{CastelnuovoBound1}
Let $S = K[x_1,\ldots,x_n]$ and let $I$ be a homogeneous ideal of $S$. Let $I$ be a homogeneous ideal in $S$ of dimension $r \geq 1$ and generated by forms of degree at most $d$. Then 
$$\reg(\sqrt{I}) \leq d^{(n-1)2^{r-1}}$$
where $\reg$ denotes the Castelnuovo--Mumford regularity. 
\end{proposition}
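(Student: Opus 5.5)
Since this is quoted from \cite[Corollary 3.3]{Lia22}, we could simply cite it. To make the dependence on $(n,r,d)$ transparent, though, the plan is a two-step argument. First bound the degrees of generators of $\sqrt{I}$ in terms of $d$ and $n$. Then feed that into a dimension-sensitive, doubly exponential bound on Castelnuovo--Mumford regularity of the Bayer--Mumford / Caviglia--Sbarra type. That bound has the shape $\reg(J)\le D^{c\cdot 2^{r-1}}$ for a homogeneous ideal $J$ of dimension $r$ generated in degrees $\le D$. The point is that the exponent $2^{r-1}$ comes only from the dimension, while the base $D$ comes from the degrees of the generators.

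\textbf{Step 1: degrees of generators of the radical.} Write $\sqrt{I}=\bigcap_i \mathfrak{p}_i$, the intersection of the minimal primes of $I$. By the refined B\'ezout inequality (applied to $V(I)$, cut out by forms of degree $\le d$), the sum of the degrees of the components $V(\mathfrak{p}_i)$ is at most $d^{\,n-1}$; we allow for embedded dimension, which is where $n-1$ rather than $n-r$ may enter. A reduced variety of total degree $e$ is set-theoretically cut out by forms of degree $\le e$, via cones over generic projections. To pass from set-theoretic to ideal-theoretic generation, we would use Kollár's effective Nullstellensatz or the argument in \cite{Lia22}. The target is that $\sqrt{I}$ is generated, at least up to saturation, by forms of degree $\le d^{\,n-1}$. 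Saturation does not affect the regularity of the associated ideal sheaf, which is what is used later for the Gotzmann number.

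\textbf{Step 2: regularity from generator degrees and dimension.} Apply the regularity bound to $J=\sqrt{I}$, which has dimension $r$ and generators in degree $\le D:=d^{\,n-1}$. That bound is proved by induction on $r$, cutting by a general linear form $\ell$. For the induction to go through we need $\ell$ to be a nonzerodivisor on $J^{\mathrm{sat}}$, and we need $\reg(J+\ell)$ together with the length of the torsion $(J:\ell)/J$ to be controlled. Each cut roughly squares the bound, which produces $D^{2^{r-1}}=d^{(n-1)2^{r-1}}$. The base case $r=1$, points or curves in projective terms, gives regularity at most about $\deg\le D$.

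\textbf{Main obstacle.} The main difficulty is Step 1: bounding the generator degrees of the radical ideal itself, not just of a set-theoretic defining system. Set-theoretic equations are easy to produce, but the radical can require much higher degrees. This is exactly the gap the paper flags just before the statement, where $\tilde A$ is cut out set-theoretically but not scheme-theoretically. I would first try to control the Hilbert function of $\sqrt{I}$ directly through the degrees of the components. If that fails, I would defer to Liang's argument, which handles reduced ideals directly by bounding the regularity of each prime component and of their intersections.
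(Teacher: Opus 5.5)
The paper gives no argument for this statement; it simply imports \cite[Corollary 3.3]{Lia22}. Your reconstruction has a genuine gap in Step 2, which is exactly where the exponent has to come out right.

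To reach $d^{(n-1)2^{r-1}}$ you need your constant $c$ to equal $1$, i.e.\ a bound $\reg(J)\le D^{2^{r-1}}$ for a (radical) ideal $J$ of dimension $r$ generated in degrees $\le D$. No such bound exists. Already for $r=1$, let $J=(q_1,q_2)\subseteq K[x,y,z]$ be generated by two general conics, so $J$ is the ideal of four reduced points. It is radical and saturated, generated in degree $D=2$, and the Koszul resolution $0\to S(-4)\to S(-2)^2\to J\to 0$ gives $\reg(J)=3>D$. More generally, a reduced complete intersection of $n-1$ forms of degree $D$ has regularity $(n-1)(D-1)+1$.

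So the base case of your induction is false when phrased in terms of generator degrees. Any regularity bound in terms of generator degree (Bayer--Mumford, Caviglia--Sbarra) must depend on $n$. For $r=1$, such a bound evaluated at $D=d^{n-1}$ is at least $(n-1)(d^{n-1}-1)+1>d^{n-1}$, so Step 1 followed by Step 2 cannot return the stated inequality. Your base case only looks right because you silently switch invariants: ``regularity at most about $\deg\le D$'' uses the degree of the scheme $V(\sqrt I)$, not the degrees of generators, and the four-point example shows these differ. Likewise, ``each cut roughly squares the bound'' is asserted rather than derived, and it is the quantitative heart of the statement.

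The form $d^{(n-1)2^{r-1}}=(d^{n-1})^{2^{r-1}}$ indicates which invariant to propagate: $e:=\sum_i\deg V(\mathfrak p_i)$, which is $\le d^{n-1}$ by the B\'ezout inequality in the first half of your Step 1. What you need is a regularity bound for radical ideals in terms of $e$ and the dimension, of the shape $\reg(J)\le e^{2^{r-1}}$. It is proved by the hyperplane-section induction you describe, but tracking $e$:
\begin{itemize}
\item a general hyperplane section of a reduced scheme is reduced (Bertini) and has no larger $e$, since isolated points drop out;
\item reduced points satisfy $\reg\le e$;
\item in the inductive step, the number of degrees $t\ge m$ in which $H^0(\mathcal{I}_X(t))\to H^0(\mathcal{I}_{X\cap H}(t))$ fails to be surjective is at most $h^1(\mathcal I_X(m-1))\le h^0(\mathcal O_X(m-1))$, and reducedness lets you bound this in terms of $e$ and $m$.
\end{itemize}
Some care in that last count is needed to land on the exponent $2^{r-1}$.

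In this argument the generator-degree claim of Step 1 plays no role. That is just as well, because Kollár's effective Nullstellensatz bounds the exponent $N$ with $(\sqrt I)^N\subseteq I$, not the degrees of generators of $\sqrt I$. So your proposed passage from set-theoretic to ideal-theoretic generation does not work as written.

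Finally, the paper uses this proposition only to get some bound, depending on $(g,g(B),M,\delta)$, on the degrees of equations cutting out $\tilde A$ scheme-theoretically. A weaker doubly exponential bound would therefore serve the paper's purposes, but it would not prove the proposition as stated.
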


Recall that $\reg(J)$ of a homogeneous ideal $J$ is an upper bound on the minimal $D$ such that $J$ may be defined by equations of degree at most $D$. Hence there is some $C_3 > 0$, depending only on $(g,g(B),M,\delta)$, such that $\tilde{A}$ is scheme-theoretically cut out by equations of at most degree $C_3$. By Proposition \ref{HeightBoundEquations3}, we may embed $B \times \tilde{A}$ into some large projective space $\bb{P}^{N'}$ so that it is cut out by equations of degree at most $d$, with $d$ and $N'$ depending only on $(g,g(B),M,\delta)$. We are now ready to complete the last step of this section.

\begin{proposition} \label{HilbertScheme1}
There exists a constant $C_4 > 0$, depending only on $(g,g(B),M,\delta)$, such that the number of irreducible components of $\Hilb^{Q_i}_{B \times \tilde{A}}$ is at most $C_4$ for $1 \leq i \leq C_2$.  
\end{proposition}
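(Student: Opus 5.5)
The plan is to apply Theorem \ref{HilbertSchemeBound} to $X = B \times \tilde{A} \subseteq \bb{P}^{N'}$ and then count components with Proposition \ref{HilbertScheme2}. By the discussion preceding the statement, $B\times\tilde{A}$ is scheme-theoretically cut out in $\bb{P}^{N'}$ by equations of degree at most $d$, where $d$ and $N'$ depend only on $(g,g(B),M,\delta)$. Concretely, we take the bound $C_3$ for $\tilde{A}$ obtained from Proposition \ref{CastelnuovoBound1} (applied to the radical of the ideal generated by the degree $\le C_1$ equations of Proposition \ref{HeightBoundEquations3}), together with fixed equations for $B$, and combine them through a Segre embedding.

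The first step is to bound the Gotzmann number $\vphi(Q_i)$ of $Q_i(t)=it+g(B)$, viewed as a Hilbert polynomial in $\bb{P}^{N'}$ (more precisely, of the complementary polynomial $\binom{t+N'}{N'}-Q_i(t)$ in Kweon's normalization). For this I would use Gotzmann's explicit formula. Any admissible Hilbert polynomial has a unique expansion
$$P(t)=\binom{t+a_1}{a_1}+\binom{t+a_2-1}{a_2}+\cdots+\binom{t+a_s-(s-1)}{a_s},$$
and the Gotzmann number is the number $s$ of terms. For $P(t)=it+g(B)$, this gives $i$ terms with $a_j=1$ followed by constant terms, whose number is bounded by roughly $g(B)+\binom{i}{2}$ up to sign bookkeeping. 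As a result $\vphi(Q_i)$ is bounded by a polynomial in $i\le C_2$ and $g(B)$, uniformly in $N'$. If the paper's convention (the Gotzmann number of $I_Z$) needs more care, I would instead invoke the standard fact that the ideal sheaf of a one-dimensional subscheme of degree $i$ and arithmetic genus $p_a$ is $m$-regular for some $m$ bounded in $i,p_a$. This is Gotzmann regularity: the bound depends only on the Hilbert polynomial, not on the ambient dimension.

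Next, set $t=\max\{\vphi(Q_i),d\}$, which is bounded in terms of $(g,g(B),M,\delta)$. Theorem \ref{HilbertSchemeBound} then embeds $\Hilb^{Q_i}_{B\times\tilde{A}}$ into $\bb{P}(\wedge^{\binom{t+N'}{N'}-Q_i(t)}R_t)$ as a closed subscheme cut out by equations of degree at most $\binom{t+N'+1}{N'}-Q_i(t+1)+1$. Both the ambient dimension and this degree bound are bounded in terms of $t$ and $N'$, hence in terms of $(g,g(B),M,\delta)$. Proposition \ref{HilbertScheme2} then bounds the number of irreducible components by some $C_4$ depending only on these quantities. Taking the maximum over $1\le i\le C_2$ finishes the proof.

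The main obstacle is the uniform Gotzmann number bound: we must confirm that it depends only on $i$ and $g(B)$, and that the complementary-polynomial convention used here matches Kweon's. A secondary point needing care is that $B\times\tilde{A}$ is cut out \emph{scheme-theoretically} by bounded-degree equations, since Theorem \ref{HilbertSchemeBound} requires this. This is exactly what the regularity bound of Proposition \ref{CastelnuovoBound1} supplies, provided we replace $\tilde{A}$ by its reduced structure, which does not affect sections over $B$ because sections of a reduced target factor through the reduction.
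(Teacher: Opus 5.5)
Your proposal is correct and follows the paper's proof: bound the Gotzmann numbers of the finitely many polynomials $Q_i$, apply Theorem \ref{HilbertSchemeBound} with $t=\max\{\vphi(Q_i),d\}$, and count components with Proposition \ref{HilbertScheme2}; the only difference is that the paper just takes a maximum over the finitely many possibilities, while you invoke Gotzmann's explicit bound, which is independent of $N'$. One wording fix in your last remark: passing to $\tilde{A}_{\mathrm{red}}$ is harmless because the \emph{source} $B$ is reduced, so every morphism $B\to\tilde{A}$ factors through $\tilde{A}_{\mathrm{red}}$.
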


\begin{proof}
Let $Q'_i(n) = \binom{n+N'}{N'} - Q_i(n)$. Then clearly there is an upper bound on $\vphi(Q'_i)$ for $0 \leq i \leq C_2$ that depends only on $C_2, g(B)$ and $N'$ by simply taking the maximum of $\vphi(Q'_i)$, which again depends only on $(g,g(B),M,\delta)$. By Theorem \ref{HilbertSchemeBound}, we obtain an upper bound on the degree of equations cutting out $\Hilb^{Q'_i}_{B \times \tilde{A}}$ inside some $\bb{P}^{N''}$ with $N''$ bounded from above by $(g,g(B),M,\delta)$. Proposition \ref{HilbertScheme2} gives us an upper bound on the number of irreducible components as desired.

\end{proof}

\begin{proof}[Proof of Theorem \ref{UniformBoundednessGoodReduction1}]
By Propositions \ref{HomScheme1}, \ref{HomScheme2} and \ref{IrredOpen}, it suffices to bound the number of irreducible components of $\bigsqcup_{i=1}^{C_2} \Hilb^{Q_i}_{B \times \tilde{A}}$. By Proposition \ref{HilbertScheme1}, the number of irreducible components is bounded in terms of $g,g(B),M$, and $\delta$. 
\end{proof}

We obtain the following corollary for abelian varieties with everywhere good reduction immediately, using \eqref{eqn:Deligne}. Recall that for a ppav $(A,\lambda)$, we let $L$ be a symmetric ample line bundle inducing $\lambda$ and let $\h_A$ denote the N\'eron--Tate height with respect to $L$. 

\begin{theorem} \label{UniformBoundednessGoodReduction2}
Let $K = \bb{C}(B)$ for a smooth projective curve $B$ over $\bb{C}$ and let $\delta > 0$ be a constant. Then there exists a natural number $N \geq 1$, depending only on $(g,g(B),\delta)$, such that for any principally polarized abelian variety $A/K$ of dimension $g$ with $\Tr_{K/\bb{C}}(A) = 0$ having everywhere good reduction, we have
$$\left \{ x \in A(K) \mid \h_{A}(x) \leq \delta \right\} \leq N.$$
\end{theorem}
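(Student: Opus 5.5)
The plan is to reduce to Theorem \ref{UniformBoundednessGoodReduction1} by showing that everywhere good reduction forces a uniform bound on $h_{\omega}(A)$ in terms of $g$ and $g(B)$.

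First, I will show that $A/K$ is semistable, since good reduction at every place is in particular semistable reduction. The set $S_0$ of places of bad reduction is then empty.

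Second, I will bound the Faltings height by a case split on $g(B)$.
\begin{itemize}
\item If $g(B)\ge 1$, then $2g(B)-2+|S_0|=2g(B)-2\ge 0$. Deligne's inequality \eqref{eqn:Deligne} gives $h_{\omega}(A)\le g(g(B)-1)$, and this quantity depends only on $g$ and $g(B)$.
\item If $g(B)=0$, the inequality does not apply, because $2g(B)-2+|S_0|=-2$. Here I would argue instead that the moduli map $B'\to\mathcal{A}_{g,3}$ is constant. To set this up, pass to $K'=K(A[3])$, which is unramified over $K$ by N\'eron--Ogg--Shafarevich, since there are no bad places. Because $\mathbb{P}^1$ is simply connected, this gives $K'=K$. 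Since $A$ extends to an abelian scheme over all of $B$, the level-3 moduli point is a morphism $\mathbb{P}^1\to\mathcal{A}_{g,3}$. This morphism is constant by hyperbolicity of $\mathcal{A}_{g,3}$, the same input the paper already uses in Corollary \ref{UniformSemistable2}. Consequently $h_{\omega}(A)=\deg p_A^*\omega=0$.
\end{itemize}
Rather than rely on hyperbolicity, one could alternatively note that the Arakelov inequality with $|S|=0$ and $g(B)=0$ forces $\deg\le -g<0$. Since $\deg p_A^*\omega\ge 0$ by nefness of the Hodge bundle, this is a contradiction unless the map is constant.

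Third, I will apply Theorem \ref{UniformBoundednessGoodReduction1} with $M:=\max\{g(g(B)-1),0\}+1$. This yields $N=N(g,g(B),M,\delta)=N(g,g(B),\delta)$ bounding the number of elements of $A(K)/\Tr_{K/\mathbb{C}}(A)(\mathbb{C})$ with $\h_A\le\delta$. Since $\Tr_{K/\mathbb{C}}(A)=0$, the quotient is $A(K)$ itself, which gives the claim.

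Two normalization points need care. The first is that $\h_A$ in Theorem \ref{UniformBoundednessGoodReduction1} and here are both defined via a symmetric ample $L$ inducing $\lambda$. By the remark in Section \ref{sec: abelianschemes}, the N\'eron--Tate height depends only on the polarization, so the two agree. The second is the $g(B)=0$ case, which I expect to be the only step needing any argument. In fact the abelian varieties there are constant, so they have nonzero trace and the statement holds vacuously, unless $g=0$.
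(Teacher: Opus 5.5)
Your proof is correct and follows the paper's route. Deligne's inequality \eqref{eqn:Deligne} with no bad places bounds $h_{\omega}(A)$ by $g(g(B)-1)$, Theorem~\ref{UniformBoundednessGoodReduction1} then gives the uniform count, and your hyperbolicity argument for $g(B)=0$ is sound (the paper leaves this case implicit but uses the same input in Corollary~\ref{UniformSemistable2}).

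Drop the suggested alternative for $g(B)=0$, though. The inequality \eqref{eqn:Deligne} is only available when $2g(B)-2+|S_0|\ge 0$, and it genuinely fails outside that range: a constant family over $\mathbb{P}^1$ has degree $0>-g$. So it cannot force constancy unless you use the refined form with $g$ replaced by the rank of the Kodaira--Spencer map.
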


\section{Reducing to the Principally Polarized Case} \label{sec: ZarhinTrick}
We finally handle the case of general abelian varieties by reducing to the case of principally polarized abelian varieties using Zarhin's trick \cite{Zar76}. This is a standard trick and has been used in various theorems to extend the ppav case to the case of general abelian varieties, for example in Falting's proof of the Mordell conjecture \cite{Fal83b}. 
\par 
Combining Theorems \ref{UniformGeneral1} and \ref{UniformBoundednessGoodReduction1}, we have proven the following theorem.

\begin{theorem} \label{Polarized1}
Let $K = \bb{C}(B)$ be a function field over $\bb{C}$ where $B$ is a smooth projective curve over $\bb{C}$. Let $(A,\lambda)$ be a principally polarized abelian variety of dimension $g$ over $K$ with $\Tr_{K/\bb{C}}(A) = 0$. Then there exist constants $c_1,c_2 > 0$ depending on $g,g(B)$ such that the set
$$\{x \in A(K) \mid \h_A(x) \leq c_2 \max\{h_{\Fal}(A),1\}\}$$ lies on $\divv(s)$ for some nonzero $s \in H^0(A,L^{c_1})$ where $L$ induces $2 \lambda$ on $A$.
\end{theorem}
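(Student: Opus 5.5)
The proof splits into two cases according to whether $A$ has at least one place of bad reduction over $K$.

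\emph{Case 1: $A$ has bad reduction somewhere.} Apply Theorem \ref{UniformGeneral1} directly. It gives $n=n(g,g(B))$ and $\eps=\eps(g,g(B))$ such that all $x\in A(K)$ with $\h_{A,L'}(x)\le \eps h_\omega(A)$ lie on $\divv(s)$ for some nonzero $s\in H^0(A,L'^n)$, where $L'=4\Theta$. Three normalization adjustments are then needed.
\begin{enumerate}
\item The line bundle in Theorem \ref{Polarized1} induces $2\lambda$, whereas $L'$ induces $8\lambda$. Néron--Tate heights depend only on the polarization, so $\h_{A,L'}=4\h_{A,L}$. Moreover $L'$ and $L^4$ have the same first Chern class, so they differ by an algebraically trivial bundle. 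Since $A$ is ppav, there is a translate with $t_a^*L^4\cong L'$. To avoid this, I would rerun the argument of Theorem \ref{UniformGeneral1} with $L^2$ in place of $4\Theta$; both are symmetric and represent multiples of $\Theta$ that are at least $3\Theta$ in the relevant sense. Alternatively, replace $s$ by a section of $L^{4n}$ whose divisor contains $\divv(s)$. This can be done by multiplying by a section of a suitable degree-$0$ twist, since $L^{4n}\otimes L'^{-n}\in \Pic^0$ and, for $n\ge 1$, we can instead take $L'^{n}\otimes P$ with $P\in\Pic^0$ and use $L^{8n}=L'^{n}\otimes(L^{4n}\otimes P^{-1})$ with the second factor ample, hence having a nonzero section.
\item Since $h_\omega(A)=h_{\Fal}(A)$ by Section \ref{sec: abelianschemes}, the bound $\eps h_\omega$ is the bound $\eps h_{\Fal}$.
\item The theorem asks for $\max\{h_{\Fal},1\}$. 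Bad reduction forces $h_{\Fal}(A)\ge c(g)>0$: each bad place contributes $\lambda_{\partial X}\ge 1$ at level $3$, and $h_\omega\gg h_{\partial X}$ by Proposition \ref{GlobalHeightAmple1}. So shrinking $\eps$ by a factor $c(g)^{-1}$ handles this.
\end{enumerate}

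\emph{Case 2: everywhere good reduction.} After passing to $K(A[3])$, the extension is unramified. By \eqref{eqn:Deligne}, $h_{\Fal}(A)\le \frac g2(2g(B)-2)\le M(g,g(B))$. Using $\Tr_{K/\bb{C}}(A)=0$, Theorem \ref{UniformBoundednessGoodReduction2} with $\delta=1$ gives at most $N(g,g(B))$ points with $\h_A(x)\le 1$. Take $c_2\le 1/\max\{M,1\}$; then every point in the set has height $\le 1$, so the set is finite of size $\le N$.

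A set of at most $N$ points lies on $\divv(s)$ for some nonzero $s\in H^0(A,L^{c_1})$ as soon as $c(c_1)=\dim H^0(A,L^{c_1})>N$. This holds because imposing vanishing at $N$ points is at most $N$ linear conditions. Since $c(k)=2^g k^g$ for $L$ inducing $2\lambda$, taking $c_1$ with $2^gc_1^g>N$ suffices. Finally, set $c_1$ to be a common multiple (or maximum with appropriate powering) of the two cases' exponents: if $T\subseteq\divv(s)$ with $s\in H^0(L^a)$, then $T\subseteq\divv(s^b)$ with $s^b\in H^0(L^{ab})$. Set $c_2$ to be the minimum of the two cases' constants.

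The main obstacle is the line-bundle bookkeeping in Case 1, item (i): matching $4\Theta$ with the bundle inducing $2\lambda$ while keeping the conclusion of the form ``contained in a divisor of a section of a fixed power.'' I would first try the $\Pic^0$-twist argument above, which only requires that ample bundles on abelian varieties have nonzero sections. The remaining steps are direct citations.
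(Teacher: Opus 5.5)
Your decomposition is the same as the paper's. The paper proves Theorem \ref{Polarized1} simply by combining Theorem \ref{UniformGeneral1} (at least one bad place) with the good-reduction results of Section \ref{sec: GoodReduction}. Your Case 2 and items (i)--(ii) are fine. In (i) the factor is off: a bundle representing $4\Theta$ induces $4\lambda$, so $L'$ is algebraically equivalent to $L^2$ and $\h_{A,L'}=2\h_{A,L}$. Your $\Pic^0$-twist argument still works once the exponents are corrected.

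\textbf{The gap is item (iii).} The claim that each bad place contributes $\lambda_{\partial X}\ge 1$ holds only at places of stable bad reduction. At a place of bad but potentially good reduction, $A$ acquires good reduction over $K'=K(A[3])$. The level-$3$ moduli point is then integral at the places above it, so $\lambda_{\partial X}=0$ there. The conclusion itself is also false. Take a nontrivial twist of a constant abelian variety $A_0/\bb{C}$ by a ramified cocycle, e.g.\ $ty^2=x^3-x$ over $\bb{C}(t)$. It has $\Tr_{K/\bb{C}}(A)=0$, bad reduction at the branch points, and $h_{\Fal}(A)=0$. For such $A$, Theorem \ref{UniformGeneral1} controls only the points with $\h_A(x)\le \eps h_\omega(A)=0$. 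Case 2 does not apply either. So points with $0<\h_A(x)\le c_2$ are covered by neither case. (The paper's one-line combination passes over this point as well.)

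\textbf{A repair by integrality.} Let $p_A\colon B'\to\mathcal{A}^*_{g,3}$ be the moduli map over $K'$.
\begin{itemize}
\item If $p_A$ is nonconstant, then since $\omega$ is ample, $h_{\Fal}(A)=\frac{1}{[K':K]}\deg_{B'}p_A^*\omega$ is a positive element of $\frac{1}{[K':K]}\bb{Z}$. Since $[K':K]\le|\mathrm{GL}_{2g}(\bb{F}_3)|$, this gives $h_{\Fal}(A)\ge c(g)$, which is what (iii) actually needs.
\item If $p_A$ is constant, then $A_{K'}\cong A_{0,K'}$ for a constant $A_0$, since $\mathcal{A}_{g,3}$ is a fine moduli space. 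Write the image of $x\in A(K)$ in $A_0(K')$ as $\phi\circ j+a$, with $j\colon B'\to\Jac(B')$ an Abel--Jacobi map, $\phi\in\Hom(\Jac(B'),A_0)$ and $a\in A_0(\bb{C})$. Let $L_0$ be a symmetric bundle on $A_0$ in the class corresponding to $L$. Using $[n]^*L_0\cong L_0^{n^2}$, one gets
\[
[K':K]\,\h_{A,L}(x)=\deg\big(j^*\phi^*L_0\big)\in\bb{Z}_{\ge 0},
\]
and this vanishes only when $\phi=0$. Hence for $c_2<1/[K':K]$, the set in the theorem consists only of points of height $0$. Those are exactly the points Theorem \ref{UniformGeneral1} controls when there is a bad place; everywhere-good $A$ is handled by your Case 2.
\end{itemize}
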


Here, we let $h_{\Fal}(A)$ be the stable Faltings height; in other words, if $K'/K$ is an extension for which $A$ has semistable reduction, then 
$$h_{\Fal}(A) = \frac{1}{[K':K]} h_{\Fal}(A/K'),$$
where $h_{\Fal}(A/K')$ is the usual Faltings height of $A$ over $K'$. Before we continue, we will state a result of Nakamayae \cite{Nakamayae} which will allow us to pass from points lying on $\divv(s)$ to points lying on a proper abelian subvariety. For a finite set $\Gamma \subseteq A(\ovl{K})$, let 
$$\Gamma(k) = \{x_1 + x_2 + \cdots + x_k \mid x_i \in \Gamma\}.$$ Set $$\ovl{\Gamma}(k) = \bigcup_{i=0}^{k} \Gamma(k)$$
where by convention we define $\Gamma(0) = \{0\}$. 
\begin{theorem} \label{Nakamayae1} \textnormal{\cite[Theorem 1]{Nakamayae}}
Let $K$ be an algebraically closed field of characteristic $0$. Let $A$ be a $g$-dimensional abelian variety over $K$ and $L$ an ample line bundle on $A$. Let $s \in H^0(A,L)$ be a nonzero section. Suppose $s$ vanishes on $\ovl{\Gamma}(g)$ where $\Gamma$ and $\ovl{\Gamma}(g)$ is as above. Then there exists a proper abelian subvariety $H \subseteq A$ such that 
$$|(\Gamma+H)/H| \cdot \deg_L(H) \leq \deg_L(A).$$
\end{theorem}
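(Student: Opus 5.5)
The statement is Nakamaye's theorem \cite[Theorem 1]{Nakamayae}, quoted as a black box. If I had to reconstruct it, I would run a zero-estimate argument in the style of Philippon and Masser--W\"ustholz, specialised to a single section vanishing on a sum set. The goal is to turn vanishing of $s$ on $\ovl{\Gamma}(g)$ into a degree inequality by intersecting translates of $\divv(s)$.

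\textbf{Step 1: cut down by translates.} Set $D=\divv(s)$, an effective divisor with $\deg_L(D)$ controlled by $c_1(L)^g$. For $\gamma\in\Gamma$, the translate $D-\gamma$ contains $\ovl{\Gamma}(g-1)$, since $\ovl{\Gamma}(g-1)+\gamma\subseteq\ovl{\Gamma}(g)\subseteq D$. I would build inductively a chain of intersections
$$Z_k=\bigcap_{j\le k}(D-\gamma_j),$$
always choosing $\gamma_j\in\Gamma$ so that $D-\gamma_j$ does not contain any component of $Z_{j-1}$ passing through $\ovl{\Gamma}(g-k)$. When such a choice is possible, $\dim$ drops by one at each step, and B\'ezout with respect to $L$ keeps the total degree of $Z_k$ bounded by $\deg_L(A)$; the normalisation matters here, because $D\in|L|$ itself.

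\textbf{Step 2: the obstruction case.} Suppose at some stage a component $W$ of $Z_{k-1}$ meeting $\ovl{\Gamma}(g-k+1)$ is contained in $D-\gamma$ for every $\gamma\in\Gamma$. Then $W+\Gamma\subseteq D$, and iterating shows that $W$ is stable under translation by the group generated by differences of elements of $\Gamma$, up to closure. In that case take $H$ to be the stabiliser (connected component) of $W$, which is an abelian subvariety. The count then comes from the following fact: the translates $W+\gamma$ for $\gamma$ in distinct cosets of $H$ are distinct components of a degree-$\le\deg_L(A)$ cycle, each of degree at least $\deg_L(H)$. This yields $|(\Gamma+H)/H|\deg_L(H)\le\deg_L(A)$.

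\textbf{Step 3: the generic case.} If no obstruction ever occurs, then after $g$ steps $Z_g$ is a zero-cycle of degree at most $\deg_L(A)$ containing all of $\Gamma$. This is the case $H=0$, giving $|\Gamma|\le\deg_L(A)$. If $Z_g$ turns out empty, that contradicts the vanishing on $\ovl{\Gamma}(0)=\{0\}$.

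\textbf{Main obstacle.} The hardest step is Step 2. The difficulty is showing that the stabiliser is genuinely positive-dimensional or $0$, and keeping the degree bookkeeping sharp, with constant $1$ rather than some multiple. I would instead rely on Nakamaye's intersection-theoretic proof, which is exactly what the paper does by citing it.
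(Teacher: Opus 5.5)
Deferring to \cite[Theorem 1]{Nakamayae} is exactly what the paper does: the statement is quoted without proof and used only as a black box. Your reconstruction, however, would not work as written, and the failure is in the logic of Step~2, not merely in its degree bookkeeping.

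\emph{Stability under $\Gamma-\Gamma$ fails.} The inclusion $W+\Gamma\subseteq D$ gives no reason for $W$ to be stable under $\Gamma-\Gamma$, and this is false in general. On $A=E_1\times E_2$ let $D=E_1\times Q+P\times E_2$, with $P,Q$ nonempty finite and $Q\supseteq\ovl{U}(2)$, and take $\Gamma=T\times U$ with $|U|\ge 2$. Then $W=E_1\times\{0\}$ is a component of $D-\gamma_1$ through $0$ and lies in every $D-\gamma$, $\gamma\in\Gamma$. But its stabiliser $E_1\times\{0\}$ does not contain $\Gamma-\Gamma$. Were your claim true, all translates $W+\gamma$ would coincide and there would be nothing to count.

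\emph{No common cycle for the count.} The count needs the translates $W+\gamma$ to be components of a single equidimensional set of controlled degree, and your construction provides none. Once $\operatorname{codim}W\ge 2$ they are not components of $D$. They also need not lie in $Z_{k-1}$, since you only intersected along a chosen chain $\gamma_1,\dots,\gamma_{k-1}$. The classical remedy (Philippon, Masser--W\"ustholz) is to intersect over the whole sumset, $Z_r=\bigcap_{\gamma\in\ovl{\Gamma}(r)}(D-\gamma)$. Then $\ovl{\Gamma}(g-r)\subseteq Z_r$ and $Z_{r+1}\subseteq Z_r-\gamma$ for every $\gamma\in\ovl{\Gamma}(1)$. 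Since $\dim Z_0=g-1$ and $0\in Z_g$, some $r<g$ has $\dim Z_{r+1}=\dim Z_r$. For a top-dimensional component $W$ of $Z_{r+1}$, each $W+\gamma$ with $\gamma\in\Gamma$ is then a component of the one set $Z_r$.

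\emph{Two inputs still missing.} Even with this remedy, your sketch does not supply:
\begin{enumerate}
\item a refined B\'ezout bound, $\sum\deg_L\le\deg_L(A)$ over the top-dimensional components of $Z_r$, which is a highly improper intersection of divisors lying in the different linear systems $|t_\gamma^*L|$;
\item the passage from $W$ to $H$. Distinct translates of $W$ correspond to cosets of the full stabiliser $S$, not of $S^0$. When $\dim W>\dim S^0$, the inequality $\deg_L(W)\ge\deg_L(H)$ you invoke compares cycles of different dimensions and is not justified.
\end{enumerate}
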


One obstacle in utilizing Theorem \ref{Nakamayae1} is that $H$ is only defined over the algebraic closure. We circumvent this in the following proposition. 

\begin{proposition} \label{Nakamayae2}
Let $K$ be a field of characteristic $0$ and let $A$ be an abelian variety over $K$. Let $L$ be an ample line bundle. Let $H\subsetneq A_{\ovl{K}}$ be a proper algebraic subgroup of $A_{\ovl{K}}$ and let $x \in A(K)$ lie on $H$. Then there exist integers $N,D > 0$, depending only on $\deg_L(H)$, such that $[N]x$ lies on a proper abelian subvariety $B'/K$ with $\deg_L(B') \leq D$.
\end{proposition}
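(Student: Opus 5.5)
We pass from $H$ to its identity component and then to a Galois-stable abelian subvariety. Let $H^0$ denote the identity component of $H$. It is an abelian subvariety of $A_{\ovl{K}}$, and $[H:H^0]$ is bounded in terms of $\deg_L(H)$: the components of $H$ are translates of $H^0$, each has degree $\deg_L(H^0)$, and their degrees sum to $\deg_L(H)$. Hence $[H:H^0] \leq \deg_L(H)$. Setting $N_1 = [H:H^0]!$, we get $[N_1]x \in H^0(\ovl{K})$ with $\deg_L(H^0) \leq \deg_L(H)$.

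The difficulty is that $H^0$ need not be defined over $K$. We would replace it by a $K$-rational subvariety. Since $x' := [N_1]x$ is $K$-rational, $x' \in \sigma(H^0)$ for every $\sigma \in \Gal(\ovl{K}/K)$. So $x' \in B'' := \bigcap_\sigma \sigma(H^0)$, which is a $\Gal$-stable algebraic subgroup and therefore defined over $K$. It is proper, since it is contained in $H^0 \subsetneq A$.

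The main obstacle is bounding the degree of $B''$ and of its component group, uniformly. The conjugates $\sigma(H^0)$ may be infinite in number as $\sigma$ varies, but an abelian subvariety of $A_{\ovl{K}}$ of bounded degree can be handled as follows. By rigidity, abelian subvarieties are defined over the field of definition of the torsion, so the orbit is finite. More usefully, the intersection stabilizes after at most $g$ steps. We would pick conjugates $H_1 = H^0, H_2, \ldots, H_k$ with $k \leq 2g$ so that each new intersection either drops in dimension or drops in number of components. Then $B'' = H_1 \cap \cdots \cap H_k$. By Bézout-type bounds for intersections in $\bb{P}^M$ (embedding via a power of $L$, say $L^3$, which is very ample), $\deg_L(B'') \leq \deg_L(H^0)^{k} \leq \deg_L(H)^{2g}$, up to constants depending on $g$. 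To ensure $k$ is bounded, note that a strictly decreasing chain of algebraic subgroups in which each step lowers either the dimension or the component count has length bounded in terms of $g$ and the degree, because the component count is at most the degree. This degree bound also controls $[B'':(B'')^0]$ by the first paragraph's argument.

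Finally, set $B' = (B'')^0$. The identity component of a $K$-group is defined over $K$. Let $N_2 = [B'':B']!$, which is bounded in terms of $\deg_L(B'')$. Then $N = N_1 N_2$ gives $[N]x \in B'(K)$, and $B'$ is a proper abelian subvariety over $K$ with $\deg_L(B') \leq \deg_L(B'') \leq D$. Here $D$ and $N$ depend only on $\deg_L(H)$ and $g$; the dependence on $g$ is absorbed since $\dim A$ is fixed in the application.
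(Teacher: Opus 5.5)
Your argument is correct in substance, but its key middle step differs from the paper's. The outer steps match the paper: pass to the identity component using $[H:H^0]\le\deg_L(H)$, and at the end take the identity component of a $\Gal(\ovl{K}/K)$-stable intersection of conjugates and kill its bounded component group.

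\textbf{The different middle step.} The paper invokes R\'emond's theorem on fields of definition of abelian subvarieties. This bounds the total number $k$ of Galois conjugates of $H^0$ by a constant depending on $g$; the paper intersects all of them and bounds the degree by B\'ezout. You never bound the orbit. Instead you add conjugates greedily, only when they strictly shrink the current group, and bound the number of steps by a descending-chain argument in the lexicographic order (dimension, number of components). The key fact is that a proper closed subgroup of the same dimension has the same identity component and index at least $2$.

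\textbf{What each route buys.} Yours replaces a deep arithmetic input by refined B\'ezout alone, at the price of a much worse but still uniform bound. Your side remark on fields of definition, made quantitative, would put you back on the paper's route: all endomorphisms, hence all abelian subvarieties, are defined over $K(A[3])$, giving at most $|\mathrm{GL}_{2g}(\mathbb{F}_3)|$ conjugates. Your route also has a bonus. For an abelian subvariety, $\deg_L(H^0)\ge(\dim H^0)!$, so the number of dimension strata is bounded by $\deg_L(H)$. Hence your constants can be made independent of $g$, matching the statement literally. Counting all conjugates cannot do this: a factor of a Weil restriction has bounded degree but unboundedly many conjugates.

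\textbf{One claim to correct.} The bound $k\le 2g$, and the resulting $\deg_L(B'')\le\deg_L(H)^{2g}$, are false as stated. At fixed dimension the component count can drop many times, already among finite subgroups. Your next sentence has the right idea, but the degree is not fixed along the chain, so it must be tracked. Here is a precise version.
\begin{itemize}
\item Work in a $K$-embedding by $L^3$, so that $\deg\sigma(H^0)=\deg H^0=:d_0$.
\item Set $G_i=H_1\cap\cdots\cap H_i$. Refined B\'ezout gives $\deg G_i\le d_0^{\,i}$, so $G_i$ has at most $d_0^{\,i}$ components.
\item A stratum of constant dimension entered at step $j$ lasts at most $j\log_2 d_0$ further steps, since each same-dimension step at least halves the component count.
\item There are at most $\dim H^0+1$ strata, and the first lasts one step because $H^0$ is connected.
\end{itemize}
This gives an explicit recursive bound on $k$, hence on $\deg_L(B'')$, $D$ and $N$.
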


\begin{proof}
We may replace $L$ with $L^3$ and hence assume that $L$ is very ample. Then the number of connected components of $H$ is bounded above by $\deg_L(H)$. Replacing $x$ with $[\deg_L(H)!]x$, we may assume that $x$ lies on the identity component of $H$ and thus assume that $H$ is an abelian subvariety. 
\par 
Next, let $H_1 = H, H_2,\ldots,H_k$ be the $\Gal(\ovl{K}/K)$-conjugates of $H$. Note that as $H$ is an abelian subvariety, $k$ is uniformly bounded in terms of $g$ by \cite[Theorem 1.1]{Rem20} (cf. \cite[Introduction]{Sev22}). As $x$ is a $K$-point of $H$, it must lie in the intersection $\bigcap_{i=1}^{k} H'_i$. 
\par 
Note that $\bigcap_{i=1}^{k} H'_i$ is an algebraic subgroup defined over $K$ as it is $\Gal(\ovl{K}/K)$-stable. Hence the connected component of the identity is also defined over $K$ and is geometrically irreducible \cite[\href{https://stacks.math.columbia.edu/tag/0B7R}{Tag 0B7R}]{stacks-project}. Let $B'$ be this connected component. Then $\deg_L(B') \leq \deg_L(\bigcap_{i=1}^{k} H'_i)$ and $B'$ is necessarily a proper abelian subvariety of $A$ defined over $K$. 
\par 
The number of connected components of $\bigcap_{i=1}^{k} H'_i$ is clearly bounded from above by its degree with respect to $L$. It follows that we may find an $N$, depending only on $\deg_L(H)$, so that $[N]x$ lies on $B'$, as desired.\end{proof}

We next recall Zarhin's trick \cite[Section 4]{Zar23}. Let $A$ be an abelian variety over $K$ with a polarization $\lambda: A \to A^t$ of degree $d$. We wish to construct a principal polarization on $A^4 \times (A^t)^4$ over $K$. Fix some $s \geq 1$ such that $s\equiv -1 \pmod d$ and write $s = \sum_{i=1}^{4} a_i^2$ as a sum of four squares, with the $a_i$ being non-negative integers. Let 
$$I = \left( \begin{matrix} a_1 & -a_2 & -a_3 & -a_4 \\ a_2 & a_1 & a_4 & -a_3 \\ a_3 & -a_4 & a_1 & a_2 \\ a_4 & a_3 & -a_2 & -a_1 \end{matrix} \right)\in M_{4\times4}(\mathbb{Z}).$$ This may naturally be viewed as an element of $\End(A^4)$. Consider the isogeny 
$$\pi: A^8 = A^4 \times A^4 \to (A^t)^4 \times A^4, (x_1,x_2,x_3,x_4,y) \mapsto (\lambda(x_1),\ldots,\lambda(x_4), I(x)-y).$$
Then there exists a principal polarization $\mu$ on $X = (A^t)^4 \times A^4$ over $K$ that pulls back to $\lambda_{A^8}$ under $\pi$. One important fact we need is that if $L_X$ is the line bundle inducing $2 \mu$, then $\deg_{L_X}(A)$ and $\deg_{L_X}(A^t)$ are bounded in terms of $d$ and $g$. 

\begin{proposition} \label{PolarizationDegree1}
Let $A'$ be any of the four coordinate copies of $A$ inside $X = (A^t)^4 \times A^4$. Then $\deg_{L_X}(A') = 2d (g!)$. Similarly if $(A')^t$ denotes any of the four coordinate copies of $A^t$ inside $X$, then $\deg_{L_X}((A')^t)$ can be bounded solely in terms of $d$ and $g$.
\end{proposition}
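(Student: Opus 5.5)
We compute $\deg_{L_X}(A')$ by pulling everything back along the isogeny $\pi$. The copy $A'\subseteq X$ sits inside the second factor $A^4$ of $X=(A^t)^4\times A^4$, say as $\{(0,0,0,0)\}\times(\text{$j$-th coordinate copy of }A)$. Its preimage under $\pi$ consists of those $(x,y)$ with $\lambda(x_i)=0$ for all $i$ and $I(x)-y\in A'$. The identity component of this preimage is the copy $A''=\{(0,y): y\in A \text{ in slot } j\}$ (up to sign), and $\pi|_{A''}:A''\to A'$ is an isomorphism ($y\mapsto -y$). On the other hand $\pi^*\mu=\lambda_{A^8}$, i.e. $\pi^*L_X$ induces $2\lambda^{\times 8}$ on $A^8$, so its restriction to $A''$ induces $2\lambda$ on $A$. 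Therefore
$$\deg_{L_X}(A')=(c_1(L_X|_{A'})^g)=(c_1(\pi^*L_X|_{A''})^g)=\big(c_1(\mathcal{M})^g\big),$$
where $\mathcal{M}$ is a line bundle on $A$ inducing $2\lambda$. By Riemann--Roch for abelian varieties, $(c_1(\mathcal{M})^g)=g!\,\chi(\mathcal{M})$ and $\chi(\mathcal{M})^2=\deg\phi_{\mathcal{M}}=\deg(2\lambda)=2^{2g}d$. I would double-check the normalization here (whether $\deg\lambda=d$ or $d^2$ in the paper's convention), since that governs whether the stated value $2d\,(g!)$ comes out literally. The key point is only that the number depends on $d$ and $g$ alone, and I would adjust the constant to match the convention.

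For the copies $(A')^t$ in the first factor, the isogeny does not restrict to an isomorphism, so we argue by degree of pullback instead. Let $Z=\pi^{-1}((A')^t)^0$. Then $\pi|_Z:Z\to (A')^t$ is an isogeny whose degree divides $\deg\pi$. Since $\ker\pi$ is contained in $\ker\lambda^{4}\times(\cdot)$ with $|\ker\pi|=d^4$ or so, this degree is bounded in terms of $d,g$. Consequently $\deg_{\pi^*L_X}(Z)=\deg(\pi|_Z)\cdot\deg_{L_X}((A')^t)$, so $\deg_{L_X}((A')^t)\le \deg_{\pi^*L_X}(Z)$.

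It remains to bound $\deg_{\pi^*L_X}(Z)$. Here $Z$ is an abelian subvariety of $A^8$ of dimension $g$, namely a graph of the form $\{(x,y): x\in A \text{ in slot } i,\ y=I(x)\}$, lying in the kernel conditions. Its degree for a bundle inducing $2\lambda^{\times 8}$ is computed by restricting $\lambda^{\times8}$ along the embedding $x\mapsto (x e_i, I(x e_i))$. This gives the polarization $2(1+s)\lambda$ on $A$, because the columns of $I$ have squared norm $s$. Hence $\deg=g!\,(2(1+s))^g\sqrt{\deg\lambda}$, and this is bounded in terms of $d,g$ once we choose $s\le d$ with $s\equiv -1\pmod d$.

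The main obstacle is bookkeeping: identifying the correct identity components of preimages and tracking the normalization of $\deg\lambda$. I expect these to be routine once the explicit description of $\pi$ is used.
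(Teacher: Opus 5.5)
Your argument is correct and is essentially the paper's proof. For the copies of $A$, you use a coordinate copy of $A$ in $A^8$ that $\pi$ maps isomorphically onto the given copy, with $\pi^*L_X$ inducing $2\lambda^{\times 8}$. For the copies of $A^t$, you use the graph $\{(xe_i, I(xe_i))\}$, on which the polarization restricts to $2(1+s)\lambda$ because each column of $I$ has squared norm $s$, and then apply the projection formula with $\deg(\pi|_Z)\ge 1$.

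Your caution about the constant is warranted. Riemann--Roch gives $g!\,2^g\sqrt{\deg\lambda}$, or $2^g g!\,d$ if $d$ denotes $\chi$ of a bundle inducing $\lambda$, and neither equals the literal $2d\,(g!)$ when $g\ge 2$. This is harmless, since only boundedness in $(d,g)$ is used later, provided $s$ is chosen $\le d$ as you do.
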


\begin{proof}
Let $L_A$ be a line bundle inducing $2 \lambda_A$ on $A$ and $L_{A^8}$ a line bundle inducing $2 \lambda_{A^8}$ on $A^8$. Since $\mu$ pulls back to $\lambda_{A^8}$ under $\pi$, it follows that $\pi^*L_X = L_{A^8}$. Let $A_1$ denote the first coordinate in $(A^t)^4 \times A^4$. It follows from the projection formula that if $A' \subseteq A^8$ is such that $\pi(A') = A_1$ and $\pi$ is a finite morphism when restricted to $A'$, then
\begin{equation} \label{eq: Projectionformula1} \deg(\pi: A' \to A_1) \deg_{L_X}(A_1)  = \deg_{L_{A^8}}(A').
\end{equation} We may take the fifth copy of $A$ inside $A^8$ as such an $A'$, as $\pi$ maps $A'$ isomorphically to $A_1$. Since $\lambda_A^8$ restricts to $\lambda_A$ on $A'$, equation \eqref{eq: Projectionformula1} then gives
$$\deg_{L_X}(A_1) = \deg_{L_{A^8}}(A') = \deg_{2 \lambda_A}(A) = 2d (g!),$$
where the last equality is due to the fact that $\lambda_A$ is a degree $d$ polarization. 
Similarly if $A_1^t$ denotes the first copy of $A^t$ in $X$, then the abelian subvariety $A''$ given by 
$$A'' = \{ (x,0,0,0,I(x)) \text{ for } x \in A \} \subseteq A^8$$
maps surjectively to $A_1^t$ under $\pi$ with some finite degree. We now calculate $\deg_{L_{A^8}}(A'')$. We first view $A''$ as the image of $A_1$ under $f = (\id,0,0,0,I) \in \End(A^8)$ and we identify $A_1$ with $A$. We now write $f = \sum_{i=1}^{8} f_i$, where $f_i$ is the $i$th coordinate. Then $f_1^*L_{A^8} = L_A$ and $f_i^*L_{A^8} = 0$ for $i=2,3,4$. For $5 \leq i \leq 8$, we have 
$$\sum_{i=5}^{8} f_i^*L_{A^8} = (a_1^2 + a_2^2 + a_3^2 + a_4^2) L_A = s L_A.$$ 
Hence $f^* \lambda_A^8 = (s+1) \lambda_A$, and so 
$$\deg_{f^*L_{A^8}}(A_1)  = (s+1)2d(g!)$$
which implies that $\deg_{L_{A^8}}(A'')$ is bounded in terms of $(d,g)$, as desired.\end{proof}

We are now ready to show that Theorem \ref{Polarized1} will imply the following uniform statement for abelian varieties which are polarizable of degree $d \geq 1$. 

\begin{theorem} \label{Polarized2}
Let $K = \bb{C}(B)$ be a function field, where $B/\mathbb{C}$ is a smooth projective curve. Let $A/K$ be an abelian variety of dimension $g$ with $\Tr_{K/\bb{C}}(A) = 0$ and with polarization $\lambda:A\to A^t$ of degree $d$. Then there exist constants $c_2$ and $c_3$, depending only on $(g,g(B),d)$, such that for any torsion $x\in A(K)$, there exists an $m \leq c_2$ such that either $[m]x$ or $\lambda([m]x)$ lies in a proper abelian subvariety $B'$ of either $A$ or $A^t$ respectively that is defined over $K$ and polarizable of degree $\leq c_3$.  
\end{theorem}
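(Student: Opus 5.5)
The plan is to pass to the principally polarized abelian variety $X = (A^t)^4 \times A^4$ of Zarhin's trick, apply Theorem \ref{Polarized1} there to torsion points, and then descend to $A$ or $A^t$ using Theorem \ref{Nakamayae1} together with Proposition \ref{Nakamayae2}.

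First I will check that $X$ satisfies the hypotheses of Theorem \ref{Polarized1}. It is defined over $K$ with principal polarization $\mu$. Its trace is $\Tr_{K/\bb{C}}(X) = \Tr_{K/\bb{C}}(A^t)^4 \times \Tr_{K/\bb{C}}(A)^4 = 0$, since $A^t$ is isogenous to $A$ and so also has trivial trace.

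Now let $x \in A(K)$ be torsion. I form the torsion subgroup $\Gamma$ generated by $x$ and embed it into the first $A$-coordinate of $X$, writing $\iota(x)$. Every point of $\Gamma$, and of $\ovl{\Gamma}(2g')$ with $g' = 8g = \dim X$, is torsion. Hence each such point has $\h_X = 0 \le c_2\max\{h_{\Fal}(X),1\}$. Theorem \ref{Polarized1} then gives a nonzero $s \in H^0(X, L_X^{c_1})$ vanishing on all of $\ovl{\Gamma}(g')$, with $c_1$ depending only on $(8g, g(B))$.

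Applying Theorem \ref{Nakamayae1} over $\ovl{K}$ to $X$ with the ample bundle $L_X^{c_1}$ produces a proper abelian subvariety $H \subsetneq X_{\ovl{K}}$ such that $|(\Gamma+H)/H|\cdot\deg(H) \le \deg_{L_X^{c_1}}(X)$. The right side is bounded in terms of $g$ and $c_1$, since $\deg_{L_X}(X) = 2^{8g}(8g)!$ for a principal polarization. So the order of $\iota(x)$ modulo $H$ is bounded by some $m_0$, which means $[m_0]\iota(x) \in H$ with $\deg H$ bounded. By Proposition \ref{Nakamayae2}, some $[N m_0]\iota(x)$ lies in a proper abelian subvariety $B''\subsetneq X$ defined over $K$, with $\deg_{L_X}(B'')\le D$.

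The remaining task is to turn $B''\subsetneq X$ into a proper subvariety of $A$ or of $A^t$, and I expect this to be the main obstacle. A proper subvariety of $X$ need not meet the first factor properly, for example $B''$ could be $A_1 \times (\text{something proper})$.

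To handle this I will not embed $x$ in a single coordinate. Instead I use the diagonal-type point $y = \pi(x,\dots,x,0,\dots,0) = (\lambda(x),\lambda(x),\lambda(x),\lambda(x), I(x,x,x,x))\in X(K)$, or better the whole orbit $\pi(\Gamma^8)$. Points of $\pi(\Gamma^8)$ lie in $\pi(\Gamma^8)$ itself, and the subgroup generated by $\Gamma^8$ in $A^8$ "fills" $A^8$ modulo any proper subvariety only if $x$ has large order.

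Concretely, I pull $B''$ back under the isogeny $\pi$ of bounded degree to get a proper subvariety $\pi^{-1}(B'')^0 \subsetneq A^8$ of bounded degree. Its intersection with the coordinate $A$ containing $[m]x$ is then either proper, giving the first alternative for $A$, or all of that copy. In the latter case I instead use the $A^t$-coordinates via $\lambda$, comparing with the copy $A''$ from Proposition \ref{PolarizationDegree1}, and obtain $\lambda([m]x)$ in a proper subvariety of $A^t$.

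The degrees of the resulting subvarieties are controlled by Proposition \ref{PolarizationDegree1} and the projection formula. Their polarization degrees are then bounded by restricting $L_X$ (respectively $\lambda$), which gives $c_3$. The choice of a good point in $\Gamma^8$, so that the projection to some factor is proper, is the delicate bookkeeping I would work out via a pigeonhole over the eight factors.
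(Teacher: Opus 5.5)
Your framework matches the paper's: Zarhin's trick, trivial trace of $X$, Theorem \ref{Polarized1} putting every torsion point of $X(K)$ on a single $\divv(s)$, then Theorem \ref{Nakamayae1} and Proposition \ref{Nakamayae2}. You also correctly see that placing $x$ in one coordinate fails. But the step that actually proves the theorem is deferred as ``bookkeeping'', and the concrete plan you give for it does not work. The subvariety from Theorem \ref{Nakamayae1} is only constrained along its input set. Suppose the input lies in the first $A$-coordinate, or in the cyclic group generated by $\pi(x,\dots,x,0,\dots,0)$. Those multiples all lie in the proper $g$-dimensional abelian subvariety $\{\pi(u,u,u,u,0,0,0,0):u\in A\}$, which is a legitimate output of Theorem \ref{Nakamayae1}. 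Then in your ``latter case'' $B''$ contains that whole copy and says nothing about the $A^t$-coordinates, so there is nothing to ``use via $\lambda$''. Pulling $B''$ back to $A^8$ does not change this. A binary fallback from one $A$-copy to the $A^t$-copies is also not the right case distinction.

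The missing idea is to fix the input set \emph{before} applying Theorem \ref{Nakamayae1}, so that it contains multiples of $x$ in every $A$-factor and multiples of $\lambda(x)$ in every $A^t$-factor simultaneously. The paper takes all points $([a_1]x,\dots,[a_4]x,[a_5]\lambda(x),\dots,[a_8]\lambda(x))$ with $0\le a_i\le N$. Your $\pi(\Gamma^8)=\lambda(\Gamma)^4\times\Gamma^4$ with $\Gamma=\langle x\rangle$ would serve equally well, being a finite group of torsion points. One application of Theorem \ref{Nakamayae1} then gives a single proper $C\subsetneq X_{\ovl K}$, and the input set is covered by at most $\deg_{L_X^{c_1}}(X)$ cosets of $C$. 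Since $C\neq X$, it fails to contain some factor $A_i$ or $A^t_j$. The multiples of $x$ (resp.\ $\lambda(x)$) sitting in that factor alone belong to the input set. So with $N>\deg_{L_X^{c_1}}(X)$ fixed in advance, pigeonhole among $[1]x,\dots,[N]x$ in that factor gives $M<N$ with $[M]x\in C\cap A_i$ (resp.\ $[M]\lambda(x)\in C\cap A^t_j$). This is a proper algebraic subgroup of bounded degree by Proposition \ref{PolarizationDegree1}, and Proposition \ref{Nakamayae2} descends it to $K$.

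Incidentally, your instinct to pull back to $A^8$ becomes useful once the input is $\pi(\Gamma^8)$. All eight factors of $A^8$ are copies of $A$, and $\pi^{-1}(C)=\pi^{-1}(C)^0+\ker\pi$ has at most $\deg\pi$ components. The same pigeonhole on a factor not contained in $\pi^{-1}(C)^0$ then always yields the $[m]x$ alternative.
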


\begin{proof}
We first apply Zarhin's trick and embed $A$ into $X = (A \times A^t)^4 = A^4 \times (A^t)^4$. The latter is a principally polarized abelian variety over $K$ with dimension $8g$. Furthermore, the $K/\bb{C}$-trace of $X$ is $0$ as this is true for $A$ and $A^t$.
\par 
Let $x$ be our $K$-torsion point and let $y = \lambda(x) \in A^t$. Let $N$ be a positive integer to be chosen later. We consider the set of all points of the form
\begin{equation}\label{eqn:8gpoints}([a_1]x,[a_2]x,[a_3]x,[a_4]x,[a_5]y,[a_6]y,[a_7]y,[a_8]y) \in X\end{equation}
where $0 \leq a_i \leq N$ for all $i$. Let $\lambda_X$ be the principal polarization and let $L$ be the line bundle inducing $2 \lambda_X$. By Theorem \ref{Polarized1}, we know that all $K$-torsion points of $X$ lie on $\divv(s)$ for $s \in H^0(A,L^{c_1})$, with $c_1$ depending only on $(g,g(B))$. Observe that $\deg_{L^{c_1}}(X)$ is $2c_1 (g!)$. We now apply Theorem \ref{Nakamayae1}, with $\Gamma$ taken to be the set of all points of the form \eqref{eqn:8gpoints}. It follows that there is an abelian subvariety $C$ of $X$ over $\ovl{K}$ along with $N' \leq \deg_{L^{c_1}}(X)$ points $g_1,\ldots,g_{N'}$ of $X(\ovl{K})$ so that $\Gamma$ is contained in $\bigcup_{i=1}^{N'} (g_i + C)$. We now take $N = N'+1$. We also have the inequality \begin{equation}\label{eqn:degCbd}\deg_{L^{c_1}}(C) \leq \deg_{L^{c_1}}(X).\end{equation} Since $\deg_{L^{c_1}}(X)$ is bounded in terms of $c_1$  only, it is bounded in terms of $g$ and $g(B)$.
\par 
Since $C$ is a proper abelian subvariety, if we let $A_1,\ldots,A_4$ and $A^t_1,\ldots,A^t_4$ denote the four copies of $A$ and $A^t$ inside $X$, then either $C \cap A_i$ is a proper subvariety for some $i$ or $C \cap A_j^t$ is a proper subvariety for some $j$. WLOG suppose that \begin{equation}\label{eqn:propersubvar}C \cap A_4^t \text{ is a proper subvariety.} \end{equation} 
Recall that $y=\lambda(x)$. Identifying $[i]y$ with $(0,\dots,0,[i]y)\in X$, since $N \geq N'+1$, we see that for two distinct $1 \leq i < j \leq N$,  we must have $[i]y, [j]y$ lying in the same coset $g_m + C$, and so $[j-i]y$ lies in $C$. Hence we have $[M]y \in C \cap A_4^t$ where $M$ is bounded in terms of $(g,g(B),d)$. Furthermore, $\deg_L(C \cap A_4^t)$ is bounded in terms of $(g,g(B),d)$ by Proposition \ref{PolarizationDegree1} and \eqref{eqn:degCbd}. 
\par 
We now apply Proposition \ref{Nakamayae2} and hence obtain an abelian subvariety $B'$ defined over $K$ with $\deg_L(B')$ bounded in terms of $(g,g(B),d)$ and with $[N]y$ lying in $B'$ and $N$ bounded in terms of $(g,g(B),d)$, as desired.\end{proof}

\begin{theorem} \label{Polarized3}
Let $g,d$ be positive integers and let $K = \bb{C}(B)$ be a function field over $\bb{C}$. Then there exists a constant $N = N(g,g(B),d)$ such that for any abelian variety $A/K$ of dimension $g$ and polarizable of degree $d$ with no $K$-isotrivial part, any $K$-torsion point has order at most $N$.
\end{theorem}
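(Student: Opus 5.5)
We argue by induction on the dimension $g$, with the statement for all dimensions $<g$ (and all polarization degrees) assumed. The case $g=0$ is vacuous. In dimension $g$, take a torsion point $x\in A(K)$ and apply Theorem \ref{Polarized2}. It gives an $m\le c_2(g,g(B),d)$ such that either $[m]x$ lies in a proper abelian subvariety $B'\subsetneq A$, or $\lambda([m]x)$ lies in a proper abelian subvariety $B'\subsetneq A^t$. In both cases $B'$ is defined over $K$ and polarizable of degree $\le c_3(g,g(B),d)$.

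\textbf{Case $B'\subseteq A$.} Since $\Tr_{K/\bb{C}}(A)=0$, any abelian subvariety $B'\subseteq A$ also has trivial trace: a nonzero trace of $B'$ would map nontrivially to $A$ and hence to $\Tr_{K/\bb{C}}(A)$, by the universal property. The point $[m]x$ is a $K$-torsion point of $B'$, and $\dim B'<g$ with polarization degree $\le c_3$. By induction, the order of $[m]x$ is at most $N(\dim B',g(B),c_3)$. Therefore the order of $x$ is at most $m\cdot\max_{g'<g,\,d'\le c_3}N(g',g(B),d')$.

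\textbf{Case $B'\subseteq A^t$.} $A^t$ is isogenous to $A$, so it also has trivial trace, and therefore so does $B'$. By the same argument, $\lambda([m]x)$ has order bounded by $N'$, depending only on $(g,g(B),d)$. Because $\ker\lambda$ has order $d$, the order of $[m]x$ divides $d\cdot N'$. Hence the order of $x$ is at most $m\,d\,N'$.

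The one subtle point is keeping the constants uniform through the recursion. The polarization degree of the subvarieties $B'$ grows at each step, from $d$ to $c_3(g,g(B),d)$. Since the induction is on $g$ for all degrees $d$ simultaneously, and each step uses only finitely many pairs $(g',d')$ with $g'<g$ and $d'\le c_3$, the bound stays uniform.

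We also need to be sure that ``polarizable of degree $\le c_3$'' means $B'$ carries a polarization of some degree $d'\le c_3$. This is exactly what Theorem \ref{Polarized2} produces, namely the restriction of a bounded-degree line bundle. We take the maximum of the inductive bounds over $1\le d'\le c_3$.
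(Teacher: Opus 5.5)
Your proposal is correct and follows the paper's proof essentially verbatim. Both argue by induction on $g$ over all polarization degrees at once, apply Theorem~\ref{Polarized2}, use the inductive hypothesis on $B'$ (which has trivial trace), and bound the order of $x$ by $m\,d\,N'$ with $m\le c_2$, which is exactly the paper's bound $N'(g,c_3)\cdot(c_2 d)$; you also usefully justify why $B'$ inherits trivial trace, which the paper only asserts, and your divisibility step holds under either convention for the degree of $\lambda$, since the exponent of $\ker\lambda$ divides $d$ even if $|\ker\lambda|=d^2$.
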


\begin{proof}
We proceed by induction on $g$. If $A$ is polarizable of degree $d$, note that its dual $A^t$ is also polarizable of degree $d$. Let $x \in A(K)$ be a torsion point. Now applying Theorem \ref{Polarized2}, we know there exists an $m \leq c_2$ where $c_2 = c_2(g,g(B),d)$ such that $[m]x$ or $\lambda([m]x)$ lies on a proper abelian subvariety $B'$ defined over $K$ of dimension $\leq g-1$ and polarizable of degree $\leq c_3 = c_3(g,g(B),d)$. This abelian subvariety $B'$ also satisfies $\Tr_{K/\bb{C}}(B') = 0$. By induction, we obtain that $[m]x$ or $\lambda([m]x)$ has order at most $N'(g,c_3)$, from which it follows that $x$ has order at most $N'(g,c_3) \cdot (c_2 d)$, i.e., order at most a constant depending only on $(g,g(B),d)$.\end{proof}

We can now remove the dependence on $d$ by Zarhin's trick. 

\begin{theorem} \label{Polarized4}
Let $K = \bb{C}(B)$ be a function field of a curve $B$ over $\bb{C}$. There exists a constant $N = N(g,g(B))$ such that for any abelian variety $A/K$ of dimension $g$ with no $K$-isotrivial part, any torsion point $x \in A(K)$ has order at most $N$.
\end{theorem}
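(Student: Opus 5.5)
Given $A/K$ of dimension $g$ with $\Tr_{K/\bb{C}}(A)=0$ but with an arbitrary polarization, the plan is to use Zarhin's trick to pass to a principally polarized abelian variety. We then apply Theorem \ref{Polarized3} with $d=1$ (or, equivalently, its proof through Theorem \ref{Polarized1}). The only input needed is a polarization $\lambda:A\to A^t$ of \emph{some} degree $d$; the bound must not depend on $d$.

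First, set $Z=A^4\times (A^t)^4$. By Zarhin's trick \cite{Zar76} (in the form recalled before Proposition \ref{PolarizationDegree1}), $Z$ carries a principal polarization defined over $K$. It has dimension $8g$, and its $K/\bb{C}$-trace vanishes: the trace commutes with products, and $\Tr_{K/\bb{C}}(A^t)=\Tr_{K/\bb{C}}(A)^t=0$, since $A^t$ is isogenous to $A$ and the trace is an isogeny invariant.

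Second, given a torsion point $x\in A(K)$, consider the point $\iota(x)=(x,0,\ldots,0)\in Z(K)$. Its order equals the order of $x$, because $\iota$ is an injective homomorphism from the first factor. Theorem \ref{Polarized3}, applied to $Z$ with $d=1$ and dimension $8g$, then gives
\[
\mathrm{ord}(x)=\mathrm{ord}(\iota(x))\le N(8g,g(B),1).
\]
This bound depends only on $g$ and $g(B)$, and we take it as $N(g,g(B))$.

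The step needing care is the claim that the constant in Theorem \ref{Polarized3} for principally polarized varieties truly depends only on $(8g,g(B))$. Its inductive proof passes through abelian subvarieties $B'$ whose polarization degree is bounded by $c_3(8g,g(B),1)$, so the constants remain controlled by the dimension and $g(B)$ alone. The trace-vanishing of $Z$ should also be checked via the universal property of the trace: a nonzero map $\Tr(Z)_K\to Z$ would project nontrivially to some factor $A$ or $A^t$. With these observations the reduction is immediate, and I expect no serious obstacle beyond confirming that Zarhin's principal polarization is defined over $K$, which it is because $\lambda$ and the integer matrix $I$ are.
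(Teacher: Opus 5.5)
Your proposal is correct and is essentially the paper's proof. Zarhin's trick embeds $A$ into the $8g$-dimensional principally polarized abelian variety $A^4\times(A^t)^4$ over $K$, which still has vanishing $K/\bb{C}$-trace, and Theorem \ref{Polarized3} is then applied with $d=1$, giving a bound depending only on $(g,g(B))$; your extra checks (trace vanishing of the product via the isogeny $A^t\sim A$, and $K$-rationality of the principal polarization) are points the paper leaves implicit.
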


\begin{proof}
By Theorem \ref{Polarized3}, such a constant exists for $8g$-dimensional principally polarized abelian varieties. But by Zarhin's trick, any abelian variety $A$ of dimension $g$ over $K$ may be embedded into an $8g$-dimensional principally polarized abelian variety, also defined over $K$. We then use that constant as our $N$.      
\end{proof}

We now prove the analogous statement for the Lang--Silverman conjecture.  

\begin{theorem} \label{LangSilvermanTheorem2}
Let $K = \bb{C}(B)$ be the function field of a smooth projective curve $B$ over $\bb{C}$ and let $g,d \geq 1$ be positive integers. There exist $c,m > 0$ depending on $(g,g(B),d)$ such that if $A/K$ is a $g$-dimensional abelian variety with no $K$-isotrivial part, $L$ is a degree $d$ polarization on $A$, and $x \in A(K)$ satisfies
$$\h_{A,L}(x) \leq c \max\{h_{\Fal}(A),1\},$$
then $[m]x$ lies on an abelian subvariety $B'/K$ such that $\deg_L(B')$ is bounded only in terms of $(g,g(B),d)$.
\end{theorem}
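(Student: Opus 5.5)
We would model the argument on the proof of Theorem \ref{Polarized2}, with small points in place of torsion points. First apply Zarhin's trick: for $(A,\lambda)$ with $\lambda$ the degree-$d$ polarization induced by $L$, let $X=A^4\times (A^t)^4$ with the principal polarization $\mu$ and $L_X$ inducing $2\mu$. Then $X$ is an $8g$-dimensional ppav with $\Tr_{K/\bb{C}}(X)=0$. We need to compare three quantities. First, $h_{\Fal}(X)=8h_{\Fal}(A)$, using $h_{\Fal}(A^t)=h_{\Fal}(A)$ and additivity over products. Second, on each coordinate copy $A_i$ of $A$, the restriction of $L_X$ has class a bounded multiple of that of $L$ (Proposition \ref{PolarizationDegree1}). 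Consequently $\h_{X,L_X}$ restricted to points supported in a few coordinates is bounded by $C(g,d)\,\h_{A,L}$, where we also use $\h_{A^t}(\lambda(x))\le C\,\h_{A,L}(x)$ since $\lambda^*$ of a polarization on $A^t$ is a bounded multiple of $L$.

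Next, take $x\in A(K)$ with $\h_{A,L}(x)\le c\max\{h_{\Fal}(A),1\}$ and $y=\lambda(x)$. Form $\Gamma$ as in \eqref{eqn:8gpoints}, consisting of the points $([a_1]x,\dots,[a_4]x,[a_5]y,\dots,[a_8]y)$ with $0\le a_i\le N$. The sums $\ovl{\Gamma}(8g)$ then consist of similar points with coefficients up to $8gN$, whose heights are at most $C(g,d)(8gN)^2\,c\max\{h_{\Fal}(A),1\}$ by quadraticity of $\h$ together with the comparison above. We choose $N=N'+1$ with $N'=\deg_{L^{c_1}}(X)$ bounded in terms of $(g,g(B))$, and then take $c$ small enough that $C(g,d)(8gN)^2c\le c_2$ and $8h_{\Fal}(A)\ge h_{\Fal}(A)$. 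This places all of $\ovl{\Gamma}(8g)$ in the small-height set of Theorem \ref{Polarized1} for $X$, so it lies on $\divv(s)$ with $s\in H^0(X,L_X^{c_1})$, $s\neq0$.

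Now apply Theorem \ref{Nakamayae1} to get a proper abelian subvariety $C\subseteq X_{\ovl{K}}$ with $|(\Gamma+C)/C|\le \deg(X)$. Exactly as before, pigeonhole shows that some coordinate copy, say $A_j$ or $A^t_j$, meets $C$ properly, and that $[M]x\in C\cap A_j$ or $[M]y\in C\cap A_j^t$ with $M\le N$; the degree of the intersection is bounded by Proposition \ref{PolarizationDegree1}. Proposition \ref{Nakamayae2} then yields $[N_1M]x$ (or $[N_1M]y$) in a proper abelian subvariety defined over $K$ of bounded degree. In the dual case we pull back along $\lambda$: $\lambda^{-1}(B'')^0$ has bounded degree and contains a bounded multiple of $x$, since $\ker\lambda$ has order $d^2$ and so $[d^2]$ kills the ambiguity up to the component group, which we absorb by multiplying by its order (bounded by the degree).

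The step we expect to be the main obstacle is the height comparison between $\h_{X,L_X}$ and $\h_{A,L}$ on the relevant points, together with the Faltings height identity for $X$. Here we would invoke that Néron–Tate heights depend only on the polarization class (\cite[Lemma 2.3]{GGK21}) and that the pullback of $\mu$ along $\pi$ is $\lambda_{A^8}$, giving $\h_X(\pi(z))=\h_{A^8,2\lambda}(z)$. Every point of $\Gamma$ is $\pi$ of a point of $A^8$ with coordinates bounded multiples of $x$, so we would write each point in that form.
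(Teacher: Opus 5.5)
Your proposal is correct and follows essentially the same route as the paper: Zarhin's trick, Theorem \ref{Polarized1} applied to the $8g$-dimensional ppav $X$, Nakamaye's Theorem \ref{Nakamayae1}, pigeonhole on one coordinate copy, and Proposition \ref{Nakamayae2}. The only difference is that the paper pulls the section back along $\pi\colon A^8\to X$ and applies Nakamaye on $A^8$ to tuples of multiples of $x$, so only copies of $A$ occur and your extra pullback along $\lambda$ in the dual case is not needed; your accounting for $\ovl{\Gamma}(8g)$ is in fact more careful than the paper's sketch.
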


\begin{proof}
We first handle the case where $A$ is principally polarized. We briefly sketch the argument; it is similar to the proof of Theorem \ref{Polarized2}. Let $N$ be a positive integer to be determined momentarily, and let $c_1,c_2$ be as in Theorem \ref{Polarized1}. If $\h_{A,L}(x) \leq \frac{c_2}{(gN)^2} \max\{h_{\Fal}(A),1\}$, then Theorem \ref{Polarized1} implies that $x,[2]x,\ldots,[gN]x$ all lie on $\divv(s)$ for some $s \in H^0(A,L^{c_1})$. Using Theorem \ref{Nakamayae1} on $\Gamma = \{x,[2]x,\ldots,[N]x\}$ gives us some abelian subvariety $C \subseteq A_{\ovl{K}}$ and a finite set of elements $g_1,\ldots,g_n$ so that $x,[2]x,\ldots,[N]x$ all lie on $g_i + C$ for some $i$. Here, $n$ is bounded in terms of $(g,g(B))$. Then setting $N = n+1$, there exist $1\le i<j\le N$ and $1\le k\le n$ such that $[j]x,[i]x$ lie on $g_k + C$, so that $[j-i]x \in C$. Proposition \ref{Nakamayae2} gives us a multiple $[m]x$ lying on a proper abelian subvariety $B'$ defined over $K$, with $m$ bounded purely in terms of $g$, $g(B)$, and $\deg_L(B')$ is bounded in terms of $g$ and $g(B)$.  This completes the proof in the case where $A$ is principally polarized.
\par 
We now move on to the general case where $A$ is polarized of degree $d$. We apply Zarhin's trick again to get 
$$\pi: A^8 \to X = (A^4) \times (A^t)^4$$
along with line bundle $L_X$ on $X$ inducing twice of a principal polarization. Then we have
$$\h_{A^8,L^8}(x) = \h_{X,L_X}(\pi(x)).$$
Again, as in \eqref{eqn:8gpoints}, we consider tuples 
$$([a_1]x,\ldots,[a_8]x)$$
in $A^8$ where $0 \leq a_i \leq N$, for $N$ a positive integer to be determined later. Then if $\h_{A,L}(x) \leq \frac{c_2}{N^2} \{h_{\Fal}(A),1\}$, we have 
$$\h_{X,L_X}(\pi([a_1]x,\ldots,[a_8]x)) \leq \frac{c_2} \max\{h_{\Fal}(X),1\}$$
since $h_{\Fal}(X) = 8 h_{\Fal}(A)$ by \cite{GLFP25}. Now apply Theorem \ref{Polarized1} on $X$. Then $\pi([a_1]x,\ldots,[a_8]x)$ all lie on $\divv(s)$ for some $s \in H^0(X,L_X^{c_1})$. Pulling back, we obtain that $[a_1]x,\ldots,[a_8]x$ lie on $\divv(s')$ for some $s' \in H^0(A^8,L^{c_1}_{A^8})$. As in the ppav case, applying Theorem \ref{Nakamayae1} with Proposition \ref{Nakamayae2} and taking $N$ to be larger than the number of cosets gives us a proper abelian subvariety $B'$ over $K$ with $\deg_L(B')$ bounded in terms of $(g,g(B),d)$ and an $m > 0$ bounded in terms of $(g,g(B),d)$ such that $[m]x$ lies in $B'$.\end{proof}

As an immediate corollary, we obtain the following.

\begin{corollary} \label{LangSilvermanTheorem1}
Let $K = \bb{C}(B)$ be the function field of a smooth projective curve $B$ over $\bb{C}$ and let $g \geq 1$ be a positive integer. There exists $c > 0$ depending only on $g$ and $g(B)$ such that if $A/K$ is a $g$-dimensional abelian variety with no $K$-isotrivial part, $L$ is a symmetric ample line bundle on $A$, and $x \in A(K)$ is such that $\bb{Z} \cdot x$ is Zariski dense in $A$, then
$$\h_{A,L}(x) \geq c \max\{h_{\Fal}(A),1\}.$$
Here, $h_{\Fal}(A)$ denotes the stable Faltings height of $A$. 
\end{corollary}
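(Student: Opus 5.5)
The plan is to deduce the corollary from Theorem \ref{LangSilvermanTheorem2}, arguing by contradiction. The constants there depend on the polarization degree $d$, whereas here $L$ is an arbitrary symmetric ample line bundle, so the first step is to remove $d$. I would apply Zarhin's trick as in the proof of Theorem \ref{Polarized4}. Let $\lambda=\lambda(L)$. Consider $x'=(x,0,\dots,0)\in A^4\times (A^t)^4$, or rather the image of $(x,0,\dots,0)\in A^8$ under the Zarhin isogeny $\pi$, inside the principally polarized $X$ of dimension $8g$. The relation $\h_{X,L_X}(\pi(z))=\h_{A^8,L_{A^8}}(z)$ (used in the proof of Theorem \ref{LangSilvermanTheorem2}) compares $\h_{A,L}(x)$ with the height of the corresponding point of $X$ up to the factor coming from $c_1(L)$ versus $2\lambda$. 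Also, $h_{\Fal}(X)=8h_{\Fal}(A)$, and $X$ has no $K$-isotrivial part. A cleaner alternative is to note that $\h_{A,L}$ depends only on $c_1(L)$, replace $L$ by a multiple, and work directly in $X$ with $d=1$.

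Next, suppose $\h_{A,L}(x)<c\max\{h_{\Fal}(A),1\}$ with $c$ to be chosen. Apply the principally polarized case of Theorem \ref{LangSilvermanTheorem2} on $X$ to the point coming from $x$. This gives an $m$ bounded in terms of $(g,g(B))$ such that $[m]x$ lands in a proper abelian subvariety $B'$ defined over $K$. Pull this back to $A$ via the coordinate embedding $A\hookrightarrow A^8\to X$: the preimage of $B'$ in $A$ is an algebraic subgroup, and its identity component is a proper abelian subvariety containing a bounded multiple of $x$.

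Then I would conclude as follows. If $[m']x$ lies in a proper algebraic subgroup $H$ of $A$, then the orbit $\bb{Z}\cdot x$ lies in the finite union $\bigcup_{0\le i<m'}([i]x+H)$, which is a proper Zariski closed subset. This contradicts density, so $\h_{A,L}(x)\geq c\max\{h_{\Fal}(A),1\}$. The constant $c$ ends up depending only on $g,g(B)$, since $X$ is principally polarized of dimension $8g$.

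The main obstacle is the first step: making sure the preimage in $A$ of the subvariety obtained in $X$ is genuinely proper. The preimage of a proper subvariety of $X$ could contain all of the copy of $A$. To avoid this, I would use the tuples $([a_1]x,\dots,[a_8]x)$ exactly as in the proof of Theorem \ref{LangSilvermanTheorem2}, so that Theorem \ref{Nakamayae1} produces a coset structure on the $A$-coordinates. Because the resulting $C$ is proper in $X$, some coordinate copy $A_i$ or $A_j^t$ meets $C$ in a proper subgroup, and this transfers to $A$ via $\lambda$, which is an isogeny. If it is instead the dual coordinate that is proper, then $\lambda([m]x)$ lies in a proper subgroup of $A^t$, and pulling back under the isogeny $\lambda$ again gives a proper subgroup of $A$ containing $[m]x$.
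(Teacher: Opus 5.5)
Your proposal is correct and is essentially the paper's argument. The paper obtains the corollary from Theorem \ref{LangSilvermanTheorem2}, whose proof is exactly the Zarhin-trick, tuples $([a_1]x,\dots,[a_8]x)$ and Theorem \ref{Nakamayae1} argument you rerun; the smallness threshold there depends only on $(8g,g(B))$, which is why $d$ drops out, and the contradiction with the Zariski density of $\bb{Z}\cdot x$ comes from the same finite-union-of-cosets observation.

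Two small cautions. First, your aside about reaching $d=1$ by taking multiples of $L$ does not work, since multiples only raise the polarization degree; only Zarhin's trick removes $d$. Second, in the transfer step, for $j\le 4$ the images under $\pi$ of the slot-$j$ tuples have nonzero $A^4$-components, so they do not lie in the coordinate copy $A_j^t$ of $X$. It is cleaner to pull $C$ back under the isogeny $\pi$ to a proper algebraic subgroup of $A^8$, choose a slot copy of $A$ that it meets properly, and pigeonhole there.
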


We will now prove that our uniform bounds only depend on the gonality of $B$ instead of the genus. This will follow from application of Weil restriction.

\begin{proof}[Proof of Theorems \ref{UniformIntroTheorem1} and \ref{UniformIntroTheorem2} for $k = \bb{C}$]
Let $K = \bb{C}(B)$ and let $e$ be the gonality of $B$. Then there exists a morphism $f: B \to \bb{P}^1$ of degree $e$ which realizes $K$ as a field extension of $\bb{C}(t)$ of degree $e$. Let $A/K$ be our abelian variety with $\Tr_{K/\bb{C}}(A) = 0$. Consider $A' = \Res_{K/\bb{C}(t)}(A)$. This is an abelian variety over $\bb{C}(t)$ with dimension $eg$, such that for any scheme $S$ over $\bb{C}(t)$, we have a functorial bijection of sets
$$A'(S) = A(S \times_{\Spec \bb{C}(t)} \Spec K).$$
In particular, we have the canonical equality of points $A'(\bb{C}(t)) = A(K)$. If $B'$ is an abelian variety over $\bb{C}$, we have 
$$\Hom_{\bb{C}(t)}(B'_{\bb{C}(t)}, A') = \Hom_{K}(B'_K,A) = 0$$
and thus we must also have $\Tr_{\bb{C}(t)/\bb{C}}(A') = 0$. We may now apply Theorem \ref{Polarized4} and deduce a uniform bound on the number of torsion points in $A(K)$ that depends only on $\gon(B)$ and $g$, as desired. 
\par 
For Theorem \ref{UniformIntroTheorem2}, we apply the theory of Weil restriction of adelic line bundles by Loughran \cite{Lou15}. Given an ample line bundle $L$ on $A$ over $K$, we obtain a line bundle $L' = \Res_{K/\bb{C}(t)}(L)$ on $A'$ over $\bb{C}(t)$. Furthermore, given $x \in A(K)$, if $y \in A'(\bb{C}(t))$ is the point corresponding to $x$, we have 
$$\h_{A',L'}(y) = \h_{A,L}(x).$$
Note that $A'_{\ovl{\bb{C}(t)}}$ is isomorphic to $A^e_{\ovl{K}}$. Let $x \in A(K)$ be such that $\bb{Z} \cdot x$ is Zariski dense in $A$. Now let $A$ be polarizable of degree $d$, so that $A'$ is polarizable by some $L'$ of degree $de$. Theorem \ref{LangSilvermanTheorem2} tells us that there exist $c_1,m_1>0$ depending only on $(g,d,e)$ such that if $\h_{A',L'}(y) \leq c_1 \max\{h_{\Fal}(A'),1\}$, then $[m_1]y$ must lie in a proper abelian subvariety $C_1$. Note that in applying Theorem \ref{LangSilvermanTheorem2}, the base curve is isomorphic to $\bb{P}^1$ and hence there is no dependence on the genus.   
\par 
We now run an iterative process. Given a proper abelian subvariety $C_n$ of $A'$ over $K$ with a positive integer $m_n$ such that $[m_n]y$ lies in $C_n$, we obtain a constant $c_n > 0$ depending only on $(\deg_{L'}(C_n),m_n,g,d,e)$ for which either:
\begin{enumerate}
\item $\h_{A',L'}(y) \geq c_n h_{\Fal}(C_n)$ and we terminate the process;
\item or there exists a proper abelian subvariety $C_{n+1}$ of $C_n$ defined over $K$ and positive integer $m_{n+1}$ for which $[m_{n+1}]y$ lies in $C_{n+1}$ and $\deg_{L'}(C_{n+1}), m_{n+1}$ are bounded in terms of $(\deg_{L'}(C_n),m_n,g,d,e)$.
\end{enumerate}

Starting with our original $C_1$, observe that our process must terminate in at most $eg$ many steps as $\dim C_{n+1} < \dim C_n$. Let $C_k$ be the final abelian variety. Then $\h_{A',L'}(y) \geq c_k h_{\Fal}(C_k)$ with $c_k$ depending only on $(g,d,e)$ since the constants for $C_1$ are bounded in terms of $(g,d,e)$. It thus suffices to show that for any proper abelian subvariety $C$ of $A'$ that contains $[N]y$ for some $N > 0$, we have $h_{\Fal}(C) \geq h_{\Fal}(A)$. 
\par 
Passing to the algebraic closure $\ovl{\bb{C}(t)}$, we have an isomorphism 
\begin{equation} \label{eq: Isom1}
\iota: A'_{\ovl{\bb{C}(t)}} \xrightarrow{\sim} \prod_{\sigma: K \xhookrightarrow{} \ovl{\bb{C}(t)}} A_{\ovl{K}}^{\sigma}.
\end{equation}
If we let $\sigma_1,\ldots,\sigma_e$ be the different embeddings of $K$ into $\overline{\bb{C}(t)}$, then the point $y$ corresponds to $(\sigma_i(x))_{i=1}^{e}$ under this isomorphism. If $C$ is a proper abelian subvariety of $A'_{\ovl{\bb{C}(t)}}$ containing $[N]y$ for some positive integer $N$, then the projection of $\iota(C)$ to one of the copies of $A_{\ovl{K}}$ must contain $A_{\ovl{K}}$, because $\bb{Z} \cdot x$ is Zariski dense in $A$. Thus $A$ is an isogeny factor of $C$ and so we must have $h_{\Fal}(C) \geq h_{\Fal}(A)$ as we are over a function field of characteristic $0$. This proves Theorem \ref{UniformIntroTheorem2} with an extra dependence on $d$ where $(A,L)$ is a polarization of degree $d$. By using Zarhin's trick as in the proof of Theorem \ref{LangSilvermanTheorem2}, we can remove the dependency on $d$. This completes the proof of Theorem \ref{UniformIntroTheorem2}.
\end{proof}

We now extend to the case where $k$ is an arbitrary field of characteristic $0$. Let $k$ be such a field, let $B/k$ be a smooth projective geometrically integral curve with $K=k(B)$, and let $A/K$ be an abelian variety of dimension $g$. Choose a finitely generated subfield $k_0\subset k$ over $\mathbb{Q}$ and models $B_0/k_0$, $K_0=k_0(B_0)$, and $A_0/K_0$ with
\[(A,B,K)\cong (A_0,B_0,K_0)\otimes_{k_0}k.\]
Fix an embedding $k_0\hookrightarrow\mathbb{C}$ and set
\[B_{\mathbb{C}}=B_0\otimes_{k_0}\mathbb{C},\qquad
K_{\mathbb{C}}=\mathbb{C}(B_{\mathbb{C}}),\qquad
A_{\mathbb{C}}=A_0\otimes_{k_0}\mathbb{C}.\]

\begin{lemma} With notation as above, the following are equivalent: \begin{enumerate}\item$\operatorname{Tr}_{K/k}(A)=0$ \item$\operatorname{Tr}_{K_0/k_0}(A_0)=0$ \item$\operatorname{Tr}_{K_{\mathbb{C}}/\mathbb{C}}(A_{\mathbb{C}})=0$.\end{enumerate} In particular, $A$ has no $K$-isotrivial part if and only if $A_{\mathbb{C}}$ has no $K_{\mathbb{C}}$-isotrivial part.\end{lemma}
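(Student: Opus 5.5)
Everything reduces to one principle: the $K/k$-trace commutes with base change along extensions of the constant field $k$, as long as $K/k$ is a primary (here regular) extension. Since $B$ is geometrically integral, $K_0/k_0$ is regular, so $K_0\otimes_{k_0}k' = k'(B_0\otimes_{k_0}k')$ is a field for every extension $k'/k_0$. In characteristic $0$ the trace is compatible with such base changes (Conrad, \cite{Con06}, Theorem 6.8): for any extension $k'/k_0$,
\[\operatorname{Tr}_{K_0k'/k'}(A_0\otimes_{K_0}K_0k')\;\cong\;\operatorname{Tr}_{K_0/k_0}(A_0)\otimes_{k_0}k'.\]
Applying this with $k'=k$ gives (i)$\iff$(ii), and applying it with $k'=\mathbb{C}$ gives (ii)$\iff$(iii). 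In each case the key point is that an abelian variety over $k_0$ is zero if and only if its base change to $k'$ is zero, which holds because dimension is invariant under field extension.

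If a self-contained argument is preferred over citing the base-change theorem, I would use the characterization $\operatorname{Tr}_{K/k}(A)=0$ if and only if $\Hom_K(C_K,A)=0$ for every nonzero abelian variety $C/k$. This equivalence is exactly the universal property of the trace, which the paper already used in the Weil restriction argument. The direction (ii)$\Rightarrow$(i),(iii) is then a descent argument. Suppose $C/k'$ is nonzero and $\phi\colon C_{K'}\to A_{K'}$ is a nonzero homomorphism, where $K'=K_0k'$. Spread $C$ and $\phi$ out over a finitely generated $k_0$-algebra $R\subseteq k'$. Specializing at a closed point of $\Spec R$ (its residue field is finite over $k_0$) gives a nonzero $C_1$ over a finite extension $k_1/k_0$ with a nonzero map $C_{1,K_0k_1}\to A_{K_0k_1}$. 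Finally, take the Weil restriction from $k_1$ to $k_0$, or equivalently use the norm/trace, to obtain a nonzero map from an abelian variety over $k_0$ to $A_0$. The reverse implications are immediate, since a nonzero map over $K_0$ base changes to a nonzero map.

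The main obstacle is the specialization step. A homomorphism can become zero after specialization, so one must check that it stays nonzero. I would handle this using the rigidity of $\Hom$ schemes of abelian schemes: $\Hom$ is unramified, and in characteristic $0$ it is étale over the base. Hence a nonzero section stays nonzero on a dense open subset of $\Spec R$, and I specialize at a closed point in that open set.
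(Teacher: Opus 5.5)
Your main argument is the paper's: both rest on Conrad's compatibility of the trace with extension of the constant field, followed by invariance of dimension under field extension. The paper uses only that $\operatorname{Tr}_{K_0/k_0}(A_0)\otimes_{k_0}E\to\operatorname{Tr}_{EK_0/E}(A_0\otimes_{k_0}E)$ is an isogeny, while you use that it is an isomorphism in characteristic $0$. Either way, the optional descent argument is not needed.

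One harmless slip: when $k'/k_0$ is transcendental (e.g.\ $k'=\mathbb{C}$), the ring $K_0\otimes_{k_0}k'$ is a domain but not a field. The compositum $K_0k'$ must therefore be read as its fraction field $k'(B_0\otimes_{k_0}k')$, which is what the base-change theorem uses.
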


\begin{proof} By definition, $A$ has no $K$-isotrivial part iff its $K/k$-trace vanishes: \[\operatorname{Tr}_{K/k}(A)=0.\] For any field extension $E\supset k_0$ there is a natural homomorphism \[I'_{E/k_0}:\operatorname{Tr}_{K_0/k_0}(A_0)\otimes_{k_0}E\longrightarrow\operatorname{Tr}_{EK_0/E}(A_0\otimes_{k_0}E)\] which is an isogeny \cite[Theorem 6.6]{Con06}. Taking $E=k$ and $E=\mathbb{C}$ yields isogenies \[\operatorname{Tr}_{K_0/k_0}(A_0)\otimes_{k_0}k\longrightarrow \operatorname{Tr}_{K/k}(A),\qquad\operatorname{Tr}_{K_0/k_0}(A_0)\otimes_{k_0}\mathbb{C}\longrightarrow\operatorname{Tr}_{K_{\mathbb{C}}/\mathbb{C}}(A_{\mathbb{C}}).\] In particular, \[\operatorname{Tr}_{K/k}(A)=0\Longleftrightarrow\operatorname{Tr}_{K_0/k_0}(A_0)=0\Longleftrightarrow\operatorname{Tr}_{K_{\mathbb{C}}/\mathbb{C}}(A_{\mathbb{C}})=0.\] Hence $A_{\mathbb{C}}$ has no $K$-isotrivial part.\end{proof}

\begin{lemma}\label{lem:degreeunchanged} Let $\pi:B_{\C}\to B_0$ be the base change of $B_0/k_0$ along an embedding $k_0\hookrightarrow\mathbb{C}$. For any divisor $D$ on $B_0$ we have
	\[\deg_{B_{\mathbb{C}}}\big(\pi^*D\big)=\deg_{B_0}(D).\]
\end{lemma}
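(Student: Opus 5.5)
The plan is to reduce to divisors supported at a single closed point and then compute both degrees via residue field dimensions. Since both sides are additive in $D$, it suffices to treat $D=[P]$ for a closed point $P\in B_0$ with residue field $\kappa(P)$, a finite extension of $k_0$. By definition, $\deg_{B_0}([P])=[\kappa(P):k_0]$.

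The pullback $\pi^*[P]$ is the effective Cartier divisor cut out by the same local equation, namely the closed subscheme
\[
\Spec\kappa(P)\times_{\Spec k_0}\Spec\mathbb{C}=\Spec(\kappa(P)\otimes_{k_0}\mathbb{C}).
\]
Its degree on $B_{\mathbb{C}}$ is the length of this zero-dimensional scheme, weighted by residue degrees, which over the algebraically closed field $\mathbb{C}$ is just $\dim_{\mathbb{C}}$ of its coordinate ring. That dimension is
\[
\dim_{\mathbb{C}}\bigl(\kappa(P)\otimes_{k_0}\mathbb{C}\bigr)=[\kappa(P):k_0].
\]
Because $\operatorname{char}k_0=0$, $\kappa(P)/k_0$ is separable, so $\kappa(P)\otimes_{k_0}\mathbb{C}\cong\mathbb{C}^{[\kappa(P):k_0]}$ and $\pi^*[P]$ is a reduced sum of $[\kappa(P):k_0]$ distinct points. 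Reducedness is not actually needed for the degree count.

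To keep this rigorous, one uses that $\pi$ is flat, being a base change of $\Spec\mathbb{C}\to\Spec k_0$. Hence the pullback of the ideal sheaf $\mathcal{O}(-[P])$ is the ideal sheaf of the base-changed subscheme, and pullback of divisors commutes with this identification.

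Equivalently, the result is the standard invariance of degree under flat base change of the ground field: $\deg D=\chi(\mathcal{O}(D))-\chi(\mathcal{O})$, and cohomology commutes with flat base change, so $\chi$ is unchanged. The main step needing care is the identification of $\pi^*[P]$ with the scheme-theoretic fiber. This follows from flatness, so no serious obstacle is expected.
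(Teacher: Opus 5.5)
Your proposal is correct and follows the paper's proof: you reduce by additivity to a single closed point, identify $\pi^*[P]$ with $\Spec(\kappa(P)\otimes_{k_0}\mathbb{C})$, and compute its length as $[\kappa(P):k_0]$, with the paper likewise using separability to split this algebra into copies of $\mathbb{C}$. Your flatness justification and the Euler-characteristic reformulation are valid extras that the paper does not need.
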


\begin{proof} Write $D=\sum_c n_c\,[c]$ over closed points $c$ of $B_0$. Let $\pi:B_{\mathbb{C}}\to B_0$ be the projection from the base change $B_{\mathbb{C}} = B_0 \times_{\mathrm{Spec}(k_0)} \mathrm{Spec}(\mathbb{C})$. For each $c$, the pullback divisor $\pi^*[c]$ is the closed subscheme $c \times_{\mathrm{Spec}(k_0)} \mathrm{Spec}(\mathbb{C}) = \mathrm{Spec}(k(c)\otimes_{k_0}\mathbb{C})$ of $B_{\mathbb{C}}$. Because $\mathrm{char}(k_0)=0$, the finite extension $k(c)/k_0$ is separable, so the $\mathbb{C}$-algebra $k(c)\otimes_{k_0}\mathbb{C}$ is isomorphic to a product of copies of $\mathbb{C}$ indexed by the $k_0$-embeddings $k(c)\hookrightarrow\mathbb{C}$. By the definition of degree on a curve over $\mathbb{C}$, $\deg_{B_{\mathbb{C}}}(\pi^*[c])=\operatorname{length}_{\mathbb{C}}(k(c)\otimes_{k_0}\mathbb{C})=[k(c):k_0]=\deg_{B_0}([c])$. Therefore \[\deg_{B_{\mathbb{C}}}(\pi^*D)=\sum_c n_c\,\deg_{B_{\mathbb{C}}}(\pi^*[c])=\sum_c n_c\,[k(c):k_0]=\deg_{B_0}(D).\] \end{proof}

\begin{lemma}Let $L_0$ be a symmetric ample line bundle on $A_0$, and let $L_{\mathbb{C}}:=L_0\otimes_{k_0}\C$. Let $x_0\in A_0(K_0)$, and let $x_\C:=x_0\otimes_{k_0}\mathbb{C}$. With notation as above,
	\[\h_{A_{\C},L_{\C}}(x_{\C})=\h_{A_0,L_0}(x_0).\]
\end{lemma}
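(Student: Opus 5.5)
The plan is to reduce the equality of canonical heights to an equality of naive Weil heights computed with compatible data, and then to check that naive heights agree using Lemma \ref{lem:degreeunchanged}. Since $\h_{A,L}(x)=\lim_{n\to\infty} n^{-2}h_L([n]x)$ for any Weil height $h_L$, it suffices to produce Weil heights $h_{L_0}$ on $A_0(K_0)$ and $h_{L_\C}$ on $A_\C(K_\C)$ that agree on every $K_0$-point $z$, i.e.\ $h_{L_\C}(z\otimes_{k_0}\C)=h_{L_0}(z)$. Multiplication by $n$ commutes with base change, $[n](x_0\otimes\C)=([n]x_0)\otimes\C$, so applying this to $z=[n]x_0$, dividing by $n^2$ and letting $n\to\infty$ gives the claimed equality. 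We may replace $L_0$ by a power $L_0^{\otimes r}$: both canonical heights scale by $r$, and base change commutes with tensor powers. We therefore assume $L_0$ is very ample.

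Next we build the compatible heights geometrically. Choose a basis of $H^0(A_0,L_0)$ giving a closed embedding $A_0\hookrightarrow\bb{P}^M_{K_0}$. Its base change is a basis of $H^0(A_\C,L_\C)=H^0(A_0,L_0)\otimes_{K_0}K_\C$, by flat base change, and gives the base-changed embedding. For a point $z\in\bb{P}^M(K_0)$, spread it out to a morphism $\tilde z:B_0\to\bb{P}^M_{k_0}$. This is possible by the valuative criterion, since $B_0$ is a smooth projective curve. Define $h(z)=\deg_{B_0}(\tilde z^*\mathcal O(1))$, the function-field Weil height relative to the degree on $B_0$, as recalled after Proposition \ref{GlobalHeightAmple2}. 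The base change $\tilde z\otimes\C:B_\C\to\bb{P}^M_\C$ is the spreading out of $z\otimes\C$, and $(\tilde z\otimes\C)^*\mathcal O(1)=\pi^*\tilde z^*\mathcal O(1)$. Lemma \ref{lem:degreeunchanged} then gives equality of degrees. Because $\deg$ depends only on the linear equivalence class of a divisor, we may write $\tilde z^*\mathcal O(1)$ as $\mathcal O(D)$ for a divisor $D$ on $B_0$ and apply the lemma directly.

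The main point needing care is normalization. The canonical height must be computed with the same normalization of places on both sides. On the $\C$ side, places are the points of $B_\C$, each of weight one. On the $k_0$ side, a closed point $c$ must carry weight $[k(c):k_0]$, so that the product formula holds and $h$ equals the degree of the pullback. We adopt this normalization for $K_0$, since it is the natural one for a function field over a non-closed constant field and is the one implicit in Lemma \ref{lem:degreeunchanged}. We also check that the restriction of the embedding to $A_0$ gives a Weil height for $L_0$ in the sense used to define $\h_{A_0,L_0}$, which is immediate from the construction. Independence of the limit from the choice of Weil height, which changes only by $O(1)$, then finishes the argument.
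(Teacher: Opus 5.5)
Your proof is correct, but it takes a different route from the paper's.

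\textbf{The paper's route.} It passes to a finite extension over which $A_0$ is semistable. It then extends a power $L_0^{n}$ to a cubical line bundle $\overline{\mathcal L}_0$ on the N\'eron model $\mathcal N_0\to B_0$ and uses the formula $n\,\h_{A_0,L_0}(x_0)=\deg_{B_0}(s_0^*\overline{\mathcal L}_0)$ for the section $s_0$ extending $x_0$. Next it asserts that the N\'eron model, the cubical extension and the section all base change to $B_\C$, with $s_\C^*\overline{\mathcal L}_\C=\pi^*(s_0^*\overline{\mathcal L}_0)$. It concludes with Lemma \ref{lem:degreeunchanged}.

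\textbf{Your route.} You work with naive heights. You realize a Weil height for $L_0$ as the degree on $B_0$ of the pullback of $\mathcal O(1)$ along the spread-out morphism $B_0\to\bb{P}^M_{k_0}$. The same degree lemma shows this agrees exactly with the corresponding Weil height on $A_\C$ at every $K_0$-point, and you finish with the limit $n^{-2}h([n]x)$.

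\textbf{Comparison.}
\begin{itemize}
\item Your argument is more elementary. It needs no semistable reduction, no N\'eron models or cubical extensions, and no justification that N\'eron models commute with the base change $k_0\hookrightarrow\C$. That compatibility does hold here, but the paper uses it without comment.
\item You also avoid passing to a finite extension of $K_0$. Such an extension can enlarge the constant field and forces heights to be rescaled, which the paper glosses over.
\item The paper's route gives an intrinsic, limit-free expression of $\h$ as a single degree on $B_0$. It follows the same template as the Faltings height lemma that comes right after it.
\end{itemize}

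Your explicit normalization, with weight $[k(c):k_0]$ at a closed point $c$, is exactly the one implicit in the paper's use of $\deg_{B_0}$. So the two arguments prove the same statement.
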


\begin{proof} We replace $K_0$ with a finite extension $K_0$ so that $A_0$ has semistable reduction. Let $\mathcal N_0\to B_0$ be the Néron model of $A_0$. Then for some $n \geq 1$, the symmetric ample line bundle $L_0^n$ on $A_0$ extends uniquely to a cubical line bundle $\overline{\mathcal L}_0$ on $\mathcal N_0$, and the $K_0$-point $x_0$ extends (by the Néron mapping property) to a section $s_0:B_0\to\mathcal N_0$ \cite[Section 14.6]{DS22}. The canonical height satisfies \[n \h_{A_0,L_0}(x_0)=\deg_{B_0}\big(s_0^*\overline{\mathcal{L}}_0\big).\] Base change along $k_0\hookrightarrow\C$ gives the Néron model $\mathcal N_{\C}\to B_{\C}$ of $A_{\C}$, the cubical extension $\overline{\mathcal L}_{\C}$ of $L^n_{\C}$, and the section $s_{\C}$ of $x_{\C}$, with \[s_{\C}^*\overline{\mathcal{L}}_{\C}=\pi^*\big(s_0^*\overline{\mathcal L}_0\big),\] where $\pi:B_{\C}\to B_0$ is the projection induced by base change. By Lemma \ref{lem:degreeunchanged}, \[n \h_{A_{\C},L_{\C}}(x_{\C})=\deg_{B_{\C}}\big(s_{\C}^*\overline{\mathcal{L}}_{\C}\big)=\deg_{B_{\C}}\big(\pi^*(s_0^*\overline{\mathcal{L}}_0)\big)=\deg_{B_0}\big(s_0^*\overline{\mathcal{L}}_0\big)=n \h_{A_0,L_0}(x_0).\]\end{proof}

\begin{lemma} With notation as above, let $\omega=\bigwedge^g e^*\Omega^1_{\mathcal A}$ be the Hodge bundle on $\mathcal A_{g,3}$. For $A_0/K_0$ as above, let $\lambda_{A_0/B_0}$ be the line bundle on $B_0$ obtained from $\omega$ via pullback, so that $h_{\mathrm{Fal}}(A_0):=\deg_{B_0}\lambda_{A_0/B_0}$. Then
	\[h_{\mathrm{Fal}}(A_{\C})=\deg_{B_{\C}}\lambda_{A_{\C}/B_{\C}}=\deg_{B_0}\lambda_{A_0/B_0}=h_{\mathrm{Fal}}(A_0).\]\end{lemma}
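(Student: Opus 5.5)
The plan is to show that the Hodge line bundle of $A_{\C}$ on $B_{\C}$ is the pullback $\pi^*\lambda_{A_0/B_0}$ along the projection $\pi:B_{\C}\to B_0$, and then to conclude with Lemma \ref{lem:degreeunchanged}. We keep the normalization of the previous lemma: after replacing $K_0$ by a finite extension (and enlarging $k_0$ if necessary, which changes neither side since the stable Faltings height is normalized by the degree of the extension), $A_0/K_0$ has semistable reduction and level $3$ structure. Thus $A_0$ defines a $K_0$-point of $\mathcal{A}_{g,3}$, which extends to a morphism $p_0:B_0\to\mathcal{A}^*_{g,3}$ by properness of the minimal compactification, and $\lambda_{A_0/B_0}=p_0^*\omega$.

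Base change along $k_0\hookrightarrow\C$ gives the morphism $p_{\C}:=p_0\times_{k_0}\C:B_{\C}\to\mathcal{A}^*_{g,3}$, i.e.\ $p_{\C}=p_0\circ\pi$ once we regard $\mathcal{A}^*_{g,3}$ and $\omega$ as being defined over $\mathbb{Q}$, hence over $k_0$. On the generic point, $p_{\C}$ is the moduli point of $A_{\C}=A_0\otimes_{k_0}\C$ with its induced level structure. By the uniqueness of extensions in the valuative criterion, it is therefore the morphism used to define $h_{\Fal}(A_{\C})$. Semistability is also preserved, because the Néron model of the base change is the base change of the Néron model, exactly as in the previous lemma. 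Moreover $[K_{\C}:K_{\C}^{\mathrm{orig}}]=[K_0:K_0^{\mathrm{orig}}]$, since the extension is geometric over $k_0$, so the normalizing factors agree. Hence
$$\lambda_{A_{\C}/B_{\C}}=p_{\C}^*\omega=\pi^*p_0^*\omega=\pi^*\lambda_{A_0/B_0},$$
and Lemma \ref{lem:degreeunchanged}, applied to a divisor representing $\lambda_{A_0/B_0}$, gives equality of degrees.

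An alternative route that avoids moduli uses the Néron-model description directly: $e_{\C}^*\Omega^g_{\mathcal{N}_{\C}/B_{\C}}=\pi^*e_0^*\Omega^g_{\mathcal{N}_0/B_0}$, because both smoothness and the formation of relative differentials commute with flat base change.

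The main point needing care is the compatibility of the extended moduli map with base change, which the uniqueness argument above settles. A second subtlety is that $\omega$, and hence $\mathcal{A}^*_{g,3}$, must be defined over $k_0$; for this I would use the Faltings--Chai construction over $\mathbb{Z}[1/3,\zeta_3]$, or else simply use the Néron-model formula.
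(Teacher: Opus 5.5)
Your proposal is correct and follows essentially the same route as the paper: you identify $\lambda_{A_{\C}/B_{\C}}$ with $\pi^*\lambda_{A_0/B_0}$ via base-change compatibility of the Hodge bundle, then invoke Lemma \ref{lem:degreeunchanged}. Your argument is slightly more careful than the paper's, which checks the isomorphism only on generic fibers before asserting it on all of $B_{\C}$; your identity $p_{\C}=p_0\circ\pi$ for the extended moduli map, justified by uniqueness of extensions into the separated $\mathcal{A}^*_{g,3}$, gives the isomorphism globally.
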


\begin{proof} Let $\eta=\mathrm{Spec} K_0$ and $\eta_{\C}=\mathrm{Spec} K_{\C}$ with $K_{\C}=\C(B_{\C})$, and let $[A_0]\in\mathcal A_{g,3}(K_0)$ be the moduli point of $A_0$, with base change $[A_{\C}]\in\mathcal A_{g,3}(K_{\C})$. By definition, \[\lambda_{A_0/B_0}\big|_{\eta}=[A_0]^*\omega,\qquad\lambda_{A_{\C}/B_{\C}}\big|_{\eta_{\C}}=[A_{\C}]^*\omega_{\C}.\] Base change along $k_0\hookrightarrow\C$ gives a canonical isomorphism of Hodge bundles on moduli, \[\omega_{\C}\cong \omega\otimes_{k_0}\C,\] hence on the generic fibers \[[A_{\C}]^*\omega_{\C}\cong \big([A_0]^*\omega\big)\otimes_{k_0}\C.\] It follows that \[\lambda_{A_{\C}/B_{\C}}\cong \pi^*\lambda_{A_0/B_0},\] where $\pi:B_{\C}\to B_0$ is the projection induced by base change from $k_0$ to $\C$. Applying Lemma \ref{lem:degreeunchanged} for $\pi$, we get \[\deg_{B_{\C}}\lambda_{A_{\C}/B_{\C}}=\deg_{B_{\C}}\pi^*\lambda_{A_0/B_0}=\deg_{B_0}\lambda_{A_0/B_0}.\] Thus $h_{\mathrm{Fal}}(A_{\C})=h_{\mathrm{Fal}}(A_0)$.\end{proof}

\begin{corollary} If \[\h_{A_{\mathbb{C}},L_{\mathbb{C}}}(x_{\mathbb{C}})\ge c\max\{\,h_{\mathrm{Fal}}(A_{\mathbb{C}}),1\}\] holds for some $c>0$, then \[\h_{A_0,L_0}(x_0)\ge c\max\{\,h_{\mathrm{Fal}}(A_0),1\}.\]\end{corollary}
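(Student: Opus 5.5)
The corollary is a direct substitution using the two preceding lemmas, which identify the relevant invariants of $(A_0,x_0)$ with those of $(A_{\C},x_{\C})$. First, by the lemma on N\'eron--Tate heights (proved via the N\'eron model, a cubical extension of $L_0^n$, and the degree invariance of Lemma \ref{lem:degreeunchanged}), we have
\[\h_{A_{\C},L_{\C}}(x_{\C})=\h_{A_0,L_0}(x_0).\]
Second, by the lemma on the Hodge bundle, we have $h_{\Fal}(A_{\C})=h_{\Fal}(A_0)$. Substituting both equalities into the hypothesis $\h_{A_{\C},L_{\C}}(x_{\C})\ge c\max\{h_{\Fal}(A_{\C}),1\}$ yields the claimed inequality for $(A_0,x_0)$ with the same constant $c$.

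The one point needing care is normalization. Both lemmas may pass to a finite extension of $K_0$ to reach semistable reduction, so I will check that the same extension, base changed to $\C$, is used on both sides. I will also check that heights are normalized by the degree of the extension relative to $K_0$, respectively $K_{\C}$. The degrees agree, since $[K_0':K_0]=[K_0'\otimes_{k_0}\C:K_{\C}]$ provided we first enlarge $k_0$ so that the extension is geometrically integral over it. With that, the stable Faltings height and the N\'eron--Tate height are unchanged, so the identification is exact.

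I expect no real obstacle here. To finish the reduction to $k=\C$, I will then compare $(A_0,x_0)$ with $(A,x)$ over $k$ by the same argument, applying Lemma \ref{lem:degreeunchanged} with $k$ in place of $\C$. Gonality does not increase under base change to $\C$, and the trace lemma shows that the no-isotriviality hypothesis transfers. The bound $c=c(g,\gon(B))$ then transfers from $\C$ to $k$.
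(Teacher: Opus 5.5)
Your proposal is correct and matches the paper's (implicit) argument: the corollary is just the substitution of the equalities $\h_{A_{\C},L_{\C}}(x_{\C})=\h_{A_0,L_0}(x_0)$ and $h_{\Fal}(A_{\C})=h_{\Fal}(A_0)$ from the two preceding lemmas into the hypothesis. Your extra care on one point is a welcome refinement that the paper glosses over: you use the same semistable extension on both sides, normalized by its degree, after first enlarging $k_0$ so that the extension is geometrically integral.
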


Finally, note that gonality is preserved under base change of constant fields. Applying this to the extensions $\mathbb{C}/k_0$ and $k/k_0$ proves that Theorems \ref{UniformIntroTheorem1} and \ref{UniformIntroTheorem2} hold when $\mathbb{C}$ is replaced by an arbitrary field $k$ of characteristic $0$, and the $K/k$-trace of $A$ is assumed to be $0$.

\printbibliography

@article{DP02,
 author = {David, Sinnou and Philippon, Patrice},
 title = {Minorations des hauteurs normalis{\'e}es des sous-vari{\'e}t{\'e}s de vari{\'e}t{\'e}s abeliennes. {II}.},
 fjournal = {Commentarii Mathematici Helvetici},
 journal = {Comment. Math. Helv.},
 volume = {77},
 number = {4},
 pages = {639--700},
 year = {2002},
}

@article{Paz13,
 author = {Pazuki, Fabien},
 title = {Lower bound of the {N{\'e}ron}-{Tate} height on abelian surfaces},
 fjournal = {Manuscripta Mathematica},
 journal = {Manuscr. Math.},
 issn = {0025-2611},
 volume = {142},
 number = {1-2},
 pages = {61--99},
 year = {2013},
 doi = {10.1007/s00229-012-0593-7},
 zbMATH = {6199400},
 Zbl = {1304.11058}
}

@article{GR25,
 author = {Gaudron, {\'E}ric and R{\'e}mond, Ga{\"e}l},
 title = {Nombre de petits points sur une vari\'et\'e ab\'elienne},
 fjournal = {Journal of the Institute of Mathematics of Jussieu},
 journal = {J. Inst. Math. Jussieu},
 issn = {1474-7480},
 volume = {24},
 number = {3},
 pages = {705--761},
 year = {2025},
 doi = {10.1017/S1474748024000549},
 zbMATH = {8047566}
}

@article{Dav91,
 author = {David, Sinnou},
 title = {Fonctions th{\^e}ta et points de torsion des vari{\'e}t{\'e}s ab{\'e}liennes.},
 fjournal = {Compositio Mathematica},
 journal = {Compos. Math.},
 issn = {0010-437X},
 volume = {78},
 number = {2},
 pages = {121--160},
 year = {1991},
 url = {https://eudml.org/doc/90083},
 zbMATH = {13048},
 Zbl = {0741.14025}
}

@article{HP22,
 author = {Hindry, Marc and Pacheco, Am{\'{\i}}lcar},
 title = {Erratum to: ``{An} analogue of the {Brauer}-{Siegel} theorem for abelian varieties in positive characteristic''},
 fjournal = {Moscow Mathematical Journal},
 journal = {Mosc. Math. J.},
 volume = {22},
 number = {1},
 pages = {169},
 year = {2022},
}

@article{Koi76,
 author = {Koizumi, Shoji},
 title = {Theta relations and projective normality of {Abelian} varieties},
 fjournal = {American Journal of Mathematics},
 journal = {Am. J. Math.},
 issn = {0002-9327},
 volume = {98},
 pages = {865--889},
 year = {1976},
 doi = {10.2307/2374034},
 zbMATH = {3540949},
 Zbl = {0347.14023}
}

@misc{Song25,
      title={Parametrization of geometric Beilinson--Bloch heights via adelic line bundles}, 
      author={Yinchong Song},
      year={2025},
      eprint={2406.19912},
      url={https://arxiv.org/abs/2406.19912}, 
}

@misc{Poi25,
      title={Valuative compactifications of analytic varieties}, 
      author={Jérôme Poineau},
      year={2025},
      eprint={2503.18643},
      primaryClass={math.AG},
      url={https://arxiv.org/abs/2503.18643}, 
}

@misc{Loo21b,
 author = {Looper, Nicole},
 title = {The {Uniform} {Boundedness} and {Dynamical} {Lang} {Conjectures} for polynomials},
 year = {2021},
 url = {https://arxiv.org/abs/2105.05240},
 arXiv = {arXiv:2105.05240}
}

@article{HP16,
 author = {Hindry, Marc and Pacheco, Am{\'{\i}}lcar},
 title = {An analogue of the {Brauer}-{Siegel} theorem for abelian varieties in positive characteristic},
 fjournal = {Moscow Mathematical Journal},
 journal = {Mosc. Math. J.},
 volume = {16},
 number = {1},
 pages = {45--93},
 year = {2016},
}

@book{RLV00,
 author = {Rumely, Robert and Lau, Chi Fong and Varley, Robert},
 title = {Existence of the sectional capacity},
 fseries = {Memoirs of the American Mathematical Society},
 series = {Mem. Am. Math. Soc.},
 volume = {690},
 year = {2000},
}

@book{Rum89,
 author = {Rumely, Robert},
 title = {Capacity theory on algebraic curves},
 fseries = {Lecture Notes in Mathematics},
 series = {Lect. Notes Math.},
 volume = {1378},
 year = {1989},
}

@article{DS22,
 author = {de Jong, Robin and Shokrieh, Farbod},
 title = {Faltings height and {N{\'e}ron}-{Tate} height of a theta divisor},
 fjournal = {Compositio Mathematica},
 journal = {Compos. Math.},
 volume = {158},
 number = {1},
 pages = {1--32},
 year = {2022},
}

@misc{AN22,
      title={Moduli of hybrid curves II: Tropical and hybrid Laplacians}, 
      author={Omid Amini and Noema Nicolussi},
      year={2022},
      eprint={2203.12785},
      archivePrefix={arXiv},
      url={https://arxiv.org/abs/2203.12785}, 
}

@misc{AN24,
      title={Moduli of hybrid curves I: Variations of canonical measures}, 
      author={Omid Amini and Noema Nicolussi},
      year={2024},
      eprint={2007.07130},
      archivePrefix={arXiv},
      url={https://arxiv.org/abs/2007.07130}, 
}

@misc{Gon25,
      title={Multiplier scales of a sequence of rational maps}, 
      author={Chen Gong},
      year={2025},
      eprint={2510.25029},
      archivePrefix={arXiv},
      url={https://arxiv.org/abs/2510.25029}, 
}

@article{Lou15,
 author = {Loughran, Daniel},
 title = {Rational points of bounded height and the {Weil} restriction},
 fjournal = {Israel Journal of Mathematics},
 journal = {Isr. J. Math.},
 volume = {210},
 pages = {47--79},
 year = {2015},
}

@misc{LT25,
 author = {Laga, Jef and Thorne, Jack},
 title = {Lower bounds on heights of odd degree points of hyperelliptic curves},
 year = {2025},
 url = {https://arxiv.org/abs/2507.08652},
 arXiv = {arXiv:2507.08652}
}

@article{EHK12,
 author = {Ellenberg, Jordan and Hall, Chris and Kowalski, Emmanuel},
 title = {Expander graphs, gonality, and variation of {Galois} representations},
 fjournal = {Duke Mathematical Journal},
 journal = {Duke Math. J.},
 volume = {161},
 number = {7},
 pages = {1233--1275},
 year = {2012},
}

@article{CT12,
 author = {Cadoret, Anna and Tamagawa, Akio},
 title = {Uniform boundedness of {{\(p\)}}-primary torsion of abelian schemes},
 fjournal = {Inventiones Mathematicae},
 journal = {Invent. Math.},
 volume = {188},
 number = {1},
 pages = {83--125},
 year = {2012},
}

@article{Par99,
 author = {Parent, Pierre},
 title = {Bornes effectives pour la torsion des courbes elliptiques sur les corps de nombres},
 fjournal = {Journal f{\"u}r die Reine und Angewandte Mathematik},
 journal = {J. Reine Angew. Math.},
 volume = {506},
 pages = {85--116},
 year = {1999},
}

@article{Aut01,
 author = {Autissier, Pascal},
 title = {Integral points on arithmetic surfaces},
 fjournal = {Journal f{\"u}r die Reine und Angewandte Mathematik},
 journal = {J. Reine Angew. Math.},
 volume = {531},
 pages = {201--235},
 year = {2001},
}

@misc{Yap25,
      title={On the Number of Small Points for Rational Maps}, 
      author={Jit Wu Yap},
      year={2025},
      eprint={2510.12039},
      archivePrefix={arXiv},
      url={https://arxiv.org/abs/2510.12039}, 
}

@misc{Poi24a,
      title={Dynamique analytique sur $\mathbf{Z}$. I : Mesures d'\'equilibre sur une droite projective relative}, 
      author={Jérôme Poineau},
      year={2024},
      eprint={2201.08480},
      archivePrefix={arXiv},
      url={https://arxiv.org/abs/2201.08480}, 
}

@book{Ber90,
 author = {Berkovich, Vladimir G.},
 title = {Spectral theory and analytic geometry over non-{Archimedean} fields},
 fseries = {Mathematical Surveys and Monographs},
 series = {Math. Surv. Monogr.},
 issn = {0076-5376},
 volume = {33},
 year = {1990},
}

@article{DGH21,
 author = {Dimitrov, Vesselin and Gao, Ziyang and Habegger, Philipp},
 title = {Uniformity in {Mordell}-{Lang} for curves},
 fjournal = {Annals of Mathematics. Second Series},
 journal = {Ann. Math. (2)},
 volume = {194},
 number = {1},
 pages = {237--298},
 year = {2021},
}

@article{GH19,
 author = {Gao, Ziyang and Habegger, Philipp},
 title = {Heights in families of abelian varieties and the geometric {Bogomolov} conjecture},
 fjournal = {Annals of Mathematics. Second Series},
 journal = {Ann. Math. (2)},
 volume = {189},
 number = {2},
 pages = {527--604},
 year = {2019},
}

@article{BBW11,
 author = {Berman, Robert and Boucksom, S{\'e}bastien and Witt Nystr{\"o}m, David},
 title = {Fekete points and convergence towards equilibrium measures on complex manifolds},
 fjournal = {Acta Mathematica},
 journal = {Acta Math.},
 volume = {207},
 number = {1},
 pages = {1--27},
 year = {2011},
}

@article{BE21,
 author = {Boucksom, S{\'e}bastien and Eriksson, Dennis},
 title = {Spaces of norms, determinant of cohomology and {Fekete} points in non-{Archimedean} geometry},
 fjournal = {Advances in Mathematics},
 journal = {Adv. Math.},
 volume = {378},
 pages = {125},
 year = {2021},
}

@misc{Yap24,
      title={Quantitative Equidistribution of Small Points for Canonical Heights}, 
      author={Jit Wu Yap},
      year={2024},
      eprint={2410.21679},}

@article{BR07,
 author = {Baker, Matt and Rumely, Robert},
 title = {Harmonic analysis on metrized graphs},
 fjournal = {Canadian Journal of Mathematics},
 journal = {Can. J. Math.},
 volume = {59},
 number = {2},
 pages = {225--275},
 year = {2007},
}

@article{CGHX21,
 author = {Cantat, Serge and Gao, Ziyang and Habegger, Philipp and Xie, Junyi},
 title = {The geometric {Bogomolov} conjecture},
 fjournal = {Duke Mathematical Journal},
 journal = {Duke Math. J.},
 volume = {170},
 number = {2},
 pages = {247--277},
 year = {2021},
}

@article{Loo21,
 author = {Looper, Nicole},
 title = {Dynamical uniform boundedness and the {{\(abc\)}}-conjecture},
 fjournal = {Inventiones Mathematicae},
 journal = {Invent. Math.},
 volume = {225},
 number = {1},
 pages = {1--44},
 year = {2021},
}

@article{Loo19,
 author = {Looper, Nicole},
 title = {A lower bound on the canonical height for polynomials},
 fjournal = {Mathematische Annalen},
 journal = {Math. Ann.},
 volume = {373},
 number = {3-4},
 pages = {1057--1074},
 year = {2019},
}

@article{Ing09,
 author = {Ingram, Patrick},
 title = {Lower bounds on the canonical height associated to the morphism {{\({{\phi}}(z)= z^d+c\)}}},
 fjournal = {Monatshefte f{\"u}r Mathematik},
 journal = {Monatsh. Math.},
 volume = {157},
 number = {1},
 pages = {69--89},
 year = {2009},
}

@article{Ben07,
 author = {Benedetto, Robert L.},
 title = {Preperiodic points of polynomials over global fields},
 fjournal = {Journal f{\"u}r die Reine und Angewandte Mathematik},
 journal = {J. Reine Angew. Math.},
 volume = {608},
 pages = {123--153},
 year = {2007},
}

@article{XY22,
 author = {Xie, Junyi and Yuan, Xinyi},
 title = {Geometric {Bogomolov} conjecture in arbitrary characteristics},
 fjournal = {Inventiones Mathematicae},
 journal = {Invent. Math.},
 volume = {229},
 number = {2},
 pages = {607--637},
 year = {2022},
}

@article{Yam18,
 author = {Yamaki, Kazuhiko},
 title = {Trace of abelian varieties over function fields and the geometric {Bogomolov} conjecture},
 fjournal = {Journal f{\"u}r die Reine und Angewandte Mathematik},
 journal = {J. Reine Angew. Math.},
 volume = {741},
 pages = {133--159},
 year = {2018},
}

@article{Gub07b,
 author = {Gubler, Walter},
 title = {The {Bogomolov} conjecture for totally degenerate abelian varieties},
 fjournal = {Inventiones Mathematicae},
 journal = {Invent. Math.},
 volume = {169},
 number = {2},
 pages = {377--400},
 year = {2007},
}

@article{CX08,
 author = {Clark, Pete L. and Xarles, Xavier},
 title = {Local bounds for torsion points on abelian varieties},
 fjournal = {Canadian Journal of Mathematics},
 journal = {Can. J. Math.},
 volume = {60},
 number = {3},
 pages = {532--555},
 year = {2008},
}

@misc{Fre89,
 author = {Frey, Gerhard},
 title = {Links between solutions of {{\(A-B=C\)}} and elliptic curves},
 year = {1989},
 howpublished = {Number theory, {Proc}. 15th {Journ}. {Arith}., {Ulm}/{FRG} 1987, {Lect}. {Notes} {Math}. 1380, 31-62 (1989).},
}

@article{HT06,
 author = {Hwang, Jun-Muk and To, Wing-Keung},
 title = {Uniform boundedness of level structures on abelian varieties over complex function fields},
 fjournal = {Mathematische Annalen},
 journal = {Math. Ann.},
 volume = {335},
 number = {2},
 pages = {363--377},
 year = {2006},
}

@article{Nad89,
 author = {Nadel, Alan},
 title = {The nonexistence of certain level structures on abelian varieties over complex function fields},
 fjournal = {Annals of Mathematics. Second Series},
 journal = {Ann. Math. (2)},
 volume = {129},
 number = {1},
 pages = {161--178},
 year = {1989},
}

@article{BT18,
 author = {Bakker, Benjamin and Tsimerman, Jacob},
 title = {The geometric torsion conjecture for abelian varieties with real multiplication},
 fjournal = {Journal of Differential Geometry},
 journal = {J. Differ. Geom.},
 volume = {109},
 number = {3},
 pages = {379--409},
 year = {2018},
}

@article{Paz10,
 author = {Pazuki, Fabien},
 title = {Remarks on a conjecture of {Lang}},
 fjournal = {Journal de Th{\'e}orie des Nombres de Bordeaux},
 journal = {J. Th{\'e}or. Nombres Bordx.},
 volume = {22},
 number = {1},
 pages = {161--179},
 year = {2010},
}

@article{Sil84,
 author = {Silverman, Joseph H.},
 title = {Lower bounds for height functions},
 fjournal = {Duke Mathematical Journal},
 journal = {Duke Math. J.},
 volume = {51},
 pages = {395--403},
 year = {1984},
}

@article{Sil81,
 author = {Silverman, Joseph H.},
 title = {Lower bound for the canonical height on elliptic curves},
 fjournal = {Duke Mathematical Journal},
 journal = {Duke Math. J.},
 volume = {48},
 pages = {633--648},
 year = {1981},
}

@article{Poo07,
 author = {Poonen, Bjorn},
 title = {Gonality of modular curves in characteristic {{\(p\)}}},
 fjournal = {Mathematical Research Letters},
 journal = {Math. Res. Lett.},
 volume = {14},
 number = {4},
 pages = {691--701},
 year = {2007},
}

@article{CH05,
 author = {Cojocaru, Alina and Hall, Chris},
 title = {Uniform results for {Serre}'s theorem for elliptic curves},
 fjournal = {IMRN. International Mathematics Research Notices},
 journal = {Int. Math. Res. Not.},
 volume = {2005},
 number = {50},
 year = {2005},
}

@misc{NS96,
      title={d-gonality of modular curves and bounding torsions}, 
      author={Khac Viet Nguyen and Masa-Hiko Saito},
      year={1996},
      eprint={alg-geom/9603024},
      archivePrefix={arXiv},
      url={https://arxiv.org/abs/alg-geom/9603024}, 
}

@article{Abr96,
 author = {Abramovich, Dan},
 title = {A linear lower bound on the gonality of modular curves},
 fjournal = {IMRN. International Mathematics Research Notices},
 journal = {Int. Math. Res. Not.},
 volume = {1996},
 number = {20},
 pages = {1005--1011},
 year = {1996},
}

@article{Mer96,
 author = {Merel, Lo\"ic},
 title = {Bornes pour la torsion des courbes elliptiques sur les corps de nombres},
 fjournal = {Inventiones Mathematicae},
 journal = {Invent. Math.},
 volume = {124},
 number = {1-3},
 pages = {437--449},
 year = {1996},
}

@incollection{KM95,
 author = {Kamienny, Sheldon and Mazur, Barry},
 title = {Rational torsion of prime order in elliptic curves over number fields (with an appendix by {A}. {Granville})},
 booktitle = {Columbia University number theory seminar, New York, 1992},
 pages = {81--98; appendix 99--100},
 year = {1995},
}

@article{Kam86,
 author = {Kamienny, Sheldon},
 title = {Torsion points on elliptic curves over all quadratic fields},
 fjournal = {Duke Mathematical Journal},
 journal = {Duke Math. J.},
 volume = {53},
 pages = {157--162},
 year = {1986},
}

@article{Maz78,
 author = {Mazur, Barry},
 title = {Rational isogenies of prime degree. ({With} an appendix by {D}. {Goldfeld})},
 fjournal = {Inventiones Mathematicae},
 journal = {Invent. Math.},
 volume = {44},
 pages = {129--162},
 year = {1978},
}

@article{Maz77,
 author = {Mazur, Barry},
 title = {Modular curves and the {Eisenstein} ideal},
 fjournal = {Publications Math{\'e}matiques},
 journal = {Publ. Math., Inst. Hautes {\'E}tud. Sci.},
 volume = {47},
 pages = {33--186},
 year = {1977},
}

@article{Zar76,
 author = {Zarhin, Yuri},
 title = {Isogenies of abelian varieties over fields of finite characteristic},
 fjournal = {Mathematics of the USSR, Sbornik},
 journal = {Math. USSR, Sb.},
 volume = {24},
 pages = {451--461},
 year = {1976},
}

@article{Fal83b,
 author = {Faltings, Gerd},
 title = {Finiteness theorems for abelian varieties over number fields.},
 fjournal = {Inventiones Mathematicae},
 journal = {Invent. Math.},
 issn = {0020-9910},
 volume = {73},
 year = {1983},
}

@article{Con06,
 author = {Conrad, Brian},
 title = {Chow's {{\(K/k\)}}-image and {{\(K/k\)}}-trace, and the {Lang}-{N{\'e}ron} theorem},
 fjournal = {L'Enseignement Math{\'e}matique. 2e S{\'e}rie},
 journal = {Enseign. Math. (2)},
 volume = {52},
 number = {1-2},
 pages = {37--108},
 year = {2006},
}

@article{Edi92,
 author = {Edixhoven, Bas},
 title = {N{\'e}ron models and tame ramification},
 fjournal = {Compositio Mathematica},
 journal = {Compos. Math.},
 volume = {81},
 number = {3},
 pages = {291--306},
 year = {1992},
}

@article{HS88,
 author = {Hindry, Marc and Silverman, Joseph},
 title = {The canonical height and integral points on elliptic curves},
 fjournal = {Inventiones Mathematicae},
 journal = {Invent. Math.},
 volume = {93},
 number = {2},
 pages = {419--450},
 year = {1988},
}

@misc{Wil21,
 author = {Wilms, Robert},
 title = {Degeneration of {Riemann} theta functions and of the {Zhang}-{Kawazumi} invariant with applications to a uniform {Bogomolov} conjecture},
 year = {2021},
 url = {https://arxiv.org/abs/2101.04024},
 arXiv = {arXiv:2101.04024},
}

@article{DEJ19,
 author = {de Jong, Robin},
 title = {Faltings delta-invariant and semistable degeneration},
 fjournal = {Journal of Differential Geometry},
 journal = {J. Differ. Geom.},
 volume = {111},
 number = {2},
 pages = {241--301},
 year = {2019},
}

@misc{Poi24b,
      title={Dynamique analytique sur $\mathbf{Z}$. II : \'Ecart uniforme entre Latt\`es et conjecture de Bogomolov-Fu-Tschinkel}, 
      author={Jérôme Poineau},
      year={2024},
      eprint={2207.01574},
      archivePrefix={arXiv},
      url={https://arxiv.org/abs/2207.01574}, 
}

@article {Fal21,
    AUTHOR = {Faltings, Gerd},
     TITLE = {Arakelov geometry on degenerating curves},
   JOURNAL = {J. Reine Angew. Math.},
  FJOURNAL = {Journal f\"ur die Reine und Angewandte Mathematik. [Crelle's
              Journal]},
    VOLUME = {771},
      YEAR = {2021},
     PAGES = {65--84},
}

@misc{FG24,
      title={Non-Archimedean techniques and dynamical degenerations}, 
      author={Charles Favre and Chen Gong},
      year={2024},
      eprint={2406.15892},
      archivePrefix={arXiv},
}

@article {Luo21,
    AUTHOR = {Luo, Yusheng},
     TITLE = {Limits of rational maps, {$\Bbb{R}$}-trees and barycentric
              extension},
   JOURNAL = {Adv. Math.},
  FJOURNAL = {Advances in Mathematics},
    VOLUME = {393},
      YEAR = {2021},
}

@article {Luo22,
    AUTHOR = {Luo, Yusheng},
     TITLE = {Trees, length spectra for rational maps via barycentric
              extensions, and {B}erkovich spaces},
   JOURNAL = {Duke Math. J.},
  FJOURNAL = {Duke Mathematical Journal},
    VOLUME = {171},
      YEAR = {2022},
    NUMBER = {14},
     PAGES = {2943--3001},
}

@misc{Nit05,
      title={Construction of Hilbert and Quot Schemes}, 
      author={Nitin Nitsure},
      year={2005},
      eprint={math/0504590},
      archivePrefix={arXiv},
      url={https://arxiv.org/abs/math/0504590}, 
}

@article{Lev68,
 author = {Levin, Martin},
 title = {On the group of rational points on elliptic curves over function fields},
 fjournal = {American Journal of Mathematics},
 journal = {Am. J. Math.},
 volume = {90},
 pages = {456--462},
 year = {1968},
}

@book{BLR90,
 author = {Bosch, Siegfried and L{\"u}tkebohmert, Werner and Raynaud, Michel},
 title = {N{\'e}ron models},
 fseries = {Ergebnisse der Mathematik und ihrer Grenzgebiete. 3. Folge},
 series = {Ergeb. Math. Grenzgeb., 3. Folge},
 issn = {0071-1136},
 volume = {21},
 year = {1990},
}

@article{HN11,
 author = {Halle, Lars Halvard and Nicaise, Johannes},
 title = {Jumps and monodromy of abelian varieties},
 fjournal = {Documenta Mathematica},
 journal = {Doc. Math.},
 volume = {16},
 pages = {937--968},
 year = {2011},
}

@misc{GLFP25,
      title={Variation of height in an isogeny class over a function field}, 
      author={Richard Griffon and Samuel Le Fourn and Fabien Pazuki},
      year={2025},
      eprint={2503.14318},
      archivePrefix={arXiv},
      url={https://arxiv.org/abs/2503.14318}, 
}

@article{Gro64,
 author = {Grothendieck, Alexandre},
 title = {{\'E}l{\'e}ments de g{\'e}om{\'e}trie alg{\'e}brique. {IV}: {\'E}tude locale des sch{\'e}mas et des morphismes de sch{\'e}mas. ({Premi{\`e}re} partie). {R{\'e}dig{\'e}} avec la colloboration de {J}. {Dieudonn{\'e}}},
 fjournal = {Publications Math{\'e}matiques},
 journal = {Publ. Math., Inst. Hautes {\'E}tud. Sci.},
 volume = {20},
 pages = {101--355},
 year = {1964},
}

@article{CH88,
 author = {Cornalba, Maurizio and Harris, Joe},
 title = {Divisor classes associated to families of stable varieties with applications to the moduli space of curves},
 fjournal = {Annales Scientifiques de l'{\'E}cole Normale Sup{\'e}rieure. Quatri{\`e}me S{\'e}rie},
 journal = {Ann. Sci. {\'E}c. Norm. Sup{\'e}r. (4)},
 volume = {21},
 number = {3},
 pages = {455--475},
 year = {1988},
}

@misc{Szp79,
 author = {Szpiro, Lucien},
 title = {Sur le th{\'e}or{\`e}me de rigidite de {Parsin} et {Arakelov}},
 year = {1979},
 howpublished = {Ast{\'e}risque 64, 169-202 (1979).},
}

@article{Fal84,
 author = {Faltings, Gerd},
 title = {Calculus on arithmetic surfaces},
 fjournal = {Annals of Mathematics. Second Series},
 journal = {Ann. Math. (2)},
 volume = {119},
 pages = {387--424},
 year = {1984},
}

@misc{AMRT75,
 author = {Avner Ash et al.},
 title = {Smooth compactifications of locally symmetric varieties},
 year = {1975},
 howpublished = {Lie {Groups}: {History}, {Frontiers} and {Applications}. {Vol}. {IV}. {Brookline}, {Mass}.: {Math} {Sci} {Press}. {IV}, 335 p. (1975).},
}

@misc{Lia22,
 author = {Liang, Yihui},
 title = {Upper bounds for regularity of radicals of ideals and arithmetic degrees},
 year = {2022},
 url = {https://arxiv.org/abs/2204.08565},
 arXiv = {arXiv:2204.08565}
}

@article{Kwe22,
 author = {Kweon, Hyuk Jun},
 title = {Bounds on the torsion subgroup schemes of {N{\'e}ron}-{Severi} group schemes},
 fjournal = {Advances in Mathematics},
 journal = {Adv. Math.},
 volume = {409},
 pages = {28},
 year = {2022},
}

@article{HN10,
 author = {Halle, Lars Halvard and Nicaise, Johannes},
 title = {The {N{\'e}ron} component series of an abelian variety},
 fjournal = {Mathematische Annalen},
 journal = {Math. Ann.},
 volume = {348},
 number = {3},
 pages = {749--778},
 year = {2010},
}

@article{Sev22,
 author = {Philip, S{\'e}verin},
 title = {Fields of definition of abelian subvarieties},
 fjournal = {Journal de Th{\'e}orie des Nombres de Bordeaux},
 journal = {J. Th{\'e}or. Nombres Bordx.},
 volume = {34},
 number = {2},
 pages = {537--547},
 year = {2022},
}

@misc{stacks-project,
  author       = {The {Stacks project authors}},
  title        = {The Stacks project},
  howpublished = {\url{https://stacks.math.columbia.edu}},
}

@article{Fal83,
 author = {Faltings, Gerd},
 title = {Arakelov's theorem for abelian varieties},
 fjournal = {Inventiones Mathematicae},
 journal = {Invent. Math.},
 issn = {0020-9910},
 volume = {73},
 pages = {337--347},
 year = {1983},
}

@InProceedings{Zar23,
author="Zarhin, Yuri",
title="Abelian Varieties, Quaternion Trick and Endomorphisms",
booktitle="Birational Geometry, K{\"a}hler--Einstein Metrics and Degenerations",
year="2023",
publisher="Springer International Publishing",
}

@misc{Yua24,
      title={Arithmetic bigness and a uniform Bogomolov-type result}, 
      author={Xinyi Yuan},
      year={2024},
      eprint={2108.05625},
      archivePrefix={arXiv},
      url={https://arxiv.org/abs/2108.05625}, 
}

@article{Nakamayae,
AUTHOR = {Nakamayae, Michael},
     TITLE = {Multiplicity estimates on commutative
algebraic groups},
   JOURNAL = {J. reine angew. Math.},
  FJOURNAL = {},
    VOLUME = {607},
      YEAR = {2007},
    NUMBER = {},
     PAGES = {217--235},

}

@article {Rem20,
    AUTHOR = {R\'emond, Ga\"el},
     TITLE = {Degr\'e{} de d\'efinition des endomorphismes d'une
              vari\'et\'e{} ab\'elienne},
   JOURNAL = {J. Eur. Math. Soc. (JEMS)},
  FJOURNAL = {Journal of the European Mathematical Society (JEMS)},
    VOLUME = {22},
      YEAR = {2020},
    NUMBER = {9},
     PAGES = {3059--3099},
}

@book {MFK94,
    AUTHOR = {Mumford, David and Fogarty, John and Kirwan, Frances},
     TITLE = {Geometric invariant theory},
    SERIES = {Ergebnisse der Mathematik und ihrer Grenzgebiete (2) [Results
              in Mathematics and Related Areas (2)]},
    VOLUME = {34},
   EDITION = {Third},
 PUBLISHER = {Springer-Verlag, Berlin},
      YEAR = {1994},
     PAGES = {xiv+292},
}

@article {Lor90,
    AUTHOR = {Lorenzini, Dino J.},
     TITLE = {Groups of components of {N}\'eron models of {J}acobians},
   JOURNAL = {Compositio Math.},
  FJOURNAL = {Compositio Mathematica},
    VOLUME = {73},
      YEAR = {1990},
    NUMBER = {2},
     PAGES = {145--160},
}

@article {Gub07,
    AUTHOR = {Gubler, Walter},
     TITLE = {Tropical varieties for non-{A}rchimedean analytic spaces},
   JOURNAL = {Invent. Math.},
  FJOURNAL = {Inventiones Mathematicae},
    VOLUME = {169},
      YEAR = {2007},
    NUMBER = {2},
     PAGES = {321--376},
}

@article {BGM22,
    AUTHOR = {Boucksom, S\'ebastien and Gubler, Walter and Martin, Florent},
     TITLE = {Differentiability of relative volumes over an arbitrary
              non-{A}rchimedean field},
   JOURNAL = {Int. Math. Res. Not. IMRN},
  FJOURNAL = {International Mathematics Research Notices. IMRN},
      YEAR = {2022},
    NUMBER = {8},
     PAGES = {6214--6242},
}

@incollection {GS23,
    AUTHOR = {Gubler, Walter and Stadl\"oder, Stefan},
     TITLE = {Monge-{A}mp\`ere measures for toric metrics on abelian
              varieties},
 BOOKTITLE = {Publications math\'ematiques de {B}esan\c con. {A}lg\`ebre et
              th\'eorie des nombres. 2023},
    SERIES = {Publ. Math. Besan\c con Alg\`ebre Th\'eorie Nr.},
    YEAR = {2023},
     PAGES = {49--84},
}

@article {Con99,
    AUTHOR = {Conrad, Brian},
     TITLE = {Irreducible components of rigid spaces},
   JOURNAL = {Ann. Inst. Fourier (Grenoble)},
  FJOURNAL = {Universit\'e{} de Grenoble. Annales de l'Institut Fourier},
    VOLUME = {49},
      YEAR = {1999},
    NUMBER = {2},
     PAGES = {473--541},
}

@article {BL84,
    AUTHOR = {Bosch, Siegfried and L\"utkebohmert, Werner},
     TITLE = {Stable reduction and uniformization of abelian varieties.
              {II}},
   JOURNAL = {Invent. Math.},
  FJOURNAL = {Inventiones Mathematicae},
    VOLUME = {78},
      YEAR = {1984},
    NUMBER = {2},
     PAGES = {257--297},
}

@article {Gub10,
    AUTHOR = {Gubler, Walter},
     TITLE = {Non-{A}rchimedean canonical measures on abelian varieties},
   JOURNAL = {Compos. Math.},
  FJOURNAL = {Compositio Mathematica},
    VOLUME = {146},
      YEAR = {2010},
    NUMBER = {3},
     PAGES = {683--730},
}

@incollection {Del87,
    AUTHOR = {Deligne, Pierre},
     TITLE = {Un th\'eor\`eme de finitude pour la monodromie},
 BOOKTITLE = {Discrete groups in geometry and analysis ({N}ew {H}aven,
              {C}onn., 1984)},
    SERIES = {Progr. Math.},
    VOLUME = {67},
     PAGES = {1--19},
 PUBLISHER = {Birkh\"auser Boston, Boston, MA},
      YEAR = {1987},
}

@misc{Mat20,
      title={Height functions associated with closed subschemes}, 
      author={Yohsuke Matsuzawa},
      year={2020},
      eprint={2008.08153},
      archivePrefix={arXiv},
      url={https://arxiv.org/abs/2008.08153}, 
}

@misc{Ing22,
      title={Explicit canonical heights for divisors relative to endomorphisms of $\mathbb{P}^N$}, 
      author={Ingram, Patrick},
      year={2022},
      eprint={2207.07206},
      archivePrefix={arXiv},
      url={https://arxiv.org/pdf/2207.07206}, 
}

@misc{Loo24,
      title={Arakelov--Green's functions for dynamical systems on projective varieties}, 
      author={Looper, Nicole},
      year={2024},
      eprint={2404.06981},
      archivePrefix={arXiv},
      url={https://arxiv.org/pdf/2404.06981}, 
}

@article {Sil87,
     AUTHOR = {Silverman, Joseph H.},
     TITLE = {Arithmetic distance functions and height functions
in Diophantine geometry},
   JOURNAL = {Math.~Ann.},
  FJOURNAL = {Mathematische Annalen},
    VOLUME = {279},
      YEAR = {1987},
    NUMBER = {},
     PAGES = {193--216},
}

@article {BR06,
     AUTHOR = {Matthew Baker and Robert Rumely},
     TITLE = {Equidistribution of small points, rational dynamics, and potential theory},
   JOURNAL = {Ann. Inst. Fourier (Grenoble)},
  FJOURNAL = {},
    VOLUME = {56},
      YEAR = {2006},
    NUMBER = {3},
     PAGES = {625--688},
}

@book {BG06, 
    AUTHOR = {Enrico Bombieri and Walter Gubler.},
     TITLE = {Heights in Diophantine Geometry},
    SERIES = {},
    VOLUME = {},
 PUBLISHER = {Cambridge University Press},
      YEAR = {2006},
}

@book {FC90,
    AUTHOR = {Chai, Ching-Li and Faltings, Gerd},
     TITLE = {Degeneration of Abelian Varieties},
    SERIES = {Ergebnisse der Mathematik und ihrer Grenzgebiete},
    VOLUME = {22},
 PUBLISHER = {Springer Berlin, Heidelberg},
      YEAR = {1990},
}

@book {BR10,
    AUTHOR = {Baker, Matthew and Rumely, Robert},
     TITLE = {Potential Theory and Dynamics on the Berkovich Projective Line},
    SERIES = {Mathematical Surveys and Monographs},
    VOLUME = {159},
 PUBLISHER = {AMS, Providence},
      YEAR = {2010},
}

@article {KS07,
     AUTHOR = {Kawaguchi, Shu and Silverman, Joseph },
     TITLE = {Dynamics of projective morphisms having identical canonical heights},
   JOURNAL = {Proc.~Math.~Lond.~Soc.},
  FJOURNAL = {Proceedings of the LMS},
    VOLUME = {95},
      YEAR = {2007},
    NUMBER = {2},
     PAGES = {519--544},
}

@article {Mas84,
    AUTHOR = {Masser, David},
     TITLE = {Small values of the quadratic part of the {N}\'eron-{T}ate
              height on an abelian variety},
   JOURNAL = {Compositio Math.},
  FJOURNAL = {Compositio Mathematica},
    VOLUME = {53},
      YEAR = {1984},
}

@article {BP05,
    AUTHOR = {Baker, Matthew and Petsche, Clayton},
     TITLE = {Global discrepancy and small points on elliptic curves},
   JOURNAL = {Int. Math. Res. Not.},
  FJOURNAL = {International Mathematics Research Notices},
      YEAR = {2005},
    NUMBER = {61},
     PAGES = {3791--3834},
}

@article {Fak03,
    AUTHOR = {Fakhruddin, Najmuddin},
     TITLE = {Questions on self maps of algebraic varieties},
   JOURNAL = {J. Ramanujan Math. Soc.},
  FJOURNAL = {Journal of the Ramanujan Mathematical Society},
    VOLUME = {18},
      YEAR = {2003},
    NUMBER = {2},
     PAGES = {109--122},
}

@article {Bak06,
    AUTHOR = {Baker, Matthew},
     TITLE = {A lower bound for average values of dynamical {G}reen's
              functions},
   JOURNAL = {Math. Res. Lett.},
  FJOURNAL = {Mathematical Research Letters},
    VOLUME = {13},
      YEAR = {2006},
    NUMBER = {2-3},
     PAGES = {245--257},
}

@misc{GGK21,
      title={The Uniform Mordell-Lang Conjecture}, 
      author={Ziyang Gao and Tangli Ge and Lars Kühne},
      year={2021},
      eprint={2105.15085},
      archivePrefix={arXiv},
}

@misc{Kuh21,
      title={Equidistribution in Families of Abelian Varieties and Uniformity}, 
      author={Lars Kühne},
      year={2021},
      eprint={2101.10272},
      archivePrefix={arXiv},
}

@misc{Gau23,
      title={Good height functions on quasi-projective varieties: equidistribution and applications in dynamics}, 
      author={Thomas Gauthier},
      year={2023},
      eprint={2105.02479},
      archivePrefix={arXiv},
}

@article {Zha98,
    AUTHOR = {Zhang, Shou-Wu},
     TITLE = {Equidistribution of small points on abelian varieties},
   JOURNAL = {Ann. of Math. (2)},
  FJOURNAL = {Annals of Mathematics. Second Series},
    VOLUME = {147},
      YEAR = {1998},
    NUMBER = {1},
     PAGES = {159--165},
}

@article {Ulm98,
    AUTHOR = {Ullmo, Emmanuel},
     TITLE = {Positivit\'{e} et discr\'{e}tion des points alg\'{e}briques
              des courbes},
   JOURNAL = {Ann. of Math. (2)},
  FJOURNAL = {Annals of Mathematics. Second Series},
    VOLUME = {147},
      YEAR = {1998},
    NUMBER = {1},
     PAGES = {167--179},
}

@article {Yua08,
    AUTHOR = {Yuan, Xinyi},
     TITLE = {Big line bundles over arithmetic varieties},
   JOURNAL = {Invent. Math.},
  FJOURNAL = {Inventiones Mathematicae},
    VOLUME = {173},
      YEAR = {2008},
    NUMBER = {3},
     PAGES = {603--649},
}

@article {Zha95,
    AUTHOR = {Zhang, Shou-Wu},
     TITLE = {Small points and adelic metrics},
   JOURNAL = {J. Algebraic Geom.},
  FJOURNAL = {Journal of Algebraic Geometry},
    VOLUME = {4},
      YEAR = {1995},
    NUMBER = {2},
     PAGES = {281--300},
}

@article{SUZ97,
    AUTHOR = {Szpiro, Lucien and Ullmo, Emmanuel and Zhang, Shou-Wu},
     TITLE = {\'{E}quir\'{e}partition des petits points},
   JOURNAL = {Invent. Math.},
  FJOURNAL = {Inventiones Mathematicae},
    VOLUME = {127},
      YEAR = {1997},
    NUMBER = {2},
     PAGES = {337--347},
}

@misc{YZ24,
      title={Adelic line bundles on quasi-projective varieties}, 
      author={Xinyi Yuan and Shou-Wu Zhang},
      year={2024},
      eprint={2105.13587},
      archivePrefix={arXiv},
}

@article{DKY20,
    AUTHOR = {DeMarco, Laura and Krieger, Holly and Ye, Hexi},
     TITLE = {Uniform {M}anin-{M}umford for a family of genus 2 curves},
   JOURNAL = {Ann. of Math. (2)},
  FJOURNAL = {Annals of Mathematics. Second Series},
    VOLUME = {191},
      YEAR = {2020},
    NUMBER = {3},
     PAGES = {949--1001},
}

@article{Lan78,
    AUTHOR = {Lang, Serge},
     TITLE = {Elliptic curves: {D}iophantine analysis},
    JOURNAL = {Grundlehren der Mathematischen Wissenschaften [Fundamental
              Principles of Mathematical Sciences]},
    VOLUME = {231},
 PUBLISHER = {Springer-Verlag, Berlin-New York},
      YEAR = {1978},
     PAGES = {xi+261},
      ISBN = {3-540-08489-4},
   MRCLASS = {10B15 (10Fxx 14G25)},
  MRNUMBER = {518817},
MRREVIEWER = {A. J. van der Poorten},
}

@article{CL06,
    AUTHOR = {Chambert-Loir, Antoine},
     TITLE = {Mesures et \'{e}quidistribution sur les espaces de {B}erkovich},
   JOURNAL = {J. Reine Angew. Math.},
  FJOURNAL = {Journal f\"{u}r die Reine und Angewandte Mathematik. [Crelle's
              Journal]},
    VOLUME = {595},
      YEAR = {2006},
     PAGES = {215--235},
}

@article{FRL06,
    AUTHOR = {Favre, Charles and Rivera-Letelier, Juan},
     TITLE = {\'{E}quidistribution quantitative des points de petite hauteur sur
              la droite projective},
   JOURNAL = {Math. Ann.},
  FJOURNAL = {Mathematische Annalen},
    VOLUME = {335},
      YEAR = {2006},
    NUMBER = {2},
     PAGES = {311--361},
}

\end{document}